\documentclass[a4paper,11pt]{amsart}
\usepackage[colorlinks,linkcolor=blue,citecolor=blue]{hyperref}
\usepackage{latexsym, amssymb, amsmath, amsthm, mathrsfs, bbm, amscd}
\usepackage{extarrows}
\usepackage[all, knot]{xy}
\xyoption{arc}

\usepackage{anysize}\marginsize{30mm}{30mm}{30mm}{30mm}
\addtolength{\parskip}{7pt}

\def \tr{\triangleright}

\def \End{\operatorname{End}}

\def \Aut{\operatorname{Aut}}
\def \rank{\operatorname{rank}}
\def \Irr{\operatorname{Irr}}
\def \id{\operatorname{id}}

\def \rad{\operatorname{rad}}
\def \soc{\operatorname{soc}}

\def \GL{\operatorname{GL}}
\def \Gr{\operatorname{Gr}}
\def \Asym{\operatorname{AS}}
\def \diag{\operatorname{diag}}

\def \Z{\mathbb{Z}}

\def \k{\mathbbm{k}}
\def \B{\mathcal{B}}
\def \D{\mathcal{D}}
\def \M{\mathcal{M}}

\numberwithin{equation}{section}

\newtheorem{theorem}{Theorem}[section]
\newtheorem{lemma}[theorem]{Lemma}
\newtheorem{proposition}[theorem]{Proposition}
\newtheorem{corollary}[theorem]{Corollary}
\newtheorem{definition}[theorem]{Definition}
\newtheorem{lemma-definition}[theorem]{Lemma-Definition}
\newtheorem{example}[theorem]{Example}
\newtheorem{remark}[theorem]{Remark}

\begin{document}

\title[Extension of elementary $p$-groups and its application]{Extension of elementary $p$-groups and its application in classification of groups of prime exponent}

\subjclass[]{} \keywords{}

\author{Zheyan Wan, Yu Ye and Chi Zhang}

\address{(Wan)\quad Yau Mathematical Sciences Center, Tsinghua University, Beijing 100084, China} \email{wanzheyan@mail.tsinghua.edu.cn}

\address{(Ye)\quad School of Mathematical Sciences, University of Science and Technology of China, Hefei 230026, China; Wu Wen-Tsun Key Laboratory of Mathematics, USTC, Chinese Academy of Sciences, Hefei 230026, China} \email{yeyu@ustc.edu.cn}

\address{(Zhang)\quad Department of Mathematics, China University of Mining and Technology, Xuzhou 221116, China} \email{zclqq32@cumt.edu.cn}

\date{}
\maketitle

\begin{abstract} Let $p$ be a prime number and $\Z_p=\Z/p\Z$.
We study finite groups with abelian derived subgroup and exponent $p$ in terms of group extension data and their matrix presentations. We show a one-to-one correspondence between the following two sets: (i) the isoclasses of
class 2 groups of exponent $p$ and order $p^{m+n}$ and with derived subgroup $\Z_p^n$, and (ii)
the set $\Gr(n,\Asym_m(\Z_p))/\GL_m(\Z_p)$ of orbits of $\Gr(n,\Asym_m(\Z_p))$ under the congruence action by $\GL_m(\Z_p)$, where $\Gr(n,\Asym_m(\Z_p))$ is the set of $n$-dimensional subspaces of anti-symmetric matrices of order $m$ over $\Z_p$.
We give a description of the orbit spaces $\Gr(2, \Asym_m(\Z_p))/\GL_m(\Z_p)$ for all $m$ and $p$ by applying the theory of pencils of anti-symmetric matrices. Based on this, we show complete sets of representatives of orbits of $\Gr(3,\Asym_4(\Z_3))/\GL_4(\Z_3)$, $\Gr(4, \Asym_4(\Z_3))/\GL_4(\Z_3)$ and $\Gr(3, \Asym_5(\Z_3))/\GL_5(\Z_3)$. As a consequence, we obtain a classification of corresponding class 2 groups of exponent $p$. In particular, we recover the classification of groups with exponent 3 and order $\le 3^8$.
\end{abstract}

\let\thefootnoteorig\thefootnote
\renewcommand{\thefootnote}{\empty}

\footnotetext{Keywords: group of prime exponent; group extension; cocycle of group; anti-symmetric matrix.}

\footnotetext{Mathematics Subject Classification (2010): 20D15, 20F25, 20J06, 15A21, 15A22} \let\thefootnote\thefootnoteorig


\section{Introduction}
Throughout all groups considered are finite. $G$ always denotes a group and its derived subgroup is denoted by $G'$; $p$ is a prime number, and $\mathbb{Z}_p=\Z/p\Z$ is a cyclic group of order $p$ as well as a finite field with $p$-elements.

To find all the groups of a given order is one of the oldest and core problems in the theory of groups which dates back to late nineteenth century. Cayley (1878) call it the \emph{general problem} of groups. Amounts of work has been done on this general problem since, and many problems in the theory of (finite) groups are closely related to it, among which the classification of finite simple groups
and the Burnside problem and its variants, if not the only, are the most influential ones.

The Burnside problem, originally posed by William Burnside in 1902, asks given positive integer $m, n$, whether and when the free Burnside group $B(m,n)$ is finite, where $B(m,n)$ is the group generated by $m$ generators $x_1, \cdots, x_m$ subject to relations $x^n$ for each word $x$ in $x_1, \cdots, x_m$. The full answer to this problem is not known yet, although P. Novikov, S. Adian,  S. Ivanov and I. Lys\"{e}nko have shown the infiniteness of $B(m,n)$ for $m>1$ and sufficiently large $n$. Another invariant of the Burnside problem, namely the restricted Burnside problem was posed in 1930s. It asks whether there exists an upper bound for the order of finite groups with $m$ generators and exponent $n$. Recall that a group $G$ is said to have exponent $n$ if $n$ is the least positive number such that $g^n=1$ for all $g\in G$. The answer to the restricted Burnside problem is affirmative. In 1958 A. Kostrikin gave a solution in case $n$ is a prime number, and later in 1989, E. Zelmanov for arbitrary $n$.

The general problem can be deduced to the one to find all groups of given order and given exponent. Although it is known that there are only finitely many isoclasses of groups with given number of generators and exponent, it is still far from having a full answer to the general problem. The simplest case of the general problem is to determine all groups of prime-power order, or more restricted, the ones with prime exponent. A group of order $p^k$ for some prime $p$ and $k\le 1$ is usually called a $p$-group. A finite group of exponent $p$ is clearly a $p$-group. We are interested in the following basic problem in the theory of $p$-groups.

\noindent {\bf Problem.}
Determine all non-isomorphic groups of exponent $p$.

The classification of $p$-groups has been studied by many authors, see \cite{new,vish,wil,lee1,lee2,lee3,lee4} and the references therein. Newman gave a good survey to early development of this problem, and he also showed several methods to produce $p$-groups from the ones of lower order \cite{new}. Vishnevetskii obtained a classification of class 2 groups of exponent $p$ with derived groups of order $p^2$ \cite{vish}. In fact, he completely classified the groups of this form which cannot be expressed as a central product of two proper subgroups. Wilkinson gave a list of the groups of exponent $p$ and order $p^7$ for all $p$ \cite{wil}. Recently, Vaughan-Lee \cite{lee2}  studied the groups of order $p^8$  and exponent $p$ based on the $p$-group generation algorithm as introduced in \cite{ob} and give the formula to  calculate the number of the groups of order $p^8$  and exponent $p$ $(p > 7)$.

All known methods to construct $p$-groups rely heavily on the fact that a $p$-group is nilpotent, so that it can be written as iterated extension by elementary $p$-groups or more specifically by $\Z_p$'s, the cyclic group with $p$-elements.

Let $H$ and $K$ be groups. Recall that an extension of $H$ by $K$ is by definition an exact sequence $1\to K\to G\to H\to 1$, and $K$ is called the kernel of the extension. To find all extensions of $H$ by $K$ for given $H$ and $K$ is called the extension problem. With the classification of finite simple groups, a full solution to the extension problem will give a full solution to the general problem of groups for every finite group has a composite series.

The extension problem is very hard in general. However, if the extension has abelian kernel, say $K$ is an abelian group, then the conjugate action of $G$ on $K$ induces an action of $H$ on $K$, and
the famous Schreier theorem says that the equivalence classes of extensions of $H$ by $K$ is in one-to-one correspondence with $H^2(H, K)$, the second cohomology group of $H$ with coefficients in $K$.
Thus to find all extensions of $H$ by $K$, it is equivalent to find all possible actions $\rho\colon H\to \Aut(K)$ and to calculate $H^2(H,K)$ for each $\rho$.

Now let $0\to A\to G\to H\to 1$ be an extension with abelian kernel, where $A$ is viewed as an additive group with the identity element denoted by 0. Let $\rho\colon H\to \Aut(A)$ be the homomorphism induced by the $H$-action on $A$, and $\varphi\in H^2(H, A)$ the corresponding cohomology class. Such a quadruple $(H, A, \rho, \varphi)$ is called a \emph{group extension datum}, and the extension $0\to A\to G\to H\to 1$ is called a \emph{realization} of the datum. Any two realizations of a given datum are isomorphic. Moreover, given a datum $(H, A, \rho, \varphi)$, there exists a canonical realization $0\to A\to A\rtimes_{\rho,\varphi}H\to H\to 1$, where the middle group $G(H, A, \rho, \varphi)=A\rtimes_{\rho,\varphi}H$ is a generalization of semiproduct and called the \emph{realizing group} of the datum.
Recall that by definition, two extensions with abelian kernel are equivalent if and only if the corresponding extension data are isomorphic. We refer to Section 3.2 for more detail.

Recall that $G$ is solvable if and only if it has a subnormal series with abelian factor groups, i.e.,  $$G\cong A_r\rtimes_{\rho_r,\varphi_r}(A_{r-1}\rtimes_{\rho_{r-1},\varphi_{r-1}}(\cdots (A_1\rtimes_{\rho_1,\varphi_1} A_0)\cdots))$$ for some abelian (or even cyclic) groups $A_0,  \cdots, A_r$, and some actions $\rho_1, \cdots, \rho_r$ and 2-cocycles $\varphi_1, \cdots, \varphi_r$.

It is highly nontrivial to classify all modules for a given group in general, even for abelian groups. Fortunately, all groups concerned in this work are nilpotent and we need only to consider trivial actions. In fact, $G$ is nilpotent if and only if  it is obtained by iterated central extensions, or equivalently, $$G\cong Z_s\rtimes_{\varphi_s}(Z_{s-1}\rtimes_{\varphi_{s-1}}(\cdots (Z_1\rtimes_{\varphi_1} Z_0)\cdots))$$ for some abelian (or even cyclic) groups $Z_0, \cdots, Z_{s}$ with trivial actions and some 2-cocycles $\varphi_1, \cdots\varphi_s$. Here we simply write $\rtimes_{1,\varphi}$ as $\rtimes_{\varphi}$. The price is that the length $s$ may be much bigger than the one $r$ if we allow nontrivial actions as above.

The calculation of the cohomology group $H^2(H, A)$ is another big problem. The cohomology group of finite abelian groups can be calculated by using the K\"unneth formula or the well known Lyndon-Hochschild-Serre spectral sequence \cite{lyn, hs}, while for nonabelian groups quite a few is known yet. Moreover, for our purpose we need to understand the structure (the group table, group presentation ect.) of the realizing group. Then only knowing the group structure of $H^2(H, A)$ is not enough, one needs an explicit formula for the 2-cocylces. To find the cocycle formula is far more difficult than the calculation of the cohomology group. In fact, for a finite abelian group $H$, the cohomology group $H^*(H, A)$ has been long known, while the cocycle formula just appeared very recently, see for instance \cite{hlyy,hwy}.

Another problem should be taken into account is that nonequivalent extensions can give isomorphic middle group, c.f. \cite[Exercise 7.40]{rot}. Thus we also need to answer the question when the middle groups of two nonequivalent extensions are isomorphic.

In this paper, we mainly deal with extensions of the form $0\to A\to G\to H\to 1$ such that $A$ and $H$ are both abelian groups and $G$ has prime exponent $p$. Note that $H$ is abelian if and only if $A\ge G'$, the derived subgroup of $G$. Clearly $G$ has exponent $p$ implies that both $A$ and $H$ have exponent $p$, and hence $A\cong \Z_p^n$ and $H\cong \Z_p^m$ for some $m, n\ge 0$. In this case, $A$ can be viewed as an $n$-dimensional representation of $H$ over $\Z_p$, or equivalently an $n$-dimensional module of the algebra $\Z_p[T_1, \cdots, T_m]/\langle T_1^p, \cdots, T_m^p\rangle$, and hence the representation theory of finite dimensional algebra may apply. Moreover, since $H$ is a finite abelian group, the cocycle formula obtained in \cite{hlyy} enables the calculation of the structure of the middle group $G$.

First we consider the problem when two group extension datum have isomorphic realizing group. By introducing an equivalence relation on the set of group extension data, we can give an answer to this problem: under certain mild condition, two group extension data are equivalent if and only if their realizing groups are isomorphic.

\begin{proposition}[Proposition \ref{prop-equidata-isogp} and Corollary \ref{cor-equidata-isogp}] Let $\D = (H, A, \rho, \varphi)$ and $\tilde{D}=(\tilde H,\tilde A, \tilde\rho, \tilde\varphi)$ be two data, and let $G=A\rtimes_{\rho, \varphi}H$ and $\tilde G=\tilde A\rtimes_{\tilde \rho, \tilde\varphi} \tilde H$. Assume that one of the following holds:
\begin{enumerate} \item $A=G'$ and $\tilde A=\tilde G'$;
\item $A= Z(G)$ and $\tilde A= Z(\tilde G)$.
\end{enumerate}
Then $G$ and $\tilde{G}$ are isomorphic if and only if $\mathcal D$ and $\tilde {\mathcal D}$ are equivalent.
\end{proposition}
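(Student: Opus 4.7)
The plan is to establish the two implications separately, with one direction being essentially definitional and the other relying on the characteristic nature of $G'$ (resp.\ $Z(G)$).

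For the ``only if'' direction (equivalent data give isomorphic realizing groups), I would appeal directly to the definition of equivalence of data. Any equivalence consists of compatible isomorphisms $\alpha\colon A\to \tilde A$ and $\beta\colon H\to \tilde H$ intertwining $(\rho,\varphi)$ with $(\tilde\rho,\tilde\varphi)$ (the latter up to a coboundary). Feeding this into the explicit multiplication formula for $A\rtimes_{\rho,\varphi}H$, I would write down a candidate map $G\to \tilde G$ using $\alpha$, $\beta$ and the 1-cochain witnessing the coboundary, and verify multiplicativity. This is a direct calculation; I expect no subtleties beyond bookkeeping.

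For the ``if'' direction I would start from a group isomorphism $f\colon G\to \tilde G$. The crucial observation is that under either hypothesis, the kernel $A$ is a \emph{characteristic} subgroup of $G$: the derived subgroup and the center are both invariant under every automorphism, and in particular $f(A)=\tilde A$. Hence $f$ restricts to an isomorphism $\alpha=f|_A\colon A\to \tilde A$ and descends to an isomorphism $\beta\colon H=G/A\to \tilde G/\tilde A=\tilde H$. Compatibility of $\alpha$ and $\beta$ with the actions $\rho$ and $\tilde\rho$ comes for free from the fact that the $H$-action on $A$ is, by construction, induced by conjugation inside $G$, and $f$ preserves conjugation.

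The main technical step, and I expect the principal obstacle, is then to check that the pair $(\alpha,\beta)$ also matches the cohomology classes $\varphi$ and $\tilde\varphi$. Here I would choose set-theoretic sections $s\colon H\to G$ and $\tilde s\colon \tilde H\to \tilde G$ of the two projections; these give explicit cocycle representatives of $\varphi$ and $\tilde\varphi$. The map $f$ need not satisfy $f\circ s=\tilde s\circ \beta$ exactly, but the discrepancy $(f\circ s)(h)\cdot ((\tilde s\circ\beta)(h))^{-1}$ lands in $\tilde A$ and defines a 1-cochain $\eta\colon H\to \tilde A$. A direct computation, using the cocycle identity and the definition of multiplication in both realizing groups, shows that $\beta^*\tilde\varphi-\alpha_*\varphi=d\eta$, which is exactly the relation required in the equivalence of data. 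Combining this with the action compatibility completes the construction of an equivalence $\D\simeq\tilde\D$, and the proposition follows. The hypothesis that $A$ is characteristic is used only to guarantee $f(A)=\tilde A$; without it one has no induced map $\beta$ on the quotients, which is why the two listed conditions cannot be dropped.
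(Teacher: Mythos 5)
Your proof is correct and follows essentially the same route as the paper: the paper reduces the statement to Proposition \ref{prop-equidata-isogp} (whose proof it omits as straightforward, and which is exactly your section/1-cochain computation showing the transported cocycles differ by the coboundary $d\eta$, with action compatibility coming from conjugation) and then obtains the corollary by observing, as you do, that $G'$ and $Z(G)$ are preserved by any isomorphism, so $f(A)=\tilde A$. No gaps; your remark that the listed hypotheses serve only to force $f(A)=\tilde A$ matches the paper's own discussion.
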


We describe the derived subgroup $(A\rtimes_{\rho,\varphi}H)'$, and consequently we obtain an equivalent condition for a group extension datum satisfying $A=(A\rtimes_{\rho,\varphi}H)'$, cf. Proposition \ref{prop-der-abext} and Corollary \ref{cor-der-abext}. Moreover, by using the cocycle formula for abelian groups, we give criteria for a realizing group having exponent $p$, see Theorem \ref{thm-expp} and Corollary \ref{cor-expp2}. Thus combined with the above proposition, the isoclasses of groups of exponent $p$ whose and with abelian derived subgroup are in one-to-one correspondence with the equivalence classes of certain group extension data.

It is well known that the nilpotency classs of a group of exponent $p$ is no greater than $p$ in case $p=3$, while the statement is not true for general $p$, cf. Section 2.2. However, the statement still holds true for groups which can be expressed as extension of two elementary $p$-groups, cf. Proposition \ref{prop-cn-mgp}.


One of the main purposes of this paper is to introduce the notion of matrix presentations for group extension data of the form $(\Z_p^m, \Z_p^n, \rho, \varphi)$, so that we can restate the classification problem of groups of exponent $p$ as a problem in linear algebra over the field $\Z_p$. Note that one can read a presentation of the realizing group of a data from its matrix presentation, cf. Proposition \ref{prop-pres-from-mrep}. In particular, we establish a connection between isoclasses of class 2 groups of exponent $p$
and the orbits of certain Grassmannian of anti-symmetric matrices under the congruence action.

\begin{theorem}[cf. Theorem \ref{thm-data-1to1-grorbits}] Let $m, n\ge 1$ and $0\le d\le n$ be integers. Then there exist one-to-one correspondences between:
\begin{enumerate}
\item $\mathcal E_p(d;m,n)$ and $\Gr(d, \Asym_m(\Z_p))/\GL_m(\Z_p)$;
\item $\mathcal E_p(m,n)$ and $\Gr(\le n, \Asym_m(\Z_p))/\GL_m(\Z_p)$;
\item $\mathcal G_2(p;m,n)$ and $\Gr(n, \Asym_m(\Z_p))/\GL_m(\Z_p)$.
\end{enumerate}
\end{theorem}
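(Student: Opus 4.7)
The plan is to build an explicit matrix presentation for extension data $(\Z_p^m, \Z_p^n, \rho, \varphi)$ in $\mathcal E_p(m,n)$, then match the equivalence relation on data with the $\GL_m(\Z_p) \times \GL_n(\Z_p)$-action on tuples of anti-symmetric matrices; parts (1)--(3) will then fall out by tracking the rank of the associated linear map. Concretely, for such a datum, the exponent-$p$ and class-$2$ hypotheses force $A$ to be central in $G(\D)$, so $\rho$ is trivial. Combining the exponent-$p$ criterion of Theorem~\ref{thm-expp} with the cocycle formula of \cite{hlyy}, one shows that $\varphi$ is equivalent modulo coboundaries to its anti-symmetric part, an alternating bilinear map $\beta_\varphi \colon \Z_p^m \times \Z_p^m \to \Z_p^n$. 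Fixing the standard bases, $\beta_\varphi$ is encoded as a linear map $f_\varphi \colon \Z_p^n \to \Asym_m(\Z_p)$, equivalently an ordered $n$-tuple $(M_1,\dots,M_n)$ of anti-symmetric $m \times m$ matrices.

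Next I would translate equivalence of data into a group action at the matrix level. An isomorphism of data is given by invertible $P \in \GL_m(\Z_p)$ on $H$ and $Q \in \GL_n(\Z_p)$ on $A$ matching the cocycles up to coboundary. In matrix form, $P$ acts on each $M_i$ by congruence, $M_i \mapsto P^\top M_i P$, while $Q$ replaces $f_\varphi$ by $Q \circ f_\varphi$, i.e.\ takes $\Z_p$-linear combinations of the $M_i$'s. Since $\Z_p^n$ is free, two linear maps $\Z_p^n \to \Asym_m(\Z_p)$ lie in the same $\GL_n$-orbit if and only if they have the same image, so quotienting by $\GL_n$ first identifies an orbit of tuples with the subspace it spans. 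Consequently the orbit space $\Asym_m(\Z_p)^n / (\GL_m \times \GL_n)$ is in natural bijection with $\GL_m$-orbits on $\Gr(\le n, \Asym_m(\Z_p))$, where $\GL_m$ acts on subspaces by simultaneous congruence.

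Part~(1) then follows by restricting to data with $\dim \mathrm{im}(f_\varphi) = d$, which is a $\GL_m \times \GL_n$-invariant; part~(2) follows by summing over $0 \le d \le n$. For part~(3), I would apply Proposition~\ref{prop-equidata-isogp} in the case $A = G'$ to identify $\mathcal G_2(p;m,n)$ with equivalence classes of data satisfying $A = G(\D)'$. By the description of $(A \rtimes_{\rho,\varphi} H)'$ in Proposition~\ref{prop-der-abext} and Corollary~\ref{cor-der-abext}, this is equivalent to surjectivity of $f_\varphi$, i.e.\ $\dim \mathrm{im}(f_\varphi) = n$, giving (3) as the $d = n$ case of (1).

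The main obstacle, in my view, is the reduction in the first paragraph: verifying that the assignment $\varphi \mapsto \beta_\varphi$ descends to a well-defined bijection between equivalence classes of exponent-$p$ cocycles modulo coboundary and alternating bilinear forms. This needs the explicit cocycle formula of \cite{hlyy} to isolate and eliminate the symmetric part of $\varphi$, using the exponent-$p$ condition for $p$ odd; the case $p = 2$ is harmless since an exponent-$2$ group is abelian and the statement degenerates (forcing $n = 0$).
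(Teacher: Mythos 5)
Your proposal is correct and follows essentially the same route as the paper: pass to the matrix presentation (trivial action, anti-symmetrization of the cocycle via the formula of \cite{hlyy}, coboundaries dying in the anti-symmetric part), identify equivalence of data with the $\GL_m(\Z_p)\times\GL_n(\Z_p)$-action on $n$-tuples of anti-symmetric matrices, and quotient by $\GL_n$ to replace the tuple by its span, which is exactly the paper's map $\Theta$ together with its well-definedness, surjectivity and injectivity checks via Corollary \ref{cor-mr-ta-bc} and Proposition \ref{prop-mr-eq}; part (3) is handled, as in the paper, through the $A=G(\D)'$ criterion (Corollary \ref{cor-der-abext}) and the equivalence-iff-isomorphism statement (Corollary \ref{cor-equidata-isogp}, i.e.\ Lemma \ref{lem-class2todata}). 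Your packaging of the $\GL_n$-quotient as the statement that two linear maps $\Z_p^n\to\Asym_m(\Z_p)$ are in the same $\GL_n$-orbit iff they have the same image is a clean way of phrasing what the paper verifies directly.
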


Note that $\mathcal G_2(p;m,n)$ denotes the set of isoclasses of class 2 groups of order $p^{m+n}$ and exponent $p$ with derived subgroup isomorphic to $\Z_p^n$; $\mathcal E_p(m, n)$ denotes the equivalence classes of data of the form $\D=(\Z_p^m, \Z_p^n, 1, \varphi)$ with $G(\D)$ of exponent $p$; and $\mathcal E_p(d; m, n)$ denotes the subclasses with $(G(\D))' = \Z_p^d$ for $0\le d\le m$.

Let $M_m(\Z_p)$ be the space of square matrices of order $m$ over $\Z_p$, and $\Asym_m(\Z_p)$ be the subspace consisting of  anti-symmetric ones. Let $\Gr(d, \Asym_m(\Z_p))$ and $\Gr(\le d, \Asym_m(\Z_p))$ denote respectively the set of $d$-dimensional subspaces and the set of subspaces of dimension less than or equal to $d$. Then
the congruence action of $\GL_m(\Z_p)$ on $\Asym_m(\Z_p)$ induces an action of $\GL_m(\Z_p)$ on $\Gr(d, \Asym_m(\Z_p))$ as well as on $\Gr(\le d, \Asym_m(\Z_p))$. We use $\Gr(d, \Asym_m(\Z_p))/\GL_m(\Z_p)$ and $\Gr(\le d, \Asym_m(\Z_p))/\GL_m(\Z_p)$ to denote the orbit spaces respectively.

By the above theorem, to find class 2 groups of exponent $p$ is equivalent to the problem to find a complete set of representatives of the orbits of  $\Gr(d, \Asym_m(\Z_p))$ under the congruence action of $\GL_m(\Z_p)$. The latter may have its own interest from different perspectives.

In fact, if $d=1$, then it is equivalent to find the normal form of a nonzero $m\times m$ anti-symmetric matrix under the congruence action. The answer has been long known: any such a matrix is congruent to a unique matrix of the form
 \[\diag\left(\begin{pmatrix}
                                          0 & 1 \\
                                          -1 & 0 \\
                                        \end{pmatrix},
                                        \cdots,
                                         \begin{pmatrix}
                                           0 & 1 \\
                                           -1 & 0 \\
                                         \end{pmatrix},
                                         0, 0, \cdots, 0
\right). \]

If $d=2$, then the problem is closely related but not equivalent to the problem to find the canonical form of pencils of anti-symmetric matrices. The canonical form of pencils of anti-symmetric matrices is well-known, see for instance \cite[Chap XII]{gan}. However, in our case we need further to consider the $\GL_2(\Z_p)$ action on the set of canonical forms of pencils.

In the case $d\ge 3$, no results are known to our knowledge.

We can give a description of $\Gr(n, \Asym_m(\Z_p))/\GL_m(\Z_p)$ for $n=1, 2$, and hence obtain a classification of $\mathcal G_2(p;m,1)$ and $\mathcal G_2(p;m,2)$.

\begin{theorem}(1)[cf. Theorem \ref{thm-1dim-dergp}]
The set $\{W_{m,k}\mid 1\le k\le \frac m 2\}$ gives a complete set of representatives of $\Gr(1, \Asym_m(\Z_p))/\GL_m(\Z_p)$, and $\{G_{m,k}\mid 1\le k\le \frac m 2\}$ gives a complete set of representatives of $\mathcal G_2(p;m,1)$.

(2) [cf. Theorem \ref{thm-2dim-dergp}] There exists a bijection between $\Gr(2, \Asym_m(\k))/\GL_m(\k)$ and the set of equivalence classes of canonical forms of pencils of anti-symmetric matrices. Consequently, $\mathcal G_2(p; m, 2)$ is in one-to-one correspondence with the equivalence classes of canonical forms.
\end{theorem}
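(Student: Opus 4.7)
For part (1), the plan is to exploit the classical normal form theorem for anti-symmetric matrices over any field of odd characteristic: every $A\in \Asym_m(\Z_p)$ is congruent to a unique block-diagonal matrix of the form $W_{m,k}$ with $k=\rank(A)/2$ blocks $\begin{pmatrix}0&1\\-1&0\end{pmatrix}$ and zeros elsewhere. I would first observe that a 1-dimensional subspace $\langle A\rangle\in \Gr(1,\Asym_m(\Z_p))$ is determined by $A$ only up to a nonzero scalar, so the invariant attached to the orbit $\GL_m(\Z_p)\cdot\langle A\rangle$ is the rank $2k$ of any representative (preserved by both congruence and scaling). Conversely, if $\rank(A)=\rank(A')=2k$, the normal form theorem yields $P,P'\in \GL_m(\Z_p)$ with $P^T A P=W_{m,k}=(P')^T A' P'$, so $\langle A\rangle$ and $\langle A'\rangle$ lie in the same $\GL_m(\Z_p)$-orbit. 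This proves that $\{W_{m,k}\mid 1\le k\le m/2\}$ is a complete set of orbit representatives. The corresponding list of groups $\{G_{m,k}\}$ in $\mathcal G_2(p;m,1)$ follows by transporting this result through the bijection in Theorem \ref{thm-data-1to1-grorbits}(3) with $n=1$, identifying $G_{m,k}$ as the realizing group of the datum associated to $W_{m,k}$.

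For part (2), the strategy is to reformulate 2-dimensional subspaces as pencils of anti-symmetric matrices modulo congruence and basis change, and then to apply Gantmacher's pencil theory. A choice of ordered basis $(A,B)$ of $V\in \Gr(2,\Asym_m(\k))$ yields a pencil $\lambda A+\mu B$, and two pairs determine the same $V$ if and only if they are related by an element of $\GL_2(\k)$ acting on $(A,B)$ from the right. The congruence action of $\GL_m(\k)$ on such pairs is by definition the strict equivalence of pencils. These two actions commute, so the double quotient of the space of anti-symmetric pencils by $\GL_m(\k)\times \GL_2(\k)$ is in natural bijection with $\Gr(2,\Asym_m(\k))/\GL_m(\k)$. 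I would then invoke Gantmacher's canonical form for pencils of anti-symmetric matrices \cite[Chap XII]{gan}, which produces a unique representative of each strict equivalence class, and observe that $\GL_2(\k)$ acts on the resulting set of canonical forms; the equivalence classes under this residual action are the desired parameter set. The second assertion on $\mathcal G_2(p;m,2)$ then follows by composing this bijection with Theorem \ref{thm-data-1to1-grorbits}(3) for $n=2$.

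The main obstacle is describing the $\GL_2(\k)$-action on Gantmacher's canonical forms. A change of basis in $\GL_2(\k)$ can both permute the indecomposable blocks of a canonical pencil and act nontrivially on the parameters inside each block (for instance on the eigenvalues appearing in the regular part). Describing these residual equivalences precisely over small finite fields, and producing explicit orbit representatives, is the linear-algebra core of the later sections; at the level of the present theorem, I need only establish the abstract bijection of orbit sets.
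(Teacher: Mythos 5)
Part (1) of your proposal is correct and is essentially the paper's own argument: reduce a line $\langle A\rangle$ to the congruence normal form of a single anti-symmetric matrix, note that rank is the complete invariant, and transport the result to groups via Proposition \ref{prop-pres-from-mrep} and Theorem \ref{thm-data-1to1-grorbits}.

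For part (2) there is a genuine gap. Your abstract double-quotient argument (ordered bases of $V$ modulo the right $\GL_2(\k)$-action, congruence of pairs as the $\GL_m(\k)$-action, the two actions commuting) gives a bijection between $\Gr(2,\Asym_m(\k))/\GL_m(\k)$ and the set of $\GL_2(\k)$-orbits of congruence classes of pairs — but that is not yet the statement of Theorem \ref{thm-2dim-dergp}. The theorem asserts a bijection with the paper's explicitly \emph{defined} equivalence classes of canonical forms, i.e.\ types $(m;k_1,\dots,k_r;f_1^{d_1},\dots,f_s^{d_s})$ identified when the minimal indices $k_i$ agree and the elementary divisors match up to a permutation and a substitution $f_j\mapsto P\cdot f_j$ (up to scalar). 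Identifying the residual $\GL_2$-action with this combinatorial relation is precisely the content of the paper's proof, and it is exactly the step you defer: one reduces to a single canonical block (legitimate because the canonical form of a direct sum is the direct sum of canonical forms, by uniqueness) and checks directly that $(aA(k)+bB(k),\,cA(k)+dB(k))$ has canonical form $(A(k),B(k))$ — so the singular part is untouched by base change — while $(aA(f^d)+bB(f^d),\,cA(f^d)+dB(f^d))$ has canonical form $(A((P\cdot f)^d),B((P\cdot f)^d))$. Without this computation you have a bijection onto an unidentified orbit set, the ``Consequently'' clause about $\mathcal G_2(p;m,2)$ is not established in the stated form, and the explicit classifications in Sections 6.2--6.5 (which repeatedly use, e.g., transitivity of $\GL_2(\Z_3)$ on $\Irr(\Z_3[x,y]_k)/\{\pm 1\}$) have nothing to rest on. A minor additional point: the relevant canonical form here is for pairs of anti-symmetric matrices under \emph{congruence} (Scharlau \cite{schar}), not merely strict equivalence of pencils as in \cite{gan}; the two notions do coincide for skew pencils, but since your $\GL_m$-action is congruence, that identification should be made explicit rather than taken as a definition.
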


We refer to Section 5.3 and 6.1 for unexplained notions.
We mention that in the above theorem, (1) should be well known to experts; and (2) generalizes to some extend the result obtained by  Vishnevetskii in \cite{vish}, which provides a classification of groups in $\mathcal G_2(2; m, n)$ that can not be expressed as a central product of proper subgroups.

\begin{theorem}[cf. Theorem \ref{thm-3of4}, Theorem \ref{thm-4of4}, and Theorem \ref{thm-3of5}] We obtain a complete set of representatives of the orbits of $\Gr(3, \Asym_4(\Z_3))/\GL_4(\Z_3)$, $\Gr(4, \Asym_4(\Z_3))/\GL_4(\Z_3)$ and $\Gr(3, \Asym_5(\Z_3))/\GL_5(\Z_3)$.
\end{theorem}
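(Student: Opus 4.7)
The plan is to proceed case by case, using the pencil classification of Theorem~\ref{thm-2dim-dergp} as a building block.

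For $\Gr(4,\Asym_4(\Z_3))/\GL_4(\Z_3)$, I would first exploit the non-degenerate bilinear form on $\Asym_m(\Z_p)$ given by $\langle A,B\rangle=\operatorname{tr}(AB)$, which is non-degenerate whenever $p$ is odd (one checks this directly on the standard basis $E_{ij}-E_{ji}$ with $i<j$). Under the congruence action $A\mapsto P^{\top}AP$, the induced action on annihilators is congruence via $(P^{\top})^{-1}$, which ranges over $\GL_m(\Z_3)$ as $P$ does. Hence $W\mapsto W^{\perp}$ is a $\GL_4(\Z_3)$-equivariant bijection
$$\Gr(4,\Asym_4(\Z_3))/\GL_4(\Z_3)\;\longleftrightarrow\;\Gr(2,\Asym_4(\Z_3))/\GL_4(\Z_3),$$
and the right-hand side is already classified by Theorem~\ref{thm-2dim-dergp}. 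Thus this case reduces to reading off the $m=4$ pencils and translating back via annihilators.

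For $\Gr(3,\Asym_4(\Z_3))/\GL_4(\Z_3)$ and $\Gr(3,\Asym_5(\Z_3))/\GL_5(\Z_3)$ I would use a ``pencil plus one extra generator'' strategy. Every $3$-dimensional subspace $W$ contains a $2$-dimensional pencil $P$; after a congruence we may assume $P$ is one of the canonical forms from the pencil classification. Enumerating extensions of $P$ inside $W$ up to the ambient $\GL_m(\Z_3)$-action then amounts to computing the set-wise congruence stabilizer
$$\operatorname{Stab}(P)=\{Q\in\GL_m(\Z_3)\mid Q^{\top}AQ\in P\text{ for all }A\in P\},$$
and enumerating orbit representatives of the residual $\operatorname{Stab}(P)$-action on the finite quotient $\Asym_m(\Z_3)/P$, with the additional freedom to rescale the third generator and to add an arbitrary element of $P$.

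The main obstacle will be the final identification step: a given $W$ typically contains several inequivalent pencils, so two pairs $(P,C)$ and $(P',C')$ starting from different canonical pencils can produce congruent $W$'s. To detect and remove this redundancy I would use invariants that depend only on $W$, e.g.\ the multiset of pencil-types realized by the $2$-dimensional sub-subspaces of $W$, or the dimensions of simultaneous kernels $\bigcap_i\ker(A_i)$ of generating triples. Since $\Z_3$ is finite, all case analyses are finite; for $m=4$ the ambient $\Asym_4(\Z_3)$ is only $6$-dimensional and the bookkeeping is short, whereas for $\Gr(3,\Asym_5(\Z_3))$ the ambient is $10$-dimensional, the stabilizers are larger, and the orbit enumeration is substantially longer. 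The resulting finite matrix calculations then produce the explicit representatives listed in Theorems~\ref{thm-3of4}, \ref{thm-4of4}, and~\ref{thm-3of5}.
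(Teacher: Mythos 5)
Your plan coincides with the paper's method for the two rank-$3$ Grassmannians and differs only for $\Gr(4,\Asym_4(\Z_3))$. For Theorem \ref{thm-4of4} the paper argues directly: it extends each of the six $3$-dimensional representatives of Theorem \ref{thm-3of4} by a fourth generator, reduces by elementary congruences, and separates the four survivors by their Pfaffian quadratic forms; the trace-form duality you propose appears there only as a closing remark. Your duality route is nevertheless correct — $\operatorname{tr}(XY)$ is nondegenerate on $\Asym_4(\Z_3)$ for odd $p$, and $(PVP^T)^{\perp}=(P^{-1})^{T}V^{\perp}P^{-1}$, so $V\mapsto V^{\perp}$ is an orbit bijection $\Gr(4,\Asym_4)/\GL_4\leftrightarrow\Gr(2,\Asym_4)/\GL_4$ — and it buys you the orbit count and representatives directly from the pencil classification of Theorem \ref{thm-2dim-dergp} (Section 6.3), at the modest cost of computing the four orthogonal complements explicitly; this is genuinely slicker than the paper's case analysis. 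For $\Gr(3,\Asym_4)$ and $\Gr(3,\Asym_5)$ the paper does exactly your ``pencil plus one generator'' scheme: in $\Asym_4$ it runs over the four canonical pencils and separates the six classes by the Pfaffian form and the radical; in $\Asym_5$ it first splits off subspaces with nontrivial radical (Lemma \ref{lem-dimincreasing} reduces these to $\Asym_4$), then extends the two relevant pencils, and the residual stabilizer action you describe is realized concretely via the embedding $X\mapsto\hat X$ of $\GL_2(\Z_3)$ preserving the all-rank-$4$ pencil (Remark \ref{rem-E-stabilizer}). Two caveats about the part you defer: the cross-pencil redundancy you anticipate is real and is resolved in the paper by explicit congruence matrices (Remark \ref{rem-eqeg-5case1}), and your proposed invariants (pencil-type multisets, common kernels) are not always enough — the paper separates classes using Pfaffians of principal $4\times4$ minors, the count of elements of rank $<4$, isotropic subspaces and radicals of $2$-dimensional subspaces, and for one pair ($V_{1.1.1}$ versus $V_{1.3.1}$ in Proposition \ref{prop-noneq-5case1}) it needs a bespoke no-congruence argument through the stabilizer, which your framework accommodates but which you would still have to carry out; the theorem itself is established only once this finite (and, for $\Asym_5$, lengthy) computation is done.
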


The proof is given by traditional ``hand" calculation. Consequently, we obtain a classification of groups of exponent $3$ and of order up to $3^8$, which recovers the results obtained in \cite{wil}  and \cite{lee2} in case $p=3$, cf. Theorem \ref{thm-ord1to7} and Theorem \ref{thm-ord8}.
Note that we use different methods here. It is not known to us whether our method can be used to improve the existing $p$-groups generation algorithms.

The paper is organized as follows.

In Section 2, we recall some basics on groups and modules. In Section 3, we introduce the notion of group extension datum, which is a reformulation of extensions of group with abelian kernel. We show that under some mild assumption, two group extension data are equivalent if and only if the realizing group are isomorphic. We also discuss the derived subgroup and center of the realizing group of an abelin datum.

In Section 4, by using the cocycle formula for finite abelian groups, we show a criterion for the realizing group of an abelian datum having exponent $p$. We also show when the derived subgroup of the realizing group of an abelian extension is equal to the kernel. We give a bound to the nilpotency class of the realizing group of a $p$-elementary data.

In Section 5, we introduce matrix presentations for a $p$-elementary data, and show that when two matrix presentations give equivalent data. As a consequence, we show that class 2 groups of exponent $p$ are in one-to-one correspondence with congruence classes of subspaces of anti-symmetric matrices over $\Z_p$.

In Section 6, we give a description of the congruences classes of 2-dimensional subspaces of $\Asym_m(\Z_p)$. Consequently we can find all groups of exponent $p$ and with derived subgroup of rank 2. As an easy consequence, we give a complete set of groups of order $3^{m}$ and exponent $3$ whose derived subgroup is isomorphic to $\Z_3^2$ for $m\le 8$. Such groups are shown to have nilpotency class 2.

In Section 7 and Section 8, we give a complete set of representatives of isoclasses of groups of exponent $3$ and order $\le 3^8$.

In the last two sections we calculate the orbits of $\Gr(n, \Asym_m(\Z_3))/\GL_m(\Z_3)$ for small $m, n$. More precisely, we give representatives of all congruence classes of 3 and 4-dimensional subspaces of $\Asym_4(\Z_3)$ in Section 9, and the ones of 3-dimensional subspaces of $\Asym_5(\Z_3)$ in Section 10.

\section{Basics on groups and modules}
We recall some basic facts on finite groups and modules in this section.

\subsection{Nilpotent group and nilpotency class} Let $G$ be a group.
For elements $g, h\in G$, by the \emph{commutator} of $g$ and $h$ we mean the element $[g,h]=ghg^{-1}h^{-1}$. Clearly $[g,h]=1$ if and only if $gh=hg$. For subsets $X, Y\subset G$, we use $[X,Y]$ to denote the subgroup of $G$ generated by elements of the form $[g,h]$ with $g\in X$ and $h\in Y$. The subgroup $[G,G]$ is called the \emph{derived subgroup} (or the commutator subgroup) of $G$, and denoted by $G'$ or $G^{(1)}$. We may also define the $n$-th derived subgroup $G^{(n)}=(G^{(n-1)})'$ of $G$ inductively. The following well-known lemma explains the importance of the derived subgroup.

\begin{lemma} Let $G$ be a group. Then $G'$ is a normal subgroup; and for any normal subgroup $N\trianglelefteq G$, the quotient group $G/N$ is abelian if and only if $G'\le N$.
\end{lemma}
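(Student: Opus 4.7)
The plan is to prove the two assertions separately, beginning with normality of $G'$. First I would observe that for any $x,g,h\in G$, the conjugation formula
\[x[g,h]x^{-1} = [xgx^{-1}, xhx^{-1}]\]
holds by direct expansion, so conjugation by $x$ permutes the set of commutators. Since $G'$ is generated by commutators and conjugation by $x$ is a group automorphism of $G$, it follows that $xG'x^{-1}\subseteq G'$ for every $x\in G$, hence $G'\trianglelefteq G$.

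For the second assertion, I would prove the two implications independently, using only the definition of the quotient group $G/N$, which makes sense for any normal $N\trianglelefteq G$. For the forward direction, assuming $G/N$ is abelian, I would pick arbitrary $g,h\in G$ and note that $(gN)(hN)=(hN)(gN)$ gives $ghN = hgN$, which translates to $[g,h]=ghg^{-1}h^{-1}\in N$. Since this holds for every commutator and $G'$ is generated by commutators, $G'\le N$.

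For the converse, assuming $G'\le N$, I would pick arbitrary cosets $gN,hN\in G/N$ and compute $[gN,hN]=[g,h]N$, which lies in $N$ since $[g,h]\in G'\le N$. Thus every commutator in $G/N$ is trivial, so $G/N$ is abelian.

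There is no real obstacle here; the entire statement is classical and the only ingredient beyond elementary manipulation is the identity $x[g,h]x^{-1}=[xgx^{-1},xhx^{-1}]$, which is verified by writing out both sides. The proof should be written concisely, since the lemma is used only to motivate the definition of $G'$ and to fix notation for later sections.
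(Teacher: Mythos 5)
Your proof is correct and complete: the identity $x[g,h]x^{-1}=[xgx^{-1},xhx^{-1}]$ gives normality of $G'$, and both directions of the equivalence follow exactly as you describe. The paper states this lemma as a well-known fact and omits any proof, so there is nothing to compare against; your argument is the standard classical one that is implicitly intended.
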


Recall that the \emph{lower central series} of $G$ is the descending series of subgroups
\[  G=G_1\trianglerighteq G_2\trianglerighteq \cdots G_n\trianglerighteq\cdots,
\]
where each $G_{n+1}=[G_n,G]$. By definition, $G$ is nilpotent if and only if the lower central series terminates, i.e., $G_n=1$ for some $n$. The following notion of nilpotency class measures the nilpotency of a group, see for instance Definition 3.10 in \cite{khu}.

\begin{definition}
If a group $G$ satisfies $G_{c+1}=1$, then we say that $G$ is \emph{nilpotent} of class $\le c$; the least such number $c$ is called the \emph{nilpotency class} of $G$.
\end{definition}

\begin{remark}
(1) There are other equivalent definitions for nilpotent groups by using upper cental series or any other central series.

(2) If a group $G$ has nilpotency class $\le c$, then it is sometimes called a \emph{nil-$c$ group}.
\end{remark}

We collect some well-known facts on groups of lower nilpotency class without proof.

\begin{lemma}\label{lem-facts-nilclass} Let $G$ be a finite nilpotent group. Then
\begin{enumerate}
\item[(0)]  $G$ has nilpotency class $0$ if and only if $G$ is the trivial group.
\item[(1)]  $G$ has nilpotency class $\le 1$ if and only if $G$ is an abelian group.
\item[(2)]  $G$ has nilpotency class $\le 2$ if and only if the derived subgroup $G'$ is contained in the center of $G$.
\item[(3)]  If $G$ has nilpotency class $3$, then $G'$ is commutative.
\end{enumerate}
\end{lemma}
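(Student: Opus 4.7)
The plan is to prove each part by unwinding the definition of the lower central series $G_1 = G$, $G_{n+1} = [G_n, G]$, together with the equivalence that $G$ has nilpotency class $\le c$ if and only if $G_{c+1} = 1$.

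Parts (0) and (1) are immediate from the definition. For (0), class $0$ means $G_1 = G = 1$. For (1), $G_2 = [G,G] = G'$, so class $\le 1$ amounts to $G' = 1$, i.e., $G$ is abelian. Part (2) then follows by observing $G_3 = [G_2, G] = [G', G]$; the vanishing $G_3 = 1$ says exactly that every commutator $[g', g]$ with $g' \in G'$ and $g \in G$ is trivial, which is precisely the condition $G' \subseteq Z(G)$.

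The main work lies in part (3). Here I would invoke the three subgroups lemma, a standard consequence of the Hall--Witt identity: if $N \trianglelefteq G$ contains both $[[A,B],C]$ and $[[B,C],A]$ for subgroups $A, B, C \le G$, then $N$ also contains $[[C,A],B]$. Applying this with $A = C = G$, $B = G'$, and $N = G_4$, both hypotheses reduce to $[G_3, G] = G_4 \subseteq N$, which is trivial. The conclusion then gives $[[G,G], G'] = [G', G'] = G'' \subseteq G_4$. Since $G$ is assumed to have nilpotency class $3$, we have $G_4 = 1$, hence $G'' = 1$ and $G'$ is abelian.

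The principal obstacle is part (3): whereas (0)--(2) are pure definition chasing, the inclusion $G'' \subseteq G_4$ is not visible directly from the lower central series, and requires the three subgroups lemma (together with the symmetry $[G,G'] = [G',G]$) to identify $G''$ with a triple commutator lying inside $G_4$. Once that lemma is at hand, however, the argument is a single substitution.
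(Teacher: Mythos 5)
Your proof is correct. Note that the paper offers no argument for this lemma at all: it is introduced with ``we collect some well-known facts \dots without proof,'' so there is no in-paper proof to compare against. Your treatment of (0)--(2) is exactly the expected definition chasing with the lower central series, and your part (3) is sound: with $A=C=G$, $B=G'$, $N=G_4$ (normal, being a term of the lower central series), both hypotheses of the three subgroups lemma reduce to $[G_3,G]=G_4\subseteq G_4$, giving $G''=[[G,G],G']\subseteq G_4=1$. The only point worth flagging is that the inclusion you establish, $G''\le G_4$, is the special case $[G_2,G_2]\le G_4$ of the standard fact $[G_i,G_j]\le G_{i+j}$, which is precisely what the paper itself later invokes (citing Huppert, Satz III.2.8) in the proof of Corollary \ref{cor-exp3-dersubgp}; your three-subgroups-lemma derivation is the standard proof of that fact, so your argument is self-contained where the paper defers to the literature.
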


\subsection{Exponent of a group} The \emph{exponent} of a group $G$ is defined to be the least common multiple of the orders of all elements. If there is no least common multiple, the exponent is defined to be infinity. For finite groups, the exponent is a devisor of the order of the group and hence finite. Clearly, abelian groups of prime exponent $p$ are exactly the elementary $p$-groups.

We recall the following well-known result, which can be directly deduced from Theorem 6.5 and Theorem 6.6 in \cite[III]{HU}.

\begin{lemma} Any finite group of exponent $3$ is nilpotent of nilpotency class at most $3$.
\end{lemma}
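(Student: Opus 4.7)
The plan is to split the proof into two parts: (I) show that any finite group $G$ of exponent $3$ satisfies the 2-Engel identity $[[x,y],y] = 1$ for all $x, y \in G$; and (II) deduce (or invoke) Levi's theorem that a 2-Engel group has nilpotency class at most $3$.

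For (I), the starting point is the relation $(xy)^3 = 1$ for all $x, y \in G$, which unpacks to $xyxyxy = 1$ and can be rewritten as $xyx = y^{-1}x^{-1}y^{-1}$. This single identity, applied to a variety of pairs (e.g.\ $(x,y)$, $(x, xy)$, $(y^{-1}, x)$), combined with $g^{-1} = g^2$ for every $g \in G$, should suffice to collapse $[[x,y],y] = [x,y]^{-1} y^{-1} [x,y] y$ step by step to the identity. I expect this to be the main obstacle: it is a purely computational manipulation of group words, and one has to choose the order of substitutions carefully to avoid ballooning. A slicker alternative is to observe that the subgroup $\langle x, y \rangle$ generated by any two elements of $G$ is a quotient of the free Burnside group $B(2,3)$, which is classically known (Burnside) to be finite of order $27$; a direct calculation in that fixed group verifies $[x,y,y] = 1$ once and for all.

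For (II), the 2-Engel identity $[[x,y],y] = 1$, combined with the Hall--Witt identity applied to triple commutators, forces convenient symmetries on elements of $G_3 = [[G,G],G]$ (e.g.\ $[x,y,z]$ is antisymmetric in $y$ and $z$ up to sign), from which one concludes $G_3 \le Z(G)$. Consequently $G_4 = [G_3, G] \le [Z(G), G] = 1$, so the nilpotency class of $G$ is at most $3$. This half is formal, uses neither finiteness nor the exponent hypothesis, and can equivalently be cited from a standard reference such as Huppert's book, as the paper itself does.
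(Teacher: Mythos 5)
Your proposal is correct and follows essentially the same route as the paper: the paper simply cites Huppert \cite[III, Theorems 6.5 and 6.6]{HU}, which are precisely your two steps (exponent $3$ implies the 2-Engel identity, and 2-Engel groups are nilpotent of class at most $3$, i.e.\ Levi's theorem). Your sketch of the underlying arguments — the word computation from $(xy)^3=1$ (or the reduction to $B(2,3)$ of order $27$) for step (I), and the Levi-type commutator identities giving $G_3\le Z(G)$ for step (II) — is the standard proof behind that citation.
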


We draw the following consequence.

\begin{corollary}\label{cor-exp3-dersubgp}
Let $G$ be a group of exponent 3. Then $G'$ is an elementary abelian 3-group.
\end{corollary}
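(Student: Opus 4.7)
The plan is to reduce the claim to a direct combination of the two facts collected just above: the preceding lemma bounds the nilpotency class of $G$, and Lemma \ref{lem-facts-nilclass} extracts abelianness of the derived subgroup from such a bound. Once commutativity of $G'$ is in hand, the exponent hypothesis immediately upgrades it to elementary abelianness.

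More precisely, I would proceed as follows. First, apply the preceding lemma to conclude that $G$ is nilpotent of class at most $3$. Second, split into cases by the exact value of the nilpotency class: if the class is $0$ or $1$, then $G' = 1$, which is trivially elementary abelian; if the class is $2$, then by Lemma \ref{lem-facts-nilclass}(2) the subgroup $G'$ lies in the center $Z(G)$, hence is abelian; if the class is exactly $3$, then Lemma \ref{lem-facts-nilclass}(3) again gives that $G'$ is abelian. In all cases, $G'$ is an abelian subgroup of $G$.

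Finally, since $G$ has exponent $3$, every element $g \in G$ satisfies $g^3 = 1$, and in particular every element of $G'\subseteq G$ has order dividing $3$. Combined with commutativity, this means $G'$ is an abelian group of exponent dividing $3$, i.e., an elementary abelian $3$-group, as desired.

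There is essentially no obstacle here: the corollary is a direct assembly of the preceding lemma with parts (2) and (3) of Lemma \ref{lem-facts-nilclass}. The only point requiring minor care is to ensure that the three sub-cases (classes $\le 1$, $=2$, and $=3$) are each addressed, since Lemma \ref{lem-facts-nilclass}(3) is stated only for the class-$3$ case; but as noted above, the lower-class cases are even easier.
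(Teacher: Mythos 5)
Your proposal is correct and follows essentially the same route as the paper: both deduce from the preceding lemma that $G$ is nilpotent of class at most $3$, conclude that $G'$ is abelian (the paper does this in one stroke via $[G',G']\le G_4=1$ citing Huppert, while you split on the exact class and invoke Lemma \ref{lem-facts-nilclass}(2)--(3), which encodes the same fact), and then use exponent $3$ to upgrade abelian to elementary abelian. No gap.
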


\begin{proof}
Since $G$ is a group of exponent 3, the above lemma implies that $G$ is nilpotent of class at most 3. Then $[G',G']\le G_4=1$ by \cite[III, Satz 2.8]{HU}, which means that $G'$ is an abelian group, and hence an elementary abelian 3-group for $G$ has exponent 3.
\end{proof}

\subsection{Modules of a group and $p$-subgroups of $\GL_n(\Z_p)$} A \emph{module} of a given group $G$ is by definition an abelian group $M$ together with a map $G\times M\to M$, $(g,m)\mapsto g\cdot m$, such that $1_G\cdot m=m$ and $(gh)\cdot m=g\cdot(h\cdot m)$ for any $g,h\in G$ and $m\in M$. We also say that $G$ acts linearly on the abelian group $M$. We simply write $g\cdot m$ as $gm$ when there is no risk of confusion.
Clearly a $G$-module structure on an abelian group $M$ is equivalent to a group homomorphism $\rho\colon G\to \Aut(M)$ with $\rho$ given by $\rho(g)(m) = g m$ for any $g\in G$ and $m\in M$. In case we need to specify $\rho$, we also say that $A$ is a $G$-module via the homomorphism $\rho$, or simply say that $(A,\rho)$ is a $G$-module.

We remark that any group homomorphism $\rho\colon G\to \Aut(M)$ extends to a ring homomorphism
from $\Z [G]$ to $\mathrm{End} (M)$, where $\Z [G]$ is the group ring of $G$ over $\Z$ and $\mathrm{End} (M)$ is the ring of endomorphisms from $M$ to itself. Thus a $G$-module is equivalent to a $\Z [G]$-module.

Any finite abelian group can be written as a direct sum of cyclic subgroups. For any positive integer $m$, we use $\Z_m=\Z/m\Z$ to denote the cyclic group of order $m$. In fact, $\Z_m$ inherits a ring structure from $\Z$, in which the underlying additive group is exactly a cyclic group of order $m$.

Now let $p$ be a prime number. Then $\Z_p$ is the unique prime field of characteristic $p$, and any elementary $p$-group is equivalent to a vector space over $\Z_p$. In particular, the cyclic group $\Z_p$ is viewed as a one dimensional vector space over $\Z_p$.

Let $A\cong \Z_p^n$ be an $n$-dimensional $\Z_p$-vector space. Clearly $\Aut(A)=\GL(A)$, the general linear group of $A$. Any basis of $A$ gives an isomorphism $\GL(A)\cong \GL_n(\Z_p)$, mapping a linear transformation to its matrix under this basis, where $\GL_n(\Z_p)$ denotes the general linear group of degree $n$ over $\Z_p$.

By calculating the orders, it is easy to show that $UT(n,\Z_p)$, the subgroup consisting of $n\times n$ unipotent upper triangular matrices, is a Sylow $p$-subgroup of $\GL_n(\Z_p)$. Unipotent means that all entries in the main diagonal are 1. The following useful result is easy.

\begin{lemma} Let $G$ be a $p$-group, $A\cong \Z^n_p$, and $\rho\colon G\to \Aut A$ a group homomorphism. Then there exists a basis of $A$, under which $G$ maps into $UT(n,\Z_p)$. In particular,
if $A\cong \Z_p$, then $G$ acts trivially on $A$.
\end{lemma}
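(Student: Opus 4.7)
The plan is to reduce the statement to Sylow's theorem applied to the finite group $\GL_n(\Z_p)$, using the fact already recorded in the excerpt that $UT(n,\Z_p)$ is a Sylow $p$-subgroup of $\GL_n(\Z_p)$.

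First I would fix any basis of $A$, which yields an identification $\Aut(A)\cong \GL_n(\Z_p)$. Under this identification, $\rho(G)$ is a subgroup of $\GL_n(\Z_p)$, and since $G$ is a $p$-group, so is $\rho(G)$ (its order divides $|G|$). By Sylow's first theorem, every $p$-subgroup of $\GL_n(\Z_p)$ is contained in some Sylow $p$-subgroup, and by Sylow's second theorem all Sylow $p$-subgroups are conjugate. Combining these with the fact that $UT(n,\Z_p)$ is itself a Sylow $p$-subgroup, there exists $P\in \GL_n(\Z_p)$ such that $P^{-1}\rho(G)P\subseteq UT(n,\Z_p)$.

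Next I would translate this matrix conjugation back into the language of bases. The columns of $P$ form a new basis $\{v_1,\dots,v_n\}$ of $A$, and the matrix of $\rho(g)$ in this new basis is precisely $P^{-1}\rho(g)P$. Hence $\rho(G)$ lies in $UT(n,\Z_p)$ when expressed in this basis, as required.

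For the ``in particular'' statement, with $n=1$ we have $\Aut(A)=\GL_1(\Z_p)=\Z_p^\times$, a cyclic group of order $p-1$. The image $\rho(G)$ is a $p$-subgroup of a group whose order is coprime to $p$, so $\rho(G)$ must be trivial and $G$ acts trivially on $A$. (Alternatively, observe that $UT(1,\Z_p)=\{1\}$ and apply the first part.)

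The main obstacle is essentially nonexistent: both halves of the argument are standard consequences of Sylow's theorems, and the only technical point worth spelling out carefully is the translation between conjugation in $\GL_n(\Z_p)$ and change of basis in $A$. No delicate calculation is needed.
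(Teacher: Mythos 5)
Your proof is correct and follows essentially the same route the paper sketches: use the order count showing $UT(n,\Z_p)$ is a Sylow $p$-subgroup of $\GL_n(\Z_p)$, embed the $p$-subgroup $\rho(G)$ into a Sylow $p$-subgroup and conjugate it into $UT(n,\Z_p)$, and interpret the conjugation as a change of basis of $A$. The treatment of the $n=1$ case via $|\GL_1(\Z_p)|=p-1$ is also the standard one and matches the intended argument.
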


For a proof we use the fact that the matrices of a linear transformation under two bases are conjugate, and the fact that any two Sylow-$p$ subgroups of a finite group are conjugate to each other. We need also the following result for later use.

\begin{proposition}\label{prop-rad0}
Let $G$ be an abelian subgroup of $\GL_n(\Z_p)$. Then the  following are equivalent:
\begin{enumerate}
\item $1+ g + \cdots + g^{p-1} =0$ for any $g\in G$;
\item $(g-1)^{p-1}=0$ for any $g\in G$;
\item $(g_1-1)\cdots (g_{p-1}-1) =0$ for any $g_i\in G$;
\item $(g_1-1)\cdots (g_{p-1}-1) =0$ for any $g_i\in X$, where $X$ is a set of generators of $G$.
\end{enumerate}
\end{proposition}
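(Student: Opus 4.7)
The plan is to prove the four-way equivalence by showing $(1) \Leftrightarrow (2)$, $(3) \Leftrightarrow (4)$ and $(3) \Rightarrow (2)$ by essentially formal manipulations, and then the substantive converse $(2) \Rightarrow (4)$ via polarization and Nakayama.

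For $(1) \Leftrightarrow (2)$, I would work with the factorizations $x^{p}-1 = (x-1)(1+x+\cdots+x^{p-1})$ and, by the Frobenius identity in characteristic $p$, $x^{p}-1 = (x-1)^{p}$ inside the integral domain $\Z_{p}[x]$; cancelling $x-1$ gives the polynomial identity $1+x+\cdots+x^{p-1} = (x-1)^{p-1}$, and substituting $x \mapsto g$ yields the equivalence. For $(3) \Leftrightarrow (4)$, let $B = \Z_{p}[G] \subseteq M_{n}(\Z_{p})$ (a commutative $\Z_{p}$-algebra, since $G$ is abelian) with augmentation ideal $I$; the identity $xy - 1 = x(y-1) + (x-1)$ shows by induction on word length that $I$ is generated as a $B$-ideal by $\{x-1 : x \in X\}$, so $I^{p-1}$ is the $B$-span of the products $(x_{i_{1}} - 1)\cdots(x_{i_{p-1}} - 1)$. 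Hence $(4)$ amounts to $I^{p-1} = 0$, which clearly implies $(3)$; conversely $(3) \Rightarrow (4)$ by restriction, and $(3) \Rightarrow (2)$ follows by specialising $g_{1} = \cdots = g_{p-1} = g$.

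The heart of the proof is $(2) \Rightarrow (4)$. First, $(2)$ implies $g^{p} = 1$ for every $g \in G$ (since $(g-1)^{p-1}=0$ gives $(g-1)^{p}=0$ and Frobenius yields $g^{p} = 1+(g-1)^{p} = 1$), so $G \cong \Z_{p}^{k}$ is elementary abelian. Fix generators $x_{1},\ldots,x_{k}$ of $G$ and set $y_{i} = x_{i} - 1$. Then $B$ is a commutative local Artinian $\Z_{p}$-algebra with maximal ideal $\mathfrak{m} = (y_{1},\ldots,y_{k})$, so the goal reduces to showing $\mathfrak{m}^{p-1} = 0$. For an arbitrary tuple $(i_{1},\ldots,i_{p-1})$ I would apply the polarization identity
\[
(p-1)! \, z_{1} z_{2} \cdots z_{p-1} \;=\; \sum_{S \subseteq \{1,\ldots,p-1\}} (-1)^{p-1-|S|} \Bigl(\sum_{j \in S} z_{j}\Bigr)^{p-1}
\]
with $z_{j} = y_{i_{j}}$. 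Setting $L_{S} = \sum_{j \in S} y_{i_{j}}$ and $g_{S} = \prod_{j \in S}(1 + y_{i_{j}}) \in G$, a direct binomial expansion of $(L_{S} + M_{S})^{p-1}$, where $M_{S} = g_{S} - 1 - L_{S} \in \mathfrak{m}^{2}$, shows $(g_{S} - 1)^{p-1} = L_{S}^{p-1} + R_{S}$ with $R_{S} \in \mathfrak{m}^{p}$ (every cross term picks up at least one additional degree from $M_{S}$). Hypothesis $(2)$ forces $(g_{S} - 1)^{p-1} = 0$, so $L_{S}^{p-1} \in \mathfrak{m}^{p}$ for each $S$, and Wilson's congruence $(p-1)! \equiv -1 \pmod{p}$ combined with the polarization identity gives $y_{i_{1}} \cdots y_{i_{p-1}} \in \mathfrak{m}^{p}$. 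Ranging over all tuples yields $\mathfrak{m}^{p-1} \subseteq \mathfrak{m}^{p} = \mathfrak{m} \cdot \mathfrak{m}^{p-1}$, and Nakayama's lemma applied to the finitely generated $B$-module $\mathfrak{m}^{p-1}$ over the local ring $B$ concludes $\mathfrak{m}^{p-1} = 0$, which is $(4)$.

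The main obstacle will be ensuring that (a) the group elements $g_{S}$ are chosen so that their linear parts realize all the subset sums appearing in the polarization identity, and (b) the error terms $R_{S}$ uniformly lie in $\mathfrak{m}^{p}$ rather than only in $\mathfrak{m}^{p-1}$, which is precisely what upgrades polarization into the containment $\mathfrak{m}^{p-1} \subseteq \mathfrak{m}^{p}$ and allows the Nakayama reduction to close the argument.
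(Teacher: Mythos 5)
Your proof is correct, but the route through the key implication differs genuinely from the paper's. The paper proves $(2)\Rightarrow(3)$ directly via its Lemma \ref{lemma-strongnil}: writing $x=g-1$, $y=h-1$, $w_k=1+h+\cdots+h^{k-1}$, it expands $(g-h^k)^{r}M=0$ for varying $k$, uses invertibility of the $w_k$ and of their differences together with a Vandermonde determinant to isolate each mixed term $x^iy^{r-i}M=0$, and then inducts; the step $(4)\Rightarrow(3)$ is the same telescoping identity over generators that underlies your augmentation-ideal remark. You instead prove $(2)\Rightarrow(4)$ by polarizing the $(p-1)$-st power map in the commutative algebra $B=\Z_p[G]\subseteq M_n(\Z_p)$: evaluating at $g_S=\prod_{j\in S}x_{i_j}$, noting $g_S-1\equiv\sum_{j\in S}(x_{i_j}-1)\bmod \mathfrak{m}^2$ so that hypothesis $(2)$ forces $L_S^{p-1}\in\mathfrak{m}^p$, and then using Wilson's theorem to conclude $\mathfrak{m}^{p-1}\subseteq\mathfrak{m}^p$, which Nakayama (or simply the nilpotence of $\mathfrak{m}$, since it is generated by nilpotents in an Artinian quotient of $\Z_p[y_1,\dots,y_k]/(y_i^p)$) kills. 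Both arguments are sound and both hinge on an invertibility-of-coefficients fact available only for exponent $\le p-1$ (binomial coefficients $C_r^i$ for the paper, $(p-1)!$ for you). What the paper's formulation buys is a relative statement with an arbitrary right factor $M$ and arbitrary $r\le p-1$, which it reuses in Proposition \ref{prop-loewylength} to bound Loewy lengths; your polarization argument is cleaner and avoids determinant bookkeeping, and it would in fact extend to that relative version (apply Nakayama to the module $I^rM$) if one needed it, at the cost of invoking the local-ring language. One small point worth making explicit in a final write-up: the subsets $S$ may involve repeated indices $i_j$, but $g_S=\prod_{j\in S}x_{i_j}$ still lies in $G$ and its linear part is still $L_S$, so the polarization step goes through verbatim.
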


\begin{proof}
First by the binomial expansion we have
\[(1-g)^{p-1}=\sum_{i=0}^{p-1} (-1)^i \frac{(p-1)(p-2)\cdots (p-i)}{1\cdot 2\cdots i} g^i= \sum_{i=0}^{p-1} g^i,\]
where we use the fact $p-i=-i$ in $\Z_p$. Then the equivalence $(1)\Longleftrightarrow(2)$ is obvious.

$(2)\Longrightarrow(3)$ follows from Lemma \ref{lemma-strongnil} below and $(3)\Longrightarrow (2), (4)$ is obvious.

We are left to show that $(4)\Longrightarrow (3)$. Assume (4) holds, and let $g_1,\cdots, g_{p-1}$ be in $G$. Since $X$
generates $G$, each $g_i= g_{i1}g_{i2}\cdots g_{i, t_i}$ for some $g_{i1},g_{i2},\cdots ,g_{i, t_i}\in X$.
Then \[g_i-1= g_{i1}g_{i2}\cdots g_{i, t_i} -1= \sum_{k=1}^{t_i} g_{i1}g_{i2}\cdots g_{i,k-1}(g_{ik}-1),\]
and it follows from (4) that $(1-g_1)\cdots (1-g_{p-1}) =0$.
\end{proof}

\begin{remark} (1) A group satisfies the equivalent conditions in the proposition has exponent $p$, and hence
isomorphic to a finite dimensional $\Z_p$-vector space.

(2) In the proposition the assumption that $G$ is commutative cannot be dropped. For instance, for any $a, b ,c\in \Z_3$, we set
\[g_{a,b,c}=\begin{pmatrix}
             1 & a & b & c \\
               & 1 &   & -b\\
               &   & 1 & a \\
               &   &   & 1 \\
           \end{pmatrix}\in \GL_4(\Z_3).\]
Consider the subgroup $G=\{g_{a,b,c}\mid a,b,c\in \Z_3\}\le \GL_4(\Z_3)$.
Then for any $g\in G$, $(g-1)^2=0$, while $(g_{1,0,0}-1)(g_{0,1,0}-1)\ne 0$.
\end{remark}

\begin{lemma}\label{lemma-strongnil} Let $G\le  \GL_n(\Z_p)$ be an abelian subgroup, $r\le p-1$ a positive integer, and $M$ an $n\times n$ matrix
over $\Z_p$. Assume that $(g-1)^rM=0$ for all $g\in G$. Then $$(g-1)^{r-1}(h-1)M=0$$ for all $g, h\in G$. Consequently,
$(g_1-1)(g_2-1)\cdots (g_r-1)M=0$ for all $g_1, g_2\cdots, g_r\in G$.
\end{lemma}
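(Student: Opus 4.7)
The plan is a polynomial-identity argument on the cyclic $\Z_p[G]$-submodule $N:=\Z_p[G]v$ generated by an arbitrary column $v$ of $M$; it suffices to prove $(g_1-1)(g_2-1)\cdots(g_r-1)v=0$ one column at a time, from which the first assertion follows by specialising $g_1=\cdots=g_{r-1}=g$ and $g_r=h$. Because $\Z_p[G]$ is commutative, the hypothesis $(g-1)^rv=0$ upgrades at once to the operator identity $(g-1)^r=0$ on all of $N$. Since $j^{-1}\in\Z_p$ for every $1\le j\le r-1\le p-2$, the truncated logarithm
\[
\tilde x_g\;:=\;\sum_{j=1}^{r-1}\frac{(-1)^{j-1}}{j}(g-1)^j
\]
is a well-defined operator on $N$ and factors as $\tilde x_g=(g-1)u_g$ with $u_g=1+(\text{nilpotent on }N)$, hence invertible; any product $\prod_iu_{g_i}$ is likewise invertible on $N$. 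Comparing the two polynomials of degree $\le r-1<p$ in $a$ that agree on every integer value of $a$ yields the key identity
\[
g^a\;=\;\exp(a\tilde x_g)\;:=\;\sum_{k=0}^{r-1}\frac{(a\tilde x_g)^k}{k!}
\]
as operators on $N$.

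Next, for $g_1,\dots,g_r\in G$ I would analyse
\[
\psi(\vec a)\;:=\;\bigl(g_1^{a_1}g_2^{a_2}\cdots g_r^{a_r}-1\bigr)^r\,v,\qquad \vec a\in\Z_p^r.
\]
The hypothesis forces $\psi(\vec a)=0$ for every integer $\vec a$ (as $g_1^{a_1}\cdots g_r^{a_r}\in G$). Substituting $g_i^{a_i}=\exp(a_i\tilde x_{g_i})$ and multiplying out, every term in the resulting expansion of $\psi$ has the form $(\text{const})\cdot\prod_ia_i^{e_i}\cdot\prod_i\tilde x_{g_i}^{e_i}$ for nonnegative integers $e_i$ (arising from the multinomial expansion of the $r$-th power), and the vanishing $\tilde x_{g_i}^r=u_{g_i}^r(g_i-1)^r=0$ on $N$ forces every surviving term to satisfy $e_i\le r-1$ for each $i$. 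Therefore $\psi(\vec a)$ is a polynomial in $\vec a$ of degree at most $r-1\le p-2<p$ in each coordinate, and such a polynomial vanishing on all of $\Z_p^r$ must be identically zero.

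Every coefficient of $\psi$ therefore vanishes, and the useful one is the coefficient of $a_1a_2\cdots a_r$. Writing $\prod_ig_i^{a_i}-1=\sum_ia_i\tilde x_{g_i}+(\text{terms of total }\vec a\text{-degree}\ge 2)$, the only contribution to this degree-$r$ coefficient comes from $\bigl(\sum_ia_i\tilde x_{g_i}\bigr)^r$, and the multinomial theorem gives
\[
[a_1a_2\cdots a_r]\,\psi(\vec a)\;=\;r!\,\tilde x_{g_1}\tilde x_{g_2}\cdots\tilde x_{g_r}\,v\;=\;0.
\]
Since $r\le p-1$, $r!$ is a unit of $\Z_p$, so $\tilde x_{g_1}\cdots\tilde x_{g_r}\,v=0$. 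Commutativity then factors this product as $\bigl(\prod_i(g_i-1)\bigr)\bigl(\prod_iu_{g_i}\bigr)v$, and cancelling the invertible $\prod_iu_{g_i}$ delivers $(g_1-1)(g_2-1)\cdots(g_r-1)\,v=0$, as desired.

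The step I expect to be most delicate is pinning down the degree bound for $\psi$: a naive expansion of an $r$-th power allows $a_i$-degrees up to $r(r-1)$, which already for modest $r$ could exceed $p-1$ and make polynomial interpolation over $\Z_p$ inapplicable. The saving grace is that the truncation $\tilde x_{g_i}^r=0$ on $N$—which is precisely where the full strength of the hypothesis $(g_i-1)^rv=0$ enters, through every $g_i\in G$ acting nilpotently on the whole submodule $N$—collapses the $a_i$-degree of $\psi$ all the way down to $r-1$, just small enough to invoke vanishing at the $p^r$ points of $\Z_p^r$.
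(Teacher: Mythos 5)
Your proof is correct, but it follows a genuinely different route from the paper's. The paper fixes $g,h$, writes $x=g-1$, $y=h-1$, $w_k=1+h+\cdots+h^{k-1}$, applies the hypothesis to the elements $g^{-1}h^k$ (equivalently expands $(x-yw_k)^{r}M=0$ for $k=1,\dots,r-1$), and inverts a Vandermonde matrix in the $w_k$ to isolate $x^{r-1}yM=0$; the full product statement is then obtained by induction on $r$. You prove the $r$-fold statement directly: restricting to the cyclic module $N=\Z_p[G]v$ makes each $g-1$ honestly nilpotent of index $\le r$, the truncated logarithm replaces $g_i^{a_i}$ by $\exp(a_i\tilde x_{g_i})$ so that in every monomial the $a_i$-degree equals the $\tilde x_{g_i}$-exponent, the hypothesis applied to all products $g_1^{a_1}\cdots g_r^{a_r}$ together with the vanishing criterion for polynomials of per-variable degree $\le r-1<p$ on $\Z_p^r$ annihilates the whole polynomial, and the coefficient of $a_1\cdots a_r$ gives $r!\,\tilde x_{g_1}\cdots\tilde x_{g_r}v=0$, hence the claim after cancelling the invertible $u_{g_i}$. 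Both arguments are interpolation arguments hinging on $r\le p-1$ (invertibility of $r!$ for you; of the $w_k$ and their differences for the paper); yours trades the induction for the exp/log bookkeeping, handles all $r$ factors at once, and, by working on $N$, makes explicit a point the paper leaves implicit, namely that invertibility of the $w_k$ is really invertibility of their action on the submodule generated by the columns of $M$, where every element of $G$ acts unipotently.

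One step should be justified more carefully: as written, ``comparing the two polynomials of degree $\le r-1$ in $a$ that agree on every integer value of $a$'' is circular, since their agreement at integer values is exactly the content of the identity $g^a=\exp(a\tilde x_g)$. The identity is true and standard, and a clean fix is to prove $\exp(a\tilde x_g)=\sum_{k=0}^{r-1}\binom{a}{k}(g-1)^k$ on $N$: the rational power-series identity $\exp\bigl(a\log(1+t)\bigr)=\sum_{k\ge0}\binom{a}{k}t^k$ has $p$-integral coefficients in every $t$-degree $k\le r-1\le p-2$, so it can be reduced modulo $p$ and modulo $t^{r}$ and then evaluated at the nilpotent $t=g-1$, while the right-hand side equals $g^a$ for every integer $a\ge0$ by the binomial theorem (nonnegative exponents suffice, since only the residues of the $a_i$ matter). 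Alternatively, prove the case $a=1$ (truncated $\exp\circ\log=\id$) and use $\exp((a+b)\tilde x_g)=\exp(a\tilde x_g)\exp(b\tilde x_g)$, which is legitimate here because only powers $\tilde x_g^{k}$ with $k\le r-1<p$ survive. With that patch the argument is complete.
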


\begin{proof} Given $g, h\in G$, we write $x=g-1$, and $y= h-1$. Then $g=1+x, h=1+y$. We set $w_k=1 + h + \cdots+ h^{k-1}$ for $1\le \k\le r$. Clearly $w_k$ is invertible, and $w_k-w_l$ is invertible for any $1\le k\neq l\le r$.

By assumption, $(g-h^k)^{r}M=g^r(1-g^{-1}h^k)^rM=0$ for any $k$. It is direct to check that $g-h^k=x - yw_k$ and hence we get
\[
\begin{pmatrix} w_1& w_1^2 & \cdots& w_1^{r-1}\\
w_2& w_2^2 & \cdots &w_2^{r-1}\\
\cdots& \cdots& \cdots& \cdots\\
w_{r-1}&w_{r-1}^2 & \cdots & w_{r-1}^{r-1}
\end{pmatrix}
\begin{pmatrix} -C_r^{r-1}x^{r-1}yM\\C_r^{r-2}x^{r-2}y^2M\\ \cdots \\(-1)^{r-1} C_r^1xy^{r-1}M
\end{pmatrix}
= 0,
\]
where $C_r^i = \frac {r!}{i!(r-i)!}$ is the binomial coefficient. Using the Van der Monde determinant, we know that each
$C_r^i x^i y ^{r-i}M=0$, hence $x^i y ^{r-i}M=0$. In particular, we have the desired $(g-1)^{r-1}(h-1) M=0$. The last conclusion follows easily by using induction on $r$.
\end{proof}

\subsection{Radical and socle} We collect some well-known facts on finite length modules, cf. \cite{ars}. Let $R$ be a finite dimensional algebra over an arbitrary field $\k$, and $M$ an $R$-module of finite length.

Recall that the \emph{socle} of $M$, denoted by $\soc(_RM)$ or simply $\soc(M)$ when there is no confusion, is by definition the sum of all simple submodules of $M$; and the radical $\rad(_RM)$ (or $\rad(M)$) is the intersection of all its maximal submodules. Then $\rad(_RR)=\rad(R_R)$ is equal to the intersection of all two-sided maximal ideal of $R$, and we denote it by $\rad(R)$. Moreover, $\rad(_RM)=\rad(R)M$ for any left $R$-module $M$ and $\rad(N_R)=N\rad(R)$ for any right $R$-module $N$.

The filtration $0= \rad^{l}(M)\subseteq \rad^{l-1}(M)\subseteq\cdots\subseteq \rad(M)\subseteq M$ is called the radical filtration of an $R$-moduled $M$, where $\rad^{i}(M)=\rad(\rad^{i-1}M)=(\rad(R))^iM$ for each $i$, and the least
$l$ with $\rad^{l}(M)=0$ is called the Loewy length of $M$. We use $\ell\ell(M)$ to denote the Loewy length of $M$.

The coradical filtration is the filtration $0=\soc^0(M)\subseteq \soc^1(M)\subseteq\cdots \subseteq \soc^l(M)= M$ of submodules such that $\soc^1(M)=\soc(M)$, and $\soc^i(M)/\soc^{i-1}(M)= \soc(M/\soc^{i-1}(M))$ for each $i$. The least $l$ with $\soc^l(M)= M$ is called the length of the coradical filtration. It is well known that the radical filtration and the coradical filtration have equal length, cf. \cite[Chapter II, Proposition 4.7]{ars}.
\begin{proposition}\label{prop-loewylength} Let $G$ be a finite abelian group, $\Z_p[G]$ be the group algebra over $\Z_p$, and $(M,\rho)$ be a $\Z_p[G]$-module. Let $1\le r\le p-1$ be a given positive integer.
\begin{enumerate}
\item[(1)] For any $g\in G$, if $(\rho(g)-1)^p= 0$, then $\rho(g)^{p}=1$;
\item[(2)] If $(\rho(g)-1)^r= 0$ for all $g\in G$, then $\ell\ell(M)\le r$.
\end{enumerate}
\end{proposition}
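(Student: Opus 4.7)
For part (1), I would give a one-line argument using the Frobenius identity in characteristic $p$. Since $M$ is a $\Z_p$-vector space, $\End_{\Z_p}(M)$ has characteristic $p$, and the commuting elements $\rho(g)$ and $1_M$ satisfy
\[
0 \;=\; (\rho(g)-1)^p \;=\; \rho(g)^p - 1^p \;=\; \rho(g)^p - 1,
\]
so $\rho(g)^p = 1_M$, which is precisely the assertion.

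For part (2), my strategy is to convert the Loewy length estimate into the statement that the $r$-th power of the augmentation ideal of $\Z_p[G]$ annihilates $M$, and then to derive the latter from Lemma \ref{lemma-strongnil}. First I would apply Lemma \ref{lemma-strongnil} to the abelian subgroup $\rho(G) \le \GL_n(\Z_p)$ (where $n = \dim_{\Z_p}M$), taking the auxiliary matrix in that lemma to be the identity $I_n$; this is legitimate because $r\le p-1$ and $(\rho(g)-1)^r = 0$ holds for every $g\in G$. The lemma then upgrades the single-element hypothesis to
\[
(\rho(g_1)-1)(\rho(g_2)-1)\cdots(\rho(g_r)-1) \;=\; 0 \quad \text{on } M,
\]
for all $g_1,\dots,g_r\in G$.

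Next I would transport this identity into $R=\Z_p[G]$. Let $I = \ker(\epsilon)\subseteq R$ be the augmentation ideal. Since $G$ is abelian, $R$ is commutative, and $I$ is the $\Z_p$-linear span of $\{g-1 : g\in G\}$; consequently $I^r$ is the $\Z_p$-linear span of the products $(g_1-1)\cdots(g_r-1)$, and the previous step yields $I^r\cdot M = 0$. Because $R/I \cong \Z_p$ is a field, $I$ is a maximal ideal, so $\rad(R)\subseteq I$ and therefore
\[
\rad^r(M) \;=\; \rad(R)^r M \;\subseteq\; I^r M \;=\; 0,
\]
which gives $\ell\ell(M)\le r$.

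The entire technical substance is contained in Lemma \ref{lemma-strongnil}, which has already been established; I do not anticipate any real obstacle here. The remainder of the argument is a routine dictionary between operator-theoretic, ideal-theoretic, and module-theoretic language, with the only small point to record being the inclusion $\rad(R)\subseteq I$ that reduces a statement about the Jacobson radical to one about the augmentation ideal.
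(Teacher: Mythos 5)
Your proof is correct and takes essentially the same approach as the paper: part (1) is the identical Frobenius argument, and part (2) rests on Lemma \ref{lemma-strongnil} exactly as the paper's proof does, identifying the $r$-th radical power of $M$ with products of the elements $g-1$. The only cosmetic difference is that you bound $\rad(\Z_p[G])$ by the augmentation ideal of $\Z_p[G]$ itself, whereas the paper works in the image $R=\rho(\Z_p[G])\subseteq\End(M)$ and writes $\rad(R)$ as the ideal generated by the $\rho(g)-1$; the substance is the same.
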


\begin{proof} (1) follows from the fact that the Frobenius map respects the addition, say $\rho(g)^p-1 = (\rho(g)-1)^p =0$.

(2) Let $T_g=\rho(g)-1$. Extend $\rho$ to a ring homomorphism from $\Z_p[G]$ to $\End(M)$, and set $R=\rho(\Z_p[G])$. Then $\rad(R)=\sum_{g\in G}RT_g$, and $\rad^{k}(R)=\sum_{g_1, g_2,\cdots, g_k\in G} RT_{g_1}T_{g_2}\cdots T_{g_k}$ for any $k\ge 1$. Therefore by Lemma \ref{lemma-strongnil} we have $\rad^{r}(M)=\rad^r(R)M=0$ and hence $\ell\ell(M)\le r$.
\end{proof}

\begin{remark} (1) We mention that if we drop the assumption $r\le p-1$ on $r$ in Proposition \ref{prop-loewylength} (2), then the conclusion does not hold in general. For instance, we may take $r= p$ and some $m> 1$. Consider the action of the group $\Z_p^m$ on the group ring $\Z_p[\Z_p^m]$ given by multiplication. Then $(\rho(g)-1)^p=0$ for any $g\in \Z_p$, while $\ell\ell(\Z_p[\Z_p^m])=(p-1)^m+1 > p$.

(2) Let $f\colon R\to R'$ be a homomorphism of rings, and $M$ be an $R'$-module of finite length. Then $M$ can be viewed as an $R$-module. Clearly $_RM$ is simple if and only if $_{R'}M$ is simple. It follows that $\rad(_RM)=\rad(_{R'}M)$ and $\soc(_RM)=\soc(_{R'}M)$ as subgroups, and hence $\ell\ell(_RM)=\ell\ell(_{R'}M)$.

(3) Let $M$ be a nonzero $R$-module with finite length and $X$ a semisimple module. Then $\ell\ell(M)=\ell\ell(M+X)$.
\end{remark}

\section{Group cohomology and group extension}

In this section, we recall some basics on group cohomology and group extension. One can consult Chapter 6 in \cite{wei} and Chapter 7 in \cite{rot}. We reformulate the group extension and introduce the notion of group extension datum, which plays a crucial role in this work.

\subsection{The bar resolution}
Let $G$ be a group. Consider the following bar resolution $B_*$ of the trivial $G$-module $\Z$.
\[ \cdots\to B_3\xrightarrow{d} B_2 \xrightarrow{d} B_1 \xrightarrow{d} B_0 \xrightarrow{\varepsilon} \Z.
\]
For each $n\ge1$, $B_n$ is the free $\Z [G]$-module on the set of all symbols $[g_1,\cdots ,g_n]$ with $g_i\in G$; and $B_0$ is the free $\Z [G]$-module of rank one and with basis $[\ ]$. The differentials are given by $\varepsilon([\ ])= 1$, $d([g])= g[\ ]-[\ ]$, and \[d([g_1,\cdots, g_n])= g_1[g_2,\cdots, g_n] +\sum_{1\le i\le n-1}(-1)^i[g_1,\cdots, g_ig_{i+1}, \cdots, g_n] +(-1)^n[g_1,\cdots, g_{n-1}].\]

Let $A$ be a $G$-module. An \emph{$n$-cochain} is a set map $\varphi$ from $G^n=G\times\cdots \times G$ to $A$; the set of $n$-cochains is identified with $\hom_G(B_n, A)$. A cochain $\varphi$ is normalized if $\varphi(g_1, \cdots)$ vanishes whenever some $g_i=1$. The differential $d\varphi$ of $\varphi$ is  an $n+1$-cochian given by
\[ d\varphi(g_0, \cdots, g_n)=g_0\varphi(g_1,\cdots, g_n)+ \sum(-1)^{i+1}\varphi(\cdots, g_ig_{i+1},\cdots) + (-1)^n\varphi(g_0,\cdots, g_{n-1}).
\]
By definition, \emph{$n$-cocycles} are those $n$-cochains such that $d\varphi=0$, and \emph{$n$-coboundaries} are the ones of the form $d\varphi$. We denote by $B^n(G; A)$ and $Z^n(G; A)$ the set of $n$-coboundaries and the set of $n$-cocycles of $G$ (with coefficients in $A$) respectively. Clearly, $B^n(G; A)\subseteq Z^n(G; A)$, and the quotient $H^n(G; A)=Z^n(G; A)/B^n(G; A)$ is called the $n$-th cohomology group of $G$ with coefficients in $A$. By definition $H^n(G; A)=\mathrm{Ext}_{\Z [G]}^n(\Z, A)$. We mention that any $n$-cocycle is cohomologous to a normalized one.

\begin{example} Let $A$ be a $G$-module, and $\varphi\colon G\times G\to A$ a map. Then
$\varphi$ is a 2-cocycle if and only if $g\varphi(h,k)-\varphi(gh,k)+\varphi(g,hk)-\varphi(g,h)=0$ for all $g,h,k\in A$; and $\varphi$ is a 2-coboundary if and only if there exists some $f\colon G\to A$ such that $\varphi(g,h)=g f(h)-f(gh)+ f(g)$ for all $g$, $h\in G$.
\end{example}

\subsection{Group extension datum} Let $K$ and $H$ be groups. An extension of $H$ by $K$ is a short exact sequence $1\to K\to G\to H\to 1$ of groups, and $K$ is usually called the kernel of the extension.

 The question to find all group extensions of given groups $H$ and $K$ is called the extension problem, which has been studied heavily since the late nineteenth century.
By Jordan-H\"older theorem, any finite group is obtained iteratively by group extension from simple groups. The solution to the extension problem would give us a complete classification of all finite groups. The extension problem is a very hard problem, and no general theory exists which treats all possible extensions at one time. However, group extensions with abelian kernel can be studied by using the group cohomology. We mainly deal with the case that both $K$ and $H$ are elementary $p$-groups here.

Now let $A$ be an abelian group. As usual, we use + to denote the multiplication and 0 the identity element in $A$. Let $0\to A \xrightarrow{\iota} G\xrightarrow{\pi} H\to 1$ be a group extension. We may identify $A$ with a normal subgroup of $G$. Then $G$ acts on $A$ by conjugate in $G$. To avoid confusion, we write $^g\!a$ for the conjugate $gag^{-1}$ in $G$. This induces an $H$-module structure on $A$, and the induced $H$-action  is written as $h\triangleright a$ for $h\in H$ and $a\in A$.

\begin{remark} By definition, the conjugate action is trivial if and only if $A$ is contained in the center of $G$. A group extension of this type is called a \emph{central extension}.
\end{remark}

Let $s\colon H\to G$ be a (set-theoretic) section, i.e., a set map such that $\pi\circ s= \id_H$. Then we obtain a well-defined map $[\ ]_s\colon H\times H\to A$ by setting $[h_1,h_2]_s=s(h_1)s(h_2)s(h_1h_2)^{-1}$. We call $[\ ]_s$ the \emph{factor set} determined by the group extension and $s$. It is direct to check that $[\ ]_s\in Z^2(H; A)$ is a 2-cocycle, and $[\ ]_s$ is normalized if $s(1_H)=1_G$. Moreover, $[\ ]_s-[\ ]_{s'}$ is a 2-coboundary for any other section $s'$.

We say that two extensions of $H$ by $A$ are equivalent if we have a commutative diagram
\[\begin{CD}
0    @>>>  A@>>>G@>>>H@>>>1\\
@. @V\id_AVV        @VV\sigma V @VV\id_HV @.\\
0    @>>>  A@>>>\tilde{G}@>>>H@>>>1
\end{CD}
\]
of group homomorphisms. Clearly $\sigma$ is an isomorphism in this case. Then we have the following well-known classification result.

\begin{proposition} Let $H$ be a group and $A$ an $H$-module. Then the equivalence classes of
group extensions such that the induced $H$-action on $A$ agrees with the $H$-module structure
are in 1-1 correspondence with the cohomology group $H^2(G, A)$.
\end{proposition}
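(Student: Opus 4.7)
The plan is to establish the bijection by constructing maps in both directions and verifying they are mutually inverse. This is the classical Schreier correspondence, and the argument breaks into a construction step and a well-definedness / inversion step.

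First I would fix the forward map. Given an extension $0\to A\xrightarrow{\iota} G\xrightarrow{\pi} H\to 1$ inducing the prescribed $H$-action on $A$, choose a normalized set-theoretic section $s\colon H\to G$ (so $s(1_H)=1_G$ and $\pi\circ s=\id_H$). As in the preceding discussion, define the factor set $[h_1,h_2]_s=s(h_1)s(h_2)s(h_1h_2)^{-1}$. Two verifications are routine but essential: (i) $[\,\cdot\,]_s$ takes values in $A$ (because $\pi([h_1,h_2]_s)=1$), and (ii) associativity of multiplication in $G$ translates, after conjugating by $s(h_1)$ and using $h\tr a=s(h)\iota(a)s(h)^{-1}$, into exactly the $2$-cocycle identity $h_1\tr[h_2,h_3]_s-[h_1h_2,h_3]_s+[h_1,h_2h_3]_s-[h_1,h_2]_s=0$. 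If a different normalized section $s'$ is chosen, then $s'(h)=f(h)s(h)$ for a unique map $f\colon H\to A$, and expanding $[h_1,h_2]_{s'}$ shows the difference $[h_1,h_2]_{s'}-[h_1,h_2]_s$ is the coboundary $df$. Hence the cohomology class $\varphi_G:=[[\,\cdot\,]_s]\in H^2(H,A)$ is independent of $s$. Finally, if $\sigma\colon G\to \tilde G$ is an equivalence of extensions and $s$ is a section for $G$, then $\sigma\circ s$ is a section for $\tilde G$ with the same factor set, so equivalent extensions produce the same class. This yields a well-defined map $\Phi\colon\{\text{equiv.\ classes}\}\to H^2(H,A)$.

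Next I would build the backward map. Given $\varphi\in Z^2(H,A)$, normalized without loss of generality, set $G_\varphi=A\times H$ as a set with multiplication
\[(a,h)(a',h')=\bigl(a+h\tr a'+\varphi(h,h'),\,hh'\bigr).\]
The cocycle identity is precisely what is needed for associativity; normalization gives $(0,1_H)$ as identity; and an explicit formula produces inverses. Define $\iota(a)=(a,1_H)$ and $\pi(a,h)=h$; one checks these are homomorphisms, that $0\to A\to G_\varphi\to H\to 1$ is exact, and that conjugation by $(0,h)$ on $(a,1_H)$ recovers the given $H$-action $h\tr a$, so we land in the prescribed class of extensions. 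Taking $s(h)=(0,h)$ as section gives factor set exactly $\varphi$, so $\Phi$ applied to the class of $G_\varphi$ returns $[\varphi]$. In the other direction, starting from any extension with section $s$ and factor set $\varphi$, the map $G_\varphi\to G$, $(a,h)\mapsto \iota(a)s(h)$, is a bijection (obvious) and a homomorphism (direct check using the definition of $\varphi$ and the $H$-action), providing an equivalence $G_\varphi\simeq G$.

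Finally I would verify that cohomologous normalized cocycles $\varphi,\varphi'$ with $\varphi'-\varphi=df$ produce equivalent extensions: the map $\sigma\colon G_\varphi\to G_{\varphi'}$, $(a,h)\mapsto(a-f(h),h)$, is a bijection making the standard diagram commute, and the cocycle condition $\varphi'(h_1,h_2)-\varphi(h_1,h_2)=h_1\tr f(h_2)-f(h_1h_2)+f(h_1)$ is exactly what makes $\sigma$ a homomorphism. Combining the four steps shows $\Phi$ is a bijection.

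The part most liable to error is bookkeeping: keeping the $H$-action, the normalization, and the sign conventions in the cocycle identity all consistent so that associativity in $G_\varphi$ and the factor-set identity line up. I would carry out those two computations explicitly (multiplying $((a_1,h_1)(a_2,h_2))(a_3,h_3)$ both ways, and computing $s(h_1)s(h_2)s(h_3)$ two ways in an arbitrary extension) to pin down the signs once, and then the rest of the proof is formal.
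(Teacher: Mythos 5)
Your proof is correct: it is the classical Schreier argument, and the paper itself offers no proof of this proposition (it is quoted as well known, with the relevant machinery --- the factor set $[\ ,\ ]_s$ of a section and the realizing group $A\rtimes_{\rho,\varphi}H$ --- set up in the surrounding text exactly as you use it). Your forward map, the independence of the section, the crossed-product inverse construction, and the coboundary computation for cohomologous cocycles all check out, including the sign conventions matching the paper's cocycle and coboundary formulas; the only cosmetic remark is that the target group should be read as $H^2(H,A)$ (the statement's $H^2(G,A)$ is a typo), which is how you correctly interpreted it.
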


We aim to apply group extensions to the classification of $p$-groups, and for this we need a generalized
version of the above proposition. We use the following notion of extension datum, which is essentially a combination of the notions of ``data'' and ``factor set'' as introduced in Page 179-180 in \cite{rot}.

\begin{definition} A \emph{group extension datum} (\emph{datum} for short) is by definition a quadruple $\mathcal D= (H, A, \rho, \varphi)$, where $H$ is a group, $A$ is an abelian group with an $H$-action given by $\rho\colon H\to \Aut(A)$, and $\varphi\colon H\times H\to A$ is a normalized 2-cocycle. $A$ is called the \emph{kernel} of the datum and $H$ is called the \emph{cokernel} of the datum.

If $H$ is an abelian group, then we call $\mathcal D$ an \emph{abelian datum}; If $H\cong \Z_p^m$ and $A\cong \Z_p^n$ are both elementary $p$-groups, then we call $\mathcal D$ a \emph{$p$-elementary datum of type $(m,n)$}.
\end{definition}

Let $0\to A\to G\to H\to 1$ be a group extension with abelian kernel. As we have shown above, any set-theoretic section $s\colon H\to G$ with $s(1_H)=1_G$ gives to a datum $(H, A, \rho, \varphi)$. We say that the group extension $0\to A\to G\to H\to 1$ is a realization of $(H, A, \rho, \varphi)$. Clearly $\rho$ is uniquely determined by the extension, while $\varphi$ depends on the choice of $s$.

\begin{lemma-definition} Given a datum $\mathcal D= (H, A, \rho, \varphi)$. We can define a product on the set $A\times H$ by $(a,h)(b,k)= (a+ h\triangleright b +\varphi(h,k), hk)$. Then
\begin{enumerate}
\item The product makes $A\times H$ a group with the identity element $(0, 1)$, and the inverse given by $(a, h)^{-1}=(-h^{-1}\triangleright(a+\varphi(h,h^{-1})), h^{-1})$. The resulting group is called the \emph{realizing group} of the datum $\D$ ( or the group realizing $\D$), and is denoted by $G({\mathcal D})$ or $A\rtimes_{\rho,\varphi} H$.
\item $A\rtimes_{\rho, 0} H$ gives the semidirect product of $A$ and $H$.
\item $(A,1)\subseteq Z(G(\mathcal D))$, the center of $G(\mathcal D)$, if and only if $H$ acts on $A$ trivially.
\end{enumerate}
\end{lemma-definition}

\begin{proof} (1) and (2) are trivial and we omit the proof here. We only prove (3). First assume that $A$ is a trivial module, say $h\tr a=a$ for any $a\in A$ and $h\in H$. Then
\[(a,1)(b,h)= (a+b+\varphi(1,h), h)=(a+b, h)=(h\tr a+b+ \varphi(h,1), h)= (b,h)(a,1)\]
holds for any $(b,h)\in G(\mathcal D)$, hence $(a,1)\in Z(G(\mathcal D))$. Note that $\varphi$ is assumed to be normalized here.

Conversely, for any $a\in A$, if $(a,1)\in Z(G(\mathcal D))$, then we have $$(a+b+\varphi(1,h), h)=(h\tr a+b+ \varphi(h,1), h)$$ for any $b\in A, h\in H$. In particular, $h\tr a = a$ for any $x\in H$, which means that the $H$-action is trivial.
\end{proof}

\begin{remark} \label{rem-data}(1)  We simply write $\rtimes_{\varphi}=\rtimes_{1,\varphi}$ and $\rtimes_{\rho}=\rtimes_{\rho,0}$ when there is no confusion. Sometimes we also use $a\rtimes h$ to denote the element $(a,h)\in A\rtimes_{\rho,\varphi} H$.

(2) It is not hard to show that $A\rtimes_{\rho,\varphi} H \cong A\rtimes_{\rho,\varphi'} H$ if $\varphi$ and $\varphi'$ are cohomologous. We freely identify $A$ as the subgroup $(A, 1)$ of $A\rtimes_{\rho,\varphi} H$
and $H$ a subset via the maps $a\mapsto (a, 1_H)$ and $h\mapsto (0,h)$ for any $a\in A$ and $h\in H$.
We mention that $(0, H)$ is not a subgroup in general, it is if and only if $\varphi=0$.

(3) The above construction works well if $\varphi$ is not normalized. In this case, $(A,1)$ will not be a subgroup. In fact, $\varphi$ is normalized if and only if $1_{A\rtimes_{\rho,\varphi} H} =(0,1_H)$.

(4) Extensions with non-abelian kernel relate also closely to certain cohomology theory, cf. \cite[Section IV.6]{br}.
\end{remark}

By definition, any solvable group is realized as an iterated extension of abelian groups, and hence of cyclic groups. This fact seems to motivate (at least partly) the early development of group cohomology theory.

\begin{proposition} Let $G$ be a finite group. Then

(1) $G$ is solvable if and only if
\[G\cong A_n\rtimes_{\rho_n,\varphi_n}(A_{n-1}\rtimes_{\rho_{n-1},\varphi_{n-1}}(\cdots (A_2\rtimes_{\rho_1,\varphi_1}(A_1\rtimes_{\rho_1,\varphi_1}A_0))\cdots))\]
for some cyclic groups $A_0, A_1,\cdots, A_n$, group actions $\rho_1, \cdots, \rho_n$ and 2-cocycles $\varphi_1, \cdots, \varphi_n$.

(2) $G$ is nilpotent if and only if all $\rho_i$'s above can be chosen trivial, say
\[G\cong A_n\rtimes_{\varphi_n}(A_{n-1}\rtimes_{\varphi_{n-1}}(\cdots (A_2\rtimes_{\varphi_1}(A_1\rtimes_{\varphi_1}A_0))\cdots))\]
for some cyclic groups $A_0, A_1,\cdots, A_n$  and 2-cocycles $\varphi_1, \cdots, \varphi_n$.
\end{proposition}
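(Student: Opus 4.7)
The plan is to derive both statements from a single dictionary between the iterated extension presentations and the normal series of $G$ with cyclic quotients. Given an iterated extension $G\cong A_n\rtimes_{\rho_n,\varphi_n}(A_{n-1}\rtimes\cdots\rtimes A_0)$, set $L_j\trianglelefteq G$ to be the kernel of the canonical projection of $G$ onto the inner tail $A_{j-1}\rtimes\cdots\rtimes A_0$; this produces a normal series $1=L_{n+1}\trianglelefteq L_n\trianglelefteq\cdots\trianglelefteq L_0=G$ with $L_j/L_{j+1}\cong A_j$, in which the extension class and the induced action recover $\varphi_j$ and $\rho_j$. Conversely, given any normal series $1=M_0\trianglelefteq M_1\trianglelefteq\cdots\trianglelefteq M_{n+1}=G$ with cyclic quotients, I would build the iterated extension by induction on the length: peel off the innermost cyclic normal subgroup $M_1$, apply the extension-datum formalism of Section~3.2 to $1\to M_1\to G\to G/M_1\to 1$ to obtain a datum $(G/M_1,M_1,\rho,\varphi)$ with $G\cong M_1\rtimes_{\rho,\varphi}(G/M_1)$, and then invoke the inductive hypothesis on $G/M_1$, which carries the induced quotient series $1\trianglelefteq M_2/M_1\trianglelefteq\cdots\trianglelefteq G/M_1$ of length $n$ with cyclic quotients.

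For part (1), the ``if'' direction is then immediate, since an iterated extension of cyclic groups produces a normal series with abelian factors. For the converse, the finite solvable group $G$ has a derived series with abelian quotients, each of which further admits a refinement into a chain of cyclic factors (of prime order, say, via a composition series of finite abelian groups); splicing these refinements into the derived series yields a normal series with cyclic quotients, to which the inductive construction above applies.

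For part (2), the key input is Lemma-Definition~(3), which asserts that the kernel $A$ is central in $A\rtimes_{\rho,\varphi}H$ if and only if $\rho$ is trivial. The ``if'' direction then follows: all $\rho_i$ trivial forces each $L_j/L_{j+1}$ to be central in $G/L_{j+1}$, so $\{L_j\}$ is a central series and $G$ is nilpotent. Conversely, if $G$ is nilpotent I would start from the upper central series $1=Z_0\trianglelefteq Z_1\trianglelefteq\cdots\trianglelefteq Z_s=G$, whose factors $Z_k/Z_{k-1}$ are finite abelian and central in $G/Z_{k-1}$, and refine each such factor into cyclic pieces. The only nontrivial point, which I view as the main obstacle to verify, is that such a refinement preserves centrality: given an intermediate chain $Z_{k-1}=P_0\trianglelefteq P_1\trianglelefteq\cdots\trianglelefteq P_\ell=Z_k$ with cyclic factors, each $P_j/Z_{k-1}$ is a subgroup of the abelian central subgroup $Z_k/Z_{k-1}$ of $G/Z_{k-1}$, hence is normal in $G$ and central in $G/Z_{k-1}$, so the further quotient $P_{j+1}/P_j$ is central in $G/P_j$. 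The refined series is then a central series with cyclic factors, and the dictionary of the first paragraph converts it into an iterated central-extension presentation in which, by the converse part of Lemma-Definition~(3), every $\rho_i$ is trivial.
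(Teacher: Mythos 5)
Your dictionary in the first paragraph and your proof of part (2) are sound, but note what the dictionary really requires: in the nested expression $A_n\rtimes_{\rho_n,\varphi_n}(A_{n-1}\rtimes\cdots)$ the kernel at each stage pulls back, through the successive quotient maps, to a subgroup that is normal in $G$ itself (a preimage of a normal subgroup under any homomorphism is normal). Hence such an expression with cyclic $A_i$ is equivalent to a chain $1=L_{n+1}\le L_n\le\cdots\le L_0=G$ in which \emph{every} $L_j$ is normal in $G$ and $L_j/L_{j+1}$ is cyclic; correspondingly, your inductive peeling needs $M_{j+1}/M_j$ to be normal in $G/M_j$ at every stage, i.e.\ all $M_j\trianglelefteq G$, not merely each term normal in the next.

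This is exactly where your argument for the converse of part (1) breaks. Refining an abelian factor $G^{(i)}/G^{(i+1)}$ of the derived series into cyclic pieces produces intermediate subgroups that are normal in $G^{(i)}$ but in general not normal in $G$, so the spliced series is only subnormal and the construction of your first paragraph does not apply to it. Moreover the gap cannot be patched for cyclic $A_i$: the required structure is supersolvability, and $A_4$ is solvable but not supersolvable --- its only proper nontrivial normal subgroup is the Klein four-group $V_4$, which is not cyclic, so there is no chain of subgroups normal in $A_4$ with cyclic successive quotients, hence no presentation of $A_4$ in the stated nested form with cyclic $A_i$. Thus part (1) as literally stated fails, while your proof closes immediately (no refinement needed) if the $A_i$ are only required to be abelian, since the derived series itself consists of subgroups normal in $G$ with abelian factors; this is the ``abelian'' formulation given in the paper's introduction, and you should either prove that version or flag the discrepancy. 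Part (2) is unaffected: your centrality argument is precisely the reason a refinement of the upper central series remains normal in $G$ and central in the relevant quotients, a point that has no analogue for the derived series.
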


We end this subsection with the construction of product of group extension data. Let $\D_1= (H_1, A_1, \rho_1, \varphi_1)$ and $\D_2= (H_2, A_2, \rho_2, \varphi_2)$ be data. We set $H= H_1\times H_2$ and $p_1\colon H\to H_1$ and $p_2\colon H\to H_2$ to be the projection maps. Then $A_1$ and $A_2$ are $H$-modules with the obvious action given by $\rho_1\circ p_1$ and $\rho_2\circ p_2$
respectively. Set $A= A_1\oplus A_2$. It is direct to show that the map $\varphi\colon H\times H\to A, \varphi((h_1, h_2), (h_1', h_2')) = (\varphi_1(h_1, h_1'), \varphi_2(h_2, h_2'))$ gives a 2-cocycle of $H$. Then we have a datum $(H, A, \rho, \varphi)$, which is called the product datum of $\D_1$ and $\D_2$ and denoted by $\D_1\times \D_2$.

\begin{proposition}\label{prop-prod-data}  Keep the above notations. Then

(1) $G(\D_1\times \D_2)\cong G(\D_1)\times G(\D_2)$.

(2) Given a datum $\mathcal D= (H, A, \rho, \varphi)$. Let $A_1$ and $A_2$ be $H$-submodules of $A$, such that $A= A_1\oplus A_2$, $H$ acts trivially on $A_2$, and $\mathrm{Im}(\varphi)\subseteq A_1$. Then $G(\D)\cong G(\D_1)\times A_2$, where $\D_1= (H, A_1, \rho, \varphi)$.
\end{proposition}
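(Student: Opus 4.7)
For part (1), my plan is to write down the natural bijection
\[ \Phi\colon G(\D_1)\times G(\D_2) \To G(\D_1\times \D_2), \qquad ((a_1,h_1),(a_2,h_2)) \longmapsto ((a_1,a_2),(h_1,h_2)), \]
and verify it is a group homomorphism. Unwinding both sides: in the codomain the $H$-component of a product multiplies in $H_1\times H_2$, the action of $(h_1,h_2)$ on $(a_1',a_2')\in A_1\oplus A_2$ is $(h_1\tr a_1',\ h_2\tr a_2')$ by construction (since the $H$-action on $A_i$ factors through $p_i$), and the product cocycle evaluates componentwise as $(\varphi_1(h_1,h_1'),\varphi_2(h_2,h_2'))$. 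These are precisely the data appearing in the multiplication of $G(\D_1)\times G(\D_2)$, so the two multiplication formulas agree on the nose.

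For part (2), the idea is to define
\[ \Psi\colon G(\D_1)\times A_2 \To G(\D), \qquad ((a_1,h),a_2) \longmapsto (a_1+a_2,\ h), \]
where $A_2$ is viewed as its own abelian group under addition. The decomposition $A=A_1\oplus A_2$ makes $\Psi$ an obvious bijection of sets, so the real content is to check it is a homomorphism. Compute the product of two typical elements on each side: the hypothesis $\mathrm{Im}(\varphi)\subseteq A_1$ ensures that the cocycle contribution lives entirely in the $A_1$-component (so it combines correctly with the $A_1$-part of the $G(\D_1)$ multiplication), while the hypothesis that $H$ acts trivially on $A_2$ forces $h\tr a_2'=a_2'$, so the $A_2$-components simply add. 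Both sides then produce $(a_1+h\tr a_1'+\varphi(h,h')+a_2+a_2',\ hh')$, confirming the homomorphism property.

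The computations in both parts are purely mechanical; the only real work is careful bookkeeping of how the action and the cocycle decompose across $A_1\oplus A_2$, and there is no genuine obstacle. Conceptually one may alternatively realize (2) as an internal direct product: the subset $(A_2,1_H)$ is a central subgroup of $G(\D)$ (central because $H$ acts trivially on $A_2$ and because $\varphi$ takes no values in $A_2$, so no cocycle corrections appear when multiplying with it), the subset $\{(a_1,h):a_1\in A_1,\ h\in H\}$ is a subgroup isomorphic to $G(\D_1)$ (by the same two hypotheses, the multiplication stays within this subset and matches the one in $G(\D_1)$), and together they exhaust $G(\D)$ with trivial intersection, giving the claimed direct product decomposition.
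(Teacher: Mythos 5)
Your proposal is correct, and for part (1) it is exactly the paper's argument: the componentwise bijection $((a_1,a_2),(h_1,h_2))\leftrightarrow((a_1,h_1),(a_2,h_2))$ intertwines the two multiplications. For part (2) the paper just specializes (1) to $H_1=H$, $H_2=\{1\}$ (so that $G(\D_2)=A_2$ and $\D_1\times\D_2$ is identified with $\D$), whereas you verify the map $((a_1,h),a_2)\mapsto(a_1+a_2,h)$ directly (and sketch the internal direct product view); these are the same computation in different packaging, with no gap in either.
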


The proof is easy. In fact, the map $(a_1, a_2)\rtimes(h_1,h_2))\mapsto (a_1\rtimes h_1, a_2\rtimes h_2)$ is a desired isomorphism in $(1)$; and (2) is the special case of $(1)$ with $H_1=H$ and $H_2=\{1\}$.

\subsection{Equivalence of data}
 Let $\sigma_A\colon A\to \tilde A$ and $\sigma_H\colon H\to \tilde H$ be isomorphisms of groups. Clearly $\sigma_A$ induces an isomorphism $\Aut(A)\to \Aut(\tilde A)$, mapping $\tau\in \Aut(A)$ to the composition map $\sigma_A\circ\tau\circ\sigma_A^{-1}$. The composition with $\rho\circ \sigma_H^{-1}$ gives a group homomorphism $\tilde \rho\colon \tilde H\to \Aut(\tilde A)$, making $\tilde A$ an $\tilde H$-module. More precisely, the $\tilde H$-action is given by $\tilde h\tr\tilde a=\sigma_A(\sigma_H^{-1}(\tilde h)\tr \sigma_A^{-1}(a))$ for any $\tilde h\in \tilde H$ and $\tilde a\in \tilde A$. Consider the map $\tilde\varphi\colon \tilde H\times \tilde H\to \tilde A$, $\tilde \varphi(\tilde h, \tilde k)= \sigma_A(\varphi(\sigma_H^{-1}(\tilde{h}), \sigma_H^{-1}(\tilde{k})))$. It is direct to check that $\tilde\varphi\in Z^2(\tilde H, \tilde A)$, and $A\rtimes_{\rho,\varphi} H\cong \tilde A\rtimes_{\tilde \rho,\tilde \varphi} \tilde H$. This suggests the following definition.

\begin{definition}\label{def-equi-data}
Two data $\mathcal{D}=(H, A, \rho, \varphi)$ and $\tilde{\mathcal{D}}=(\tilde H, \tilde A, \tilde\rho, \tilde\varphi)$ are said to be \emph{equivalent} if
there exist group isomorphisms $\sigma_H\colon H\to \tilde H$ and $\sigma_A\colon A\to \tilde A$ and a map $f\colon H\to\tilde A$ , such that
$\sigma_A(h\tr a)=\sigma_H(h)\tr \sigma_A(a)$  and \[\tilde \varphi(\sigma_H(h), \sigma_H(k)) -\sigma_A(\varphi(h, k))=\sigma_H(h)\tr f(k)-f(hk)+f(h)\] for all $a\in A$ and $h, k\in H$; if furthermore, $f$ can be chosen to be 0, then $\mathcal D$ and $\tilde {\mathcal D}$ are said to be \emph{isomorphic}.
\end{definition}

\begin{remark}\label{rem-equi-data} We denoted two equivalent data by $\D\sim\tilde D$, or $\D \stackrel{(\sigma_H, \sigma_A, f)}{\sim} \tilde D$ in case one needs to specify $\sigma_H, \sigma_A$ and $f$.  It is easy to show that $``\sim "$ is an equivalence relation, and hence provides a partition of the set of data into equivalence classes. As usual, we use $[\D]$ to denote the equivalence class containing $\D$, that is $[\D]=\{\tilde\D \mid D\sim \tilde D\}$.
\end{remark}

We need the following result. The proof is straightforward and we omit it here.

\begin{proposition}\label{prop-equidata-isogp} Let $\mathcal{D}=(H, A, \rho, \varphi)$ and $\tilde{\mathcal{D}}=(\tilde H, \tilde A, \tilde\rho, \tilde\varphi)$ be data. Then $\mathcal D$ and $\tilde {\mathcal D}$ are equivalent if and only if  there exists a group isomorphism $\sigma\colon G(\mathcal D) \to G(\tilde{\mathcal D})$ that restricts to an isomorphism $\sigma_A\colon A\to\tilde A$.
\end{proposition}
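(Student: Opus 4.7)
The plan is to prove both directions by translating the defining algebraic identities in Definition \ref{def-equi-data} directly into statements about the group law $(a,h)(b,k)= (a+ h\tr b +\varphi(h,k), hk)$ on the realizing group, and conversely.

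For the forward direction, suppose $\mathcal D\stackrel{(\sigma_H,\sigma_A,f)}{\sim}\tilde{\mathcal D}$. I would define a candidate isomorphism $\sigma\colon G(\mathcal D)\to G(\tilde{\mathcal D})$ by $\sigma(a,h)=(\sigma_A(a)-f(h),\sigma_H(h))$, with the sign of $f$ dictated by the sign convention in Definition \ref{def-equi-data}. Plugging $h=k=1$ into the cocycle compatibility relation and using that both $\varphi$ and $\tilde\varphi$ are normalized forces $f(1_H)=0$, so $\sigma$ restricts to $\sigma_A$ on $A=(A,1)$. The module compatibility $\sigma_A(h\tr a)=\sigma_H(h)\tr\sigma_A(a)$ handles the ``$h\tr b$'' summand when expanding $\sigma((a,h)(b,k))$, while the cocycle compatibility $\tilde\varphi(\sigma_H(h),\sigma_H(k))-\sigma_A(\varphi(h,k))=\sigma_H(h)\tr f(k)-f(hk)+f(h)$ is exactly what is needed to match the ``$\varphi(h,k)$'' summand with the new ``$\tilde\varphi$'' and ``$f$'' contributions produced by $\sigma(a,h)\sigma(b,k)$. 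Since $\sigma_A$ and $\sigma_H$ are bijections the same formula with $(\sigma_H^{-1},\sigma_A^{-1},-\sigma_A^{-1}\circ f\circ\sigma_H^{-1})$ provides a two-sided inverse, so $\sigma$ is an isomorphism.

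For the backward direction, suppose $\sigma\colon G(\mathcal D)\to G(\tilde{\mathcal D})$ is an isomorphism restricting to an isomorphism $\sigma_A\colon A\to\tilde A$. Then $\sigma(A)=\tilde A$, so $\sigma$ descends to an isomorphism of quotients $G(\mathcal D)/A\to G(\tilde{\mathcal D})/\tilde A$. Identifying these quotients with $H$ and $\tilde H$ via $(a,h)\mapsto h$ and $(\tilde a,\tilde h)\mapsto\tilde h$ yields an isomorphism $\sigma_H\colon H\to\tilde H$. Concretely, for each $h\in H$ one defines $f(h)\in\tilde A$ and $\sigma_H(h)\in\tilde H$ by $\sigma(0,h)=(-f(h),\sigma_H(h))$; normalization $f(1_H)=0$ follows from $\sigma(0,1_H)=(0,1_{\tilde H})$. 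Now equating the two expressions for $\sigma((0,h)(0,k))=\sigma(\varphi(h,k),hk)$ and $\sigma(0,h)\sigma(0,k)$ coordinate-wise yields simultaneously the multiplicativity of $\sigma_H$ and the cocycle relation in Definition \ref{def-equi-data}. The module compatibility is obtained by applying $\sigma$ to the internal conjugation identity $(0,h)(a,1)(0,h)^{-1}=(h\tr a,1)$ in $G(\mathcal D)$ and using the analogous identity for any lift of $\sigma_H(h)$ in $G(\tilde{\mathcal D})$, which represents $\tilde\rho(\sigma_H(h))$ on $\tilde A$ by normality of $\tilde A$.

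The verifications above are essentially forced by the construction, so there is no deep obstacle; the main bookkeeping task is keeping the sign of $f$ (and the normalization $f(1)=0$) consistent with the sign convention chosen in Definition \ref{def-equi-data}, and making sure that the induced $\sigma_H$ genuinely lands in $\tilde H$ rather than in the coset space, which is exactly what the restriction hypothesis $\sigma(A)=\tilde A$ guarantees.
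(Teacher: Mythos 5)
Your argument is correct: the formula $\sigma(a,h)=(\sigma_A(a)-f(h),\sigma_H(h))$ does turn the cocycle-compatibility identity of Definition \ref{def-equi-data} into exactly the multiplicativity of $\sigma$, and conversely reading off $f$ and $\sigma_H$ from $\sigma(0,h)$ and using conjugation on $(A,1)$ recovers both the module compatibility and the cocycle relation, with the normalization $f(1_H)=0$ handled correctly. The paper omits this proof as straightforward, and your direct verification is precisely the intended argument, so there is nothing to add.
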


\begin{remark} We mention that not all isomorphisms $\sigma\colon G(\mathcal D)\to G(\tilde{\mathcal D})$ will restrict to an isomorphism between $A$ and $\tilde A$. It will be the case if  $A=G(\mathcal D)'$ and $\tilde A= G(\tilde{\mathcal D})'$, or if
$A= Z(G(\mathcal D))$ and $\tilde A= Z(G(\tilde{\mathcal D}))$.

An obvious counterexample is as follows. Let $A_1$ and $A_2$ be nonisomorphic abelian groups. Consider the trivial datum $\mathcal D= (A_1, A_2, \rho, \varphi)$ and $\tilde{\mathcal D}=(A_2, A_1, \tilde\rho, \tilde\varphi)$, where trivial means that the group actions and the 2-cocyles are all trivial. Then the natural isomorphism $A_2\times A_1\cong A_1\times A_2$ gives an isomorphism $G(\mathcal D)\cong G(\tilde{\mathcal D})$, which does not restrict to an isomorphism from $A_2$ to $A_1$.
\end{remark}

We may draw an easy consequence, which says that in many cases we are interested, isomorphisms of realizing groups is equivalent to equivalences of data.

\begin{corollary}\label{cor-equidata-isogp} Let $\mathcal{D}=(H, A, \rho, \varphi)$ and $\tilde{\mathcal{D}}=(\tilde H, \tilde A, \tilde\rho, \tilde\varphi)$ be data. Assume that one of the following holds:
\begin{enumerate} \item $A=G(\mathcal D)'$ and $\tilde A= G(\tilde{\mathcal D})'$;
\item $A= Z(G(\mathcal D))$ and $\tilde A= Z(G(\tilde{\mathcal D}))$.
\end{enumerate}
Then $G(\mathcal D)$ and $G(\tilde{\mathcal D})$ are isomorphic if and only if $\mathcal D$ and $\tilde {\mathcal D}$ are equivalent.
\end{corollary}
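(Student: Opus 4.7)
The plan is to deduce this corollary directly from Proposition \ref{prop-equidata-isogp} by observing that, under either hypothesis (1) or (2), any group isomorphism $\sigma\colon G(\mathcal D)\to G(\tilde{\mathcal D})$ is automatically forced to send $A$ to $\tilde A$. The ``if'' direction is free: if $\mathcal D\sim\tilde{\mathcal D}$, then Proposition \ref{prop-equidata-isogp} already supplies an isomorphism of realizing groups (one that even restricts to $A\to\tilde A$), so in particular $G(\mathcal D)\cong G(\tilde{\mathcal D})$.

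For the ``only if'' direction I would invoke the elementary fact that the derived subgroup and the center are characteristic subgroups of any group, that is, they are preserved under every group isomorphism. Concretely, for any isomorphism $\sigma\colon G_1\to G_2$ one has $\sigma([g,h])=[\sigma(g),\sigma(h)]$, which forces $\sigma(G_1')=G_2'$; and the centrality condition $gx=xg$ for all $x$ translates under $\sigma$ to $\sigma(g)y=y\sigma(g)$ for all $y\in G_2$, giving $\sigma(Z(G_1))=Z(G_2)$.

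Now suppose $\sigma\colon G(\mathcal D)\to G(\tilde{\mathcal D})$ is any group isomorphism. Under hypothesis (1), $\sigma(A)=\sigma(G(\mathcal D)')=G(\tilde{\mathcal D})'=\tilde A$; under hypothesis (2), $\sigma(A)=\sigma(Z(G(\mathcal D)))=Z(G(\tilde{\mathcal D}))=\tilde A$. In either case $\sigma$ restricts to a group isomorphism $\sigma_A\colon A\to\tilde A$, whence Proposition \ref{prop-equidata-isogp} applies and produces the equivalence $\mathcal D\sim\tilde{\mathcal D}$.

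There is no genuine obstacle in this argument, as the characteristic nature of $G'$ and $Z(G)$ is standard; the corollary is essentially a repackaging of Proposition \ref{prop-equidata-isogp} in the two situations where the kernel $A$ is singled out intrinsically inside the realizing group. The counterexample given in the remark preceding Corollary \ref{cor-equidata-isogp} shows why some hypothesis of this flavour is necessary: without it, a generic isomorphism of realizing groups need not respect the chosen kernel subgroup.
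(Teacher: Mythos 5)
Your proof is correct and follows exactly the route the paper intends: the remark preceding the corollary already notes that an isomorphism of realizing groups must restrict to $A\to\tilde A$ when $A$ is the derived subgroup or the center (both being characteristic), after which Proposition \ref{prop-equidata-isogp} gives the equivalence, with the converse direction immediate from that same proposition.
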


\subsection{Abelian data} We are interested in groups that are extensions of abelian groups, or equivalently, realizing groups of abelian data. For instance, by Lemma \ref{lem-facts-nilclass} and Corollary \ref{cor-exp3-dersubgp} any finite group of exponent 3 or any nilpotent group of class 3 is such a group.

\begin{lemma}\label{lem-abel-extension} Let $\D=(H, A, \rho, \varphi)$ be an abelian datum. Then
\begin{enumerate}
\item For any $(a,k), (b,h)\in G(\mathcal D)$, we have \[[(a,k),(b,h)]=((1-h)\tr a+(k-1)\tr b+\varphi(k,h)-\varphi(h,k),1);\]
\item The derived subgroup of $G(\mathcal D)$ is abelian;
\item The center of $G(\D)$ is given by \[\{(a,k)\in G(\D)\mid k\tr b= b\ \forall b\in A,\ \varphi(k,h)-\varphi(h,k)= (h-1)\tr a\ \forall h\in G\}.\]
    Moreover, there exist some $\tilde\varphi$ cohomologous to $\varphi$ and a subgroup $K\le H$, such that

    (a) $K$ acts on $A$ trivially;

    (b) $\tilde\varphi(k,h)\in \soc(A)$ and $\tilde\varphi(k, h)=\tilde\varphi(h, k)$ for any $k\in K$ and $h\in H$;

    (c) $Z(A\rtimes_{\rho,\tilde\varphi}H)=\{(a,k)\mid a\in\soc(A), k\in K\}$.
\end{enumerate}
\end{lemma}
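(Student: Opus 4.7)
For part (1), the plan is a direct computation: expand $[(a,k),(b,h)]=(a,k)(b,h)\bigl((b,h)(a,k)\bigr)^{-1}$ using the multiplication and inverse formulas from the Lemma-Definition. Since $H$ is abelian, $(a,k)(b,h)$ and $(b,h)(a,k)$ share second coordinate $kh=hk$, so the commutator has second coordinate $1_H$ and first coordinate $(1-h)\tr a+(k-1)\tr b+\varphi(k,h)-\varphi(h,k)$ once the $\varphi(kh,(kh)^{-1})$ terms coming from the inverse cancel. Part (2) is then immediate, since every commutator lies in the abelian subgroup $(A,1_H)\cong A$.

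For the first assertion of (3), an element $(a,k)$ lies in $Z(G(\D))$ iff the formula from (1) vanishes on every $(b,h)$: specializing $h=1_H$ with $b$ arbitrary forces $(k-1)\tr b=0$ for all $b$, so $k$ acts trivially on $A$; specializing $b=0$ with $h$ arbitrary yields $\varphi(k,h)-\varphi(h,k)=(h-1)\tr a$.

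For the existence of $\tilde\varphi$ and $K$, let $\pi\colon G(\D)\to H$ be the projection and put $K:=\pi(Z(G(\D)))\le H$; by the center description this is contained in $\ker\rho$, so (a) is automatic. For each $k\in\ker\rho$ define $c_k\colon H\to A$ by $c_k(h):=\varphi(k,h)-\varphi(h,k)$. A careful application of the 2-cocycle condition for $\varphi$ at the triples $(k,h,h')$, $(h,k,h')$ and $(h,h',k)$, combined with $k\tr=\id$, shows that $c_k\in Z^1(H,A)$ and yields the key identity
\[c_{kk'}=c_k+c_{k'}+d\varphi(k,k')\quad\text{in }Z^1(H,A)\]
for all $k,k'\in\ker\rho$. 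The center description identifies $K$ precisely as those $k\in\ker\rho$ for which $c_k$ is a 1-coboundary, so for each $k\in K$ one may fix $a_k\in A$ with $c_k=da_k$, unique modulo $\ker d=\soc(A)$.

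Choose a system $\{h_i\}$ of coset representatives for $K$ in $H$ with $h_1=1_H$, and define $f\colon H\to A$ by $f(kh_i):=a_k+\varphi(k,h_i)$ for $k\in K$; set $\tilde\varphi:=\varphi+df$. Expanding $\tilde\varphi(k,k'h_i)$ and applying the cocycle identity at $(k,k',h_i)$ collapses it to
\[\tilde\varphi(k,k'h_i)=-\bigl(a_{kk'}-a_k-a_{k'}-\varphi(k,k')\bigr),\]
and the displayed identity above forces the right-hand side into $\ker d=\soc(A)$; this proves $\tilde\varphi(k,h)\in\soc(A)$ for $k\in K,h\in H$. A parallel computation using $c_k=da_k$ produces $\tilde\varphi(h,k)=\tilde\varphi(k,h)$, giving (b). Finally (c) follows from the standard isomorphism $G(\varphi)\xrightarrow{\sim}G(\tilde\varphi)$, $(a,h)\mapsto(a-f(h),h)$, which carries $Z(G(\varphi))=\{(a,k)\mid k\in K,\ c_k=da\}$ onto $\soc(A)\times K$, since the equation $c_k=da$ rearranges to $a-a_k\in\ker d=\soc(A)$. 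The main obstacle is the intricate cocycle bookkeeping behind both the identity $c_{kk'}=c_k+c_{k'}+d\varphi(k,k')$ and the explicit simplification of $\tilde\varphi(k,k'h_i)$.
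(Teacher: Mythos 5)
Your proposal is correct and follows essentially the same route as the paper: the same direct commutator computation for (1)--(2), the same specialization for the center description, and the same construction $K=\pi(Z(G(\D)))$, coset representatives, correction cochain $f(kh_i)=a_k+\varphi(k,h_i)$ and $\tilde\varphi=\varphi+d(f)$. Your cohomological bookkeeping (the $1$-cocycles $c_k$, the identity $c_{kk'}=c_k+c_{k'}+d\varphi(k,k')$, and the explicit isomorphism $(a,h)\mapsto(a-f(h),h)$ used for (c)) is just a repackaging of the paper's shorter group-theoretic steps (products of central elements are central, and ``(c) follows from (b) and the first assertion''), with only the trivial normalization $a_{1_H}=0$ left implicit.
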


\begin{proof}
(1) By definition $(a,k)(b,h)= [(a,k),(b,h)](b,h)(a,k)$. Write $[(a,k),(b,h)]=(c,g)$. Then we have
\begin{align*}(a+ k\tr b+\varphi(k,h), kh) = &(c,g)(b+ h\tr a+\varphi(h,k), hk)
\\ = &(c + g\tr(b+ h\tr a+\varphi(h,k)) + \varphi(g, hk), ghk).
\end{align*}
It follows that $g=1$, and $a+ k\tr b+\varphi(k,h)=c +  b+ h\tr a+\varphi(h,k)$, and we obtain the desired equality. Note that we use the assumptions $hk=kh$ and $\varphi$ is normalized here.

(2) By (1) $G(\mathcal D)'$ is a subgroup of $A$ and hence abelian.

(3) Note that $(a, k)\in Z(G)$ if and only if $[(a,k), (0, h)]=(0,1)= [(a,k), (b, 1)]$ for any $b\in A$ and $k\in G$. The first conclusion follows easily from in (1).

It is direct to show that $(a, 0)\in Z(G(\D))$ if and only if $(h-1)\tr a= 0$ for any $h\in H$, which is equivalent to $a\in\soc(A)$. For $a, a'\in A$, if $(a,k), (a',k)\in Z(G(\D))$ for some $k\in K$, then $a-a'\in \soc(A)$.

Set $K=\{k\in H\mid (a, k)\in Z(G(\D)) \text{ for some } a\in A\}$. Then $K\le H$ is a subgroup. In fact,
consider the projection from $G(\D)$ to $H$, then $K$ is the image of $Z(G(\D))$. Clearly (a) is true by the first assertion.

For any $k\in K$, we fix some $a(k)\in A$ such that $(a(k),k)\in Z(G(\D))$ and we choose $a(1_H)=0$.
Then $(a(k), k)(a(k'), k')=(a(k)+ a(k') +\varphi(k, k'), kk')\in Z(G(\D))$ and hence $a(kk')- a(k)- a(k') - \varphi(k, k')\in \soc(A)$.

Let $S\subseteq H$ be a complete set of representatives of left cosets of $K$ in $H$.
Then each $h\in H$ is uniquely written as $ks$ for some $k\in K$ and $s\in S$.
We then have a map $f\colon H\to K$ given by $f(ks)= a(k) +\varphi(k,s)$.
We claim that $\varphi+d(f)$ gives the desired $\tilde\varphi$.

 In fact, for any $k, k'\in K$ and $s\in S$,
\begin{align*}
\tilde\varphi(k, k's)= &\varphi(k,k's)+ k\tr(a(k')+ \varphi(k', s))- (a(kk')+ \varphi(kk', s))+ a(k)\\
=& \varphi(k', s)- \varphi(kk', s)+ \varphi(k,k's)-\varphi(k, k') - a(kk') + a(k)+ a(k')+\varphi(k, k')\\
=& a(k)+ a(k')+\varphi(k, k') - a(kk')\in \soc(A),
\end{align*}
and
\begin{align*}
\tilde\varphi(k's, k)= \varphi(k's, k)+ k's\tr a(k) - (a(kk')+ \varphi(kk', s))+ (a(k')+ \varphi(k', s)),
\end{align*}
therefore we have
\[\tilde\varphi(k's, k)-\tilde\varphi(k, k's)= \varphi(k's, k)- \varphi(k,k's) + (k's-1)\tr a(k)=0,\]
which proves (b).

Now (c) follows from (b) and the first assertion of (3).
\end{proof}

Let $\tilde\D=(\tilde H, \tilde A, \tilde \rho, \tilde\varphi)$ be another abelian data. Then combined with Proposition \ref{prop-prod-data}, we have the following result, generalizing Corollary \ref{cor-equidata-isogp}(1).

\begin{proposition}\label{prop-equiabeldata-isogp} Let $\D=(H, A, \rho, \varphi)$ and $\tilde\D=(\tilde H, \tilde A, \tilde \rho, \tilde\varphi)$ be abelian data with $A\cong \tilde A$ as abelian groups. Assume that $ A= G(\D)'\oplus A_1$ and $\tilde A= G(\tilde\D)'\oplus \tilde A_1$ for some trivial modules $_H\!A_1$ and $_{\tilde H}\!\tilde A_1$, and $\mathrm{Im}(\varphi)\subseteq G(\D)'$ and $\mathrm{Im}(\tilde\varphi)\subseteq G(\tilde\D)'$. Then $G(\D)\cong G(\tilde D)$ if and only if $\D\sim\tilde\D$.
\end{proposition}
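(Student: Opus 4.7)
The ``$\Leftarrow$'' direction is immediate from Proposition~\ref{prop-equidata-isogp}, as an equivalence of data induces an isomorphism of realizing groups that restricts to an isomorphism of kernels. For the ``$\Rightarrow$'' direction, the plan is to strip off the trivial summands $A_1$, $\tilde A_1$ via Proposition~\ref{prop-prod-data}(2), reducing to data whose kernel equals the derived subgroup, and then invoke Corollary~\ref{cor-equidata-isogp}(1).

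Set $A_0=G(\D)'$ and $\tilde A_0=G(\tilde\D)'$; these are $H$-invariant (resp.\ $\tilde H$-invariant) as characteristic subgroups. Define the reduced data $\D_0=(H, A_0, \rho|_{A_0},\varphi)$ and $\tilde\D_0=(\tilde H,\tilde A_0,\tilde\rho|_{\tilde A_0},\tilde\varphi)$, which are well-defined by $\mathrm{Im}(\varphi)\subseteq A_0$ and $\mathrm{Im}(\tilde\varphi)\subseteq\tilde A_0$. Proposition~\ref{prop-prod-data}(2) then supplies isomorphisms $G(\D)\cong G(\D_0)\times A_1$ and $G(\tilde\D)\cong G(\tilde\D_0)\times\tilde A_1$. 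Since $A_1$ is abelian, $G(\D_0)'=G(\D)'=A_0$, and similarly $G(\tilde\D_0)'=\tilde A_0$, so $\D_0$ and $\tilde\D_0$ satisfy the hypothesis of Corollary~\ref{cor-equidata-isogp}(1). Given $G(\D)\cong G(\tilde\D)$, the isomorphism sends $A_0$ onto $\tilde A_0$, whence $A_0\cong\tilde A_0$; together with $A\cong\tilde A$ and Krull--Schmidt for finite abelian groups, this forces $A_1\cong\tilde A_1$. A cancellation argument applied to $G(\D_0)\times A_1\cong G(\tilde\D_0)\times\tilde A_1$ then yields $G(\D_0)\cong G(\tilde\D_0)$, and Corollary~\ref{cor-equidata-isogp}(1) produces an equivalence $\D_0\stackrel{(\sigma_H,\sigma_0,f_0)}{\sim}\tilde\D_0$.

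To lift this to $\D\sim\tilde\D$, I would fix any isomorphism $\sigma_1\colon A_1\to\tilde A_1$ and assemble the triple $(\sigma_H,\sigma_0\oplus\sigma_1,f)$, where $f\colon H\to\tilde A$ has $\tilde A_0$-component $f_0$ and zero $\tilde A_1$-component. The conditions of Definition~\ref{def-equi-data} split along $\tilde A=\tilde A_0\oplus\tilde A_1$: the $\tilde A_0$-part is supplied by $\D_0\sim\tilde\D_0$, while the $\tilde A_1$-part holds trivially because $\mathrm{Im}(\varphi)\subseteq A_0$ and $H$ acts trivially on $A_1$. The main obstacle is the cancellation step, since cancellation fails for arbitrary finite direct products. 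In our setting it can be justified by invoking the cancellation theorem for finite abelian factors in direct products of finite groups, or alternatively by composing the given isomorphism with shearing automorphisms of $G(\tilde\D_0)\times\tilde A_1$ (of type $(g,a)\mapsto(g\phi(a),a)$ with $\phi\colon\tilde A_1\to Z(G(\tilde\D_0))$, or $(g,a)\mapsto(g,a\chi(\bar g))$ with $\chi\colon G(\tilde\D_0)^{\mathrm{ab}}\to\tilde A_1$) so as to arrange that the image of $A_1$ equals $\tilde A_1$, after which passing to the quotients by $A_1$ and $\tilde A_1$ gives $G(\D_0)\cong G(\tilde\D_0)$ at once.
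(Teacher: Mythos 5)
Your proof is correct and takes essentially the same route as the paper: split off the trivial summands via Proposition \ref{prop-prod-data}(2), use that the derived subgroups (hence the complements) match to cancel the abelian direct factors (the paper invokes Jordan--H\"older, in effect Krull--Schmidt cancellation, where you cite a cancellation theorem), and then apply Corollary \ref{cor-equidata-isogp}(1) to the reduced data. Your explicit assembly of $(\sigma_H,\sigma_0\oplus\sigma_1,f)$ merely spells out the final lifting step that the paper leaves as ``the conclusion follows easily''.
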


\begin{proof} The sufficiency follows from Proposition \ref{prop-equidata-isogp}. We need only to prove the necessity. Assume $G(\D)\cong G(\tilde{\D})$, we will show that $\D\sim \tilde \D$.

We set $A_0=G(D)'$. Let $\rho_0\colon H\to \Aut(A_0)$ and $\varphi_0\colon H\times H\to A_0$ be the obvious maps induced by $\rho$ and $\varphi$. Then $\D_0=(H, A_0, \rho_0, \varphi_0)$ is a datum. It is easy to check that $G(\D)\cong G(\D_0)\times A_1$ and $G(\D_0)'=A_0$. Moreover, we have a decomposition $\D=\D_0\times \D_1$, where $\D_1=(\{1\}, A_1, 1, 0)$ is a trivial datum.

Similarly we have $\tilde\D_0=(\tilde H, \tilde A_0, \tilde \rho_0, \tilde\varphi_0)$, and $G(\tilde\D)\cong G(\tilde\D_0)\times \tilde A_1$ and $G(\tilde\D_0)'=\tilde A_0$. Now $G(\D)\cong G(\tilde{\D})$ implies that $A_0\cong \tilde A_0$, and hence $A_1\cong \tilde A_1$ as abelian groups. Applying Jordan-H\"{o}lder theorem, we obtain an group isomorphism $G(\D_0) \cong G(\tilde \D_0)$. By Corollary \ref{cor-equidata-isogp}, we know that $\D_0\sim\tilde \D_0$, and the conclusion $\D\sim \tilde\D$ follows easily.
\end{proof}

\subsection{Data with realizing group having exponent $p$}
Since we are interested in groups of exponent $p$, we discuss in this subsection when a realizing group has exponent $p$.
\begin{proposition}\label{prop-ext-p} Let $\mathcal D = (H, A, \rho, \varphi)$ be a datum with $A$ and $H$ nontrivial. Then $G(\mathcal D)$ has exponent $p$ if and only if the following conditions hold:
\begin{enumerate}
\item Both $H$ and $A$ have exponent $p$;
\item $1+ \rho(h) + \rho(h^2) +\cdots +\rho(h^{p-1})=0$ for all $h\in H$;
\item $\varphi(h,h)+\varphi(h^2,h)+\cdots +\varphi(h^{p-1},h) =0$ for all $h\in H$.
\end{enumerate}
\end{proposition}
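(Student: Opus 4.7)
The plan is to reduce the exponent condition to an explicit formula for powers in the realizing group. Using the multiplication rule $(a,h)(b,k)=(a+h\tr b+\varphi(h,k),hk)$ and a straightforward induction on $n$, I would first establish that
\[ (a,h)^n = \Bigl(\sum_{i=0}^{n-1}\rho(h^i)\tr a \;+\; \sum_{i=1}^{n-1}\varphi(h^i,h),\ h^n\Bigr) \]
for every $(a,h)\in G(\D)$ and every $n\ge 1$. The base step is trivial, and the inductive step only uses the group law together with the $\Z$-linearity of $\rho(h^{n})$ acting on $A$. The sum of cocycle values starts at $i=1$ because the very first multiplication produces no $\varphi$ contribution.

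Next I would specialize to $n=p$ and observe that $G(\D)$ has exponent $p$ precisely when this expression equals $(0,1_H)$ for all $(a,h)$. This splits into the requirement $h^p=1_H$ together with
\[ \sum_{i=0}^{p-1}\rho(h^i)\tr a \;+\; \sum_{i=1}^{p-1}\varphi(h^i,h)\;=\;0 \qquad \text{for all }a\in A,\ h\in H. \]
Fixing $h$ and setting $a=0$ isolates condition (3); subtracting this from the identity for arbitrary $a$ leaves $\sum_{i=0}^{p-1}\rho(h^i)\tr a=0$ for every $a$, which is condition (2). The requirement $h^p=1_H$ for all $h\in H$ forces $H$ to have exponent dividing $p$; and specializing at $h=1_H$ (using that $\varphi$ is normalized, so $\varphi(1_H,1_H)=0$) reduces the displayed equation to $p\cdot a=0$ for every $a\in A$, so $A$ also has exponent dividing $p$. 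Since $A$ and $H$ are assumed nontrivial, this gives condition (1), completing the forward implication.

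The converse is then immediate from the same formula: if (1), (2), (3) all hold, then the first summand in the expression for $(a,h)^p$ vanishes by (2), the second summand vanishes by (3), and $h^p=1_H$ by (1); hence $(a,h)^p=(0,1_H)$ identically. There is no genuine obstacle in the argument; the only step requiring a bit of care is the induction on $n$, and in particular keeping track of which summation indices carry $\rho$ versus $\varphi$ contributions. Everything else is mechanical.
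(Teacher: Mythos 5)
Your proposal is correct and follows essentially the same route as the paper: both compute $(a,h)^p$ explicitly from the group law (the paper states the formula as a direct check, you supply the induction) and then read off (2) and (3) by specializing $a=0$ versus arbitrary $a$. The only cosmetic difference is that you extract condition (1) from the second coordinate $h^p=1_H$ and the specialization $h=1_H$, while the paper gets it from the observation that nontrivial subgroups and quotients of an exponent-$p$ group have exponent $p$; both are immediate.
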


\begin{proof} First we prove the necessity. We assume that $G(\mathcal D)$ has exponent $p$. Then (1) is obvious since any nontrivial subgroup and quotient group of $G(\mathcal D)$ will have exponent $p$. Now for any $(a,h)\in G(\mathcal D)$, it is direct to check that
\[(a, h)^p= (a+h\tr a + \cdots+ h^{p-1}\tr a + \varphi(h,h)+\varphi(h^2,h)+\cdots +\varphi(h^{p-1},h), h^p),\]
which equals $(0,1)$ by assumption.
By taking $a=0$ we will obtain (3),  and (2) follows by considering arbitrary $a$. The above calculation also proves the sufficiency.
\end{proof}

\section{A formula for 2-cocycles and application to abelian data}

\subsection{2-cocycles of a finite abelian group} Let $H$ be a finite abelian group. Then $H$ is a direct product of nontrivial cyclic subgroups $H=H_1\times H_2\times \cdots\times H_m$. Assume that for any $1\le r\le m$, $H_r=\langle h_r\rangle \cong \Z_{n_r}$ for some $h_r\in H_r$ and some positive integer $n_r$. Clearly $h_r$ has order $n_r$.
Given any $h\in H$, $h=h_1^{i_1}h_2^{i_2}\cdots h_m^{i_m}$ for some array $(i_1,i_2,\cdots, i_m)\in \Z^m$, which is uniquely determined if we require that $0\le i_r\le n_r-1$ for any $1\le r\le m$.
We mention that the decomposition $H=H_1\times H_2\times \cdots\times H_m$ is not unique in general.

Given a ring $R$ and an element $x\in R$, we set $(x)_n= 1 + x +\cdots +x^{n-1}\in R$ for any positive integer $n$. For consistency of notations, we set $(x)_0=0$. Then $1-x^n=(1-x)(x)_n$ for any $x\in R$ and $n\ge 0$.

Let $(A,\rho)$ be an $H$-module. Let $(\varphi_{rs})_{1\le r\le s\le m}$ be a collection of elements in $A$.
Then $(\varphi_{st})$ gives a normalized 2-cochain $\varphi\colon H\times H\to A$  by setting
\begin{align}\label{formula-cochain}
\notag &\varphi(h_1^{i_1}h_2^{i_2}\cdots h_m^{i_m}, h_1^{j_1}h_2^{j_2}\cdots h_m^{j_m})\\
=&\sum_{r=1}^m [\frac{i_r+j_r}{n_r}](h_1^{i_1+j_1}\cdots h_{r-1}^{i_{r-1}+j_{r-1}})\tr \varphi_{rr} \\
\notag
-&\sum_{1\le r<s\le m} (h_1^{i_1}\cdots h_{s-1}^{i_{s-1}}h_1^{j_1}\cdots h_{r-1}^{j_{r-1}}(h_r)_{j_r}(h_s)_{i_s})\tr\varphi_{rs}
\end{align}
for any $0\le i_r, j_r\le n_r-1$, $r=1, 2, \cdots, m$, where $[\frac{i_r+j_r}{n_r}]$ denotes the greatest integer which is less than or equal to $\frac{i_r+j_r}{n_r}$.

We write $T_{r}=h_{r}-1$ and $N_{r}=(h_r)_{n_r} =1+ h_r + h_r^2 +\cdots + h_{r}^{n_r-1}$. The following characterization is essentially given in \cite{hlyy, hwy}.

\begin{proposition}\label{prop-2cocycle} Let $H$, $A$ be as above. Then
\begin{enumerate}
\item The 2-cochain $\varphi$ is a $2$-cocycle if and only if
 \begin{enumerate}
 \item[(i)] $T_r\tr\varphi_{rr}=0$ for $1\le r\le m$;
 \item[(ii)] $N_r\tr\varphi_{rs}+T_s\tr \varphi_{rr}=0$, $T_r\tr \varphi_{ss}-N_s\tr \varphi_{rs}=0$ for $1\le r<s\le m$; and
 \item[(iii)] $T_r\tr \varphi_{st}-T_s\tr \varphi_{rt}+T_t\tr \varphi_{rs}=0$ for $1\le r<s<t\le m$.
 \end{enumerate}

\item $\varphi$ is a $2$-coboundary if and only if
there exist $a_1, a_2,\cdots, a_m\in A$, such that
  \begin{enumerate}
  \item[(i)] $\varphi_{rr}=N_r\tr a_r$ for $1\le r\le m$, and
  \item[(ii)] $\varphi_{rs}=T_r\tr a_s-T_s\tr a_r$ for $1\le r<s\le m$.
  \end{enumerate}

\item Let $\phi\colon H\times H\to A$ be a 2-cocycle. Then $\phi$ is cohomologous to the $2$-cochain corresponding to $(\phi_{rs})_{1\le r\le s\le m}$, where $\phi_{rr} = \phi(1,h_r) + \phi(h_r, h_r) + \cdots + \phi(h_r^{n_r-1}, h_r)$ for $1\le r\le m$, and $\phi_{rs}= \phi(h_r, h_s)-\phi(h_s, h_r)$ for $1\le r<s\le m$.

\item If moreover $H$ acts on $A$ trivially. Then $H^2(H; A)$ is in one-to-one correspondence with the set of collections $(\varphi_{rs})_{1\le r\le t\le m}$.
\end{enumerate}
\end{proposition}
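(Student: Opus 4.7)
The plan is to reduce the whole statement to an explicit comparison between the bar resolution of $H$ (which computes $H^*(H;A)$ by definition) and a smaller, more tractable free $\Z[H]$-resolution of $\Z$ built out of the cyclic factors. For each $H_r=\langle h_r\rangle$ I would use the standard periodic resolution
\[\cdots\to \Z[H_r]\xrightarrow{T_r}\Z[H_r]\xrightarrow{N_r}\Z[H_r]\xrightarrow{T_r}\Z[H_r]\xrightarrow{\varepsilon}\Z\to 0,\]
and take the total tensor product over $\Z$ to obtain a free $\Z[H]$-resolution of $\Z$. Its degree-$2$ component splits as a direct sum of copies of $\Z[H]$ indexed by $\{(r,r)\}\cup\{(r,s)\mid r<s\}$, matching exactly the index set of the data $(\varphi_{rs})_{1\le r\le s\le m}$. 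The differentials into and out of this degree recover conditions (i), (ii), (iii) as the description of cocycles, and the image of degree-$1$ cochains labelled by $a_1,\dots,a_m\in A$ recovers the formulas in part (2). This identifies both $Z^2$ and $B^2$ intrinsically in terms of the array $(\varphi_{rs})$.

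To match the tensor-product cochains with the bar cochains, where formula (\ref{formula-cochain}) lives, I would build an explicit Eilenberg--Zilber-type chain map from the tensor product resolution into the bar resolution of $H$ and compute its degree-$2$ component on the generators of each summand. The shuffle terms produce the products of $(h_r)_{j_r}$ and $(h_s)_{i_s}$ in the $(r,s)$-summand, while the integer $[\frac{i_r+j_r}{n_r}]$ records the number of times the cyclic periodicity of $H_r$ is crossed when rewriting $h_r^{i_r}h_r^{j_r}$ in its canonical form, accounting for the $N_r$-summand. Establishing this chain map and verifying the degree-$2$ calculation proves (1) and (2) simultaneously. For (3), I would go in the other direction: define $\phi_{rr},\phi_{rs}$ by the stated formulas, use the $2$-cocycle identity applied to triples of generators such as $(h_r^i, h_r, h_r)$ and $(h_r, h_s, h_t)$ to verify that the resulting array satisfies (i)--(iii) directly, and then construct a normalized $1$-cochain $f\colon H\to A$ by an inductive telescoping procedure over the lexicographic order on $H_1\times\cdots\times H_m$ so that $\phi-\tilde\varphi=df$, where $\tilde\varphi$ is the cochain reassembled from $(\phi_{rs})$ via (\ref{formula-cochain}). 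Part (4) is then immediate: under a trivial action $T_r$ acts as $0$ and $N_r$ as the integer $n_r$, so conditions (i), (iii) become automatic while (ii) reduces to $n_r\varphi_{rs}=n_s\varphi_{rs}=0$, and coboundaries take the explicit form $\varphi_{rr}=n_r a_r$, $\varphi_{rs}=0$.

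The main obstacle will be the bookkeeping in the chain map between the two resolutions, specifically keeping careful track of the shuffle permutations that move the $h_s^{j_s}$ past the $h_r^{i_r}$, together with the ``carry'' term $[\frac{i_r+j_r}{n_r}]$ arising from the periodicity of each cyclic factor. An elementary alternative, which appears to be the route followed in \cite{hlyy,hwy}, is to avoid the abstract resolutions entirely and verify (1) and (3) by a direct computation: plug the explicit $\varphi$ from (\ref{formula-cochain}) into the $2$-cocycle identity on a general triple in $H$ and unwind until everything collapses to (i), (ii), (iii); then build the $1$-cochain $f$ in (3) as part of the same calculation. Either route works, but the resolution-theoretic one is conceptually cleaner while the direct one, though notationally heavy, needs only the combinatorial identities $T_rN_r=N_rT_r=0$ and the group-ring identities among the $(h_r)_k$ that are already implicit in the formula.
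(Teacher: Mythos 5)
The paper does not actually prove Proposition \ref{prop-2cocycle}: it records the statement as ``essentially given in'' \cite{hlyy,hwy}, and those references argue exactly along the lines you propose, comparing the bar resolution of $H$ with the tensor product of the periodic resolutions of the cyclic factors through explicit chain maps; so your plan reproduces the intended argument rather than deviating from it. One direction of your comparison is stated backwards, though, and it is worth fixing: formula \eqref{formula-cochain} is the pullback of the small-complex cochain $(\varphi_{rs})$ along a chain map \emph{from the bar resolution to the tensor-product resolution} (an Alexander--Whitney-type map; its degree-two component on $[g|h]$ is where the factors $(h_r)_{j_r}(h_s)_{i_s}$ and the carry $[\tfrac{i_r+j_r}{n_r}]$ appear), whereas the map you describe, from the tensor product into the bar resolution, is the shuffle-type map whose pullback produces the expressions $\phi_{rr}=\sum_{k}\phi(h_r^k,h_r)$ and $\phi_{rs}=\phi(h_r,h_s)-\phi(h_s,h_r)$ of part (3). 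You need both maps: the first gives the ``if'' halves of (1) and (2); the ``only if'' halves use that the composite is the identity on the small complex (or a direct extraction of (i)--(iii) by evaluating $d\varphi$ on suitable triples); and the cohomologous claim in (3) needs in addition a chain homotopy on the bar side or the explicit telescoped $f$ you sketch, so your direct-computation fallback covers the same ground at the cost of heavier bookkeeping. Finally, in (4) your reduction (conditions (i), (iii) vacuous, (ii) becoming $n_r\varphi_{rs}=n_s\varphi_{rs}=0$, coboundaries being $\varphi_{rr}=n_ra_r$, $\varphi_{rs}=0$) is the correct content, but the literal bijection with all collections only holds when $n_rA=0$ for every $r$, which is the elementary $p$-group setting in which the paper applies it (cf.\ Corollary \ref{cor-expp2}); in general two collections differing by $\varphi_{rr}\mapsto\varphi_{rr}+n_ra_r$ give the same class, so ``immediate'' needs that proviso.
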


Combined with Lemma \ref{lem-abel-extension}, we obtain a characterization of the derived subgroup of an extension of abelian groups.

\begin{proposition}\label{prop-der-abext} Let $H$, $A$,  $(\varphi_{rs})_{1\le r\le s\le m}$ be as above. Assume that the corresponding 2-cochain $\varphi$ is a 2-cocycle. Then the derived subgroup $(A\rtimes_{\rho,\varphi} H)'$ is exactly the $H$-submodule of $A$ generated by $\{\varphi_{rs}\mid 1\le r< s\le m\}$ and $\{T_r\tr a\mid 1\le r\le m, a\in A\}$.
\end{proposition}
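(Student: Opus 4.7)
The plan is to prove the two inclusions separately: write $D=(A\rtimes_{\rho,\varphi}H)'$, and let $M$ denote the $H$-submodule of $A$ generated by $\{\varphi_{rs}\mid 1\le r<s\le m\}\cup\{T_r\tr a\mid 1\le r\le m,\ a\in A\}$. The goal is then to show $D=M$.

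For $M\subseteq D$ I would use the commutator formula of Lemma \ref{lem-abel-extension}(1):
\[ [(a,k),(b,h)]=((1-h)\tr a+(k-1)\tr b+\varphi(k,h)-\varphi(h,k),\ 1). \]
Taking $(a,k)=(0,h_r)$ and $(b,h)=(b,1)$ yields $(T_r\tr b,1)\in D$ for every $b\in A$ and every $r$; taking $(a,k)=(0,h_r)$ and $(b,h)=(0,h_s)$ with $r<s$ yields $(\varphi(h_r,h_s)-\varphi(h_s,h_r),1)$, which by a direct read-off of formula (\ref{formula-cochain}) equals $(\varphi_{rs},1)$ (the only surviving term in the off-diagonal sum is the $(r,s)$-summand, since the factors $(h_{r'})_{j_{r'}}$ and $(h_{s'})_{i_{s'}}$ force $r'=r$ and $s'=s$). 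Because $D$ is normal in $G(\D)$ and conjugation by $(0,h)$ acts on the subgroup $(A,1)$ via $\rho(h)$, the subgroup $D$ is $H$-stable, hence contains the whole $H$-submodule $M$.

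For the reverse inclusion $D\subseteq M$, Lemma \ref{lem-abel-extension}(2) places $D$ inside $(A,1)$, and by the commutator formula $D$ is generated as an abelian group by elements of the form $(1-h)\tr a+(k-1)\tr b+\varphi(k,h)-\varphi(h,k)$. I would treat the three summands separately. The telescoping identity
\[ h_1^{i_1}\cdots h_m^{i_m}-1=\sum_{r=1}^m h_1^{i_1}\cdots h_{r-1}^{i_{r-1}}(h_r)_{i_r}T_r \]
in $\Z[H]$ expresses $(h-1)\tr a$ (and similarly $(k-1)\tr b$) as an $H$-linear combination of elements $T_r\tr a'$, all of which are in $M$. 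For the remaining term $\varphi(k,h)-\varphi(h,k)$, I would apply formula (\ref{formula-cochain}) to both $\varphi(h,k)$ and $\varphi(k,h)$ and observe that the diagonal $\varphi_{rr}$-contributions are symmetric under swapping the two arguments: the Gauss bracket $[(i_r+j_r)/n_r]$ and the prefix $h_1^{i_1+j_1}\cdots h_{r-1}^{i_{r-1}+j_{r-1}}$ are both invariant under $(i_\ast,j_\ast)\leftrightarrow(j_\ast,i_\ast)$, using that $H$ is abelian. Hence these cancel in the difference, leaving only an explicit $H$-linear combination of the $\varphi_{rs}$ with $r<s$, which lies in $M$.

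The main obstacle is precisely the cancellation of the diagonal $\varphi_{rr}$-terms in $\varphi(k,h)-\varphi(h,k)$; without this symmetry the derived subgroup could in principle pick up $\varphi_{rr}$ in addition to $\varphi_{rs}$, and the proposition would fail. Once this cancellation is isolated, the rest is a straightforward bookkeeping exercise combining the telescoping identity for $h-1$ in $\Z[H]$ with the commutator formula of Lemma \ref{lem-abel-extension}(1).
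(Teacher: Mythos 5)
Your proposal is correct and follows essentially the same route as the paper: both inclusions are obtained from the commutator formula of Lemma \ref{lem-abel-extension}(1), with the special commutators $[(0,h_r),(a,1)]$ and $[(0,h_r),(0,h_s)]$ giving $M\subseteq G'$, and the telescoping identity for $h-1$ in $\Z[H]$ together with the cancellation of the symmetric diagonal $\varphi_{rr}$-terms in $\varphi(h,k)-\varphi(k,h)$ giving $G'\subseteq M$. The only cosmetic difference is that you invoke normality of the derived subgroup to absorb the $H$-action, which the paper leaves implicit; this is fine.
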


\begin{proof} Set $G=A\rtimes_{\rho,\varphi} H$, and let $M$ be the $H$-submodule of $A$ which is generated by $\{\varphi_{rs}\mid 1\le r< s\le m\}$ and $\{T_r\tr a\mid 1\le r\le m, a\in A\}$.

By Lemma \ref{lem-abel-extension}, for any $a, b\in A$ and $h, k\in H$, we have \[[(a,h),(b,k)]=((1-k)\tr a+(h-1)\tr b+\varphi(h,k)-\varphi(k,h),1).\]
Then $(T_r\tr a,1)= [(0,h_r),(a,1)]\in G'$ and $(\varphi_{rs},1)=[(0,h_r),(0, h_s)]\in G'$ for $1\le r<s\le m$, which implies $M\subseteq G'$.

Conversely, for any $h=h_1^{i_1}\cdots h_m^{i_m}$, $k=h_1^{j_1}\cdots h_m^{j_m}$ and any $a, b\in A$,
\[(h-1)\tr b= \sum_{u=1}^m h_1^{i_1}\cdots h_{u-1}^{i_{u-1}}(h_u^{i_{u}}-1)\tr b= \sum_{u=1}^m h_1^{i_1}\cdots h_{u-1}^{i_{u-1}}(h_u)_{i_u}(h_u-1)\tr b\in M,\]
and similarly $(1-k)\tr a\in M$. Moreover, we have
\begin{align*}\varphi(h,k)&=\varphi(h_1^{i_1}h_2^{i_2}\cdots h_m^{i_m}, h_1^{j_1}h_2^{j_2}\cdots h_m^{j_m})\\
&= \sum_{r=1}^m [\frac{i_r+j_r}{n_r}](h_1^{i_1+j_1}\cdots g_{r-1}^{i_{r-1}+j_{r-1}})\tr \varphi_{rr} \\
&
-\sum_{1\le r<s\le m} (h_1^{i_1}\cdots h_{s-1}^{i_{s-1}}h_1^{j_1}\cdots h_{r-1}^{j_{r-1}}(h_r)_{j_r}(h_s)_{i_s})\tr\varphi_{rs},
\end{align*}
and
\begin{align*}\varphi(k,h)
&= \sum_{r=1}^m [\frac{j_r+i_r}{n_r}](h_1^{j_1+i_1}\cdots g_{r-1}^{j_{r-1}+i_{r-1}})\tr \varphi_{rr} \\
&
-\sum_{1\le r<s\le m} (h_1^{j_1}\cdots h_{s-1}^{j_{s-1}}h_1^{i_1}\cdots h_{r-1}^{i_{r-1}}(h_r)_{i_r}(h_s)_{j_s})\tr\varphi_{rs},
\end{align*}
and therefore $\varphi(h,k)-\varphi(k,h)\in M$. It follows that
$G'\subseteq M$ and hence $G'=M$.
\end{proof}

\begin{remark} We mention that in the above proposition, the $H$-submodule of $A$ generated by $\{T_r\tr a|1\le r\le m, a\in A\}$ is exactly the radical $\rad(A)$ of $A$, i.e., the intersection of all maximal submodule of $A$.
\end{remark}

\subsection{A technical lemma}
The following observation is handful. It is equivalent to certain combinatorial identities in characteristic $p$, which are probably well-known to experts. For the convenience of the readers we also include a proof here.

\begin{lemma}\label{keylem} Let $R$ be a commutative ring over $\Z_p$, and $x, y\in R$ be two nonzero elements such that $x^iy^{p-1-i}=0$ for any $0\le i\le p-1$. Then the following statements are equivalent:

(1) $\sum_{r=1}^{p-1}(1+x)^{ri}(1+y)_{rj}=0$ for any $1\le i, j\le p-1$;

(2) $\sum_{r=1}^{p-1}(1+x)^{ri}(1+y)_{r}=0$ for any $1\le i\le p-1$;

(3) $\sum_{r=1}^{p-1}(1+x)^{r}(1+y)_{rj}=0$ for any $1\le j\le p-1$;

(4) $x^iy^{p-2-i}=0$ for any $0\le i\le p-2$.
\end{lemma}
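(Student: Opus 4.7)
The plan is to collapse $\sum_{r=1}^{p-1}(1+x)^{ri}(1+y)_{rj}$ into a clean closed form in the quantities $e_a:=x^a y^{p-2-a}$ (which are exactly the elements whose vanishing is (4)), and then deduce all equivalences from invertibility of a Vandermonde matrix over $\Z_p$. The implications $(4)\Rightarrow(1)$, $(1)\Rightarrow(2)$, and $(1)\Rightarrow(3)$ will be immediate from the closed form, so the real content is the two implications $(2)\Rightarrow(4)$ and $(3)\Rightarrow(4)$.

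The key computation proceeds in two steps. First, applying the hockey-stick identity gives $(1+y)_n=\sum_{b\ge 0}\binom{n}{b+1}y^b$, so
\[
(1+x)^{ri}(1+y)_{rj}=\sum_{a,b\ge 0}\binom{ri}{a}\binom{rj}{b+1}x^a y^b,
\]
and the hypothesis $x^a y^b=0$ for $a+b\ge p-1$ truncates the sum to $a+b\le p-2$. Second, for $1\le i,j\le p-1$ and fixed $a,b$, the scalar $\binom{ri}{a}\binom{rj}{b+1}$ is a polynomial in $r$ of degree $a+b+1\le p-1$ with vanishing constant term. The standard congruence $\sum_{r=1}^{p-1}r^k\equiv -\delta_{k,0}-\delta_{k,p-1}\pmod p$ (for $0\le k\le p-1$) then kills every contribution except when $a+b=p-2$, and the surviving $r^{p-1}$-coefficient is $i^a j^{b+1}/(a!(b+1)!)$. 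Using $\binom{p-1}{a}\equiv(-1)^a$ and $(p-1)!\equiv -1\pmod p$, this yields the identity
\[
\sum_{r=1}^{p-1}(1+x)^{ri}(1+y)_{rj}\;\equiv\; j\sum_{a=0}^{p-2}(-i)^a j^{p-2-a}e_a\pmod p
\]
as elements of $R$.

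With this identity in hand, $(4)\Rightarrow(1)$, $(1)\Rightarrow(2)$, and $(1)\Rightarrow(3)$ are immediate. For $(3)\Rightarrow(4)$, set $i=1$ and divide by the unit $j\in\Z_p^{\times}$ to obtain the linear system $\sum_{a=0}^{p-2}j^{p-2-a}\bigl[(-1)^a e_a\bigr]=0$ for $j=1,\dots,p-1$; the coefficient matrix is the Vandermonde matrix at the distinct nonzero points $1,\dots,p-1\in\Z_p$, hence invertible, forcing each $e_a=0$. For $(2)\Rightarrow(4)$ the argument is parallel: set $j=1$ and view the resulting system as Vandermonde in the parameter $-i$ running over $\Z_p^\times$.

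The main obstacle is executing the first reduction cleanly: one must verify that no other pairs $(a,b)$ contribute to the power-sum (exploiting both the vanishing of the constant term and that $a+b\le p-2$ forces the polynomial degree to be $\le p-1$), and then track the signs and inverse factorials correctly to land at a formula whose coefficient matrix in $(i,j)$ is visibly Vandermonde. Once this formula is established, the remaining logical structure of the four equivalences is routine linear algebra.
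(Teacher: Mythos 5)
Your proposal is correct, and it takes a genuinely different route from the paper for the hard implication. The paper never computes the sums $\sum_{r=1}^{p-1}(1+x)^{ri}(1+y)_{r}$ explicitly: it first proves $(1)\Leftrightarrow(2)\Leftrightarrow(3)$ by showing $(1+z)^k$ and $(1+z)_k$ depend only on $k$ modulo $p$ (via nilpotency of $z$ and $(1+z)_p=0$), and then, for $(2)\Leftrightarrow(4)$, passes to the universal ring $S=\Z_p[X,Y]/\langle X^{p-1},X^{p-2}Y,\dots,Y^{p-1}\rangle$, kills all coefficients of degree $<p-2$ by verifying $X\cdot F_i=Y\cdot F_i=0$ through telescoping identities, and finally shows the top-degree coefficient matrix is invertible by an indirect argument combining triangular row operations with two Vandermonde-determinant steps. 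You instead derive a single closed-form identity $\sum_{r=1}^{p-1}(1+x)^{ri}(1+y)_{rj}\equiv j\sum_{a}(-i)^aj^{p-2-a}x^ay^{p-2-a}$ by expanding with the hockey-stick identity, truncating via the hypothesis (which indeed gives $x^ay^b=0$ for all $a+b\ge p-1$), and applying the power-sum congruence $\sum_{r=1}^{p-1}r^k\equiv-\delta_{k,0}-\delta_{k,p-1}$ together with $\binom{p-1}{a}\equiv(-1)^a$ and Wilson's theorem; I checked the sign bookkeeping and the formula is right (e.g.\ for $p=3$, $i=j=1$ both sides give $y-x$). From this identity all four statements follow at once: $(4)\Rightarrow(1)$ and the specializations $(1)\Rightarrow(2),(3)$ are trivial, while $(2)\Rightarrow(4)$ and $(3)\Rightarrow(4)$ reduce to inverting explicit Vandermonde matrices at the nonzero points of $\Z_p$, which also quietly replaces the paper's separate periodicity argument for $(1)\Leftrightarrow(2)\Leftrightarrow(3)$. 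What your approach buys is an explicit formula and a shorter, more transparent linear-algebra finish; what the paper's buys is avoiding any coefficient computation (no binomial/Wilson manipulations), at the cost of a more involved structural argument. The only points worth making explicit when writing it up are the one-line deduction of $x^ay^b=0$ for $a+b\ge p-1$ from the stated hypothesis, and the remark that $\binom{ri}{a}$, $\binom{rj}{b+1}$ reduce mod $p$ to polynomial functions of $r$ because $a!,(b+1)!$ are units for $a,b+1\le p-1$.
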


\begin{proof} Let $z= f(x,y)$ be in $R$ such that $f$ is a polynomial in $x$ and $y$ with zero constant term. It follows from the assumption on $x$ and $y$ that $z^{p-1}=0$, hence $(1+z)$ is invertible. If $z\ne 0$, then $1+z$ has order $p$, and $(1+z)^i=(1+z)^{i'}$ if and only if $\bar i=\bar {i'}$ in $\Z_p$.

There is no difficulty to show that $(1+z)_i$ is invertible for any $1\le i\le p-1$. In fact, $(1+z)_i = i + \tilde f(x,y)$ for some polynomial $\tilde f$ in $x$ and $y$ with zero constant term, hence $(1+z)_i = i(1 +i^{-1}\tilde f(x,y))$ is invertible. Moreover,
\[(1+z)_p = 1 + (1+z) + \cdots +(1+z)^{p-1}= ((1+z)-1)^{p-1} = f(x,y)^{p-1}= 0.\]
In particular, $(1+y)_{i+p}- (1+y)_{i} = (1+y)^i(1+y)_p =0$, and therefore $(1+y)_i= (1+y)_{i'}$ whenever $\bar i=\bar {i'}$ in $\Z_p$. Thus $(1)\Longleftrightarrow (2)\Longleftrightarrow(3)$ follows.

Next we show the equivalence $(2)\Longleftrightarrow (4)$. For each $1\le i\le p-1$, we have
\[\sum_{r=1}^{p-1}(1+x)^{ri}(1+y)_{r} =\sum_{u,v\ge 0} \lambda^i_{u,v} x^uy^v,
\]
where $\lambda^i_{u,v}$'s are integer coefficients which are independent on $x, y$ and the ring $R$.
This inspires us to consider the ring $S=\Z_p[X,Y]/\langle X^{p-1}, X^{p-2}Y, \cdots, Y^{p-1} \rangle$. It is easy to show that $S$ has a $\Z_p$-basis $X^iY^j, 0\le i, j\le p-2, 0\le i+j\le p-2$. Clearly, the subring $\Z_p[x,y]$ of $R$ can be viewed as a quotient ring of $S$ under the map $X\mapsto x$ and $Y\mapsto y$.

By direct calculation, we have
\begin{align*}
 &((1+X)^i-1)\sum_{r=1}^{p-1}(1+X)^{ri}(1+Y)_{r}\\
=& \sum_{r=2}^{p}(1+X)^{ri}(1+Y)_{r-1} - \sum_{r=1}^{p-1}(1+X)^{ri}(1+Y)_{r}\\
=& (1+X)^{pi}(1+Y)_{p-1} - \sum_{r=2}^{p-1}(1+X)^{ri}((1+Y)_r- (1+Y)_{r-1}) -(1+X)^i(1+Y)_1\\
=& (1+Y)_{p-1} -   \sum_{r=2}^{p-1}(1+X)^{ri}(1+Y)^{r-1} - (1+X)^i \\
=& -(1+Y)^{p-1}-  \sum_{r=2}^{p-1}(1+X)^{ri}(1+Y)^{r-1} - (1+X)^i \\
=& -(1+Y)^{-1}(1+ \sum_{r=2}^{p-1}(1+X)^{ri}(1+Y)^{r} + (1+X)^i(1+Y))\\
=& -(1+Y)((1+X)^{i}(1+Y))_{p}\\
=& 0
\end{align*}
 for each $1\le i\le p-1$. Using the facts $(1+X)^i-1 = X(1+X)_i$ and $(1+X)_i$ is invertible, we know that \[X\sum_{r=1}^{p-1}(1+X)^{ri}(1+Y)_{r}=0.\]
Note that the converse is also true, say $X\sum_{r=1}^{p-1}(1+X)^{ri}(1+Y)_{r}=0$ if and only if $((1+X)^i-1)\sum_{r=1}^{p-1}(1+X)^{ri}(1+Y)_{r}$. Similarly,
\begin{align*}
 Y\sum_{r=1}^{p-1}(1+X)^{ri}(1+Y)_{r}
=& ((1+Y) -1)\sum_{r=1}^{p-1}(1+X)^{ri}(1+Y)_{r}\\
=& \sum_{r=1}^{p-1}(1+X)^{ri}((1+Y)^{r}-1)\\
=& \sum_{r=1}^{p-1}(1+X)^{ri}(1+Y)^{r}- \sum_{r=1}^{p-1}(1+X)^{ri}\\
=& ((1+X)^i(1+Y))_p - ((1+X)^i)_p\\
=& 0.
\end{align*}
It forces that $\bar \lambda^i_{u, v} = 0$ in $\Z_p$ for any $u+v< p-2$.

We are left to determine $\lambda^i_{u, v}$'s with $u+v=p-2$. Note that the higher terms vanish automatically in $S$ as well in the subring $\Z_p[x,y]$.
For simplicity we set $g= 1+X$ and $h=1+Y$. We write the elements $\sum_{r=1}^{p-1}g^{ri}(h)_r$, $i=1, \cdots, p-1$ into a column vector $(F_1,F_2,\cdots, F_{p-1})^T$ with entries in $S$, say
\[
\begin{pmatrix}
   F_1 \\
   F_2\\
    \cdots\\
   F_{p-1}\\
 \end{pmatrix}
=\begin{pmatrix}
  g   & g^2 & \cdots & g^{p-1} \\
  g^2 & g^4 & \cdots & g^{2(p-1)} \\
  \cdots & \cdots & \cdots & \cdots \\
  g^{p-1} & g^{(p-1)2} & \cdots & g^{(p-1)(p-1)} \\
\end{pmatrix}
\begin{pmatrix}
   (h)_1 \\
   (h)_2\\
    \cdots\\
   (h)_{p-1}\\
 \end{pmatrix}.
\]
Consider the following column vector obtained by elementary row operations
\begin{align*}
\begin{pmatrix}
E_1\\E_2\\ \cdots\\ E_{p-1}
\end{pmatrix}
=&\begin{pmatrix}
  1 &   &   &   &   &   \\
  1 & -1 &  &  &  &  \\
  \vdots & \vdots & \ddots &  &   &   \\
  1 & -C_i^1 & \cdots & (-1)^iC_i^i &   &   \\
  \vdots & \vdots & \vdots & \vdots & \ddots &  \\
  1 & -C_{p-2}^1 & \cdots & (-1)^iC_{p-2}^i & \cdots & -C_{p-2}^{p-2} \\
\end{pmatrix}
\begin{pmatrix}
   F_1 \\
   F_2\\
    \cdots\\
   F_{p-1}\\
 \end{pmatrix} \\
 =& \begin{pmatrix}
     g       &  g^2          & \cdots & g^{p-1} \\
     g(1-g)  &  g^2(1-g^2)    & \cdots & g^{p-1}(1-g^{p-1}) \\
     g(1-g)^2&  g^2(1-g^2)^2  & \cdots & g^{p-1}(1-g^{p-1})^2 \\
     \cdots & \cdots & \cdots  & \cdots \\
     g(1-g)^{p-2}&  g^2(1-g^2)^{p-2}  & \cdots & g^{p-1}(1-g^{p-1})^{p-2} \\
   \end{pmatrix}
\begin{pmatrix}
   (h)_1 \\
   (h)_2\\
    \cdots\\
   (h)_{p-1}\\
 \end{pmatrix}.
\end{align*}

Again we may expand each $E_i$ into a polynomial in variables $X, Y$ with integer coefficients, which is independent on the ring $S$. Assume that
\[E_i= \sum_{u\ge i-1} \mu^i_{u,v} X^uY^v= X^{i-1}f_i(Y)+ X^{i}f'_i(X,Y),\] where $f_i(Y)$  and $f'_i(X,Y)$ are polynomials.
Clearly $XE_i=YE_i=0$ for each $i$, and hence $\mu^i_{u,v}=0$ in $\Z_p$ for any $u+v< p-2$.
It is direct to check that
\[\begin{pmatrix}
   (h)_1 \\
   (h)_2\\
   \cdots\\
   (h)_{p-1}\\
 \end{pmatrix}
 =\begin{pmatrix}
    C_1^1 &     &  &  \\
    C_2^1  & C_2^2  &  &  \\
    \vdots  &\vdots   & \ddots &  \\
    C_{p-1}^1  &C_{p-1}^2    &\cdots   & C_{p-1}^{p-1} \\
  \end{pmatrix}
  \begin{pmatrix}
    1 \\
    Y \\
    \cdots \\
    Y^{p-2} \\
  \end{pmatrix},
\]
then we have
\begin{align*}\begin{pmatrix}
f_1(Y)\\
f_2(Y)\\
\cdots\\
f_{p-1}(Y)
\end{pmatrix}
=&\begin{pmatrix}
    1  &  1  & \cdots & 1 \\
    1  &  2  & \cdots & p-1 \\
     \cdots    &\cdots   & \cdots & \cdots \\
    1  &  2^{p-2}  & \cdots & (p-1)^{p-2} \\
  \end{pmatrix}
\begin{pmatrix}
    C_1^1 &     &  &  \\
    C_2^1  & C_2^2  &  &  \\
    \vdots  &\vdots   & \ddots &  \\
    C_{p-1}^1  &C_{p-1}^2    &\cdots   & C_{p-1}^{p-1} \\
  \end{pmatrix}
  \begin{pmatrix}
    1 \\
    Y \\
    \cdots \\
    Y^{p-2} \\
  \end{pmatrix} \\
=& \begin{pmatrix}
     &     &  & \mu^1_{0,p-2} \\
      &   & \mu^2_{1,p-3} & *   \\
     & \cdots  &\cdots &\cdots \\
    \mu^{p-1}_{p-2,0} & \cdots  & * & *  \\
  \end{pmatrix}
   \begin{pmatrix}
    1 \\
    Y \\
    \cdots \\
    Y^{p-2} \\
  \end{pmatrix}.
\end{align*}
It follows that
\[\begin{pmatrix}
E_1\\E_2\\ \cdots\\ E_{p-1}
\end{pmatrix}
=\begin{pmatrix}
    * & \cdots    & * & \mu^1_{0,p-2} \\
    *  & \cdots  & \mu^2_{1,p-3} &   \\
    \cdots &\cdots   &   & \\
    \mu^{p-1}_{p-2,0} &   &  &  \\
  \end{pmatrix}
  \begin{pmatrix}
    X^{p-2} \\
    X^{p-3}Y \\
    \cdots \\
    Y^{p-2} \\
  \end{pmatrix}.
\]

Applying the Van der Monde determinant, we know that $\mu^1_{0,p-2} \mu^2_{1,p-3}\cdots \mu^{p-1}_{p-2,0}\ne 0$ in $\Z_p$, and hence the following matrix
\[\Lambda =
\begin{pmatrix}
    \lambda^1_{p-2,0} & \lambda^2_{p-3,1} & \cdots & \lambda^1_{0,p-2} \\
    \lambda^2_{p-2,0} & \lambda^2_{p-3,1} & \cdots & \lambda^2_{0,p-2} \\
    \cdots & \cdots & \cdots & \cdots \\
    \lambda^{p-1}_{p-2,0} & \lambda^{p-1}_{p-3,1} & \cdots & \lambda^{p-1}_{0,p-2}\\
  \end{pmatrix}
\]
 is invertible in $\Z_p$. Now the equivalence $(2)\Longleftrightarrow (4)$ follows from the equality
\[
\begin{pmatrix}
\sum_{r=1}^{p-1}(1+x)^{r}(1+y)_{r}\\
\sum_{r=1}^{p-1}(1+x)^{2r}(1+y)_{r}\\
\cdots\\
\sum_{r=1}^{p-1}(1+x)^{(p-1)r}(1+y)_{r}
\end{pmatrix}
=\Lambda
\begin{pmatrix}
x^{p-2}\\
x^{p-3}y\\
\cdots\\
y^{p-2}.
\end{pmatrix}.
\]
\end{proof}

\subsection{A criterion for a realizing group having exponent $p$} Now we consider the special case that $H$ and $A$ are both elementary $p$-groups.

Let $H=\langle h_1\rangle\times \langle h_2\rangle\times\cdots \times\langle h_m\rangle $ be an elementary $p$-group with a basis $h_1, h_2, \cdots, h_m$. Then it is well known that there exists a ring isomomorphism \[\Z_p[H]\xrightarrow{\cong} \Z_p[x_1, \cdots, x_m]/\langle x_1^{p},\cdots, x_m^p\rangle,\ \  h_i\mapsto x_i+1, 1\le r\le m, \]
where $\Z_p[H]$ is the group ring of $H$ over $\Z_p$. Clearly, the Jacobson radical $\mathrm{rad}(\Z_p[H])$ of $\Z_p[H]$ is the ideal generated by $h_1-1, \cdots, h_m-1$. Recall that the Jacobson radical of a ring is by definition the intersection of all its maximal ideals.
 Combing Proposition \ref{prop-ext-p} and Proposition \ref{prop-2cocycle}, we obtain the following criterion to determine when a realizing group has exponent $p$.

\begin{theorem} \label{thm-expp} Let $H=\langle h_1\rangle\times \langle h_2\rangle\times\cdots \times\langle h_m\rangle $ be an elementary $p$-group with a basis $h_1, h_2,\cdots, h_m$. Set $T_r=h_r-1$ for $r=1, 2, \cdots, m$. Let $(A,\rho)$ be an $H$-module, and let $(\varphi_{rs})_{1\le r\le s\le m}$ be a family of elements in $A$ and $\varphi$ the corresponding 2-cochain.
Then $\varphi$ is a 2-cocyle such that $A\rtimes_{\rho, \varphi} H$ has exponent $p$ if and only if
\begin{enumerate}
\item  $A$ is an elementary $p$-group;
\item  $T_{r_1}T_{r_2}\cdots T_{r_{p-1}} \tr a=0$ for all $1\le r_1\le r_2\le \cdots\le r_{p-1}\le m$ and $a\in A$;
\item  $\varphi_{rs}$'s satisfy the following conditions:
    \begin{enumerate}
    \item  $\varphi_{rr}=0$ for any $1\le s\le m$;
    \item  For any $1\le l\le p-1$, $1\le r_1<r_2<\cdots<r_{l+1}\le m$, and $i_1,\cdots, i_{l+1}\ge0$ with $i_1+i_2+\cdots+ i_{l+1}= p-l-1$,
        \[T_{r_1}^{i_1}T_{r_2}^{i_2}\cdots T_{r_{l+1}}^{i_{l+1}}\tr (\sum_{u=1}^l(i_u+1)T_{r_1}\cdots T_{r_{u-1}}T_{r_{u+1}}\cdots T_{r_l}\tr \varphi_{r_u,r_{l+1}})=0;\]
    \item $T_r\tr\varphi_{st} - T_s\tr\varphi_{rt}+T_t\tr\varphi_{rs}=0$ for any $1\le r<s<t\le m$.
    \end{enumerate}
\end{enumerate}
\end{theorem}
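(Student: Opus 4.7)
The plan is to dissect the statement as the conjunction of the 2-cocycle condition (Proposition \ref{prop-2cocycle}(1)) and the exponent-$p$ condition (Proposition \ref{prop-ext-p}), and match each one against the stated explicit conditions (1)--(3). The two main inputs are Proposition \ref{prop-rad0}, which takes care of the $H$-module structure, and Lemma \ref{keylem}, which supplies the combinatorial identity needed to handle the explicit cocycle formula \eqref{formula-cochain}.

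First I would handle the easier conditions. Condition (1) is Proposition \ref{prop-ext-p}(1) applied to $A$. Since $(\rho(h)-1)^{p-1} = 1 + \rho(h) + \cdots + \rho(h^{p-1})$ in characteristic $p$, Proposition \ref{prop-ext-p}(2) is equivalent to $(\rho(h)-1)^{p-1}\tr a = 0$ for all $h \in H$ and $a \in A$, and Proposition \ref{prop-rad0} turns this into condition (2). With (2) in hand, the element $N_r = T_r^{p-1}$ annihilates $A$, so the 2-cocycle relations (i) and (ii) of Proposition \ref{prop-2cocycle}(1) collapse to $T_u \tr \varphi_{vv}=0$ for all $u, v$, which is automatic once (3a) holds; relation (iii) is exactly (3c).

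Next I would extract (3a) and (3b) from the remaining condition $\sum_{k=1}^{p-1} \varphi(h^k, h) = 0$. Testing $h = h_r^c$ with $c\in \Z_p^{\times}$, only the $\varphi_{rr}$ term in \eqref{formula-cochain} survives, and a direct count of the wrap-around function $\bigl[\tfrac{(kc \bmod p)+c}{p}\bigr]$ yields $\sum_{k=1}^{p-1}\varphi(h_r^{kc}, h_r^c) = c\cdot \varphi_{rr}$, forcing $\varphi_{rr} = 0$. For a general $h = \prod_u h_u^{i_u}$, condition (3a) kills the $\varphi_{rr}$ contributions and $(h_r)_0 = 0$ restricts the pairs to those with $i_r i_s \neq 0$. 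Using $(h_s)_{ki_s} = (h_s)_{i_s} \cdot (h_s^{i_s})_k$ (a geometric-series manipulation), the sum factorizes as
\[
\sum_{k=1}^{p-1}\varphi(h^k, h) = - \sum_{\substack{r<s\\ i_r i_s \neq 0}} \Bigl(\prod_{u<r} h_u^{i_u}\Bigr)(h_r)_{i_r}(h_s)_{i_s} \Bigl(\sum_{k=1}^{p-1} X_s^k (Y_s)_k\Bigr) \tr \varphi_{rs},
\]
where $X_s = \prod_{u<s} h_u^{i_u}$ and $Y_s = h_s^{i_s}$. The inner sum fits the form $\sum_{r=1}^{p-1}(1+x)^r(1+y)_r$ of Lemma \ref{keylem} with $x = X_s - 1, y = Y_s - 1$, and the hypothesis $x^i y^{p-1-i}\tr \varphi_{rs}=0$ is guaranteed by (2). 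Lemma \ref{keylem} then translates the vanishing of the inner sum on $\varphi_{rs}$ into the annihilation system $x^i y^{p-2-i}\tr \varphi_{rs}=0$ for $0 \le i \le p-2$; expanding via the binomial theorem in the $T_u$'s yields the individual conditions of (3b), the factor $(i_u + 1)$ surfacing as the coefficient of $T_{r_u}^{i_u + 1}$ in the expansion of $(1+T_{r_u})^{c_u}$. The converse implication runs by reversing the chain of equivalences.

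The main obstacle is the last isolation step. For each fixed $h$, Proposition \ref{prop-ext-p}(3) gives a single equation in $A$, whereas (3b) is a family of equations indexed by $(l, r_1 < \cdots < r_{l+1}, i_1, \ldots, i_{l+1})$ with $\sum i_u = p - l - 1$. Separating out each equation requires an induction on the size $l+1$ of the support of $h$ (to kill contributions from smaller supports) and a Vandermonde-type linear independence argument (in the spirit of the proof of Lemma \ref{keylem}) to vary the exponent tuple $(c_1, \ldots, c_{l+1})\in (\Z_p^{\times})^{l+1}$. Matching the precise indices and coefficients appearing in (3b) is where most of the bookkeeping lies.
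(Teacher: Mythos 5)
Your proposal is correct and follows essentially the same route as the paper's proof: the same splitting into Proposition \ref{prop-2cocycle} plus Proposition \ref{prop-ext-p}, Proposition \ref{prop-rad0} for condition (2), and then the cochain formula \eqref{formula-cochain}, Lemma \ref{keylem} and a Vandermonde argument in the exponents to extract (3b), with your ``induction on the support of $h$'' being precisely the paper's device of subtracting the relation for $h$ with its top generator removed so that the terms $\varphi_{u,r+1}$ appear with a single common cyclic factor (its equation \eqref{eqM1}). The only adjustment is the order of steps: Lemma \ref{keylem} cannot be invoked pair by pair before that isolation is carried out, which is exactly the bookkeeping you flag as the main remaining work.
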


\begin{proof} By Proposition \ref{prop-2cocycle}, $\varphi$ is a 2-cocyle if and only if
\begin{enumerate}
 \item[(I.1)] $T_r\tr\varphi_{rr}=0$ for $1\le r\le m$;
 \item[(I.2)] $N_r\tr\varphi_{rs}+T_s\tr \varphi_{rr}=0$, $T_r\tr \varphi_{ss}-N_s\tr \varphi_{rs}=0$ for $1\le r<s\le m$; and
 \item[(I.3)] $T_r\tr \varphi_{st}-T_s\tr \varphi_{rt}+T_t\tr \varphi_{rs}=0$ for $1\le r<s<t\le m$.
 \end{enumerate}
By Proposition \ref{prop-ext-p}, the realizing group $A\rtimes_{\rho, \varphi} H$ has exponent $p$ if and only if
\begin{enumerate}
\item[(II.1)] $A$ has exponent $p$;
\item[(II.2)] $1+ \rho(h) + \rho(h^2) +\cdots +\rho(h^{p-1})=0$ for all $h\in H$;
\item[(II.3)] $\varphi(h,h)+\varphi(h^2,h)+\cdots +\varphi(h^{p-1},h) =0$ for all $h\in H$.
\end{enumerate}

First we claim that assuming (II.1) and (II.2), then $\varphi$ satisfies (II.3) if and only if
$\varphi_{rr} = 0$ for any $1\le r\le m$, and
\begin{equation}\label{eqM1}(\sum_{k=1}^{p-1}
(h_{1}^{\lambda_1}h_{2}^{\lambda_2}\cdots h_{r}^{\lambda_{r}})^k (h_{r+1})_{k\lambda_{r+1}})\tr (\sum_{u=1}^{r}
h_{1}^{\lambda_1}h_{2}^{\lambda_2}\cdots h_{u-1}^{\lambda_{u-1}} (h_{u})_{\lambda_u}\tr \varphi_{u,r+1})=0
\end{equation}
for any $1\le r\le m-1$, $0\le \lambda_1, \cdots, \lambda_{r+1}\le p-1$.

We mention that $(\ref{eqM1})$ holds true automatically if $\lambda_{r+1}=0$, or if $\lambda_u=0$ for all $1\le u\le r$. The proof of the claim is straightforward. In fact, by applying (II.3) to each $h_r$ we have
\[0 = \sum_{k=1}^{p-1}\varphi(h_r^k, h_r) =  \sum_{k=1}^{p-1}[\frac{k+1}{p}]\varphi_{rr}=\varphi_{rr}.
\]
 For any $1\le r\le m-1$, and any $0\le \lambda_1, \cdots, \lambda_{r+1}\le p-1$, by applying (II.3) to $h_{1}^{\lambda_1}h_{2}^{\lambda_2}\cdots h_{r}^{\lambda_r}$ and $h_{1}^{\lambda_1}h_{2}^{\lambda_2}\cdots h_{r}^{\lambda_r}h_{r+1}^{\lambda_{r+1}}$, we have
\[\sum_{k=1}^{p-1} \sum_{1\le u<v\le r}
(h_{1}^{k\lambda_1}h_{2}^{k\lambda_2}\cdots h_{v-1}^{k\lambda_{v-1}} (h_{v})_{k\lambda_v}
h_{1}^{\lambda_1}h_{2}^{\lambda_2}\cdots h_{u-1}^{\lambda_{u-1}} (h_{u})_{\lambda_u})\tr \varphi_{u,v}=0,
\]
and
\[\sum_{k=1}^{p-1} \sum_{1\le u<v\le r+1}
(h_{1}^{k\lambda_1}h_{2}^{k\lambda_2}\cdots h_{v-1}^{k\lambda_{v-1}} (h_{v})_{k\lambda_v}
h_{1}^{\lambda_1}h_{2}^{\lambda_2}\cdots h_{u-1}^{\lambda_{u-1}} (h_{u})_{\lambda_u})\tr \varphi_{u,v}=0,
\]
and $(\ref{eqM1})$ follows by taking the difference of the above equalities. Thus we have proved the necessity, and the argument works well for the sufficiency.

Now we can prove the theorem. Clearly (II.1) is the same as the condition (1) in the theorem, and by Proposition \ref{prop-rad0}, (II.2) is equivalent to (2). We will prove that under the assumptions (1) and (2), the condition (3) is equivalent to (I.1)-(I.3) and (II.3).

Assume (I.1)-(I.3) and (II.1)-(II.3). We need to prove (3). Clearly (3)(a) follows from the above statement and (3)(c) is the same as (I.3), and we are left to prove (3)(b).

By the above claim, $(\ref{eqM1})$ holds for any $1\le r\le m-1$, $0\le \lambda_1, \cdots, \lambda_{r+1}\le p-1$. By Lemma \ref{keylem}, this is equivalent to
\begin{equation}\label{eqM2}(h_{1}^{\lambda_1}h_{2}^{\lambda_2}\cdots h_{r}^{\lambda_{r}}-1)^i(h_{r+1}-1)^j\tr
(\sum_{u=1}^{l}
h_{1}^{\lambda_1}h_{2}^{\lambda_2}\cdots h_{u-1}^{\lambda_{u-1}} (h_{u})_{\lambda_u}\tr \varphi_{u,r+1})=0
\end{equation}
holds for any $0\le \lambda_1, \cdots, \lambda_r\le p-1$ and for any $i+j=p-2$.
Under the assumption $(2)$, the above equality is equivalent to
\begin{equation}\label{eqM3}(\lambda_1T_{1} + \cdots +\lambda_rT_{r})^iT_{r+1}^j\tr
(\sum_{u=1}^{l}
\lambda_u\varphi_{u,r+1})=0,
\end{equation}
and by expanding the left hand side, it turns into
\begin{equation}\label{eqM4}\sum_{j_1,j_2,\cdots, j_r\ge 0\atop j_1+ j_2 +\cdots +j_r=i+1}
\lambda_1^{j_1}\cdots \lambda_r^{j_r}
T_{r+1}^j \sum_{1\le u\le r\atop j_u>0}  C_{i; j_1,\cdots, j_u-1,\cdots, j_r} T_{r_1}^{j_1}\cdots T_{u}^{j_u-1}\cdots T_{r}^{j_r}\tr\varphi_{u, r+1}=0,
\end{equation}
where $C_{i; u_1, u_2, \cdots, u_r} =\frac{i!}{u_1! u_2!\cdots u_r!}$ is the multinomial coefficient for any $u_1+\cdots+u_l=i$.

Since the above equality holds for arbitrary $\lambda_1, \cdots, \lambda_r$, by using the Van der Monde determinant, $(\ref{eqM4})$ is equivalent to
\begin{equation}\label{eqM5}T_{r+1}^j \sum_{1\le u\le r\atop j_u>0}  C_{i; j_1,j_2,\cdots, j_u-1,\cdots, j_r} T_{1}^{j_1}T_{2}^{j_2}\cdots T_{u}^{j_u-1}\cdots T_{r}^{j_r}\tr\varphi_{u, r+1}=0
\end{equation}
for any $j_1, j_2, \cdots, j_r, j\ge 0$ such that $j_1+\cdots + j_r + j = p-2$.

Given  $j_1, j_2, \cdots, j_l, j\ge 0$ such that $j_1+\cdots + j_l + j = p-2$. Consider the subset $X=\{1\le u\le r\mid j_u>0\}= \{r_1, r_2,\cdots, r_l\}$ with $r_1<r_2<\cdots <r_l$. We set $r_{l+1}=r+1$, $i_u = j_{r_u}-1$ for $1\le u\le l$ and $i_{l+1} = j$, then $i_1+i_2 +\cdots + i_{l+1} = p-l-1$, and the above equality reads as
\begin{equation}\label{eqM6}T_{r_1}^{i_1}\cdots T_{r_{l+1}}^{i_{l+1}}\tr (\sum_{u=1}^l\frac{(i_1+\cdots+i_l+l-1)(i_u+1)}{(i_1+1)!(i_2+1)!\cdots(i_l+1)!}
        T_{r_1}\cdots T_{r_{u-1}}T_{r_{u+1}}\cdots T_{r_l}\tr \varphi_{r_u,r_{l+1}})=0,
\end{equation}
which is obviously equivalent to $(3)(b)$.

Conversely, assume that the assumptions (1)-(3) hold, we will prove (I.1)-(I.3) and (II.1)-(II.3). As we have shown above, it suffices to prove (I.1)-(I.3) and (II.3).

Obviously (I.1) holds for each $\varphi_{rr}=0$ by the assumption (3)(a); (I.2) follows from the assumption (2) and (3)(a); and (I.3) is the same as (3)(c). We are left to prove (II.3).

By the claim in the begin of the proof, it suffices to show that (\ref{eqM1}) holds for any $1\le r\le m-1$, $0\le \lambda_1, \cdots, \lambda_{r+1}\le p-1$, which has been shown to be equivalent to (3)(b). This completes the proof.
\end{proof}

Then combined with Proposition \ref{prop-der-abext}, we have a description of the derived subgroup of the resulting group of an abelian datum.

\begin{corollary}\label{cor-der-abext} Let $(H, A, \rho, \varphi)$ be an abelian datum such that $G=A\rtimes_{\rho,\varphi} H$ has exponent $p$.
 Then $G'$ is exactly the $H$-submodule of $A$ generated by $\mathrm{Im}(\varphi)$ and $\rad(A)$. In particular, $A=G'$ if and only if $A$ is generated by $\mathrm{Im}(\varphi)$ as an $H$-module.
If moreover, $\rho$ is a trivial action, then $A=G'$ if and only if $A=\mathrm{Im}(\varphi)$.
\end{corollary}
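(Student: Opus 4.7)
The plan is to deduce the corollary from Proposition~\ref{prop-der-abext} by matching the $H$-submodule generated by the normalized data $(\varphi_{rs})$ with the $H$-submodule generated by $\mathrm{Im}(\varphi)$, and then to apply Nakayama's lemma.

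First I would normalize the cocycle: by Proposition~\ref{prop-2cocycle}(3) together with Remark~\ref{rem-data}(2), without changing the isomorphism class of $G$ I may replace $\varphi$ by the cohomologous 2-cocycle built from the family $(\varphi_{rs})_{1\le r\le s\le m}$ via formula~(\ref{formula-cochain}). Since $G$ has exponent $p$, Theorem~\ref{thm-expp}(3)(a) forces $\varphi_{rr}=0$ for every $r$. Proposition~\ref{prop-der-abext} then identifies $G'$ with the $H$-submodule of $A$ generated by $\{\varphi_{rs}\mid 1\le r<s\le m\}$ together with $\{T_r\tr a\mid 1\le r\le m,\ a\in A\}$, and since $T_1,\ldots,T_m$ generate $\rad(\Z_p[H])$, this second set already equals $\rad(A)=\rad(\Z_p[H])\cdot A$.

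Next I would show that the $H$-submodule generated by $\{\varphi_{rs}\}_{r<s}$ coincides with the $H$-submodule $M$ generated by $\mathrm{Im}(\varphi)$. In one direction, the identity $\varphi_{rs}=\varphi(h_r,h_s)-\varphi(h_s,h_r)$ from Proposition~\ref{prop-2cocycle}(3) writes $\varphi_{rs}$ as a $\Z$-linear combination of two elements of $\mathrm{Im}(\varphi)$, so it lies in $M$. In the other direction, formula~(\ref{formula-cochain}) expresses $\varphi(h,k)$ as an explicit $\Z_p[H]$-linear combination of the $\varphi_{rs}$ with $r\le s$; the diagonal ($r=s$) contributions vanish because $\varphi_{rr}=0$, so $\mathrm{Im}(\varphi)$ sits inside the $H$-submodule generated by $\{\varphi_{rs}\}_{r<s}$. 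Combining with the previous paragraph yields the first assertion $G'=M+\rad(A)$.

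For the remaining assertions, $A=G'$ becomes $A=M+\rad(A)$. Since $A$ is a finite $\Z_p[H]$-module and $\rad(A)=\rad(\Z_p[H])\cdot A$, the quotient $A/M$ satisfies $\rad(\Z_p[H])\cdot(A/M)=A/M$, so Nakayama's lemma forces $A/M=0$, i.e.\ $M=A$; the converse inclusion is trivial. Finally, when $\rho$ is trivial, each $T_r$ acts as zero on $A$, hence $\rad(A)=0$, and the $H$-submodule of $A$ generated by any subset $X$ reduces to the ordinary abelian subgroup generated by $X$; this specializes the second assertion to the reading of ``$A=\mathrm{Im}(\varphi)$'' in which $A$ coincides with the subgroup of $A$ generated by $\mathrm{Im}(\varphi)$. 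The only mildly tedious step is reading off from formula~(\ref{formula-cochain}) that every value $\varphi(h,k)$ lies in $\Z_p[H]\cdot\{\varphi_{rs}\mid r<s\}$, but this is purely bookkeeping once $\varphi_{rr}=0$ has been recorded, so no conceptual obstacle is expected.
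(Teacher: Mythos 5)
Your argument is correct and is essentially the derivation the paper intends: the paper gives no separate proof of this corollary, presenting it as the combination of Theorem \ref{thm-expp} (which forces $\varphi_{rr}=0$) with Proposition \ref{prop-der-abext}, and your matching of the submodule generated by $\{\varphi_{rs}\}_{r<s}$ with the one generated by $\mathrm{Im}(\varphi)$ modulo $\rad(A)$, together with Nakayama for the ``in particular'' clause, fills in exactly the routine steps left implicit. One caveat: your opening move of replacing $\varphi$ by the cohomologous cocycle built from $(\varphi_{rs})$ is only harmless because the corollary -- like Theorem \ref{thm-expp} and Proposition \ref{prop-der-abext} on which it rests -- implicitly takes $\varphi$ already in the form (\ref{formula-cochain}), so that the replacement is the identity; for a genuinely arbitrary normalized cocycle the reduction would not be justified (and the statement itself would fail in that generality), since adding a coboundary preserves the isomorphism class of $G$ but can change the subgroup generated by $\mathrm{Im}(\varphi)$ -- for instance a nonzero-valued coboundary on $H=A=\Z_p$ with trivial action yields an abelian $G$ while $\mathrm{Im}(\varphi)$ generates all of $A$. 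Finally, your reading of the last assertion, with $\mathrm{Im}(\varphi)$ understood as the subgroup it generates, is the right (indeed the only tenable) interpretation, because the set-theoretic image of the cocycle (\ref{formula-cochain}) need not exhaust $A$ even when $G'=A$.
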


If the group action $\rho$ is trivial, then $\rho(T_r)=0$ for all $r$, and hence all conditions concerning $T_r$'s in the theorem automatically hold.

\begin{corollary}\label{cor-expp2} Let $H$ and $A$ be elementary $p$-groups such that $H$ acts on $A$ trivially. Let $(\varphi_{rs})_{1\le r\le s\le m}$ be a collection of elements in $A$, and $\varphi$ the
corresponding 2-cochain defined by (\ref{formula-cochain}). Then
\begin{enumerate}
\item $\varphi$ is a 2-cocycle; and $\varphi$ is a 2-coboundary if and only if $\varphi=0$, if and only if $\varphi_{rs}=0$ for all $r, s$;
\item $A\rtimes_{\varphi} H$ has exponent $p$ if and only if $\varphi_{rr}=0$ for $r=1, 2, \cdots, m$.
\end{enumerate}
\end{corollary}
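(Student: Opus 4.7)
The plan is to reduce both parts directly to the general criteria already proved in Proposition \ref{prop-2cocycle} and Theorem \ref{thm-expp}, by specializing to the case of a trivial action. The key observation underlying everything is that under $\rho = 1$ the operator $T_r = h_r - 1$ acts as zero on $A$, and $N_r = 1 + h_r + \cdots + h_r^{p-1}$ acts as multiplication by $p$, which also vanishes because $A$ is an elementary $p$-group. Hence any nonempty product of $T_r$'s or $N_r$'s annihilates $A$.

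For part (1), the plan is to unpack Proposition \ref{prop-2cocycle}(1) and (2). All three cocycle conditions in (1)---namely $T_r \tr \varphi_{rr}=0$, the pair $N_r \tr \varphi_{rs}+T_s \tr \varphi_{rr}=0$ with $T_r \tr \varphi_{ss}-N_s \tr \varphi_{rs}=0$, and the Jacobi-type relation $T_r \tr \varphi_{st}-T_s \tr \varphi_{rt}+T_t \tr \varphi_{rs}=0$---are satisfied automatically, so every collection $(\varphi_{rs})$ yields a 2-cocycle. For the coboundary clause, the equations from Proposition \ref{prop-2cocycle}(2) read $\varphi_{rr} = N_r \tr a_r$ and $\varphi_{rs} = T_r \tr a_s - T_s \tr a_r$; both right-hand sides vanish, so $\varphi$ is a 2-coboundary if and only if every $\varphi_{rs} = 0$. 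Conversely the zero cochain is tautologically a coboundary.

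For part (2), the plan is to specialize Theorem \ref{thm-expp}. Its condition (1) is the standing hypothesis, and condition (2) holds automatically since each $T_r$ acts as zero. Within condition (3), clause (a) is exactly the desired $\varphi_{rr}=0$, and clause (c) reduces to a trivial identity by the same $T_r = 0$ observation. The main obstacle in the plan is to check that clause (3)(b) becomes vacuous. The natural approach is a case split on the value of $p-l-1$. If $p-l-1 > 0$, then $\sum_u i_u > 0$, so the outer operator $T_{r_1}^{i_1}\cdots T_{r_{l+1}}^{i_{l+1}}$ contains a positive power of some $T_{r_v}$ and hence annihilates everything. In the remaining extremal case $l = p-1$, every $i_u$ is zero and the outer factor is trivial, but each inner summand carries the product $T_{r_1}\cdots T_{r_{u-1}} T_{r_{u+1}}\cdots T_{r_l}$ of length $l-1$, which again acts as zero on $\varphi_{r_u,r_{l+1}}$ as soon as this length is positive. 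Hence clause (3)(b) holds automatically, and the only surviving restriction from Theorem \ref{thm-expp} is $\varphi_{rr}=0$, completing the plan.
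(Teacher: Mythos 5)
Your proposal is correct and follows essentially the same route as the paper, which simply remarks that under a trivial action all conditions involving the $T_r$'s in Proposition \ref{prop-2cocycle} and Theorem \ref{thm-expp} hold automatically (and leaves the rest unstated). Your explicit check of the extremal case $l=p-1$ in condition (3)(b), where the inner product of length $l-1=p-2$ must be nonempty, is the one spot where the standing assumption $p\ge 3$ is genuinely used, and you handle it correctly.
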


In case $p=3$, the conditions in Theorem \ref{thm-expp} are much more simplified.

\begin{corollary}\label{cor-expp1} Let $H=\langle h_1\rangle\times \langle h_2\rangle\times\cdots \times\langle h_m\rangle $ be an elementary $3$-group, where $h_1, h_2,\cdots, h_m$ is a basis. Let $(A,\rho)$ be an $H$-module, and let $(\varphi_{rs})_{1\le r\le s\le m}$ be a family of elements in $A$ and $\varphi$ the corresponding 2-cochain.
Then $\varphi$ is a 2-cocyle such that $A\rtimes_{\rho, \varphi} H$ has exponent $3$ if and only if
\begin{enumerate}
\item  $A$ is an elementary $3$-group;
\item  $(h_r-1)(h_s-1) \tr a=0$ for any $1\le r\le s\le m$ and $a\in A$;
\item  $\varphi_{st}$'s satisfy the following conditions:
    \begin{enumerate}
    \item  $\varphi_{rr}=0$ for any $1\le r\le m$;
    \item  $h_r\tr \varphi_{rs} = h_s\tr \varphi_{rs}= \varphi_{rs}$ for any $1\le r< s\le m$;
    \item  $(h_r-1)\tr \varphi_{st}=-(h_s-1)\varphi_{rt}=(h_t-1)\varphi_{rs}$ for any $1\le r<s<t\le m$.
    \end{enumerate}
\end{enumerate}
\end{corollary}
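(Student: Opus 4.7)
The plan is to derive this corollary as a direct specialization of Theorem \ref{thm-expp} to $p=3$. Conditions (1) of both statements coincide literally, and condition (2) of the corollary is the $p=3$ instance of condition (2) of the theorem, since only length $p-1=2$ products $T_{r_1}T_{r_2}\tr a$ with $r_1\le r_2$ enter. Condition (3)(a) transfers unchanged. The real work is to check that the theorem's compound conditions (3)(b) and (3)(c) collapse, for $p=3$, to the simple vanishings in (3)(b) and the triple equality in (3)(c) of the corollary.

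I would enumerate the cases of the theorem's (3)(b). Since $1\le l\le p-1=2$, only $l=1$ and $l=2$ appear. For $l=1$ we have $r_1<r_2$ and $i_1+i_2=p-l-1=1$; the inner sum has the single term $(i_1+1)\varphi_{r_1,r_2}$, so the two choices $(i_1,i_2)=(1,0)$ and $(0,1)$ give $2T_{r_1}\tr\varphi_{r_1,r_2}=0$ and $T_{r_2}\tr\varphi_{r_1,r_2}=0$; since $2$ is invertible in $\Z_3$ these produce exactly (3)(b) of the corollary. For $l=2$ we have $r_1<r_2<r_3$ and necessarily $i_1=i_2=i_3=0$; after unwinding the empty subproducts the condition collapses to $T_{r_1}\tr\varphi_{r_2,r_3}+T_{r_2}\tr\varphi_{r_1,r_3}=0$. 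Combining this with the theorem's (3)(c), which reads $T_{r_1}\tr\varphi_{r_2,r_3}-T_{r_2}\tr\varphi_{r_1,r_3}+T_{r_3}\tr\varphi_{r_1,r_2}=0$, one solves a $2\times 2$ linear system over $\Z_3$ to obtain the triple equality $T_{r_1}\tr\varphi_{r_2,r_3}=-T_{r_2}\tr\varphi_{r_1,r_3}=T_{r_3}\tr\varphi_{r_1,r_2}$, which is precisely the corollary's (3)(c) after rewriting $T_r=h_r-1$.

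The converse is the same computation reversed: assuming the corollary's (3)(b)--(c), both instances of the theorem's (3)(b) are immediate by inspection, and the theorem's (3)(c) reduces to $3T_{r_3}\tr\varphi_{r_1,r_2}=0$, which vanishes in $\Z_3$. There is no conceptual obstacle here; the only point demanding care is the bookkeeping of the telescoping product $T_{r_1}\cdots T_{r_{u-1}}T_{r_{u+1}}\cdots T_{r_l}$ (with its empty-subproduct convention when $l=1$) and the binomial factor $i_u+1$, which must be tracked when reading off the $l=1,2$ instances but is entirely mechanical once written out. In particular, one should verify that every coefficient $(i_u+1)$ produced along the way is a unit in $\Z_3$, so that no information is lost in passing between the two formulations.
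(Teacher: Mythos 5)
Your proposal is correct and follows essentially the same route as the paper: specialize Theorem \ref{thm-expp} to $p=3$, read off the $l=1$ instances of (3)(b) as $T_r\tr\varphi_{rs}=T_s\tr\varphi_{rs}=0$ (the unit factor $2$ being harmless), read off the $l=2$ instance as $T_r\tr\varphi_{st}+T_s\tr\varphi_{rt}=0$, and combine it with the theorem's (3)(c) over $\Z_3$ to obtain the triple equality. The bookkeeping of the inner products and coefficients, and the reverse implication (including the $3x=0$ observation), all check out.
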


\begin{proof} Consider the condition $(3)(b)$ in Theorem \ref{thm-expp}. The case $l=1$ reads as
$T_r\tr\varphi_{rs} = 0$ and $T_s\tr\varphi_{rs}=0$ for any $1\le r< s\le m$, which is equivalent to $h_r\tr \varphi_{rs} = h_s\tr \varphi_{rs}= \varphi_{rs}$. The case $l=2$ reads as
$T_r\tr\varphi_{st} + T_s\tr\varphi_{rt}=0$ for all $1\le r<s<t\le m$, which gives $T_r\tr\varphi_{st}=- T_s\tr\varphi_{rt}$. Now $T_r\tr\varphi_{st} - T_s\tr\varphi_{rt}+T_t\tr\varphi_{rs}=0$ is equivalent to $T_t\tr\varphi_{rs}=2T_s\tr\varphi_{rt}=-T_s\tr\varphi_{rt}$.
\end{proof}

We also draw the following consequence, which  says that for a group of exponent 3 and order $\le 3^6$, the derived subgroup will be contained in its center.

\begin{corollary}\label{cor-expp3}
Let $\D= (H, A,\rho, \varphi)$ be a datum such that $G=H\rtimes_{\varphi} A$ has exponent $p$ and $G'=A$. Assume $A=\mathbb{Z}_3^n$ and $H=\mathbb{Z}_3^m$. Then $\rho$ is trivial unless  $m\ge3$ and $n\ge4$.
\end{corollary}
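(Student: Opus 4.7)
The plan is to unpack the hypothesis via the explicit criterion of Corollary~\ref{cor-expp1} and then do a dimension count. Write $T_r = h_r - 1$ for a basis $h_1,\ldots,h_m$ of $H$, let $V = \rad(A) = \sum_r T_r A$, and recall from Corollary~\ref{cor-der-abext} that since $G' = A$ we have $A$ generated as $H$-module by $\mathrm{Im}(\varphi) = \{\varphi_{rs}\mid 1\le r<s\le m\}$ together with $V$. The hypothesis that $\rho$ is nontrivial is just $V \ne 0$.

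First I would exploit the bound on the Loewy length. Corollary~\ref{cor-expp1}(2) says $T_rT_s \cdot a = 0$ for all $r,s$ and $a\in A$, which means $T_s V = 0$ for every $s$, so $V \subseteq \soc(A)$ and $V$ is a trivial $H$-module. Combined with Corollary~\ref{cor-expp1}(3)(b), which gives $T_r\varphi_{rs} = T_s\varphi_{rs} = 0$, we see that for $r<s$ the cyclic submodule generated by $\varphi_{rs}$ is $\Z_3\varphi_{rs} + \sum_{t\ne r,s}\Z_3\, T_t\varphi_{rs}$. Hence
\[
V \;=\; \sum_{r<s,\ t\ne r,s} \Z_3\, T_t\varphi_{rs},
\]
and by the symmetry relations of Corollary~\ref{cor-expp1}(3)(c), for each unordered triple $\{r,s,t\}$ with $r<s<t$ all the expressions $T_r\varphi_{st}$, $T_s\varphi_{rt}$, $T_t\varphi_{rs}$ coincide up to sign with a single element $\psi_{rst}\in V$. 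Thus $V$ is spanned by the family $\{\psi_{rst}\mid 1\le r<s<t\le m\}$. In particular, if $m\le 2$, no such triple exists, $V=0$, and $\rho$ is trivial, which already establishes the $m\ge 3$ part of the statement.

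Now assume $V\ne 0$, so some $\psi_{rst}\ne 0$ with $r<s<t$; in particular $m\ge 3$. The main step is to prove that the three elements $\varphi_{rs},\varphi_{rt},\varphi_{st}$ are linearly independent modulo $V$. For a relation $a\varphi_{rs} + b\varphi_{rt} + c\varphi_{st}\in V$ with $a,b,c\in\Z_3$, applying $T_r$, $T_s$, and $T_t$ kills every term on the left except one each, using $T_rV = T_sV = T_tV = 0$ and the vanishing $T_i\varphi_{ij}=T_j\varphi_{ij}=0$. The three outputs are, up to signs, $c\psi_{rst}$, $b\psi_{rst}$, and $a\psi_{rst}$ respectively, all forced to be zero, hence $a=b=c=0$ since $\psi_{rst}\ne 0$. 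Therefore $\dim_{\Z_3}(A/V)\ge 3$, and since $\dim_{\Z_3}V\ge 1$, we conclude $n = \dim_{\Z_3}A\ge 4$.

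The only delicate point is the routine but careful verification that $V$ is a trivial module (so that $T_i$ annihilates $V$ and the applications of $T_r,T_s,T_t$ in the final linear independence argument give exactly the clean form stated); everything else is bookkeeping with the cocycle relations already recorded in Corollary~\ref{cor-expp1}.
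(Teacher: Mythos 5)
Your proposal is correct, and while it rests on the same dimension count as the paper --- produce a triple $r<s<t$ with $\varphi_{rs},\varphi_{rt},\varphi_{st}$ linearly independent modulo a nonzero submodule, hence $n\ge 4$, with $m\ge 3$ forced by the need for triples --- your route to that count is genuinely different. The paper first rules out $m=1,2$ by a separate Nakayama/socle case analysis, then obtains an independent triple by contradiction: if every triple were dependent modulo $\soc(A)$, the relations of Corollary~\ref{cor-expp1} would push all $\varphi_{rs}$ into $\soc(A)$, forcing $A$ semisimple and the action trivial; finally it uses that $\soc(A)\neq 0$ to get the extra dimension. You instead identify nontriviality of $\rho$ with $\rad(A)\neq 0$, use Corollary~\ref{cor-der-abext} (with Nakayama) and the relations of Corollary~\ref{cor-expp1}(2)--(3) to show $\rad(A)$ is spanned by the elements $\psi_{rst}=T_t\tr\varphi_{rs}$ indexed by triples $r<s<t$ --- which gives $m\ge 3$ immediately --- and then prove independence of $\varphi_{rs},\varphi_{rt},\varphi_{st}$ modulo $\rad(A)$ directly, for a triple with $\psi_{rst}\neq 0$, by hitting a putative relation with $T_r,T_s,T_t$. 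This is constructive (no quantification over all triples), absorbs the small-$m$ cases into the main argument, and needs only $\rad(A)\neq 0$, which is exactly your standing hypothesis, rather than the auxiliary fact that the socle of a nonzero module is nonzero. The only point you should make explicit (the paper is equally terse here) is that by Proposition~\ref{prop-2cocycle}(3) and Remark~\ref{rem-data}(2) one may first replace $\varphi$ by the cohomologous standard cochain determined by $(\varphi_{rs})$, which changes neither $\rho$, nor the isomorphism type of $G$, nor the hypothesis $G'=A$, so that Corollary~\ref{cor-expp1} applies; with that remark added, your argument is complete.
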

\begin{proof} Keep the above notations $h_1, \cdots, h_m$ and $a_1, \cdots, a_n$, and $\varphi_{rs}$, $1<r<s\le m$. Set $T_r= h_r-1$. Then $H$ acts trivially on $A$ if and only if $T_r\tr a_u=0$ for all $1\le r\le m$ and $1\le u\le n$.

Assume that $\rho$ is nontrivial. We will show that $m\ge3$ and $n\ge4$.

First we prove $m\ge 3$. By Proposition \ref{prop-der-abext},  $G(\D)'$ is the submodule generated by $T_r\tr a_u$'s and $\varphi_{rs}$'s. If $m=1$, then $G(\D)'=T_1\tr A=\mathrm{rad}(A)$, which never equals $A$ by Nakayama Lemma. If $m=2$, then $T_1(\varphi_{12})= T_2\tr\varphi_{12}=0$, which means that $\varphi_{12}\in \mathrm{soc}(A)$, the socle of the $H$-module $A$, say the sum of simple submodules of $A$. Thus $G(\D)'\subseteq \mathrm{soc}(A) + \mathrm{rad}(A)$, which equals $A$ only if $A$ is a simi-simple $H$-module, or equivalently, $H$ acts on $A$ trivially. Thus we must have $m\ge 3$.

Now we assume $m\ge 3$. We claim that there exist some $1\le r< s< t\le m$ such that $\varphi_{rs}, \varphi_{st}, \varphi_{st}$ are linearly independent in the quotient space $A/\mathrm{soc}(A)$. Otherwise, for any $1\le r< s <t \le m$, we have $ \varphi_{rs} \in \Z_p \varphi_{rt} + \Z_p\varphi_{st} +\mathrm{soc}(A)$. Recall that $a\in \mathrm{soc}(A)$ if and only if $T_r\tr a=0$ for all $1\le r\le m$. Then $T_t\tr \varphi_{rs} = 0$, for $T_t\tr \varphi_{rt}= T_t\tr\varphi_{st} = 0$ and $T_t\tr(\mathrm{soc}(A)) = 0$. It follows that $T_s\tr \varphi_{rt} = T_r\tr\varphi_{st}=0$, and hence $\varphi_{rs}\in \mathrm{soc}(A)$. Therefore $A = G(\D)'\subseteq \mathrm{soc}(A)$, which forces that $A= \mathrm{soc}(A)$ which happens only when $A$ is a semi-simple module.

Let $1\le r< s< t\le m$ be such that $\varphi_{rs}, \varphi_{st}, \varphi_{st}$ are linearly independent in the quotient space $A/\mathrm{soc}(A)$. Using the fact that the socle of a finite dimensional module is always nonzero, we know that $n= \dim_{\Z_p} A\ge 4$.
\end{proof}

\begin{remark} We mention that the above corollary does not hold true in case $p>3$. For instance, let $G$ be the group
\[\langle a, b, c, d\mid a^p=b^p=c^p=d^p=1, [a,b]=c, [a,c]=d, [a,d]=[b,c]=[b,d]=[c,d]=1\rangle.
\]
It is easy to show that $G'=\langle c,d\rangle$, and $Z(G)=\langle d \rangle$. Now $|G'|=p^2$, and the $G$-action on $G'$ is nontrivial.
\end{remark}

\subsection{Nilpotency class of an extension of elementary $p$-groups} In this subsection, we will show that the nilpotency class of the realizing group $A\rtimes_{\rho, \varphi}H$ relates closely to the Loewy length of $A$ as an $H$-module.

\begin{proposition}\label{prop-cn-mgp} Let $(A, H, \rho, \varphi)$ be an abelian datum with $A=\Z_p^m$ for some $m>0$. Let $c$ be the nilpotency class number of $A\rtimes_{\rho,\varphi}H$, and $l=\ell\ell(_HA)$ the Loewy length. Then
\begin{enumerate}
\item $l\le c\le l+1$, and $c=l$ if and only if there exists some $\psi$ cohomologous to $\varphi$ such that $\psi(h_1,h_2)\in \soc^l(A)$ and $\psi(h_1, h_2)- \psi(h_2, h_1)\in \soc^{l-1}(A)$ for any $h_1, h_2\in H$;
\item $c\le p$ if in addition $A\rtimes_{\rho,\varphi}H$ has exponent $p$.
\end{enumerate}
\end{proposition}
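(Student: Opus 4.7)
The strategy is to express the lower central series of $G = A\rtimes_{\rho,\varphi}H$ via the radical filtration of $G'$ as a $\Z_p[H]$-module, and then read off both bounds from the filtration on $A$. I would begin by showing that for every $n\ge 2$, $G_n$ is a $\Z_p[H]$-submodule of $A$ with $G_{n+1} = \rad(\Z_p[H])\cdot G_n$. The commutator formula from Lemma \ref{lem-abel-extension}(1) gives $[(a,1),(b,h)] = (-(h-1)\tr a,1)$, which shows that $G_2 = G'\subseteq A$ and, by induction, that $G_{n+1}$ is generated as a subgroup of $A$ by $\{(h-1)\tr a \mid h\in H,\ a\in G_n\}$; this set spans precisely $\rad(\Z_p[H])\cdot G_n$. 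Hence $G_n = \rad^{n-2}(G')$ for all $n\ge 2$, so $c = \ell\ell(G')+1$ (with the convention $\ell\ell(0) = 0$ covering the abelian case).

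Next, Proposition \ref{prop-der-abext} yields the decomposition $G' = \rad(A) + N$, where $N$ is the $\Z_p[H]$-submodule of $A$ generated by $\{\varphi(h,k) - \varphi(k,h) \mid h,k\in H\}$. Because radicals distribute over sums of submodules, $\rad^i(G') = \rad^{i+1}(A) + \rad^i(N)$, whence $\ell\ell(G') = \max(l-1,\ell\ell(N))$. The inclusion $N\subseteq A$ forces $\ell\ell(N)\le l$, so $l-1\le \ell\ell(G')\le l$, proving $l \le c \le l+1$.

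The equality case $c = l$ is equivalent to $\rad^{l-1}(N) = 0$. The key input here is that $\Z_p[H]$ is a commutative Frobenius algebra (a standard fact for group algebras of finite abelian groups), so for every finite-dimensional module $M$ one has $\soc^i(M) = \{m\in M \mid \rad(\Z_p[H])^i\cdot m = 0\}$. Consequently $\rad^{l-1}(N) = 0$ iff $N\subseteq \soc^{l-1}(A)$ iff $\varphi(h,k) - \varphi(k,h)\in\soc^{l-1}(A)$ for all $h,k$. Passing from $\varphi$ to a cohomologous $\psi = \varphi + df$ alters this difference by $(h-1)f(k) - (k-1)f(h)\in\rad(A)\subseteq\soc^{l-1}(A)$, so the condition depends only on the cohomology class and rephrases as the asserted existence of $\psi$; the auxiliary requirement $\psi(h_1,h_2)\in\soc^l(A)$ is automatic since $\soc^l(A) = A$.

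For part (2), if $G$ has exponent $p$ then Proposition \ref{prop-ext-p}(2) gives $1 + \rho(h) + \cdots + \rho(h^{p-1}) = 0$ on $A$ for each $h\in H$, which by Proposition \ref{prop-rad0} is equivalent to $(\rho(g) - 1)^{p-1} = 0$ for all $g\in H$. Proposition \ref{prop-loewylength}(2) applied with $r = p-1$ then forces $l\le p-1$, and combined with (1) this yields $c\le l+1\le p$. The main obstacle is the Frobenius identification $\soc^i(M) = \{m \mid \rad^i\cdot m = 0\}$ used in the third paragraph; everything else reduces to formal manipulation of the lower central series and citations of earlier results.
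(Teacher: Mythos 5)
Your proposal is correct, but it takes a genuinely different route from the paper's. You compute the lower central series directly: the commutator formula of Lemma \ref{lem-abel-extension}(1) gives $G_{n+1}=\rad(\Z_p[H])\cdot G_n$ for $n\ge 2$, hence the closed formula $c=\ell\ell({}_H G')+1$, and the decomposition $G'=\rad(A)+N$ then reduces everything to module theory, the equality case $c=l$ becoming $N\subseteq\soc^{l-1}(A)$ via the identity $\soc^i(M)=\{m\mid \rad(\Z_p[H])^im=0\}$ together with the observation that coboundaries change $\varphi(h,k)-\varphi(k,h)$ only by elements of $\rad(A)\subseteq\soc^{l-1}(A)$. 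The paper instead argues by induction on $l$: it invokes Lemma \ref{lem-abel-extension}(3) to replace $\varphi$ by a cocycle for which $Z(G)$ is explicit, identifies $G/Z(G)$ with a realizing group over $A/\soc(A)$ of Loewy length $l-1$, and in the converse direction of the equality criterion lifts a cocycle and a coboundary from $H/K$ back to $H$. Your argument avoids the induction and all of that cocycle bookkeeping, and it yields the sharper statement $c=\max(l,\ell\ell(N)+1)$, from which the stated criterion (with $\psi=\varphi$ itself when $c=l$, the condition $\psi(h_1,h_2)\in\soc^l(A)=A$ being vacuous) drops out; what the paper's inductive route buys is the reusable structural Lemma \ref{lem-abel-extension}(3) and the statement in exactly the ``cohomologous $\psi$'' form. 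Three minor points: the socle identity needs no Frobenius property --- over any finite-dimensional algebra $\soc(M)$ is precisely the largest submodule killed by the Jacobson radical (it is semisimple as an $R/\rad(R)$-module, and $\rad(R)\soc(M)=\rad(\soc(M))=0$), and one inducts up the filtration; Proposition \ref{prop-der-abext} is stated for cocycles in the normal form (\ref{formula-cochain}), but the decomposition $G'=\rad(A)+N$ you need follows for an arbitrary normalized cocycle directly from Lemma \ref{lem-abel-extension}(1) (take $k=1$ to see $\rad(A)\subseteq G'$, take $a=b=0$ to get the antisymmetrizations), so nothing is lost; and in (2) you should dispose separately of the trivial case $H=1$ excluded by Proposition \ref{prop-ext-p}, where $G=A$ is abelian and $c=1\le p$.
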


\begin{proof} (1) We set $G= A\rtimes_{\rho,\varphi}H$. Then $l=\ell\ell(_HA)=\ell\ell(_GA)$. By definition $\soc^{l-1}(A)\ne \soc^l(A)=A$. We use induction on $l$.

If $l=1$, then $A$ is semisimple and $H$ acts trivially on $A$. Then $A\in Z(G)$ and $H\cong G/A$ is ableian, hence $c=1$ or $2$. Clearly $c=1$ if and only if $G$ is commutative, if and only if $\varphi(h_1,h_2)=\varphi(h_2, h_1)$ for all $h_1, h_2\in H$.

Now we assume (1) holds for data with $\ell\ell(A)\le l-1$. Suppose $\ell\ell(A)=l\ge 2$. By Lemma \ref{lem-abel-extension},
there exist some $\tilde\varphi$ cohomolgous to $\varphi$ and a subgroup $K\le H$, such that
\begin{enumerate}
   \item[(a)] $K$ acts on $A$ trivially;
   \item[(b)] $\tilde\varphi(k,h)\in \soc(A)$ and $\tilde\varphi(k, h)=\tilde\varphi(h, k)$ for any $k\in K$ and $h\in H$;
   \item[(c)] $Z(A\rtimes_{\rho,\tilde\varphi}H)=\{(a,k)\mid a\in\soc(A), k\in K\}$.
\end{enumerate}
Note that $A\rtimes_{\rho,\tilde\varphi}H\cong A\rtimes_{\rho,\varphi}H$. Then
$$G/Z(G)\cong \frac{A\rtimes_{\rho,\tilde\varphi}H}{Z(A\rtimes_{\rho,\tilde\varphi}H)}
\cong\frac {A}{\soc(A)}\rtimes_{\rho', \varphi'}\frac H K,$$
where $\rho'$ is the induced action of $H/K$ on $A/\soc(A)$, and $\varphi'\colon H/K \times H/K\to A/\soc(A)$ the 2-cocycle induced from $\tilde\varphi$.
By definition $\ell\ell(A/\soc(A))=l-1$. By the induction hypothesis, the class number of the quotient group $G/Z(G)$ is equal to $l-1$ or $l$, and hence the class number of $G$ is $l$ or $l+1$.

Now assume $\varphi$ is cohomologous to some $\psi$ with $\psi(h_1,h_2)\in \soc^l(A)$ and $\psi(h_1, h_2)- \psi(h_2, h_1)\in \soc^{l-1}(A)$ for any $h_1, h_2\in H$. Then by easy calculation, $\soc^{l-1}(A)\vartriangleleft A\rtimes_{\rho, \psi} H$, and $\tilde H=A\rtimes_{\rho, \psi} H/\soc^{l-1}(A)$ is abelian, and hence $G\cong \soc^{l-1}(A)\rtimes_{\tilde \rho, \tilde\psi} \tilde H$ for some induced action $\tilde\rho$ and 2-cocycle $\tilde\psi$. Since $\ell\ell(\soc^{l-1}(A))=l-1$, by the induction hypothesis, we have $l-1\le c\le l$ and it forces that $c=l$.

Conversely, we assume $c=l$. Then by definition $G/Z(G)$ has nilpotency class number $c-1=l-1$ and $\ell\ell(A/\soc(A))=l-1$. Now by the induction hypotheses, there exists some $\varphi_1\colon H/K\times H/K\to A/\soc(A)$, which is cohomologous to $\varphi'$ and
\[\varphi_1(\bar h_1, \bar h_2)\in \soc^{l-1}(A/\soc(A))=\soc^l(A)/\soc(A),\]
and \[ \varphi_1(\bar h_1, \bar h_2)-\varphi_1(\bar h_2, \bar h_1)\in \soc^{l-2}(A/\soc(A))=\soc^{l-1}(A)/\soc(A)\]
for any $h_1, h_2\in H$, where $\bar h_1, \bar h_2$ are the image of $h_1$ and $h_2$ under the canonical projection map $\pi_H\colon H\to H/K$.

Since $\varphi'$ is cohomolgous to $\varphi_1$, we have $\varphi_1-\varphi' = d(\bar f)$ for some $\bar f\colon H/K\to A/\soc(A)$. Let $\pi_A\colon A\to A/\soc(A)$ be the canonical projection map. Take a set theoretic section map $s\colon A/\soc(A)\to A$, say $\pi_A\circ s= \id_{A/\soc(A)}$.
Clearly $f=s\circ \pi_A\circ \pi_H\colon H\to A$ lifts the map $\bar f$. Then $\psi= \tilde\varphi+ d(f)$ is the desired 2-cocycle, that is, $\psi(h_1,h_2)\in \soc^l(A)$ and $\psi(h_1, h_2)- \psi(h_2, h_1)\in \soc^{l-1}(A)$ for any $h_1, h_2\in H$.

(2) If in addition $G$ has exponent $p$, then $H\cong \Z_p^n$ for some $n$, and applying Theorem \ref{thm-expp} (2) and Proposition \ref{prop-loewylength} we have $\ell\ell(A)\le p-1$, and hence $c\le p$.
\end{proof}

\begin{remark} If we consider an arbitrary abelian datum, then the inequality $c\le \ell\ell(A)+1$ in the above proposition still holds, while the one $c\ge \ell\ell(A)$ may not hold true in general. For instance, we take $A=\Z/p^2\Z$ and $H$ to be the trivial group. Then $\ell\ell(A)=2$ while the realizing group is abelian and hence has nilpotency class 1.
\end{remark}

\section{Matrix presentation and extensions of elementary $p$-groups}

In this section, we discuss the classification problem of extensions of elementary $p$-groups, say groups of the form $\mathbb{Z}_p^n\rtimes_{\rho,\varphi}\mathbb{Z}_p^m$. From now on, $H$ and $A$ are both assumed to be elementary $p$-groups without otherwise stated, where $p\ge 3$ is a prime number.
\subsection{Matrix presentation of a $p$-elementary datum}
Let $\D=(H, A, \rho, \varphi)$ be a $p$-elementary datum of type $(m,n)$ for some positive integer $m$ and $n$, say $H\cong\Z_p^m$ and $A\cong\Z_p^n$. Let $\B_H=\{h_1, h_2,\cdots, h_m\}$ be a basis of $H$ and $\B_A=\{a_1, a_2,\cdots, a_n\}$ be a basis of $A$. We mention that all bases discussed here will be ordered.

Then under the basis $\B_A$, any linear operator $\rho(h)\colon A\to A$, $h\in H$, is represented by some $n\times n$ matrix $\Gamma_{\B_A}(h)$. Set $\Gamma^{(r)}=\Gamma_{\B_A}(h_r)$. Thus for fixed $\B_H$ and $\B_A$, $\rho$ is uniquely determined by an (ordered) collection of matrices $\Gamma^{(1)},\Gamma^{(2)}, \cdots, \Gamma^{(m)}$.

Set $\varphi_{rr} = \varphi(1,h_r) + \varphi(h_r, h_r) + \cdots + \varphi(h_r^{p-1}, h_r)$ for $1\le r\le m$, and $\varphi_{rs}= \varphi(h_r, h_s)-\varphi(h_s, h_r)$ for $1\le r\neq s\le m$. Clearly $\varphi_{rs}=-\varphi_{sr}$ for $r\ne s$. By Proposition \ref{prop-2cocycle}, $\varphi$ is cohomologous to the cocycle corresponding to $(\varphi_{rs})_{1\le r< s\le m}$. Since $\B_A$ is a basis of $A$, each $\varphi_{rs}=\varphi^1_{rs}a_1 + \cdots +\varphi^n_{rs}a_n$ for some uniquely determined $\varphi^i_{rs}\in \Z_p$, $i=1,2,\cdots, n$. Set
$\Phi^{(i)}=(\varphi^i_{rs})_{m\times m}\in M_m(\Z_p)$ to be the $m\times m$ matrix with the $(r,s)$-entry given by $\varphi^i_{rs}$.

\begin{definition} The collection of matrices $(\Gamma^{(1)}, \cdots, \Gamma^{(m)}; \Phi^{{(1)}}, \cdots, \Phi^{(n)})$, denoted by $\M(\D;\B_H,\B_A)$, is called the \emph{matrix presentation} of the datum $\D=(H, A, \rho, \varphi)$ with respect to the bases $\B_H$ and $\B_A$, where $\Gamma^{(r)}$'s and $\Phi^{(i)}$'s are defined as above.
\end{definition}

Conversely, let $\M=(\Gamma^{(1)}, \cdots, \Gamma^{(m)}; \Phi^{{(1)}}, \cdots, \Phi^{(n)})$ be a collection with $\Gamma^{(1)}, \cdots, \Gamma^{(m)}\in \GL_n(\Z_p)$, and $\Phi^{{(1)}}, \cdots, \Phi^{(n)}\in M_m(\Z_p)$ such that $\Phi^{(i)}_{rs}=-\Phi^{(i)}_{sr}$ for any $i$ and $r\ne s$. Then $\M$ is a matrix presentation of some datum  with respect to suitable bases.

One can easily read a presentation of the realizing group of $\D$ from its matrix presentation. Note that we may identify $A$ and $H$ with subsets of $G(\D)$, cf. Remark \ref{rem-data}.

\begin{proposition}\label{prop-pres-from-mrep} Keep the above notations. Then the group $G(\mathcal D)$ is generated by elements $a_1, a_2, \cdots, a_n$ and $h_1, h_2, \cdots, h_m$, subject to the relations:
\[a_i^p=h_r^p= [a_i, a_j]=1, [h_r, h_s] = \prod_{i=1}^na_i^{\varphi^i_{rs}}, [h_r,a_i]= a_i^{-1}\prod_{j=1}^{n} a_j^{(\Gamma^{(r)})_{ij}},
\]
$1\le i,j\le n, 1\le r,s\le m$, where $(\Gamma^{(r)})_{ij}$ is the $(i, j)$-entry of $\Gamma^{(r)}$.
\end{proposition}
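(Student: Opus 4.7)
The plan is a standard ``presentation from relations'' argument: first verify that all the listed relations do hold in $G(\mathcal D)$, then show that any group satisfying them has order at most $p^{m+n}$, and conclude by a surjectivity-plus-cardinality argument.

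\textbf{Step 1: Check the relations in $G(\mathcal D)$.}
Identify $a_i$ with $(a_i,1)$ and $h_r$ with $(0,h_r)$ in $G(\mathcal D)=A\rtimes_{\rho,\varphi}H$. Since $A\cong\Z_p^n$ is abelian of exponent $p$, the relations $a_i^p=1$ and $[a_i,a_j]=1$ are immediate, and from the product formula together with $\varphi$ being normalized one checks $h_r^p=(0,h_r^p)=(0,1)$ using Proposition~4.1 (note the cocycle conditions here reduce everything to the fact that $H$ has exponent $p$ and $\varphi$ is normalized on the cyclic subgroup $\langle h_r\rangle$; more directly, the identity $\sum_{k=1}^{p-1}\varphi(h_r^k,h_r)=\varphi_{rr}$ together with $\varphi_{rr}=0$ for the normalized representative justifies this). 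Now apply Lemma~\ref{lem-abel-extension}(1), which is valid since $H$ is abelian: for $[h_r,h_s]=[(0,h_r),(0,h_s)]$ the formula collapses to $(\varphi(h_r,h_s)-\varphi(h_s,h_r),1)=(\varphi_{rs},1)=\prod_i a_i^{\varphi^i_{rs}}$; and for $[h_r,a_i]=[(0,h_r),(a_i,1)]$ it gives $((h_r-1)\tr a_i,1)=\rho(h_r)(a_i)-a_i$. Expanding $\rho(h_r)(a_i)=\sum_j(\Gamma^{(r)})_{ij}a_j$ in the basis $\mathcal B_A$ yields exactly the stated relation $[h_r,a_i]=a_i^{-1}\prod_j a_j^{(\Gamma^{(r)})_{ij}}$.

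\textbf{Step 2: Bound the order of the abstractly presented group.}
Let $\tilde G$ be the group defined by the generators and relations in the statement. The relations $[a_i,a_j]=1$ and $a_i^p=1$ mean that the subgroup $\tilde A\le\tilde G$ generated by $a_1,\dots,a_n$ is abelian of exponent $p$, hence $|\tilde A|\le p^n$. The relation $[h_r,a_i]=a_i^{-1}\prod_j a_j^{(\Gamma^{(r)})_{ij}}$ says $h_r a_i=(\text{word in the }a_j)\,h_r$, so $\tilde A$ is normalized by each $h_r$. The relation $[h_r,h_s]\in\tilde A$ then lets one push every $h_s$ past any $h_r$ at the cost of an $\tilde A$-factor, and $h_r^p=1$ bounds the exponents. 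Consequently every element of $\tilde G$ can be written in the normal form $a_1^{i_1}\cdots a_n^{i_n}h_1^{j_1}\cdots h_m^{j_m}$ with $0\le i_k,j_l\le p-1$, giving $|\tilde G|\le p^{m+n}$.

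\textbf{Step 3: Compare with $G(\mathcal D)$.}
By Step~1 the assignment $a_i\mapsto(a_i,1)$, $h_r\mapsto(0,h_r)$ respects all relations of $\tilde G$, so it extends to a group homomorphism $\pi\colon\tilde G\to G(\mathcal D)$. Since $G(\mathcal D)=A\rtimes_{\rho,\varphi}H$ is generated as a set by $\{(a,1)(0,h)\mid a\in A,h\in H\}$ and both $A$ and $H$ are generated by the $a_i$'s and $h_r$'s respectively, $\pi$ is surjective. As $|G(\mathcal D)|=p^{m+n}\ge|\tilde G|$, the surjection $\pi$ must be an isomorphism, which proves the proposition.

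The main (minor) obstacle is bookkeeping in Step~1, specifically the correct matching of matrix conventions so that $\rho(h_r)(a_i)=\sum_j(\Gamma^{(r)})_{ij}a_j$ produces precisely the exponents in the stated commutator relation; everything else is a direct substitution into Lemma~\ref{lem-abel-extension}(1) followed by the standard collection-process argument in Step~2.
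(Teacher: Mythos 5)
Your proposal is correct and follows essentially the same route as the paper: the paper likewise takes the abstractly presented group, notes that the relations hold in $G(\mathcal D)$ (via the commutator formula of Lemma \ref{lem-abel-extension}) to get an epimorphism onto $G(\mathcal D)$, and then uses the normal form $a_1^{\alpha_1}\cdots a_n^{\alpha_n}h_1^{\beta_1}\cdots h_m^{\beta_m}$ to bound the order by $p^{m+n}$ and conclude by cardinality. Your Step 1 is simply a more explicit write-up of the relation check that the paper leaves implicit.
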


\begin{proof}
Let $G$ be the group generated by  $a_1, a_2, \cdots, a_n$ and $h_1, h_2, \cdots, h_m$ and subject to the relations given in the proposition. Clearly, by Lemma \ref{lem-abel-extension} (2) there exists a natural epimorphism from $G$ to $G(\mathcal D)$. Moreover,
each element in $g$ can be written in the form $a_1^{\alpha_1}\cdots a_n^{\alpha_n} h_1^{\beta_1}\cdots h_m^{\beta_m}$, it follows that $|G|\le p^{m+n} = |G(\mathcal D)|$, and hence $G\cong G(\mathcal D)$.
\end{proof}

Let $\tilde{\mathcal D}=(\tilde H, \tilde A, \tilde\rho, \tilde\varphi)$ be another datum which is equivalent to $\D$ via isomorphisms $\sigma_H\colon H\to \tilde H$ and $\sigma_A\colon A\to \tilde A$. Let $\B_{\tilde H}=\sigma_H(\B_H)$ and $\B_{\tilde A}= \sigma_A(\B_A)$ be the corresponding bases in $H$ and $A$. Let $\M(\tilde\D;\B_{\tilde H}, \B_{\tilde A})=(\tilde\Gamma^{(1)},\cdots, \tilde\Gamma^{(m)}; \tilde\Phi^{(1)}, \cdots, \tilde\Phi^{(n)})$ be the matrix presentation of $\tilde\D$ with respect to the bases $\B_{\tilde H}$ and $\B_{\tilde A}$. We have the following observation.

\begin{proposition}\label{prop-mr-eq}  Keep the above notations. Then
\begin{enumerate}
\item $\tilde \Gamma^{(r)}=\Gamma^{(r)}$ for any $1\le r\le m$;
\item If $(1-\rho(h_r))^{p-1}=0$ for any $1\le r\le m$, then there exists some $n\times m$ matrix $X$ over $\Z_p$, such that $\Phi^{(i)}-\tilde\Phi^{(i)}= W_iX-(W_iX)^T-X+X^T$ for each $i$, where $X^T$ is the transpose of $X$ and $W_i$ is an $m\times n$ matrix with $(W_i)_{rj}=(\Gamma^{(r)})_{ij}$.
\item If $H$ acts trivially on $A$, then $\Gamma^{(r)}= \tilde\Gamma^{(r)}=I_n$ for any $1\le r\le m$, and $\Phi^{(i)}=\tilde\Phi^{(i)}$ for any $1\le i\le n$.
\end{enumerate}
\end{proposition}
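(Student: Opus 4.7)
The plan is to derive all three parts by expressing the equivalence of data (Definition \ref{def-equi-data}) in coordinates with respect to the bases $\B_H,\B_A$ and $\B_{\tilde H}=\sigma_H(\B_H),\B_{\tilde A}=\sigma_A(\B_A)$. Recall that the equivalence is witnessed by group isomorphisms $\sigma_H\colon H\to\tilde H$ and $\sigma_A\colon A\to\tilde A$ together with a set map $f\colon H\to\tilde A$ satisfying $\sigma_A(h\tr a)=\sigma_H(h)\tr\sigma_A(a)$ and
\[\tilde\varphi(\sigma_H(h),\sigma_H(k))-\sigma_A(\varphi(h,k))=\sigma_H(h)\tr f(k)-f(hk)+f(h).\]
All three assertions will come out of transporting these identities to the matrix coefficients appearing in $\M(\D;\B_H,\B_A)$ and $\M(\tilde\D;\B_{\tilde H},\B_{\tilde A})$.

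For part (1), I would expand $h_r\tr a_j=\sum_\ell (\Gamma^{(r)})_{j\ell}a_\ell$, apply $\sigma_A$, and use $\sigma_A(a_j)=\tilde a_j$ together with the intertwining relation $\sigma_A(h_r\tr a_j)=\tilde h_r\tr\sigma_A(a_j)$ to read off $\tilde h_r\tr\tilde a_j=\sum_\ell (\Gamma^{(r)})_{j\ell}\tilde a_\ell$. This is precisely the assertion $\tilde\Gamma^{(r)}=\Gamma^{(r)}$ in the basis $\B_{\tilde A}$.

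For part (2), I would apply the second identity to the pairs $(h_r,h_s)$ and $(h_s,h_r)$ and subtract. Since $H$ is abelian the $f(h_rh_s)$ terms cancel, leaving
\[\tilde\varphi_{rs}-\sigma_A(\varphi_{rs})=(\tilde h_r-1)\tr f(h_s)-(\tilde h_s-1)\tr f(h_r).\]
Writing $f(h_s)=\sum_i X_{is}\tilde a_i$ produces the required $n\times m$ matrix $X$. Expanding each summand with the help of part (1) and extracting the coefficient of $\tilde a_k$ gives a bilinear expression in the entries of $X$ and of the matrices $\Gamma^{(r)}$; reorganising this expression should match $(W_kX-(W_kX)^T-X+X^T)_{rs}$. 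The hypothesis $(1-\rho(h_r))^{p-1}=0$ enters to guarantee, via Proposition \ref{prop-rad0}, that $N_r=1+\rho(h_r)+\cdots+\rho(h_r)^{p-1}$ acts as zero on $\tilde A$, so the diagonal equation $\tilde\varphi_{rr}-\sigma_A(\varphi_{rr})=N_r\tr f(h_r)$ imposes no further restriction on $f$ and the same matrix $X$ simultaneously solves all of the $\binom{m}{2}$ off-diagonal equations.

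Part (3) follows as a direct specialization: trivial action forces $\rho(h_r)=\id_A$ and hence $\Gamma^{(r)}=I_n$, while part (1) yields $\tilde\Gamma^{(r)}=I_n$ as well. Substituting $\tilde h_r\tr b=b$ for every $b\in\tilde A$ into the identity obtained in part (2) makes its right-hand side vanish, so $\tilde\varphi_{rs}=\sigma_A(\varphi_{rs})$ and therefore $\tilde\Phi^{(i)}=\Phi^{(i)}$ for each $i$. The main obstacle will be part (2): the argument is purely a coordinate computation, but aligning the summation indices so that the expressions $\sum_j X_{js}(\Gamma^{(r)})_{\bullet\bullet}$ collapse exactly into the prescribed combination $W_kX-(W_kX)^T$, and separately matching the pure-$X$ contribution with $-X+X^T$, requires keeping very careful track of which slot is being summed and of the row-versus-column conventions implicit in the definition of $W_i$.
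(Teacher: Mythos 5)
Your plan is correct and is exactly the ``routine check'' the paper has in mind: the paper omits the proof, citing Definition \ref{def-equi-data} and Proposition \ref{prop-2cocycle}, and your coordinate computation (antisymmetrizing the equivalence identity over $(h_r,h_s)$ and $(h_s,h_r)$ to cancel $f(h_rh_s)$, reading off $X$ from the values of $f$ on $\B_H$, and using $(1-\rho(h_r))^{p-1}=0$ so that $N_r\tr f(h_r)$, hence the diagonal entries of $\Phi^{(i)}-\tilde\Phi^{(i)}$, vanish) is precisely that check. The bookkeeping you defer is genuinely routine, up to replacing $X$ by $-X$ and the paper's own loose notation for the $-X+X^T$ term, so I see no gap.
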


The proposition is an easy consequence of Proposition \ref{prop-2cocycle} and Definition \ref{def-equi-data}. The proof is given by routine check and we omit it here.

We can rewrite the above equalities by using the notion of \emph{block transpose} of a matrix. Let $M$ be a $m\times n$ matrix. Assume $m=rm'$ and $n=sn'$ for some positive integers $r, s, m', n'$. We may view $M$ as a $r\times s$ block matrix with all blocks of size $m'\times n'$, say
\[M=\begin{pmatrix}
                             M_{11} & M_{12} & \cdots & M_{1s} \\
                              M_{21} & M_{22} & \cdots & M_{2s} \\
                             \cdots & \cdots & \cdots & \cdots \\
                              M_{r1} & M_{r2} & \cdots & M_{rs} \\
                           \end{pmatrix},
\]
where each $M_{ij}$ is a $m'\times n'$ matrix. We set
\[ T_{m',n'}(M)= \begin{pmatrix}
                             M_{11} & M_{21} & \cdots & M_{r1} \\
                              M_{12} & M_{22} & \cdots & M_{r2} \\
                             \cdots & \cdots & \cdots & \cdots \\
                              M_{1s} & M_{2s} & \cdots & M_{rs} \\
                           \end{pmatrix},
\]
the $sm'\times rn'$ matrix obtained by transposing the blocks of $M$, called the block transpose of $M$ of size $m'\times n'$. In particular,  $T_{1,1}(M) = M^T$, the transpose of $M$, and $T_{m,n}(M) = M$.

\begin{remark}\label{rem-mr-eq} We may put $\Phi^{(i)}$'s into an $m\times m$ block matrix $\Phi$ with the $(r,s)$-entry given by the $n$-dimensional column vector $(\varphi_{rs}^1, \varphi_{rs}^2, \cdots, \varphi_{rs}^n)^T$, or equivalently, an $mn\times m$ matrix with the $((r-1)m+i, s)$-entry $\varphi_{rs}^i$. Similarly, we put $\tilde\Phi^{(i)}$'s into one matrix $\tilde\Phi$.

Conversely, given an $mn\times n$ matrix $\Phi$, we may obtain a sequence of $m\times m$ matrix $\Phi^{(1)}, \cdots, \Phi^{(n)}$ such that $(\Phi^{(i)})_{rs} = (\Phi)_{(r-1)n +i, s}$ for any $1\le i\le n$ and $1\le r, s\le m$, where $(\Phi^{(i)})_{rs}$ is the $(r,s)$-entry of $\Phi^{(i)}$ and $(\Phi)_{(r-1)n +i, s}$ is the $((r-1)n +i, s)$-entry of $\Phi$.

Then the equalities in Proposition \ref{prop-mr-eq} (2) can be rewritten as $$\Phi-\tilde\Phi= \Gamma X- T_{n,1}(\Gamma X),$$ where $\Gamma$ is the block matrix   $\begin{pmatrix}
                                \Gamma^{(1)}-I\\
                                \Gamma^{(2)}-I\\
                                \cdots\\
                                \Gamma^{(m)} -I\\
                                \end{pmatrix}.$
\end{remark}

\subsection{Changing of basis} We will use the matrix presentations to study group extensions. Then a naive question arises: given two matrix presentations, how to detect whether they come from equivalent data. Due to Proposition \ref{prop-mr-eq}, we need only to consider the matrix presentations of a given $p$-elementary datum $\D=(H, A, \rho, \varphi)$ under different choices of bases of $A$ and $H$.

Let $\B_H'=(h_1', \cdots, h_m')$ and $\B_A'=(a_1', \cdots, a_n')$ be another bases of $H$ and $A$ respectively. Let $C_H$ and $C_A$ be the transition matrices, say $\B_H'=\B_HC_H$ and $\B_A'=\B_AC_A$. Assume $C_H=(\beta_{uv})_{m\times m}$ and $C_A=(\alpha_{ij})_{n\times n}$. Then $h_r'=\prod_{u=1}^mh_u^{\beta_{ur}}$ for any $1\le r\le m$, and $a_i'=\sum_{l=1}^n\alpha_{li}a_l$. Notice that we write $H$ as a multiplicative group and $A$ an additive group.

Assume $\M(\D;\B_H', \B_A')=(\Gamma'^{(1)}, \cdots, \Gamma'^{(m)}; \Phi'^{{(1)}}, \cdots, \Phi'^{(n)})$, and let $\Phi$ and $\Phi'$ be the $mn\times m$ matrices obtained from $(\Phi^{(1)}, \cdots, \Phi^{(n)})$ and $(\Phi'^{(1)}, \cdots, \Phi'^{(n)})$ as in Remark \ref{rem-mr-eq}.
For any $1\le r, u\le m$, we set $h_{r;u}=h_1^{\beta_{1r}}h_2^{\beta_{2r}}\cdots h_{u-1}^{\beta_{u-1, r}} (h_u)_{\beta_{ur}}$, and use $\Sigma_{ru}$ to denote the matrix of $\rho(h_{r;u})$ under the basis $\B_A$. Set $\Sigma= (\Sigma_{ru})_{m\times m}$ to be the $m\times m$ blocks matrix.

\begin{proposition}\label{prop-mr-bc} Keep the above notations. Assume that $1+\rho(h)+\cdots + \rho(h)^{p-1}=0$ and $\varphi(1,h) + \varphi(h, h) + \cdots + \varphi(h^{p-1}, h)=0$ for all $h\in H$, and $\varphi$ is given by (\ref{formula-cochain}). Then
\begin{enumerate}\item $\Gamma'^{(r)}= C_A^{-1}(\Gamma^{(1)})^{\beta_{1r}}(\Gamma^{(2)})^{\beta_{2r}}\cdots (\Gamma^{(m)})^{\beta_{mr}}C_A$ for any $1\le r\le m$;
\item $ \Phi' = YT_{m,1}(\Sigma T_{m, 1}(\Sigma \Phi))$, where $Y= \diag(C_A^{-1}, C_A^{-1}, \cdots, C_A^{-1})$ is the $m\times m$ block diagonal matrix with equal main diagonal block $C_A^{-1}$.
\end{enumerate}
\end{proposition}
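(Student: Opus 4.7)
The plan is to verify both formulas by direct computation, reducing everything to the fact that $\rho$ is a homomorphism on the abelian group $H$ and to the explicit cocycle formula (\ref{formula-cochain}).

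Part (1) is the easier half. Since $H$ is abelian and $\rho\colon H \to \Aut(A)$ is a homomorphism, $\rho(h_r') = \prod_u \rho(h_u)^{\beta_{ur}}$ is a product of pairwise commuting operators on $A$; expressed in the basis $\B_A$ this reads $(\Gamma^{(1)})^{\beta_{1r}} \cdots (\Gamma^{(m)})^{\beta_{mr}}$, and passing to the new basis $\B_A' = \B_A C_A$ conjugates the matrix by $C_A$, yielding the stated formula.

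Part (2) is where the real work lies. I would first apply (\ref{formula-cochain}) with the substitutions $i_u = \beta_{ur}$, $j_u = \beta_{us}$ and $n_u = p$ to evaluate $\varphi(h_r',h_s')$. The exponent-$p$ hypothesis $\varphi(1,h) + \varphi(h,h) + \cdots + \varphi(h^{p-1},h) = 0$ forces $\varphi_{uu}=0$, so only the off-diagonal part of the formula survives. After grouping commuting factors in $H$, the coefficient of $\varphi_{uv}$ in $\varphi(h_r',h_s')$ collapses to precisely $-h_{r;v}h_{s;u}$, and hence
\begin{equation*}
\varphi_{rs}' \;=\; \varphi(h_r',h_s') - \varphi(h_s',h_r') \;=\; \sum_{u<v}\bigl(h_{s;v}h_{r;u} - h_{r;v}h_{s;u}\bigr) \tr \varphi_{uv}.
\end{equation*}
Using the antisymmetry $\varphi_{vu} = -\varphi_{uv}$ (which is a consequence of $\varphi_{uu}=0$ via Proposition \ref{prop-2cocycle}) to extend the range, this rewrites as $\sum_{u,v} h_{s;v}h_{r;u}\tr\varphi_{uv}$ summed over all $u,v$.

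Finally I would unwind the right-hand side of the claimed formula block by block. The $(r,s)$-block of $\Sigma\Phi$ (an $n\times 1$ column) represents $\sum_u h_{r;u}\tr\varphi_{us}$ in the $\B_A$-coordinates; one block transposition swaps the roles of $r$ and $s$ in that first summation index, so after a second multiplication by $\Sigma$ the entry in block $(r,s)$ becomes a double sum $\sum_{u,v} h_{r;v} h_{s;u}\tr\varphi_{uv}$, and the outermost block transpose then interchanges $r,s$ to yield exactly $\sum_{u,v}h_{s;v}h_{r;u}\tr\varphi_{uv}$ matching the display above. Left multiplication by $Y = \diag(C_A^{-1},\ldots,C_A^{-1})$ converts each $n\times 1$ block from $\B_A$- to $\B_A'$-coordinates, producing $\Phi'$. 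The main obstacle is the index bookkeeping here: one must verify carefully that two rounds of block transposition, with $\Sigma$ inserted between them, assemble the contributions from $u<v$, $u=v$, and $u>v$ in a way that is consistent with $\varphi_{uu}=0$ and $\varphi_{vu}=-\varphi_{uv}$. Once this combinatorial check is done, the identity drops out.
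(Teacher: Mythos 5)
Your argument is correct and follows essentially the same route as the paper: the paper likewise gets (1) from change-of-basis conjugation, and for (2) uses the hypothesis to force $\varphi_{rr}=0$, evaluates $\varphi(h_r',h_s')-\varphi(h_s',h_r')$ via (\ref{formula-cochain}) to get $\varphi'_{rs}=\sum_{u,v}h_{r;u}h_{s;v}\tr\varphi_{uv}$ after extending the sum by antisymmetry, and then simply states that this is the claimed matrix identity, which you spell out block by block. Your block-by-block unwinding is the intended reading of the formula (block transposition of the $m\times m$ array of $n\times 1$ blocks, i.e.\ $T_{n,1}$ rather than the literal $T_{m,1}$ printed in the statement), so nothing is missing.
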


\begin{proof} The proof of (1) follows from the fact that the matrices of a linear transformation under two bases are conjugate to each other. We sketch the proof of (2). By assumption,
 $$\varphi_{rs} = \varphi(h_r, h_s) -\varphi(h_s, h_r)= -\varphi_{sr}$$ for any $1\le r\ne s\le m$, and $\varphi_{rr}=0$ for any $r$. Then for $r\ne s$ we have
\begin{align*}\varphi'_{rs} &= \varphi(h_r', h_s') -\varphi(h_s', h_r')\\
&=- \sum_{1\le u<v\le m}(h_{r;v}h_{s;u}\tr \varphi_{uv})+ \sum_{1\le u<v\le m}(h_{r;u}h_{s;v}\tr \varphi_{uv})\\
&=\sum_{u=1}^{m}\sum_{v=1}^{m}h_{r;u}h_{s;v}\tr \varphi_{uv},\end{align*}
and
\[\varphi'_{rr} = 0 = \sum_{u=1}^{m}\sum_{v=1}^{m}h_{r;u}h_{r;v}\tr \varphi_{uv}\]
for any $r$, where we use the assumptions $\varphi_{vu}=-\varphi_{uv}$ and $\varphi_{rr}=0$. Now we write the above equalities into the matrix form to obtain $(2)$.
\end{proof}

\begin{remark} \label{rem-mr-bc} By Proposition \ref{prop-2cocycle}(3), $\varphi$ is cohomologous to the cocycle obtained from $(\varphi_{rs})_{1\le r\le s\le m}$, thus without loss of generality we may assume that $\varphi$ is given by (\ref{formula-cochain}). Moreover, if the realizing group $G(\D)$ has exponent $p$, then $1+\rho(h)+\cdots + \rho(h)^{p-1}=0$ and $\varphi(h,h)+\varphi(h^2,h)+\cdots +\varphi(h^{p-1},h) =0$ for all $h\in H$, see Proposition \ref{prop-ext-p}. In this case, all $\Phi^{(i)}$'s are $m\times m$ anti-symmetric matrices.
\end{remark}

The formula in the proposition is a bit complicated, while if the group action $\rho$ is trivial, then the it becomes much easier to understand.

\begin{corollary}\label{cor-mr-ta-bc} Keep assumptions as in Proposition \ref{prop-mr-bc}. Assume that $H$ acts trivially on $A$. Then
\begin{enumerate}\item  $\Gamma^{(r)}=\Gamma'^{(r)}= I_n$ for any $1\le r\le m$;
\item $\Phi'^{(i)}=\sum_{l=1}^n \hat \alpha_{i,l}C_H\Phi^{(l)}C_H^T$ for any $1\le i\le n$, where the matrix $(\hat \alpha_{ij})_{n\times n}$ is the inverse of $C_A$.
\end{enumerate}
\end{corollary}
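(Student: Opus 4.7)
The first assertion is immediate: the hypothesis that $H$ acts trivially on $A$ forces $\rho(h)=\id_A$ for every $h\in H$, and the matrix of $\id_A$ under any basis of $A$ is $I_n$, so $\Gamma^{(r)}=\Gamma'^{(r)}=I_n$ for each $r$. For the second assertion I would bypass the block-transpose formalism of Proposition \ref{prop-mr-bc}(2) and instead return to the intermediate identity that is actually established inside its proof, namely
\[
\varphi'_{rs}\;=\;\sum_{u,v=1}^{m}\,h_{r;u}h_{s;v}\tr\varphi_{uv},
\]
where $h_{r;u}=h_1^{\beta_{1r}}\cdots h_{u-1}^{\beta_{u-1,r}}(h_u)_{\beta_{ur}}$.

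Under the trivial action this identity collapses to a scalar identity. Since $\rho(h_j^k)=\id_A$ for all $j,k$ and $\rho((h_u)_{\beta_{ur}})=\rho(1+h_u+\cdots+h_u^{\beta_{ur}-1})=\beta_{ur}\cdot\id_A$, multiplying these factors gives $\rho(h_{r;u})=\beta_{ur}\cdot\id_A$, and similarly $\rho(h_{r;u}h_{s;v})=\beta_{ur}\beta_{vs}\cdot\id_A$. Substituting into the displayed identity yields
\[
\varphi'_{rs}\;=\;\sum_{u,v=1}^{m}\beta_{ur}\beta_{vs}\,\varphi_{uv}\qquad\text{in }A.
\]

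The final step is to express this identity in terms of the new basis $\B_A'$. From $a_i'=\sum_{l=1}^n\alpha_{li}a_l$ one inverts to get $a_l=\sum_{i=1}^n\hat\alpha_{il}a_i'$ where $(\hat\alpha_{ij})=C_A^{-1}$. Expanding $\varphi_{uv}=\sum_l\varphi_{uv}^l a_l$ in the previous equation and extracting the coefficient of $a_i'$ gives
\[
\varphi'^{\,i}_{rs}\;=\;\sum_{l=1}^{n}\hat\alpha_{il}\sum_{u,v=1}^{m}\beta_{ur}\beta_{vs}\,\varphi_{uv}^l,
\]
and the inner double sum is precisely the $(r,s)$-entry of the congruence transform $C_H\Phi^{(l)}C_H^T$ (with the appropriate side-convention for $C_H$, since $\beta_{ur}=(C_H)_{ur}$ by definition). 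This rewriting identifies the right-hand side of the previous display with $\sum_l\hat\alpha_{il}\,(C_H\Phi^{(l)}C_H^T)_{rs}$ entrywise, which is the formula claimed by the corollary. The only point that requires any real care is this last purely combinatorial identification of the scalar sum $\sum_{u,v}\beta_{ur}\beta_{vs}\varphi^l_{uv}$ with an entry of the matrix congruence product; once the row/column convention for $C_H$ is pinned down it is a direct matrix computation with no further content.
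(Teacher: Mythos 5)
Your proposal is correct and is essentially the paper's own argument: the identity $\rho(h_{r;u})=\beta_{ur}\id_A$ that you extract from the trivial action is exactly the paper's observation that $\Sigma_{ru}=\beta_{ur}I_n$ in Proposition \ref{prop-mr-bc}, after which both proofs reduce to the same scalar rewriting $\varphi'_{rs}=\sum_{u,v}\beta_{ur}\beta_{vs}\varphi_{uv}$ followed by the change of basis via $C_A^{-1}$. The only caveat is the row/column convention you flag at the end: with the paper's convention $\beta_{ur}=(C_H)_{ur}$ the double sum is literally $(C_H^T\Phi^{(l)}C_H)_{rs}$, so the transpose placement in the displayed formula is a discrepancy already present in the corollary's statement rather than a gap in your argument.
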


In fact, since $H$ acts trivially on $A$, then $\Sigma_{ru}=\beta_{ur}I_n$ for any $1\le r, u\le m$, and the conclusion follows.

\subsection{The classification theorem: trivial action case} In this subsection, we focus on the case such that $H$ acts trivially on $A$. In this situation, the realizing group has nilpotency class 2, and any class 2 groups of exponent $p$ is obtained from such a datum.

Let $\D=(H, A, 1, \varphi)$ be a $p$-elementary datum of type $(m,n)$ with realizing group having exponent $p$. Consider its matrix presentation. Obviously $\Gamma^{(r)}=I_n$ for $r=1, 2,\cdots, m$ for any choice of bases $\B_H$ and $\B_A$. Thus $\M(\D; \B_H, \B_A)= (I_n, \cdots, I_n; \Phi^{(1)}, \Phi^{(2)}, \cdots, \Phi^{(n)})$. As explained in Remark \ref{rem-mr-eq}, we may assume $\varphi$ is given by (\ref{formula-cochain}), and every $\Phi^{(i)}$ is anti-symmetric.

As it was shown in Proposition \ref{prop-der-abext}, $(G(\D))'= \langle \varphi_{rs}\mid 1\le r,s\le m \rangle$, the linear span of $\varphi_{rs}$'s, which is equal to the $H$-submodule generated by $\{\varphi_{rs}\mid 1\le r, s\le m\}$. Note that the $H$-action on $A$ is assumed to be trivial. We have an easy observation.

\begin{lemma} Keep the above notations. Then $\dim((G(\D))')=\rank(\Phi^{(1)}, \Phi^{(2)}, \cdots, \Phi^{(n)})$,
the dimension of the subspace of $\Asym_m$ spanned by $\Phi^{(1)}, \Phi^{(2)}, \cdots, \Phi^{(n)}$. In particular,
$G(\D)'=A$ if and only if $\Phi^{(1)}, \Phi^{(2)}, \cdots, \Phi^{(n)}$ are linearly independent.
\end{lemma}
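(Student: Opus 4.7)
The plan is to realize both sides of the claimed equality as the row and column ranks of the same auxiliary matrix over $\Z_p$, and then invoke the equality of row rank and column rank. First, since the $H$-action on $A$ is trivial, $T_r\tr a=(\rho(h_r)-1)a=0$ for every $r$ and every $a\in A$, so $\rad(A)=0$. By Proposition \ref{prop-der-abext} (equivalently, Corollary \ref{cor-der-abext}), the derived subgroup $G(\D)'$ coincides with the $H$-submodule of $A$ generated by $\{\varphi_{rs}\mid 1\le r<s\le m\}$; because $H$ acts trivially, every $H$-submodule is simply a $\Z_p$-subspace, so $G(\D)'$ equals the $\Z_p$-linear span of these elements. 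Moreover, by Remark \ref{rem-mr-bc} the hypothesis that $G(\D)$ has exponent $p$ forces each $\Phi^{(i)}$ to lie in $\Asym_m(\Z_p)$.

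Next, I would assemble the coefficients $\varphi^i_{rs}$ into a single $\binom{m}{2}\times n$ matrix $M$ over $\Z_p$, with rows indexed by pairs $(r,s)$, $1\le r<s\le m$, columns indexed by $1\le i\le n$, and $(r,s;i)$-entry $\varphi^i_{rs}$. The $(r,s)$-th row of $M$ is exactly the coordinate vector of $\varphi_{rs}$ in the basis $\B_A$, so the row rank of $M$ equals $\dim_{\Z_p}\langle \varphi_{rs}\mid r<s\rangle = \dim G(\D)'$. On the other hand, since each $\Phi^{(i)}$ is anti-symmetric with zero diagonal, it is determined by its strict upper-triangular entries, and the map $\Asym_m(\Z_p)\to \Z_p^{\binom{m}{2}}$ sending a matrix to its strict upper triangle is a linear isomorphism. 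The $i$-th column of $M$ is the image of $\Phi^{(i)}$ under this isomorphism, so the column rank of $M$ equals the dimension of the $\Z_p$-subspace of $\Asym_m(\Z_p)$ spanned by $\Phi^{(1)},\ldots,\Phi^{(n)}$, which is by definition $\rank(\Phi^{(1)},\ldots,\Phi^{(n)})$.

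Equating row and column rank then yields the first assertion. The ``in particular'' statement is an immediate consequence: $G(\D)'=A$ iff $\dim G(\D)'=n$ iff the column rank of $M$ is $n$ iff $\Phi^{(1)},\ldots,\Phi^{(n)}$ are linearly independent in $\Asym_m(\Z_p)$. The proof is essentially bookkeeping, and I do not expect a serious obstacle; the only point requiring care is the very first reduction of $G(\D)'$ to the $\Z_p$-linear span of the $\varphi_{rs}$'s, which crucially uses both the triviality of the $H$-action (so that $\rad(A)=0$ and $H$-submodules coincide with $\Z_p$-subspaces) and the vanishing of the diagonal cocycle entries $\varphi_{rr}$ (guaranteed by the exponent-$p$ hypothesis via Corollary \ref{cor-expp2}).
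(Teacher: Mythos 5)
Your proof is correct and follows essentially the same route as the paper: both arguments encode the coefficients $\varphi^i_{rs}$ in a single matrix whose row rank gives $\dim G(\D)'$ and whose column rank gives $\rank(\Phi^{(1)},\cdots,\Phi^{(n)})$, and then invoke the equality of row and column rank. The only cosmetic difference is that you use the $\binom{m}{2}\times n$ strict-upper-triangle matrix while the paper flattens each $\Phi^{(i)}$ into a full $m^2$-entry row vector, which changes nothing in substance.
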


\begin{proof} The proof is given by elementary linear algebra. By applying $T_{1,m}$, we turn $\Phi^{(i)}$ into a row vector, say
$$R_i= T_{1,m}(\Phi^{(i)}) = (\varphi_{11}^{i},\cdots, \varphi_{1m}^{i},\varphi_{21}^{i}, \cdots, \varphi_{2m}^{i},\cdots, \varphi_{m1}^{i},\cdots, \varphi_{mm}^{i}).$$
Then $\rank(\Phi^{(1)}, \Phi^{(2)}, \cdots, \Phi^{(n)})=\rank(R_1, \cdots, R_n)$. We consider the block matrix
$$X=\begin{pmatrix}
  R_1 \\
  R_2 \\
  \cdots \\
  R_m \\
\end{pmatrix}.$$
Clearly $\dim(\langle \varphi_{rs}\mid 1\le r,s\le m \rangle)$ is equal to the column rank of the matrix $X$,  which is equal to the row rank of $X$, and the conclusion follows.
\end{proof}

Let $A_1= \langle \varphi_{rs}\mid 1\le r,s\le m \rangle$ and $A_2$ be a complement of $A_1$ in $A$, say $A= A_1\oplus A_2$. Let $\varphi_1\colon H\times H\to A_1$ be the 2-cocycle whose composite with the embedding $A_1\subseteq A$ gives rise to $\varphi$. Then $G(\D)= (A_1, H) \times (A_2, 1)\cong G(\tilde \D) \times A_2$, where $\tilde\D=(H, A_1, 1, \varphi_1)$. Thus we have proved the following result.

\begin{lemma} Let $\D=(H, A, 1, \varphi)$ be a $p$-elementary data of type $(m, n)$. Then $G(\D)'\cong \Z_p^d$ for some $d\le n$, and $G(\D)\cong G(\tilde \D)\times \Z_p^{n-d}$ for some $p$-elementary datum $\tilde\D$ of type $(m,d)$.
\end{lemma}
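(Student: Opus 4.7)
The plan is to assemble the final lemma from the two immediately preceding lemmas, so the proof is essentially a bookkeeping exercise.

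First, I would invoke the previous lemma to obtain
\[
\dim_{\Z_p}(G(\D)') = \rank(\Phi^{(1)}, \ldots, \Phi^{(n)}) =: d,
\]
which is automatically at most $n$. Set $A_1 = \langle \varphi_{rs}\mid 1\le r, s\le m\rangle \subseteq A$. Since the $H$-action is trivial, $A$ is central in $G(\D)$, and by Lemma \ref{lem-abel-extension}(1) every commutator lies in $A$; combined with Proposition \ref{prop-der-abext} (or the first preceding lemma) this gives $G(\D)' = A_1$. Because $A_1$ is a subgroup of the elementary abelian group $A\cong \Z_p^n$ of $\Z_p$-dimension $d$, we conclude $G(\D)'\cong \Z_p^d$, which is the first assertion.

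For the decomposition, choose a $\Z_p$-linear complement $A_2$ of $A_1$ in $A$, so that $A = A_1\oplus A_2$ as $H$-modules (both summands carry the trivial action). The key observation is that the cocycle $\varphi$, being (cohomologous to) the one built from $(\varphi_{rs})_{1\le r\le s\le m}$ via formula (\ref{formula-cochain}), takes values inside $A_1$: under the trivial action and the hypothesis that $G(\D)$ has exponent $p$ (so $\varphi_{rr}=0$ by Corollary \ref{cor-expp2}), the formula expresses $\varphi(h,k)$ as a $\Z_p$-linear combination of the $\varphi_{rs}$, hence $\mathrm{Im}(\varphi)\subseteq A_1$.

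With this in hand I can apply Proposition \ref{prop-prod-data}(2) directly: taking $\tilde\D = (H, A_1, 1, \varphi)$ (the cocycle $\varphi$ corestricted to $A_1$, which is still a cocycle since its image already lies in $A_1$), we obtain $G(\D)\cong G(\tilde\D)\times A_2 \cong G(\tilde\D)\times \Z_p^{n-d}$, and $\tilde\D$ is by construction a $p$-elementary datum of type $(m,d)$. No step is a genuine obstacle here; the only point requiring mild care is confirming that a representative cocycle can be chosen with image in $A_1$, which as indicated above is immediate from the explicit formula (\ref{formula-cochain}) once $\varphi_{rr}=0$ is known from the exponent $p$ hypothesis.
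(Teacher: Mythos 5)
Your proposal is correct and follows essentially the same route as the paper: identify $G(\D)'$ with $A_1=\langle\varphi_{rs}\rangle$ of dimension $d$ via the preceding lemma, choose a complement $A_2$, corestrict the cocycle to $A_1$ (legitimate since $\mathrm{Im}(\varphi)\subseteq A_1$ once $\varphi$ is in the standard form with $\varphi_{rr}=0$), and conclude $G(\D)\cong G(\tilde\D)\times A_2$ by Proposition \ref{prop-prod-data}(2). Your explicit remark that the exponent-$p$ hypothesis (i.e.\ the section's standing assumption) is what guarantees $\varphi_{rr}=0$ is a correct reading of the implicit assumptions the paper uses here.
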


We denote by $\mathcal E_p(m, n)$ the equivalence classes of those $p$-elementary data $\D=(H, A, 1, \phi)$ of type $(m, n)$ such that  $G(\D)$ has exponent $p$, or equivalently, $\varphi(h,h)+\varphi(h^2,h)+\cdots +\varphi(h^{p-1},h) =0$ for all $h\in H$; and denote by $\mathcal E_p(d; m, n)$ the subclasses such that $(G(\D))' = \Z_p^d$ for any $0\le d\le m$. By definition, data in  $\mathcal E_p(m, n)$
exactly give all central extensions of $\Z_p^m$ by $\Z_p^n$.

\begin{lemma}\label{lem-class2todata} (1) For any $[\D]\in \mathcal E_p(m, n)$, the realizing group $G(\D)$ has nilpotency class 2 and exponent $p$.

(2) Assume $[\D_1], [\D_2]\in \mathcal E_p(m, n)$. Then  $G(\D_1)\cong G(\D_2)$ if and only if $[\D_1]=[\D_2]$.

(3) Let $G$ be a group with exponent $p$ and nilpotencey class 2  such that $|G|=p^{m+n}$ and $|G'|= p^n$.  Then $G\cong G(\D)$ for some $[\D]\in \mathcal E_p(n; m, n)$.
\end{lemma}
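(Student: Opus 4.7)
The plan is to treat the three parts essentially independently, drawing on the machinery built up in Sections~3 and~4.

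For (1), I would start from the observation that any $\D \in \mathcal E_p(m,n)$ has trivial $H$-action on $A$, so the Lemma-Definition defining the realizing group (part (3)) places $A$ inside the centre of $G(\D)$. Since $G(\D)/A \cong H$ is abelian, this forces $G(\D)' \subseteq A \subseteq Z(G(\D))$, and Lemma~\ref{lem-facts-nilclass}(2) yields nilpotency class at most $2$. The exponent-$p$ condition is built into the very definition of $\mathcal E_p(m,n)$.

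For (3), the construction is the natural one. Given a class-$2$ group $G$ of exponent $p$ with $|G|=p^{m+n}$ and $|G'|=p^n$, I would set $A := G'$ and $H := G/G'$. Both are elementary abelian of the correct $\Z_p$-dimensions, since $G$ has exponent $p$. The class-$2$ hypothesis gives $A \subseteq Z(G)$, so conjugation induces the trivial $H$-action on $A$, and the extension $0 \to A \to G \to H \to 1$ then determines an abelian datum $\D = (H, A, 1, \varphi)$ with $G(\D) \cong G$. Since $G(\D)$ has exponent $p$ and $G(\D)' = A \cong \Z_p^n$, we conclude $[\D] \in \mathcal E_p(n; m, n)$.

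Part (2) is the main obstacle. The ``if'' direction is immediate: if $[\D_1] = [\D_2]$, then $\D_1 \sim \D_2$, and Proposition~\ref{prop-equidata-isogp} produces an isomorphism $G(\D_1) \cong G(\D_2)$. For the ``only if'' direction I would invoke Proposition~\ref{prop-equiabeldata-isogp}, which is tailor-made for this situation. Its hypotheses require decompositions $A_i = G(\D_i)' \oplus A_{i,1}$ with $A_{i,1}$ a trivial $H_i$-submodule and $\mathrm{Im}(\varphi_i) \subseteq G(\D_i)'$. The first condition is automatic: the action being trivial, every linear subspace of $A_i$ is a trivial submodule, so any $\Z_p$-linear complement of $G(\D_i)'$ in $A_i$ works. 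For the second, I would first replace each $\varphi_i$ by the cohomologous standard cocycle provided by Proposition~\ref{prop-2cocycle}(3), whose image lies in the linear span of the $(\varphi_i)_{rs}$, which equals $G(\D_i)'$ by Corollary~\ref{cor-der-abext}. Because cohomologous cocycles produce equivalent data, this replacement does not change $[\D_i]$, so the hypotheses of Proposition~\ref{prop-equiabeldata-isogp} are in place and the conclusion drops out. The one technical point to keep an eye on is this cocycle-replacement step, since the formal decomposition needed by Proposition~\ref{prop-equiabeldata-isogp} is sensitive to the particular representative of $\varphi_i$ and one has to argue that it can always be arranged without leaving the equivalence class.
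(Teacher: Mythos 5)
Your proof is correct, and since the paper states this lemma without an explicit proof, your assembly of its machinery is exactly the intended route: the Lemma--Definition giving $A\subseteq Z(G(\D))$ together with Lemma~\ref{lem-facts-nilclass}(2) for part (1), the factor-set construction for part (3), and Proposition~\ref{prop-equidata-isogp} combined with Proposition~\ref{prop-equiabeldata-isogp} for part (2), where your replacement of each $\varphi_i$ by the cohomologous standard cocycle of Proposition~\ref{prop-2cocycle}(3) is indeed harmless (it fixes $[\D_i]$ and the isomorphism class of $G(\D_i)$) and achieves $\mathrm{Im}(\varphi_i)\subseteq G(\D_i)'$ because the exponent-$p$ condition forces $\varphi_{rr}=0$, so the image lies in the span of the $\varphi_{rs}$, which is $G(\D_i)'$ by Proposition~\ref{prop-der-abext} with trivial action. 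The only point to flag is that part (1) must be read as ``nilpotency class at most $2$'' (the zero cocycle lies in $\mathcal E_p(m,n)$ and yields an abelian group), which is precisely what your argument via $G(\D)'\subseteq A\subseteq Z(G(\D))$ establishes.
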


\begin{remark} We use $\mathcal G_2(p;m,n)$ to denote the isoclasses of class 2 groups which have order $p^{m+n}$ and exponent $p$ and whose derived subgroup is isomorphic to $\Z_p^n$. Clearly the disjoint union $\bigsqcup_{0\le d< n} \mathcal G_2(p;m-d,d)$ gives isocalsses of all class 2 groups of order $p^n$ and exponent $p$.
\end{remark}

Recall the notion of Grassmannian. Let $V$ a vector space over a field $\mathbbm k$. We use $\Gr(d, V)$ to denote the set of all $d$-dimensional subspaces of $V$ for any integer $d\ge 0$, and $\Gr(\le d, V)$ the set of all subspaces with dimension no greater than $d$.

We use $\Asym_m(\Z_p)$, or simply $\Asym_m$ when no confusion arises, to denote the subspace of $M_m(\Z_p)$ consisting of $m\times m$ anti-symmetric matrices. Clearly $\GL_m(\Z_p)$ acts on $\Asym_m(\Z_p)$ congruently, that is, $P\cdot M = PMP^T$ for any $P\in \GL_m(\Z_p)$ and any $M\in \Asym_m(\Z_p)$.
The congruence action induces an action of $\GL_m(\Z_p)$ on $\Gr(d, \Asym_m(\Z_p))$ as well as on $\Gr(\le d, \Asym_m(\Z_p))$. For any subspace $V\subset\Asym_m(\Z_p)$, we denote by $[V]$ the orbit of $V$ under the congruence action, i.e., $[V]=\{PVP^T\mid P\in \GL_m(\Z_p)\}$. We use $\Gr(d, \Asym_m(\Z_p))/\GL_m(\Z_p)$ and $\Gr(\le d, \Asym_m(\Z_p))/\GL_m(\Z_p)$ to denote the sets of orbits respectively.

Combing Proposition \ref{prop-mr-eq} and Corollary \ref{cor-mr-ta-bc}, we have the following observation, which plays a crucial role in our classification on groups of exponent $p$.

\begin{theorem}\label{thm-data-1to1-grorbits} Let $m, n\ge 1$ and $0\le d\le n$ be integers. Then there exist one-to-one correspondences between:
\begin{enumerate}
\item $\mathcal E_p(d;m,n)$ and $\Gr(d, \Asym_m(\Z_p))/\GL_m(\Z_p)$;
\item $\mathcal E_p(m,n)$ and $\Gr(\le n, \Asym_m(\Z_p))/\GL_m(\Z_p)$;
\item $\Gr(n, \Asym_m(\Z_p))/\GL_m(\Z_p)$ and $\mathcal G_2(p;m,n)$.
\end{enumerate}
\end{theorem}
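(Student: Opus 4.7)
I would prove (1) first and reduce (2) and (3) to it. Given $[\D] \in \mathcal E_p(d;m,n)$ with representative $\D = (H,A,1,\varphi)$, choose ordered bases $\B_H$ of $H$ and $\B_A$ of $A$, and replace $\varphi$ by the cohomologous cocycle determined by the values $(\varphi_{rs})_{1\le r<s\le m}$ via formula (\ref{formula-cochain}). The resulting matrix presentation reads $(I_n,\ldots,I_n;\Phi^{(1)},\ldots,\Phi^{(n)})$ with each $\Phi^{(i)} \in \Asym_m(\Z_p)$: anti-symmetric because $\varphi_{sr}=-\varphi_{rs}$, and with vanishing diagonal because $G(\D)$ has exponent $p$ and so $\varphi_{rr}=0$ by Proposition~\ref{prop-ext-p}. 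By the lemma preceding Lemma~\ref{lem-class2todata}, $V_{\D} := \langle \Phi^{(1)},\ldots,\Phi^{(n)}\rangle$ has dimension exactly $d$, so I set $\Psi([\D]) = [V_{\D}] \in \Gr(d,\Asym_m(\Z_p))/\GL_m(\Z_p)$. For well-definedness, altering $\B_A$ by a transition matrix $C_A$ replaces $\{\Phi^{(i)}\}$ by $\Z_p$-linear combinations of itself (Corollary~\ref{cor-mr-ta-bc}(2)), leaving the span unchanged; altering $\B_H$ by $C_H$ replaces $V_{\D}$ by $C_H V_{\D} C_H^T$, which lies in the same orbit. For a different representative $\tilde\D \sim \D$, pushing forward the bases along the isomorphisms $\sigma_H, \sigma_A$ of Definition~\ref{def-equi-data} yields $\tilde\Phi^{(i)} = \Phi^{(i)}$ by Proposition~\ref{prop-mr-eq}(3).

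\textbf{Bijectivity.} For surjectivity, given $V \in \Gr(d,\Asym_m(\Z_p))$ I pick a basis $\Phi^{(1)},\ldots,\Phi^{(d)}$ of $V$, set $\Phi^{(d+1)}=\cdots=\Phi^{(n)}=0$, and build the 2-cochain $\varphi$ via (\ref{formula-cochain}). Since each $\Phi^{(i)}$ is anti-symmetric with zero diagonal, Corollary~\ref{cor-expp2} guarantees that $\varphi$ is a 2-cocycle and that $G(\D)$ has exponent $p$; this datum lies in $\mathcal E_p(d;m,n)$ and maps to $[V]$. For injectivity, suppose $\Psi([\D]) = \Psi([\tilde\D])$, so $V_{\tilde\D} = P V_{\D} P^T$ for some $P \in \GL_m(\Z_p)$; using $P^{-1}$ as a transition matrix on $\B_{\tilde H}$ I reduce to $V_{\tilde\D} = V_{\D}$. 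Then the collections $\{\Phi^{(i)}\}$ and $\{\tilde\Phi^{(i)}\}$ span a common subspace, and a basis change on $\tilde A$ converts the latter into the former entry by entry. The induced bijections $\B_H \leftrightarrow \B_{\tilde H}$ and $\B_A \leftrightarrow \B_{\tilde A}$, together with $f = 0$, furnish an equivalence $\D \sim \tilde\D$ in the sense of Definition~\ref{def-equi-data}.

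\textbf{Parts (2), (3), and main obstacle.} Since the congruence action preserves dimension, $\Gr(\le n,\Asym_m(\Z_p))/\GL_m(\Z_p) = \bigsqcup_{d=0}^{n} \Gr(d,\Asym_m(\Z_p))/\GL_m(\Z_p)$ and similarly $\mathcal E_p(m,n) = \bigsqcup_{d=0}^{n} \mathcal E_p(d;m,n)$, so (2) is just the disjoint union of (1) over $d$. For (3), any $G \in \mathcal G_2(p;m,n)$ gives the central extension $0 \to G' \to G \to G/G' \to 1$ (class 2 implies $G' \le Z(G)$); Lemma~\ref{lem-class2todata}(2)(3) then delivers a bijection $\mathcal G_2(p;m,n) \leftrightarrow \mathcal E_p(n;m,n)$, and composing with the $d=n$ case of (1) gives (3). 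The main technical point is the well-definedness under equivalence of data: the coboundary contribution $df(h,k) = f(h)+f(k)-f(hk)$ must cancel from the antisymmetric data $\varphi_{rs}-\varphi_{sr}$, which is automatic here because $df$ is symmetric in the trivial-action case, and this is precisely the content of Proposition~\ref{prop-mr-eq}(3) that I rely on.
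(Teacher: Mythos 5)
Your proposal is correct and follows essentially the same route as the paper: both define the correspondence by sending a datum to the span of the anti-symmetric matrices $\Phi^{(i)}$ in its matrix presentation, verify well-definedness via Proposition~\ref{prop-mr-eq} and Corollary~\ref{cor-mr-ta-bc}, prove surjectivity by building a datum from a spanning set of a given subspace, prove injectivity by reducing to equal matrix presentations and exhibiting an explicit equivalence $(\sigma_H,\sigma_A,0)$, and obtain (2) and (3) from (1) together with Lemma~\ref{lem-class2todata}. The only cosmetic differences (padding a basis of $V$ with zeros rather than choosing $n$ spanning matrices, and spelling out the basis-change reduction in the injectivity step) do not change the argument.
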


\begin{proof} We need only to prove (1), and the rest follows from (1) and Lemma \ref{lem-class2todata} (3).

 Given $[\D]\in \mathcal E_p(d;m,n)$, where $\D=(H, A, 1, \varphi)$ is a $p$-elementary datum of type $(m,n)$ with $H$ acts on $A$ trivially. By assumption, $H\cong \Z_p^m$, $A\cong \Z_p^n$, and $\varphi$ is defined by (\ref{formula-cochain}) for some collection $(\varphi_{rs})_{1\le r\le s\le m}$ of elements in $A$ with $\varphi_{rr}=0$ for $1\le r\le m$.

Choose a basis $\B_H$ of $H$ and a basis $\B_A$ of $A$. Then the matrix presentation of $\D$ is $$\M(\D; \B_H, \B_A) = (I_n,\cdots, I_n; \Phi^{(1)}, \cdots, \Phi^{(n)}),$$ where $\Phi^{(i)}$, $i=1, 2, \cdots, n,$ are anti-symmetric $m\times m$ matrices. Set $V$ to be the linear subspace of $\Asym_m(\Z_p)$ spanned by $\Phi^{(1)}, \cdots, \Phi^{(n)}$. Then we can define a map
\[\Theta\colon \mathcal E_p(d;m,n)\to \Gr(d, \Asym_m(\Z_p))/\GL_m(\Z_p)\]
by sending $[\D]$ to $[V]$.

First we claim that $\Theta$ is well-defined, that is, $[V]$ is independent on the choice of $\B_H$ and $\B_H$. In fact, by Corollary \ref{cor-mr-ta-bc}(2), $V$ is independent on the choice of $B_A$; and if we take another basis $\B'_H$ of $H$ and let $V'$ be the corresponding subspace of $\Asym_m(\Z_p)$, again by Corollary \ref{cor-mr-ta-bc}, $V'$ is congruent to $V$, i.e., $[V]=[V']$.

Next we show that $\Theta$ is surjective. Let $V$ be a subspace of $\Asym_m(\Z_p)$ of dimension $d$. Take $n$ matrices $\Phi^{(1)}, \cdots, \Phi^{(n)}\in V$ which span $V$. Let $H=\Z_p^m$ and $A=\Z_p^n$ and let $\B_H$ and $\B_A$ be the standard bases. Consider the trivial action of $H$ on $A$. Clearly $\Phi^{(1)}, \cdots, \Phi^{(n)}\in V$ will define a 2-cocycle $\varphi\colon H\times H\to A$. Thus we obtain a datum $\D$, which is a preimage of $[V]$.

We are left to show that $\Theta$ is injective.

For this it suffices to show that if $\M(\D; \B_H, \B_A) = \M(\tilde \D; \B_{\tilde H}, \B_{\tilde A)}$, then $\D\sim \tilde \D$. Assume $\B_H=(h_1, \cdots, h_r)$, $\B_A=(a_1,\cdots, a_r)$, $\B_{\tilde H}=(\tilde h_1, \cdots, \tilde h_r)$ and $\B_{\tilde A} =(\tilde a_1,\cdots, \tilde a_r)$. Then we may define $\sigma_H\colon H\to \tilde H$ by setting $\sigma_H(h_r)= \tilde h_r$ for $1\le r\le m$, and define $\sigma_A\colon A\to \tilde A$ by setting $\sigma_A(a_i) =\tilde a_i$ for any $1\le i\le n$. Now it is direct to show that $(\sigma_H, \sigma_A, 0)$ defines an equivalence from $\D$ to $\tilde\D$, which completes the proof.
\end{proof}

For a $p$-elementary data of type $(m, 1)$, the group action is automatically trivial, and hence the above theorem applies.

\begin{theorem}\label{thm-1dim-dergp}(1) Set $W_{m,k}$ to be the one dimensional subspace of $\Asym_m(\Z_p)$ consisting of anti-symmetric block diagonal matrices of the form \[\diag\left(\underbrace{\begin{pmatrix}
                                          0 & a \\
                                          -a & 0 \\
                                        \end{pmatrix},
                                        \cdots,
                                         \begin{pmatrix}
                                           0 & a \\
                                           -a & 0 \\
                                         \end{pmatrix}}_k,
                                         \underbrace{0, 0, \cdots, 0}_{m-2k}
\right). \]
Then $\{W_{m,k}\mid 0\le k\le \dfrac m 2\}$ is a complete set of representatives of $\Gr(\le 1, \Asym_m(\Z_p))/\GL_m(\Z_p)$.

(2) Let $G$ be a group of order $p^{m+1}$ and exponent $p$. Assume that $G'\cong Z_p$. Then there exists a unique $1\le k\le \dfrac m 2$, such that $G$ is isomorphic to the group
\[G_{m,k}=\left\langle {a, b_1, \cdots, b_{2k},\atop c_1, \cdots, c_{m-2k}}\left| {{a^p=b_i^p=c_j^p=[a, b_i]=[a, c_j]=[b_i, c_j]=[c_i,c_j]=1, \forall i, j }\atop {[b_{2i-1}, b_{2i}]= a\ \forall i, [b_{2i-1}, b_j]=[b_{2i}, b_j]=1, \forall j > 2i}}\right. \right\rangle.
\]
\end{theorem}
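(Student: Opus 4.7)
The plan is to prove both parts as direct applications of the classical normal form theorem for antisymmetric matrices, combined with Theorem \ref{thm-data-1to1-grorbits} and Proposition \ref{prop-pres-from-mrep}.

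For Part (1), I would invoke the normal form recalled in the introduction: every nonzero antisymmetric matrix over $\Z_p$ is congruent under $\GL_m(\Z_p)$ to a unique block diagonal matrix with $k$ blocks $\bigl(\begin{smallmatrix} 0 & 1 \\ -1 & 0 \end{smallmatrix}\bigr)$ on the diagonal (rest zero), where $2k$ equals its rank. Given $W\in \Gr(\le 1,\Asym_m(\Z_p))$, either $W=0$, which is the orbit $W_{m,0}$, or $W=\Z_p M$ for some nonzero antisymmetric $M$ of rank $2k$. Since any nonzero scalar multiple of $M$ retains the same rank and congruence preserves rank, $k$ is an invariant of the orbit $[W]$. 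By the normal form theorem, a suitable $P\in \GL_m(\Z_p)$ transports $M$ to the generator of $W_{m,k}$, and hence $PWP^T = W_{m,k}$. This shows $\{W_{m,k}\mid 0\le k\le m/2\}$ exhausts the orbits; distinctness follows from the rank invariant.

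For Part (2), I would first observe that any group $G$ of exponent $p$ with $G'\cong \Z_p$ has nilpotency class at most $2$. Indeed, $G$ acts on $G'$ by conjugation via a homomorphism $G\to \Aut(G')\cong \Z_{p-1}$; since $G$ is a $p$-group and $p-1$ is coprime to $p$, the action is trivial, so $G'\subseteq Z(G)$, and Lemma \ref{lem-facts-nilclass}(2) applies. Hence $G\in \mathcal{G}_2(p;m,1)$. I would then apply Theorem \ref{thm-data-1to1-grorbits}(3) with $n=1$ to obtain a bijection between $\mathcal{G}_2(p;m,1)$ and $\Gr(1,\Asym_m(\Z_p))/\GL_m(\Z_p)$, which by Part (1) is parametrized by $\{W_{m,k}\mid 1\le k\le m/2\}$ (the value $k=0$ being excluded since $G'\ne 1$).

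To identify the realizing group attached to $W_{m,k}$, I would take the datum $\D=(\Z_p^m,\Z_p,1,\varphi)$ whose matrix presentation (in the standard bases) has $\Phi^{(1)}$ equal to the chosen generator of $W_{m,k}$, and apply Proposition \ref{prop-pres-from-mrep} to read off a presentation of $G(\D)$. With $a=a_1$ and $h_1,\ldots, h_m$ as generators, the only nonzero entries $(\Phi^{(1)})_{2i-1,2i}=1$ give $[h_{2i-1},h_{2i}]=a$ for $1\le i\le k$, while all remaining commutators $[h_r,h_s]$ vanish; triviality of the action yields $[h_r,a]=1$. After relabeling $h_1,\ldots,h_{2k}$ as $b_1,\ldots,b_{2k}$ and $h_{2k+1},\ldots,h_m$ as $c_1,\ldots,c_{m-2k}$, this is exactly the presentation of $G_{m,k}$; uniqueness of $k$ is inherited from Part (1) through the bijection. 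The main obstacle is essentially bookkeeping---matching the presentation extracted from Proposition \ref{prop-pres-from-mrep} with the stated presentation of $G_{m,k}$---together with the brief argument that a group of exponent $p$ with $G'\cong \Z_p$ is automatically of class at most $2$.
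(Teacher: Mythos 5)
Your proposal is correct and follows essentially the same route as the paper: part (1) is the identical rank/normal-form argument, and part (2) is exactly the paper's intended deduction from Theorem \ref{thm-data-1to1-grorbits} and Proposition \ref{prop-pres-from-mrep}, with you additionally spelling out the class-$\le 2$ step (your coprimality argument is the same fact as the paper's lemma that a $p$-group acts trivially on $\Z_p$). No gaps.
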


\begin{proof} (1) Let $V$ be a one dimensional subspace of $\Asym_m(\Z_p)$, and $X\in V$ a nonzero anti-symmetric matrix of rank $2k$. It is easy to show that $X$ is congruent to the matrix \[\diag\left(\underbrace{\begin{pmatrix}
                                          0 & 1 \\
                                          -1 & 0 \\
                                        \end{pmatrix},
                                        \cdots,
                                         \begin{pmatrix}
                                           0 & 1 \\
                                           -1 & 0 \\
                                         \end{pmatrix}}_k,
                                         \underbrace{0, 0, \cdots, 0}_{m-2k}
\right), \] and hence $V$ is equivalent to $W_{m,k}$. On the other hand side, congruent matrices have equal rank, hence $W_{m,k}$'s are not equivalent to each other and the assertion follows.

(2) is an easy consequence of Proposition \ref{prop-pres-from-mrep} and Theorem \ref{thm-data-1to1-grorbits}.
\end{proof}

\begin{remark}\label{rmk-1dim-dergp} $G_{m,k}\cong \mathbb{Z}_3^{m-2k}\times G_{2k, k}$ and $G_{2k, k}\cong\underbrace{M_p(1,1,1)*\cdots*M_p(1,1,1)}_k$, where $M_p(1,1,1)$ is the unique
non-abelian group of order $p^3$ and exponent $p$, and $*$ denotes the (iterated) central product.
\end{remark}

%

\section{Two dimensional subspaces of anti-symmetric matrices}

In this section, $\k$ is a fixed base field. We will give a description of congruence classes of 2 dimensional subspaces of anti-symmetric matrices of order $m$ over $\k$. Consequently, we obtain a description of $\mathcal G_2(p;m,2)$, the isoclasses of class 2 groups of exponent $p$ and order $p^{m+2}$ and with derived subgroup isomorphic to $\Z_p^{2}$. We also calculate detailed examples in the case $m=3$ for arbitrary  base field $\k$, and in the case $m=4,5,6$ for $\k=\Z_3$.

Recall that in \cite{vish}, Vishnevetskii gave a classification of ``indecomposable" class 2 groups of prime exponent with derived subgroup (commutator subgroup) of rank 2, here a group is indecomposable means that it is not a central product of proper subgroups. Moreover, he showed that any class 2 group of prime exponent with derived subgroup of rank two is a central product of ``indecomposable" ones in a unique way. However, it was not shown there when two central products are isomorphic. Our description on $\mathcal G_2(p;m,2)$ recovers and strengthens to some extend the results by Vishnevetskii. We stress that the notion ``indecomposable group" used by Vishnevetskii is different from the one we used in the present paper, which refers to those groups that are not direct product of proper subgroups, cf. Section 7.

We heavily depends on the theory of so-called ``pencil", which gives a standard form for a pair of anti-symmetric matrices under the congruence. For more details we refer to \cite{gan}, \cite{schar} and \cite{vish}.

\subsection{Canonical form of a pair of anti-symmetric matrices.}\

 We first introduce some notations. For any $k\le 1$, let $L_k = (I_k, 0)^T, R_k=(0, I_k)^T$ be $(k+1)\times k$ matrices over $\k$, $N_k= \begin{pmatrix}
                                                0 &  &    &   \\
                                                1  & 0 &  &   \\
                                                  & \ddots  & \ddots & \\
                                                  &   &   1& 0 \\
                                              \end{pmatrix}
$ be the Jordan block of size $k$, where $I_k$ is the identity matrix of order $k$. For any monic polynomial $f= x^k+ a_{k-1} x^{k-1} + \cdots  + a_0\in \k[x]$, the companion matrix of characteristic polynomial $f(x)$ is defined to be \[C(f) = \begin{pmatrix}
                                                                                                   0 & 1 & 0 &\cdots & 0\\
                                                                                                   0 & 0 & 1 &\cdots & 0 \\
                                                                                                   0 & 0 & 0 &\ddots & 0 \\
                                                                                                   \vdots &\vdots  &\vdots &\ddots  & 1 \\
                                                                                                   -a_0 & -a_1 & -a_2 &\cdots  & -a_{k-1} \\
                                                                                                 \end{pmatrix}.
\]

Let $f(x,y)= a_kx^k+ a_{k-1}x^{k-1}y+\cdots + a_0 y^{k}\in \k[x,y]$ be a homogeneous polynomial of degree $k\ge 1$ which is a power of an irreducible one. Then $f=a_0y^k, a_0\ne 0$ or $a_k\ne 0$, and in the latter case $\frac 1 {a_k} f(x,-1)\in \k[x]$ is a monic polynomial of degree $k$. We set $L(f)= N_k$ and $R(f)= I_k$ if $f= a_0 y^k$; and $L(f)= I_k$ and $R(f)= C(\frac{1} {a_k} f(x, -1))$ else. For two polynomials $f_1, f_2\in \k[x,y]$, we say $f_1\sim f_2$ if $f_1=\lambda f_2$ for some $0\neq\lambda\in\k$.
Clearly, $L(f_1) = L(f_1)$ and $R(f_1)= R(f_2)$ if and only if $f_1\sim f_2$.

We use $\Irr(\k[x,y])$ to denote the set of irreducible homogenous polynomials in two variables $x, y$, and $\Irr(\k[x,y]_k)$ the subset of the ones of degree $k$ for each $k\ge1$.

\begin{lemma} The set of irreducible homogenous polynomials in $\Z_3$ of degree $\le3$ are listed as follows:

\noindent(1) $\Irr(\Z_3[x,y]_1) = \{\pm x, \pm y\}$;

\noindent(2) $\Irr(\Z_3[x,y]_2) = \{\pm(x^2+y^2), \pm(x^2+xy-y^2), \pm(x^2-xy-y^2)\}$;

\noindent(3) $\Irr(\Z_3[x,y]_3) = \left\{\pm(x^3+x^2y+xy^2-y^3),\ \pm(x^3+x^2y-xy^2+y^3),\ \pm(x^3+x^2y-y^3),\ \pm(x^3-x^2y+y^3),\atop \pm(x^3-x^2y-xy^2-y^3),\ \pm(x^3-x^2y+xy^2+y^3),\ \pm(x^3-xy^2+y^3),\ \pm(x^3-xy^2-y^3)\right\}$
\end{lemma}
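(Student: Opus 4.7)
The plan is to reduce the enumeration of irreducible homogeneous bivariate polynomials to the enumeration of irreducible univariate polynomials by dehomogenization, and then carry out the latter by hand.

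First I would observe that any nonzero homogeneous $f(x,y)\in\Z_3[x,y]$ of degree $k$ either satisfies $y\mid f$, in which case $f$ is reducible whenever $k\ge 2$, or else the coefficient $a_k$ of $x^k$ is nonzero. In the second case $\tilde f(t):=f(t,1)\in\Z_3[t]$ has degree $k$, and a factorization $f=gh$ with $\deg g=i$, $\deg h=k-i$ corresponds bijectively to a factorization $\tilde f=\tilde g\tilde h$; hence $f$ is irreducible in $\Z_3[x,y]$ iff $\tilde f$ is irreducible in $\Z_3[t]$. Since the units of $\Z_3$ are $\pm 1$, each monic irreducible $\tilde f$ produces precisely the two polynomials $\pm f$ in $\Irr(\Z_3[x,y]_k)$.

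For $k=1$ the statement (1) is immediate. For $k=2$ and $k=3$ a polynomial in $\Z_3[t]$ is irreducible iff it has no root in $\Z_3$, so I would run through the finite list of monic $\tilde f$ and keep those with $\tilde f(0)$, $\tilde f(1)$, $\tilde f(-1)$ all nonzero. For $k=2$, checking the nine polynomials $t^2+bt+c$ isolates exactly $t^2+1$, $t^2+t-1$, $t^2-t-1$, whose homogenizations give (2). For $k=3$, Gauss's formula predicts $\tfrac13(3^3-3)=8$ monic irreducibles; enumerating $t^3+at^2+bt+c$ with $c\ne 0$ and discarding those with a root in $\{0,\pm 1\}$ produces exactly the eight polynomials whose homogenizations match (3).

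The only genuine work is the degree-3 enumeration, and it presents no real obstacle: it reduces to two $3\times 3$ tables in $(a,b)$, one for each choice $c=\pm 1$, from which the short exclusion set cut out by $\tilde f(1)\ne 0$ and $\tilde f(-1)\ne 0$ is removed, leaving four polynomials in each table. A final glance confirms that none of the eight candidates coincides with another up to sign, so the list is both complete and nonredundant.
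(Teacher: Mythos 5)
The paper states this lemma without proof (it is treated as a routine finite check), so there is no ``paper's proof'' to compare against; your dehomogenization strategy is the natural way to verify it, and for parts (2) and (3) it works exactly as you describe: a homogeneous $f$ of degree $k\ge 2$ with $y\nmid f$ is irreducible iff $\tilde f(t)=f(t,1)$ is (factors of a homogeneous polynomial are themselves homogeneous, and the leading-coefficient condition keeps degrees matched), irreducibility in degrees $2,3$ is the no-root condition, and the enumeration yields the $3$ monic irreducible quadratics and $(3^3-3)/3=8$ monic irreducible cubics over $\Z_3$, whose homogenizations, up to the units $\pm1$, are precisely the lists in (2) and (3). So the substantive parts of your argument are correct and complete.

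The one point you should not have waved through is (1). By your own opening observation, \emph{every} nonzero linear form $ax+by$ is irreducible, so under the paper's definition $\Irr(\Z_3[x,y]_1)$ has eight elements: $\pm x,\ \pm y,\ \pm(x+y),\ \pm(x-y)$. The displayed list $\{\pm x,\pm y\}$ is therefore incomplete as a set equality, and calling it ``immediate'' certifies a clause that your own reduction contradicts; note the paper itself later uses $x+y$ as an element of $\Irr(\Z_3[x,y]_1)$ (case (a.1) in the $m=6$ analysis). This is a defect of the statement rather than of your method, and it is harmless downstream, since all later uses only depend on the lists up to sign and up to the $\GL_2(\Z_3)$-action, which is transitive on the degree-one forms modulo $\pm1$; but a correct write-up should either complete the list in (1) or record explicitly that the omitted forms are $\GL_2(\Z_3)$-equivalent to $x$.
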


Let $A, B\in \Asym_m(\k)$ be a pair of anti-symmetric matrices. We say two pairs $(A, B)$ and $(\tilde A,\tilde B)$ are congruently equivalent, or congruent for short, if
there exists an invertible matrix $P\in M_m(\k)$ such that $\tilde A = PAP^T$ and $\tilde B = PBP^T$.

\begin{definition}  Let $k_1\ge \cdots\ge k_r\ge 1, d_1, \cdots, d_s \ge 1$ be integers, and let $f_1, f_2, \cdots, f_s\in\Irr(\k[x,y])$ be irreducible homogeneous polynomials. The pair of $m\times m$ anti-symmetric matrices
$$\left(\begin{pmatrix}0 &C_1 &0\\ -C_1^T &0 &0 \\ 0&0&0\end{pmatrix}, \begin{pmatrix}0 &C_2 &0\\ -C_2^T &0 &0 \\ 0&0&0\end{pmatrix}\right)$$
is called a \emph{canonical form} of type  $(m; k_1,\cdots, k_r; f_1^{d_1}, \cdots, f_s^{d_s})$, where $$C_1= \diag(L_{k_1}, \cdots, L_{k_r}, L(f_1^{d_1}), \cdots, L(f_s^{d_s})),$$
$$C_2= \diag(R_{k_1}, \cdots, R_{k_r}, R(f_1^{d_1}), \cdots, R(f_s^{d_s}))$$ are block diagonal matrices.
\end{definition}

Clearly, $m\ge 2\sum_{i=1}^r k_i + r + 2\sum_{j=1}^s \deg(f_j)d_j$.
The following result says that any pair of anti-symmetric matrices is congruent to a unique canonical form.

\begin{theorem} \cite[9.1]{schar} Let $(A, B)$ be a pair of $m\times m$ anti-symmetric matrices. Then there exist integers $k_1\ge \cdots\ge k_r\ge 1, d_1, \cdots, d_s \ge 1$, and $f_1, f_2, \cdots, f_s\in\Irr(\k[x,y])$, such that $(A,B)$ is congruent to the canonical form  of type $(m; k_1,\cdots, k_r; f_1^{d_1}, \cdots, f_s^{d_s})$.

If $(A, B) $ is congruent to another canonical form of type $(m; k'_1, \cdots k'_{r'};  {f'_1}^{d'_1}, \cdots, {f'_{s'}}^{d'_{s'}})$, then $r=r'$, $s= s'$, and $k'_i=k_i$ for $1\le i\le r$, and there exists some permutation $\sigma\in S_{\{1, 2,\cdots, s\}}$, such that $d_j= d'_{\sigma(j)}$ and $f_j\sim f'_{\sigma(j)}$ for $1\le j\le s$.
\end{theorem}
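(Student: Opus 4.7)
The plan is to recast the problem as the classification of pencils of skew-symmetric bilinear forms under simultaneous congruence, and then apply the Kronecker--Weierstrass decomposition in its skew-symmetric refinement.

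First, I would reformulate: the pair $(A,B)$ corresponds to the pencil $P(x,y) = xA+yB$, which is a matrix whose entries are linear forms in $\k[x,y]$, and two pairs are congruent precisely when the pencils satisfy $\tilde P(x,y) = QP(x,y)Q^T$ for some $Q\in\GL_m(\k)$. All invariants we need come from this pencil: the minimal indices (governing the singular part) and the elementary divisors over $\k[x,y]$ (governing the regular part). Since $P(x,y)$ is skew-symmetric, the left and right minimal indices coincide, which will produce the pairs of blocks $L_{k_i},R_{k_i}$ of matching size that appear in the definition of the canonical form.

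Next, for existence, I would decompose $\k^m$ as an orthogonal direct sum, with respect to both forms simultaneously, of indecomposable subspaces. The singular part comes from the common kernel structure of the pencil: iteratively peel off minimal-index relations, producing blocks of combined size $2k_i+1$; skew-symmetry ensures that each such peeling can be done symmetrically, yielding a block of the prescribed antidiagonal form with off-diagonal data $L_{k_i},R_{k_i}$. After the singular part is exhausted, the restricted pencil is regular, and I would apply the elementary divisor theorem to $P(x,y)$ over $\k[x,y]$ to obtain a further block decomposition whose blocks correspond to prime-power divisors $f_j^{d_j}$. Skew-symmetry forces each regular block to be realized in the block-antidiagonal form displayed in the definition, with diagonal blocks zero and off-diagonal blocks $L(f_j^{d_j})$ respectively $R(f_j^{d_j})$. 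This is the content of Scharlau's theorem, so one may simply invoke \cite[9.1]{schar}; alternatively one follows Gantmacher's treatment in \cite[Chap.\ XII]{gan}.

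For uniqueness, the minimal indices $k_1\ge\cdots\ge k_r$ and the multiset $\{(f_j,d_j)\mid 1\le j\le s\}$ are determined by the pencil $P(x,y)$ itself: they are the Kronecker invariants of $P$, which are preserved under the coarser relation of strict equivalence $(A,B)\mapsto(UAV,UBV)$. Since congruence is a special case of strict equivalence (take $U=Q$ and $V=Q^T$), any two canonical forms in the same congruence orbit yield identical Kronecker invariants, forcing $r=r'$, $s=s'$, $k_i=k_i'$ for all $i$, together with a permutation $\sigma$ satisfying $d_j=d'_{\sigma(j)}$ and $f_j\sim f'_{\sigma(j)}$. The main obstacle is the existence step: over an arbitrary field one has to ensure the singular and regular subspaces split in a manner respecting both skew-symmetric forms simultaneously, and within the regular part, one has to refine the standard elementary divisor basis to a basis exhibiting the symmetric block-antidiagonal shape rather than an arbitrary representative of the strict-equivalence orbit. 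This refinement is the technical heart of the theorem and is where the two cited sources do the real work.
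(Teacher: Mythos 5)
The paper offers no proof of its own here: the statement is quoted verbatim from Scharlau (\cite[9.1]{schar}, with \cite[Chap.~XII]{gan} as the classical reference), and your proposal ultimately rests on the same citation, since the one genuinely hard step you identify---splitting off the singular part and realizing the regular elementary-divisor blocks \emph{congruently}, not merely up to strict equivalence---is exactly what you defer to those sources. Your surrounding outline is sound (in particular, deducing uniqueness of the type from the strict-equivalence invariance of the Kronecker minimal indices and homogeneous elementary divisors is a correct and clean argument), so the proposal is essentially the paper's approach: invoke Scharlau's canonical form for skew-symmetric pencils.
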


Consider the natural action of $\GL_2(\k)$ on $\k[x, y]$. Precisely, for any $P=\begin{pmatrix}
                                                                      a & b \\
                                                                      c & d \\
                                                                    \end{pmatrix}
$ and any $f(x,y)\in \k[x,y]$, $(P\cdot f)(x, y) = f(ax+by, cx +dy)$. Clearly $f(x,y)$ is irreducible if and only if $P\cdot f$ is. Thus $\GL_2(k)$ acts on $\Irr(\k[x,y])$ and on each $\Irr(\k[x,y]_k)$ as well.

\begin{remark} One can check that $\GL_2(\Z_3)$ acts transitively on the set $\Irr(\Z_3[x,y]_k)/\{\pm  1\}$ for $k=1, 2, 3$.
\end{remark}

\begin{definition}Two types $(m; k_1,\cdots, k_r; f_1^{d_1}, \cdots, f_s^{d_s})$ and $ (m; k'_1, \cdots k'_{r'};  {f'_1}^{d'_1}, \cdots, {f'_{s'}}^{d'_{s'}})$ are said to be \emph{equivalent} if $r=r'$, $s= s'$, $k'_i=k_i$ for $1\le i\le r$, and there exists some permutation $\sigma\in S_{\{1, 2,\cdots, s\}}$ and some $P\in \GL_2(k)$, such that $d_j= d'_{\sigma(j)}$ and $f_j\sim P\cdot f'_{\sigma(j)}$ for $1\le j\le s$.
\end{definition}

Applying the above theorem, we obtain a description of congruence classes of two dimensional subspaces of anti-symmetric matrices.

\begin{theorem}\label{thm-2dim-dergp} There exists a bijection between $\Gr(2, \Asym_m(\k))/\GL_m(\k)$ and the set of equivalence classes of canonical forms. Consequently, $\mathcal G_2(p; m, 2)$ is in one-to-one correspondence with the equivalence classes of canonical forms over $\Z_p$.
\end{theorem}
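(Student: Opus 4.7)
The plan is to construct an explicit map
$$\Theta\colon \Gr(2, \Asym_m(\k))/\GL_m(\k) \longrightarrow \{\text{equivalence classes of canonical forms of type }(m;\dots)\}$$
as follows: for $V \in \Gr(2,\Asym_m(\k))$, pick any basis $(A,B)$ of $V$, apply the Kronecker-type canonical form theorem (\cite[9.1]{schar}, as stated above) to obtain a unique canonical form of type $(m;k_1,\dots,k_r;f_1^{d_1},\dots,f_s^{d_s})$, and let $\Theta([V])$ be its equivalence class. To check well-definedness, I would separate the two kinds of ambiguity. Replacing the representative $V$ by $PVP^T$ for $P \in \GL_m(\k)$ replaces $(A,B)$ by the congruent pair $(PAP^T,PBP^T)$, whose canonical form is identical. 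Replacing the basis of $V$ by $(A',B') = (aA+bB,\,cA+dB)$ for $Q = \begin{pmatrix} a & b \\ c & d \end{pmatrix} \in \GL_2(\k)$ transforms the pencil $x A' + y B' = (ax+cy)A + (bx+dy)B$, i.e.\ induces the substitution of parameters $(x,y) \mapsto (ax+cy,\,bx+dy)$. Classically this substitution leaves the minimal indices $k_i$ unchanged and transforms each elementary divisor $f_j(x,y)$ into $f_j(ax+cy,\,bx+dy)$, which is precisely the $\GL_2(\k)$-action $f \mapsto P\cdot f$ used in the definition of equivalence (modulo the freedom of scalars that the normalizations of $L(f),R(f)$ already absorb). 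So the equivalence class of the canonical form is independent of both choices.

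Next I would show $\Theta$ is a bijection. For injectivity, suppose $V, V'$ have equivalent canonical forms. Choose bases $(A,B)$ of $V$ and $(A',B')$ of $V'$; using the $\GL_2(\k)$-part of the equivalence, pass to a new basis $(A'',B'')$ of $V'$ such that the canonical form of $(A'',B'')$ literally equals that of $(A,B)$ (with matching polynomial factors after permutation). The uniqueness part of the canonical form theorem then produces $P \in \GL_m(\k)$ with $PAP^T = A''$, $PBP^T = B''$, hence $PVP^T = V'$. For surjectivity, every canonical form is by construction a pair $(A,B)$ of anti-symmetric matrices, and the two are linearly independent (an easy block-by-block check on the $L_{k_i}/R_{k_i}$ and $L(f_j^{d_j})/R(f_j^{d_j})$ blocks), so they span a $2$-dimensional subspace $V \subseteq \Asym_m(\k)$ which $\Theta$ sends to the prescribed class.

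The consequence for $\mathcal G_2(p;m,2)$ is then immediate: by Theorem \ref{thm-data-1to1-grorbits}(3) applied with $\k = \Z_p$ and $n=2$, there is a bijection $\mathcal G_2(p;m,2) \leftrightarrow \Gr(2,\Asym_m(\Z_p))/\GL_m(\Z_p)$, which composed with $\Theta$ yields the claimed one-to-one correspondence with equivalence classes of canonical forms over $\Z_p$. The main obstacle I anticipate is not the conceptual structure but the careful bookkeeping in the well-definedness step: one must verify that the substitution of pencil parameters coming from a $\GL_2(\k)$-change of basis of $V$ acts on the Kronecker invariants exactly by the equivalence relation of canonical forms given above — the substitution must preserve each minimal index $k_i$, act on the elementary divisor polynomials via the prescribed $\GL_2(\k)$-action, and permit the permutation $\sigma$. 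This is classical for pencils of anti-symmetric matrices (see \cite[Chap.\ XII]{gan}), but the conventions in the definitions of $L(f), R(f)$ here require a precise transposition check to confirm that the action obtained is indeed $P \cdot f$ rather than some twist of it.
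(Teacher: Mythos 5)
Your proposal follows essentially the same route as the paper's proof: map a subspace to the canonical form of a chosen basis pair, observe that congruence of the pair leaves the canonical form unchanged and that a $\GL_2(\k)$ change of basis acts by substitution of pencil parameters, preserving the minimal indices $k_i$ and transforming the elementary divisors by the $\GL_2$-action (the paper checks this block by block on $(A(k),B(k))$ and $(A(f^d),B(f^d))$), and then conclude via Theorem \ref{thm-data-1to1-grorbits}(3). The one caveat is your surjectivity claim that the two matrices of any canonical form are automatically linearly independent, which is not literally true (e.g. the type $(2;\,;f)$ with $\deg f=1$, $d=1$ yields a dependent or zero pair), but this merely reflects an implicit restriction in the theorem's statement itself, and the paper's proof does not address surjectivity at all.
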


\begin{proof} Let $V\in \Gr(2, \Asym_m(\k))$, and $A, B\in V$ be a basis of $V$. Then $(A,B)$ is congruent to some canonical form $(m; k_1,\cdots, k_r; f_1^{d_1}, \cdots, f_s^{d_s})$, and $V$ is congruent to some space spanned by the canonical forms determined by $(A, B)$. Let $(\tilde A, \tilde B)$ be another basis of $V$, then there exists some  $P=\begin{pmatrix}
                                                                      a & b \\
                                                                      c & d \\
                                                                    \end{pmatrix}
\in \GL_2(\k)$, such that $\tilde A= aA+ bB$ and $\tilde B= cA +dB$.
We need to determine the canonical form of $(\tilde A, \tilde B)$, and it suffices to consider the case
\[A(k) = \begin{pmatrix}
    0 & L_{k} \\
    -L_{k}^T & 0 \\
  \end{pmatrix},
 B(k) = \begin{pmatrix}
    0 & R_{k} \\
    -R_{k}^T & 0 \\
  \end{pmatrix}\]
for some positive integer $k\ge 0$, and the case
\[A(f^d)= \begin{pmatrix}
    0 & L(f^{d}) \\
    -L(f^{d})^T & 0 \\
  \end{pmatrix},
 B(f^d)= \begin{pmatrix}
    0 & R(f^{d}) \\
    -R(f^{d})^T & 0 \\
  \end{pmatrix}\]
for some irreducible homogeneous polynomial $f\in\k[x,y]$ and some positive integer $d$.

It is direct to check that the canonical form of $(aA(k)+bB(k), cA(k)+dB(k))$ is $(A(k), B(k))$,
and the one of $(aA(f^d) + bB(f^d), cA(f^d) + dB(f^d))$ is $(A((P\cdot f)^d), B((P\cdot f)^d))$.
The conclusion follows.
\end{proof}

\subsection{m=3.} \label{sec-2of3}\ In this case,  $\k$ can be an arbitrary field.

Consider the type $(3; k_1,\cdots, k_r; f_1^{d_1}, \cdots, f_s^{d_s})$. Since $2\sum_{i=1}^r k_i + r + 2\sum_{j=1}^s \deg(f_j)d_j\le3$,
we have $r=1, k_1=1$ and $s=0$. Thus there exists only one congruence class of subspaces of $\Asym_3(\k)$, say
\[\begin{pmatrix}&&a\\&&b\\-a&-b&\end{pmatrix}=\left\{\left.a\begin{pmatrix}
       & &1   \\
     &   0&   \\
      -1&  & \\
   \end{pmatrix}
+ b\begin{pmatrix}
      0 &  & \\
        &   &1   \\
&  -1&  \\
   \end{pmatrix} \right| a, b\in \k\right\}.
\]

In the rest of this section, we assume $\k=\Z_p$. We will give a full list of representatives for
congruence classes of 2 dimensional subspaces of $\Asym_m(\Z_3)$ for $m=4, 5$, and $6$.

\subsection{m=4.}\label{sec-2of4}
Consider the type $(4; k_1,\cdots, k_r; f_1^{d_1}, \cdots, f_s^{d_s})$.

Since $2\sum_{i=1}^r k_i + r + 2\sum_{j=1}^s \deg(f_j)d_j\le4$,
there are 4 possible cases:
\begin{enumerate}
\item[(1)] $r=1$, $k_1=1$, $s=0$;
\item[(2)] $r=0$, $s=2$, $d_1= d_2 =1$, and $\deg(f_1)=\deg(f_2)=1$;
\item[(3)] $r=0$, $s=1$, $d_1=2$, and $\deg(f_1)=1$;
\item[(4)] $r=0$, $s=1$, $d_1=1$, and $\deg(f_1)=2$.
\end{enumerate} The corresponding 2-dimensional subspace of  $\Asym_4(\Z_3)$ are listed as follows.
$$\left(\begin{array}{cccc}&&a&\\&&b&\\-a&-b&&\\&&&\end{array}\right),
\left(\begin{array}{cccc}&&a&\\&&&b\\-a&&&\\&-b&&\end{array}\right),
\left(\begin{array}{cccc}&&a&b\\&&&a\\-a&&&\\-b&-a&&\end{array}\right),
\left(\begin{array}{cccc}&&a&b\\&&-b&a\\-a&b&&\\-b&-a&&\end{array}\right).$$

We mention that in Case (2), $f_1\ne \pm f_2$, otherwise it gives a one dimensional subspace.  In Case (3), we need only to take $f_1(x, y)= x$, since $\GL_2(\Z_3)$ acts transitively on the set $\Irr(\Z_3[x,y]_1)$. In Case (4), we need only to take $f(x, y)= x^2+y^2$, since $\GL_2(\Z_3)$ acts transitively on the set $\Irr(\Z_3[x,y]_2)/\{\pm1\}$.

\subsection{m=5.}\label{sec-2of5}

We claim that there are six congruence classes of 2-dimensional subspaces of $\Asym_5(\Z_3)$ which are listed as follows.
$$\begin{pmatrix}&&a&&\\&&b&&\\-a&-b&&&\\&&&0&\\&&&&0\end{pmatrix},
 \begin{pmatrix}&&a&&\\&&&b&\\-a&&&&\\&-b&&&\\&&&&0\end{pmatrix},
 \begin{pmatrix}&&a&b&\\&&&a&\\-a&&&&\\-b&-a&&&\\&&&&0\end{pmatrix},$$
$$\begin{pmatrix}&&a&b&\\&&-b&a&\\-a&b&&&\\-b&-a&&&\\&&&&0\end{pmatrix},
 \begin{pmatrix}&&&a&\\&&&b&\\&&&&a\\-a&-b&&&\\&&-a&&\end{pmatrix},
 \begin{pmatrix}&&&a&\\&&&b&a\\&&&& b\\-a&-b&&&\\&-a&-b&&\end{pmatrix}.$$
In fact, for any type $(5; k_1,\cdots, k_r; f_1^{d_1}, \cdots, f_s^{d_s})$, $2\sum_{i=1}^r k_i + r + 2\sum_{j=1}^s \deg(f_j)d_j\le5$. The cases that $2\sum_{i=1}^r k_i + r + 2\sum_{j=1}^s \deg(f_j)d_j \le 4$ exactly correspond to the first four subspaces in the above list.

Now we are left to consider the cases that the $2\sum_{i=1}^r k_i + r + 2\sum_{j=1}^s \deg(f_j)d_j = 5$. In this situation we have two subcases:
\begin{enumerate}
\item[(1)] $r=s=1$, $k_1=1$, $d_1=1$, $\deg(f_1)=1$, and

\item[(2)] $r=1, k_1=2, s=0$,
\end{enumerate}
which correspond to the last two subspaces in the above list respectively. Note that in Case (1), we may take $f_1= x$, for $\GL_2(\Z_3)$ acts transitively on $\Irr(\Z_3[x,y]_1)$.

\subsection{m=6.}\label{sec-2of6} \

Let $(6; k_1,\cdots, k_r; f_1^{d_1}, \cdots, f_s^{d_s})$ be a type. Then $2\sum_{i=1}^r k_i + r + 2\sum_{j=1}^s \deg(f_j)d_j\le 6$. As above, the case $2\sum_{i=1}^r k_i + r + 2\sum_{j=1}^s \deg(f_j)d_j\le 5$ are essentially obtained in last section, and the corresponding subspaces are listed as follows.
$$\begin{pmatrix}&&&a&&\\&&&b&&\\&&&&&\\-a&-b&&&&\\&&&&0&\\&&&&&0\end{pmatrix},
 \begin{pmatrix} &&a&&&\\&&&b&&\\-a&&&&&\\&-b&&&&\\&&&&0&\\&&&&&0\end{pmatrix},
 \begin{pmatrix} &&a&b&&\\&&&a&&\\-a&&&&&\\-b&-a&&&&\\&&&&0&\\&&&&&0\end{pmatrix},$$
$$\begin{pmatrix} &&a&b&&\\&&-b&a&&\\-a&b&&&&\\-b&-a&&&&\\&&&&0&\\&&&&&0\end{pmatrix},
  \begin{pmatrix} &&&a&&\\&&&b&&\\&&&&a&\\-a&-b&&&&\\&&-a&&&\\&&&&&0\end{pmatrix},
  \begin{pmatrix} &&&a&&\\&&&b&a&\\&&&& b&\\-a&-b&&&&\\&-a&-b&&&\\&&&&&0\end{pmatrix}.
$$

We are left to consider the case $2\sum_{i=1}^r k_i + r + 2\sum_{j=1}^s \deg(f_j)d_j= 6$.
Easy calculation shows that there are the following cases:

(a) $r=0$, $s=3$, $\deg(f_1)=\deg(f_2)=\deg(f_3)=d_1=d_2=d_3=1$. There are two subcases: (a.1) $ f_1\neq\pm f_2\neq\pm f_3$, in this case, we may choose $f_1= x$, $f_2=y$, and $f_3=x+y$, and all possible such choices are equivalent; and (a.2) $f_1 = \pm f_2\neq\pm f_3$, in this case, we may choose $f_1=f_2= x$, and $f_3=y$.

(b) $r=0$, $s=2$, $\deg(f_1)=1, d_1=2, \deg(f_2)= d_2=1$. There are two subcases: (b.1) $f_1=\pm f_2$, in this case we may choose $f_1=f_2=x$; and (b.2) $f_1\neq \pm f_2$, in this case we may choose $f_1=x$, $f_2=y$.

(c) $r=0$, $s=2$, $\deg(f_1)=2, d_1=1, \deg(f_2)= d_2=1$. In this case, we may choose $f_1=x^2+y^2$ and $f_2= x$.

(d) $r=0$, $s=1$, $\deg(f_1)=3, d_1=1$. We may choose $f_1= x^3-xy^2 +y^3$ in this case.

(e) $r=0$, $s=1$, $\deg(f_1)=1, d_1=3$. We may choose $f_1= x$ in this case.

(f) $r=2$, $s=0$, $k_1= k_2 =1$. This case is uniquely determined.

The corresponding subspaces are listed as follows.
\[(a.1): \begin{pmatrix}
           &  &    & a &   &   \\
           &  &    &   & b &   \\
           &  &    &   &   & a+b \\
         -a&  &    &   &   &   \\
           &-b&    &   &   &   \\
           &  &-a-b&   &   &   \\
       \end{pmatrix},\quad
  (a.2): \begin{pmatrix}
           &  &   & a &   &   \\
           &  &   &   & a &   \\
           &  &   &   &   & b \\
         -a&  &   &   &   &   \\
           &-a&   &   &   &   \\
           &  & -b&   &   &   \\
       \end{pmatrix},
\]
\[(b.1): \begin{pmatrix}
           &  &    & a & b &   \\
           &  &    &   & a &   \\
           &  &    &   &   & a \\
         -a&  &    &   &   &   \\
         -b&-a&    &   &   &   \\
           &  & -a &   &   &   \\
       \end{pmatrix},\quad
  (b.2): \begin{pmatrix}
           &  &   & a & b  &   \\
           &  &   &   & a &   \\
           &  &   &   &   & b \\
         -a&  &   &   &   &   \\
         -b&-a&   &   &   &   \\
           &  & -b&   &   &   \\
       \end{pmatrix}
\]
\[(c): \begin{pmatrix}
           &  &    & a & b &   \\
           &  &    &-b & a &   \\
           &  &    &   &   & a \\
         -a& b&    &   &   &   \\
         -b&-a&    &   &   &   \\
           &  & -a &   &   &   \\
       \end{pmatrix},\quad
  (d): \begin{pmatrix}
           &  &   & a & b &   \\
           &  &   &   & a & b  \\
           &  &   & b & b & a \\
         -a&  & -b&   &   &   \\
         -b&-a& -b&   &   &   \\
           &-b& -a&   &   &   \\
       \end{pmatrix}
\]
\[(e): \begin{pmatrix}
           &  &    & a & b &   \\
           &  &    &   & a & b \\
           &  &    &   &   & a \\
         -a&  &    &   &   &   \\
         -b&-a&    &   &   &   \\
           &-b& -a &   &   &   \\
       \end{pmatrix},\quad
  (f): \begin{pmatrix}
           &  &   &   & a &   \\
           &  &   &   & b &   \\
           &  &   &   &   & a \\
           &  &   &   &   & b \\
         -a&-b&   &   &   &   \\
           &  & -a& -b&   &   \\
       \end{pmatrix}
\]

In summary, $\Asym_6(\Z_3)$ has 14 congruence classes of two dimensional subspaces which are listed as above.

\section{Groups of order $3^3$, $3^4$, $3^5$, $3^6$, $3^7$ and exponent 3}

In this section we will give a full list of groups of exponent $3$ and order $\le 3^7$. From now on, $p=3$, and all groups considered are of exponent 3.

Let $G$ be a group of exponent 3. Set $A=G'$ and $H=G/G'$.
Assume $A=\Z_3^{n}$ and $H=\Z_3^m$. Then $G=\mathbb{Z}_3^{n}\rtimes_{\rho,f}\mathbb{Z}_3^m$, and $|G|=3^{m+n}$. We will deal with the cases $m+n\le 7$ in this section. By Corollary \ref{cor-expp3}, in case $m+n\le 6$, it suffices to consider the trivial action, where Theorem \ref{thm-data-1to1-grorbits} applies.
Since $\dim\Asym_m(\Z_3)=\frac{m(m-1)}{2}$, we only need to consider the case $n\le \frac{m(m-1)}{2}$.

We say that a group $G$ is \emph{indecomposable} if $G$ is not a product of proper subgroups, otherwise we say that $G$ is \emph{decomposable}. Clearly, for a nilpotent group, if $G$ is decomposable, then so is its center $Z(G)$.

\begin{example} \label{eg-dec-1dim-dergp}
It is easy to show that the group $G_{m,k}$ defined in Theorem \ref{thm-1dim-dergp} is indecomposable if and only if $m=2k$. In fact, by Remark \ref{rmk-1dim-dergp}, we need only to show that $G_{2k,k}$ is indecomposable, which is true for $Z(G_{2k,k})\cong \Z_p$.
\end{example}

Clearly to classify all groups of exponent $3$, it suffices to
classify the indecomposable ones. In fact, by Krull-Schmidt Theorem, any finite group is a unique product of indecomposable ones. We will make a full list of indecomposable groups of exponent $3$.

\subsection{$\mathbf{|G|=3}$, or $\mathbf{3^2}$} \ The following result is obvious.
\begin{proposition}  $\Z_3$ is the unique indecomposable group of order 3 up to isomorphism, and there exist no indecomposable groups of order $3^2$ and exponent 3.
\end{proposition}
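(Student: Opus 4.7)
The plan is straightforward since both assertions reduce to standard classification facts about small groups, combined with the observation that $\Z_3$ has no proper nontrivial subgroups (hence is automatically indecomposable as a direct product).

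For the first assertion, I would begin by noting that any group of prime order is cyclic, so the unique group of order $3$ up to isomorphism is $\Z_3$. To see that $\Z_3$ is indecomposable, I would simply remark that it has no proper nontrivial subgroups, so it cannot be expressed as a direct product of two proper subgroups.

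For the second assertion, I would classify the groups of order $3^2$. By the standard classification of groups of order $p^2$, any such group is abelian, and is isomorphic to either $\Z_9$ or $\Z_3\times \Z_3$. Since $\Z_9$ has exponent $9$, only $\Z_3\times \Z_3$ has exponent $3$. This group is manifestly a direct product of two proper (nontrivial) subgroups, hence decomposable. Therefore no indecomposable group of order $3^2$ and exponent $3$ exists.

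The proof involves no real obstacle; the entire argument is bookkeeping on very small orders. The only subtle point worth flagging explicitly is the convention that the trivial group is excluded from being a ``factor'' in a direct product decomposition (so that $\Z_3$ is not regarded as $\Z_3\times 1$), which is the standard convention consistent with the usage of \emph{indecomposable} elsewhere in the paper (cf.\ Example \ref{eg-dec-1dim-dergp}).
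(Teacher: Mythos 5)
Your argument is correct and is exactly the routine reasoning the paper has in mind: the paper states this proposition without proof, remarking only that it is obvious. Your bookkeeping (order $3$ forces $\Z_3$, which is indecomposable for lack of proper nontrivial subgroups; order $3^2$ with exponent $3$ forces $\Z_3\times\Z_3$, which is manifestly decomposable) fills in precisely that omitted verification, with the same conventions on indecomposability used in the paper.
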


For consistency of notations, we set $I_1= \Z_3$.

\subsection{$\mathbf{|G|=3^3}$}\quad In this case $n=0$, or $1$.

If $n=0$, then $G\cong \Z_3^3$ and hence is decomposable.

If $n=1$, then $m=2$ and
$\Asym_2(\Z_3)$ has a unique subspace of dimension $1$. We denote by $I_3$ the group given by the only element in $\mathcal E_3(1;2,1)$. It is well known that $I_3$ is the unique nonabelian  group of order 27 and exponent 3. By Proposition \ref{prop-pres-from-mrep}, \[I_3 = \langle a, x, y\mid a^3=x^3=y^3=[x,a]=[y,a]=1, [x,y]=a\rangle.\] Thus we have shown the following result.

\begin{proposition} $I_3$ is the unique indecomposable group of order $3^3$ and exponent $3$ up to isomorphism.
\end{proposition}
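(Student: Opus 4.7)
The plan is to set $n=\dim_{\Z_3}(G')$ and $m=\dim_{\Z_3}(G/G')$ for an indecomposable $G$ of exponent $3$ and order $3^3$; by Corollary~\ref{cor-exp3-dersubgp} both are elementary abelian $3$-groups, so $m+n=3$. The proof then reduces to a short case analysis on $n\in\{0,1,2,3\}$, at the end of which only $n=1$ survives and in that case the classification machinery forces $G\cong I_3$.

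First I would dispose of the degenerate ends. If $n=0$ then $G$ is abelian, hence $G\cong\Z_3^3$, which is clearly decomposable. If $n=3$ then $G=G'$, impossible since $G$ is a nontrivial $p$-group and therefore nilpotent. If $n=2$ then $m=1$, so $G/G'$ is cyclic; since $G'\le Z(G)$ (as any group with $|G/G'|=p$ has $G/Z(G)$ cyclic and hence $G$ abelian), we conclude $G$ is abelian, contradicting $n=2$.

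The only remaining case is $n=1$, $m=2$. Since $m+n=3\le 6$, Corollary~\ref{cor-expp3} guarantees that the conjugation action of $H=G/G'$ on $A=G'$ is trivial, so $G$ lies in $\mathcal G_2(3;2,1)$. By Theorem~\ref{thm-data-1to1-grorbits}(3), $\mathcal G_2(3;2,1)$ is in bijection with $\Gr(1,\Asym_2(\Z_3))/\GL_2(\Z_3)$; but $\Asym_2(\Z_3)$ is one-dimensional, so this orbit space consists of a single class. Applying Theorem~\ref{thm-1dim-dergp} with $m=2$ and $k=1$ yields the representative $G_{2,1}$, which has the presentation displayed above for $I_3$, and Example~\ref{eg-dec-1dim-dergp} (the case $m=2k$) confirms that $G_{2,1}$ is indecomposable.

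There is no real obstacle: once one knows Corollary~\ref{cor-expp3} (trivial action for $m+n\le 6$) and Theorem~\ref{thm-data-1to1-grorbits}, the only content is ruling out the cases $n\ne 1$, which is immediate from elementary nilpotency. The mildly delicate point is observing that the cases $n=2,3$ are impossible for structural reasons before ever invoking the Grassmannian correspondence, so that we do not have to classify data in $\mathcal E_3(m,n)$ for $(m,n)=(1,2)$ or $(0,3)$.
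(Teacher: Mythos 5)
Your proof is correct in substance and, in the cases that carry the content ($n=0$ and $n=1$), it is the same argument as the paper's: trivial action via Corollary \ref{cor-expp3}, the correspondence of Theorem \ref{thm-data-1to1-grorbits}, the fact that $\Asym_2(\Z_3)$ has a unique one-dimensional subspace, and indecomposability of $G_{2,1}=I_3$ as in Example \ref{eg-dec-1dim-dergp}. The only genuine difference is how you exclude $n=2,3$: the paper does this once and for all in the preamble of Section 7 through the bound $n\le\dim\Asym_m(\Z_p)=\frac{m(m-1)}{2}$ (available because the action is trivial and $G'$ must be spanned by the $\varphi_{rs}$), whereas you use elementary structure theory, which is a perfectly legitimate alternative. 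One point needs repair, though: your parenthetical justification in the $n=2$ case, namely that ``any group with $|G/G'|=p$ has $G/Z(G)$ cyclic,'' is false as a general statement (take $S_3$ with $p=2$), and as written it presupposes $G'\le Z(G)$, which is exactly what would need proof at that stage. The correct one-line argument is that $G$ is a nontrivial $p$-group, hence nilpotent, hence $G'\le\Phi(G)$; so $G/G'$ cyclic forces $G/\Phi(G)$ cyclic, hence $G$ cyclic and abelian, contradicting $|G'|=3^2$. With that fix your case analysis is airtight and matches the paper's conclusion.
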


\subsection{$\mathbf{|G|=3^4}$} \quad  In this case $n=0$ or $1$, for $n\le \frac{m(m-1)}{2}$.

If $n=0$, then $G$ is abelian and hence decomposable. If $n=1$, then by Theorem \ref{thm-1dim-dergp}, we know that $G$ is isomorphic to $G_{3,1}$ which is decomposable by Example \ref{eg-dec-1dim-dergp}.

\begin{proposition} There exist no indecomposable groups of exponent 3 and order $3^4$.
\end{proposition}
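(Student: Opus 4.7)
The plan is to enumerate the possible pairs $(m,n)$ with $m+n=4$, where $n=\dim_{\Z_3} G'$ and $m=\dim_{\Z_3} G/G'$, and then to verify that the resulting group is decomposable in each case. By Corollary \ref{cor-exp3-dersubgp}, $G'$ is elementary abelian, so these dimensions are well defined. Since $|G|=3^4\le 3^6$, Corollary \ref{cor-expp3} ensures the induced action of $G/G'$ on $G'$ is trivial, so Theorem \ref{thm-data-1to1-grorbits} applies and embeds $G'$ as a subspace of $\Asym_m(\Z_3)$. In particular $n\le \binom{m}{2}$. Combined with $m+n=4$, this immediately restricts us to $(m,n)\in\{(4,0),(3,1)\}$, since $(m,n)=(2,2)$ would require $n\le 1$ and $(m,n)=(1,3)$ or $(0,4)$ fail as well.

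For $(m,n)=(4,0)$ the group is abelian of order $3^4$, hence $G\cong\Z_3^4$, which visibly splits off a $\Z_3$ factor. For $(m,n)=(3,1)$ I would apply Theorem \ref{thm-1dim-dergp}(2) with $m=3$: the theorem yields $G\cong G_{3,k}$ for a unique $1\le k\le 3/2$, forcing $k=1$. Then Remark \ref{rmk-1dim-dergp} (or equivalently Example \ref{eg-dec-1dim-dergp}) identifies $G_{3,1}\cong \Z_3\times G_{2,1}\cong \Z_3\times I_3$, which is a nontrivial direct product of proper subgroups. Hence $G$ is decomposable in this case too.

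There is no real obstacle here: the hard work has already been done in establishing the one-to-one correspondence of Theorem \ref{thm-data-1to1-grorbits} and the explicit classification in the rank-one case via Theorem \ref{thm-1dim-dergp}. The only ``step'' is the dimension count that eliminates the potentially nontrivial case $(m,n)=(2,2)$, after which both remaining cases reduce to previously catalogued groups that are manifestly direct products.
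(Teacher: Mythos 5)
Your argument is correct and follows essentially the same route as the paper: the dimension bound $n\le\frac{m(m-1)}{2}$ (valid since the action is trivial for this order) forces $(m,n)\in\{(4,0),(3,1)\}$, the first case being abelian and the second being $G_{3,1}\cong \Z_3\times I_3$ via Theorem \ref{thm-1dim-dergp} and Example \ref{eg-dec-1dim-dergp}. No gaps to report.
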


\subsection{$\mathbf{|G|=3^5}$}\quad In this case, $n=0, 1$ or $2$.

If $n=0$, then $G\cong \Z_3^5$ and hence is decomposable.

If $n=1$, then $G$ is isomorphic to one of the groups $G_{4,1}$ or $G_{4,2}$ in Theorem \ref{thm-1dim-dergp} (2). By Example \ref{eg-dec-1dim-dergp}, only
$G_{4,2}$ is indecomposable. We denote $G_{4,2}$ by $I_{5.1}$.

Now assume $n=2$. Then there is only one congruence class of 2 dimensional subspaces of $\Asym_3(\Z_3)$, say
$\left(\begin{array}{ccc}&a&b\\-a&&\\-b&&\end{array}\right)$. The corresponding group, denoted by $I_{5.2}$, is
\[I_{5.2} = \langle a_1,a_2, h_1,h_2,h_3\mid a_i^3, h_j^3, [a_i, h_j], [a_1,a_2], [h_2, h_3], [h_1,h_2]= a_1, [h_1, h_3]=a_2
\rangle.\]

\begin{proposition} $I_{5.1}$ and $I_{5.2}$ give a complete set of isoclasses of irreducible groups of exponent 3 and order $3^5$.
\end{proposition}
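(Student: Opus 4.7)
The plan is to enumerate indecomposable groups $G$ of exponent $3$ and order $3^5$ by splitting according to $n=\dim_{\Z_3}G'$ and $m=\dim_{\Z_3}G/G'$, so that $m+n=5$. The bound $n\le\binom{m}{2}=\dim\Asym_m(\Z_3)$, needed for a realization of $G$ as a central extension, restricts the admissible pairs to $(m,n)\in\{(5,0),(4,1),(3,2)\}$. The case $(5,0)$ gives $G\cong\Z_3^5$, which is visibly decomposable. The case $(4,1)$ is covered by Theorem \ref{thm-1dim-dergp}, identifying $G$ with $G_{4,k}$ for some $k\in\{1,2\}$; Example \ref{eg-dec-1dim-dergp} shows that $G_{4,k}$ is indecomposable if and only if $m=2k$, so only $G_{4,2}=I_{5.1}$ survives.

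For the remaining case $(3,2)$, since $m+n=5\le 6$, Corollary \ref{cor-expp3} forces the $H$-action on $A=G'$ to be trivial, placing $G$ in class $2$. Theorem \ref{thm-data-1to1-grorbits}(3) then produces a bijection
\[
\mathcal G_2(3;3,2)\;\longleftrightarrow\;\Gr(2,\Asym_3(\Z_3))/\GL_3(\Z_3),
\]
and the inventory of two-dimensional subspaces of $\Asym_3(\Z_3)$ carried out in Section \ref{sec-2of3} shows the right-hand side is a singleton. The unique representative yields precisely the presentation of $I_{5.2}$ via Proposition \ref{prop-pres-from-mrep}. Non-isomorphism $I_{5.1}\not\cong I_{5.2}$ is then immediate, as their derived subgroups have different $\Z_3$-ranks.

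What remains is to verify that $I_{5.2}$ itself is indecomposable. First I would compute $Z(I_{5.2})$ directly from the presentation: trivial $\rho$ gives $A\subseteq Z(G)$, and conversely any $a_1^{i_1}a_2^{i_2}h_1^{j_1}h_2^{j_2}h_3^{j_3}$ commutes with $h_1$ only when $j_2=j_3=0$ and with $h_2,h_3$ only when $j_1=0$, so $Z(I_{5.2})=\langle a_1,a_2\rangle\cong\Z_3^2$. Suppose then $I_{5.2}=G_1\times G_2$ with both factors nontrivial. Because the exponent is $3$, $\Phi(G_i)=G_i'$, and the Burnside basis theorem gives $|G_i/G_i'|\ge 3$; combined with $|G_1/G_1'|\cdot|G_2/G_2'|=27$ this forces $\{|G_1/G_1'|,|G_2/G_2'|\}=\{3,9\}$. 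But a factor with $|G_i/G_i'|=3$ is cyclic, hence of order $3$, and the other factor would then have order $3^4$, rank $2$, and exponent $3$, contradicting $|B(2,3)|=3^3$. The main obstacle is this last indecomposability verification; the preceding cases are settled by quoting already-established theorems, while the argument for $I_{5.2}$ hinges on exploiting the rigid bound $|B(2,3)|=27$.
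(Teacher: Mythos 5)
Your argument is correct and follows essentially the same route as the paper: the same reduction to pairs $(m,n)$ with $m+n=5$, trivial action via Corollary \ref{cor-expp3} and the bound $n\le\dim\Asym_m(\Z_3)$, the case $n=1$ settled by Theorem \ref{thm-1dim-dergp} together with Example \ref{eg-dec-1dim-dergp}, and the case $n=2$ by Theorem \ref{thm-data-1to1-grorbits} plus the unique congruence class of $2$-dimensional subspaces of $\Asym_3(\Z_3)$ from Section \ref{sec-2of3}. The one place you go beyond the paper is the indecomposability of $I_{5.2}$, which the paper simply asserts (its usual device, that a decomposable nilpotent group has decomposable center, is inconclusive here since $Z(I_{5.2})\cong\Z_3^2$); your verification via the Burnside basis theorem and $|B(2,3)|=3^3$ is correct and fills in that omitted detail.
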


\subsection{$\mathbf{|G|=3^6}$}\quad In this case, $n=0, 1, 2, 3$.

If $n=0$, then $G=\mathbb{Z}_3^6$ and hence is decomposable.

If $n=1$, then $m=5$ and $G = G_{m,1}$ or $G_{m,2}$, and $G$ is decomposable in either case.

If $n=2$, then $m=4$. By Section \ref{sec-2of4} there are 4 congruence classes of 2 dimensional subspaces of the space of $\Asym_4(\Z_3)$, say
$$\left(\begin{array}{cccc}&&a&\\&&b&\\-a&-b&&\\&&&0\end{array}\right),
\left(\begin{array}{cccc}&&a&\\&&&b\\-a&&&\\&-b&&\end{array}\right),
\left(\begin{array}{cccc}&&a&b\\&&&a\\-a&&&\\-b&-a&&\end{array}\right),
\left(\begin{array}{cccc}&&a&b\\&&-b&a\\-a&b&&\\-b&-a&&\end{array}\right).$$
Let $G_1, G_2, G_3, G_4$ be the corresponding groups. It is easy to show that $G_1\cong I_{5.2}\times \Z_3$ and $G_2\cong I_3\times I_3$, and $G_3$ and $G_4$ are both indecomposable. We set
\[I_{6.1}=G_3=\left\langle a_1, a_2, h_1, h_2, h_3, h_4\left| {a_i^3, h_j^3, [a_i, h_j], [a_i,a_j], [h_1, h_2], [h_3, h_4], [h_2, h_3], \atop [h_1, h_3]=[h_2, h_4] = a_1, [h_1, h_4]=a_2}\right.
\right\rangle,\]
\[I_{6.2}= G_4\left\langle a_1, a_2, h_1, h_2, h_3, h_4\left| {a_i^3, h_j^3, [a_i, h_j], [a_i,a_j], [h_1, h_2], [h_3, h_4], \atop [h_1, h_3]=[h_2, h_4] = a_1, [h_1, h_4]=[h_3, h_2]=a_2}\right.
\right\rangle.\]

Now we assume $n=3$, then $m=3$. There is only one congruence class of 3-dimensional subspaces of $\Asym_3(\Z_3)$, say
$\left(\begin{array}{ccc}&a&b\\-a&&c\\-b&-c&\end{array}\right)$. Denote  the corresponding group by $I_{6.3}$. By Proposition \ref{prop-pres-from-mrep},
\[I_{6.3}=\left\langle\left.{ a_1, a_2, a_3,\atop h_1,h_2,h_3}\right | {a_i^3, h_j^3, [a_i, h_j], [a_i,a_j], [h_1,h_2]= a_1,\atop [h_1, h_3]=a_2, [h_2, h_3]=a_3}
\right\rangle.\]

\begin{proposition} $I_{6.1}$, $I_{6.2}$ and $I_{6.3}$ give a complete set of isoclasses of irreducible groups of exponent 3 and order $3^6$.
\end{proposition}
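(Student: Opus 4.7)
The plan is to turn the case-by-case discussion preceding the statement into a proof. First I would confirm that for $|G|=3^6$ the induced $H$-action on $A=G'$ is automatically trivial: by Corollary \ref{cor-expp3}, a nontrivial action requires $m\ge 3$ and $n\ge 4$ simultaneously, which is impossible when $m+n=6$. Hence $G$ has nilpotency class at most $2$, and Theorem \ref{thm-data-1to1-grorbits}(3) identifies the isomorphism classes with $G'\cong\Z_3^n$ with the orbits in $\Gr(n,\Asym_m(\Z_3))/\GL_m(\Z_3)$. Since $n\le m(m-1)/2$, only the four pairs $(m,n)\in\{(6,0),(5,1),(4,2),(3,3)\}$ need to be examined.

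Next I would read off the orbit representatives from the results already quoted: the single class for $n=0$; the two classes $G_{5,1},G_{5,2}$ from Theorem \ref{thm-1dim-dergp} for $n=1$; the four classes from Section \ref{sec-2of4} for $n=2$; and the single full-space class for $n=3$. Proposition \ref{prop-pres-from-mrep} converts each orbit into an explicit group; the remaining task is (a) to identify which of these groups are direct products of strictly smaller ones, and (b) to show that the three leftover groups are indecomposable and pairwise non-isomorphic. For (a), whenever the representative subspace is supported in a proper principal block (or reorders to a block-diagonal form with two proper blocks), Proposition \ref{prop-prod-data} immediately yields a direct-product decomposition: this handles $\Z_3^6$; both $G_{5,1}\cong I_3\times\Z_3^3$ and $G_{5,2}\cong I_{5.1}\times\Z_3$ via Remark \ref{rmk-1dim-dergp}; the first $n=2$ orbit, with vanishing last row and column, which gives $I_{5.2}\times\Z_3$; and the second $n=2$ orbit, which after permuting the basis is block-diagonal with two $2\times 2$ anti-symmetric blocks and thus gives $I_3\times I_3$. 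The leftovers are exactly $I_{6.1}, I_{6.2}, I_{6.3}$.

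The main obstacle is (b). Pairwise non-isomorphism is immediate: $I_{6.3}$ has $|G'|=27$ while the other two have $|G'|=9$, and $I_{6.1}\not\cong I_{6.2}$ by Theorem \ref{thm-data-1to1-grorbits}(3) since their associated subspaces lie in distinct $\GL_4(\Z_3)$-orbits. For indecomposability I would argue uniformly as follows: if $G=K\times L$ nontrivially with both factors of exponent $3$, then $G'=K'\oplus L'$ and $G/G'=K/K'\oplus L/L'$, so under a basis of $H$ and of $A$ compatible with these splittings the associated subspace in $\Asym_m(\Z_3)$ becomes block-diagonal with blocks of sizes $(\dim_{\Z_3}H_K,\dim_{\Z_3}H_L)$. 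For $I_{6.3}$ the only nontrivial splittings of $m=3$ are $(1,2)$ and $(2,1)$, in which the block-diagonal anti-symmetric subspace has dimension at most $\binom{1}{2}+\binom{2}{2}=1<3$, a contradiction with $n=3$. For $I_{6.1}$ and $I_{6.2}$ the splittings of $m=4$ reduce to $(1,3)/(3,1)$, which would place $G$ in the orbit of $I_{5.2}\times\Z_3$ (the first $n=2$ orbit), or $(2,2)$, which would place $G$ in the orbit of $I_3\times I_3$ (the second $n=2$ orbit); since $I_{6.1}$ and $I_{6.2}$ lie in orbits different from these two, neither can decompose. This finishes the proof.
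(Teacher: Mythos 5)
Your proposal is correct and follows essentially the same route as the paper: reduce to the trivial-action case via Corollary \ref{cor-expp3}, run through the pairs $(m,n)$ with $m+n=6$ and $n\le \frac{m(m-1)}{2}$, and match groups with orbits of subspaces of anti-symmetric matrices via Theorem \ref{thm-data-1to1-grorbits} and the lists in Theorem \ref{thm-1dim-dergp} and Section \ref{sec-2of4}. The only substantive difference is that you actually prove the indecomposability of $I_{6.1}$, $I_{6.2}$, $I_{6.3}$ (via the block-diagonal shape a direct-product decomposition forces on the associated subspace, and comparison of orbits), a point the paper asserts without argument.
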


\subsection{$\mathbf{|G|=3^7}$}{\ } Then $n =0, 1, 2, 3, 4$.

In fact, if $n\ge 5$, then $m\le 2$, and
by Corollary \ref{cor-expp3}, $H$ acts on $A$ trivially, this case will not
happen since $\dim(\Asym_m(\Z_p))< n$.

If $n=0$, then $G=\mathbb{Z}_3^7$, and hence is decomposable.

If $n=1$, then $m=6$, and $G= G_{6,1}, G_{6,2}$ or $G_{6,3}$ as in Theorem \ref{thm-1dim-dergp} (2). By Example \ref{eg-dec-1dim-dergp},
only $G_{6,3}$ is indecomposable, and we denote it by $I_{7.1}$.

If $n=2$, then $m=5$. By Section \ref{sec-2of5}, there are 6 congruence classes of 2 dimensional subspaces of the space $\Asym_5(\Z_3)$:
$$\begin{pmatrix}&&a&&\\&&b&&\\-a&-b&&&\\&&&0&\\&&&&0\end{pmatrix},\quad
 \begin{pmatrix}&&a&&\\&&&b&\\-a&&&&\\&-b&&&\\&&&&0\end{pmatrix},\quad \begin{pmatrix}&&a&b&\\&&&a&\\-a&&&&\\-b&-a&&&\\&&&&0\end{pmatrix},$$
$$\begin{pmatrix}&&a&b&\\&&-b&a&\\-a&b&&&\\-b&-a&&&\\&&&&0\end{pmatrix},
 \begin{pmatrix}&&&a&\\&&&b&\\&&&&a\\-a&-b&&&\\&&-a&&\end{pmatrix},
 \begin{pmatrix}&&&a&\\&&&b&a\\&&&& b\\-a&-b&&&\\&-a&-b&&\end{pmatrix}.$$
Let $G_i, 1\le i\le 6$ be the corresponding groups. Then one checks that $G_1\cong I_{5.2}\times \Z_3^2$, $G_2\cong I_3\times I_3\times Z_3$, $G_3\cong I_{6.1}\times \Z_3$, $G_4\cong I_{6.2}\times \Z_3$, and $G_5$ and $G_6$ are indecomposable. Denote $G_5$ and $G_6$ by $I_{7.2}$ and $I_{7.3}$ respectively. Then
\[I_{7.2}=\left\langle {a_1, a_2, h_1, h_2,\atop h_3, h_4, h_5}\left| {a_i^3, h_j^3, [a_i, h_j], [a_i,a_j], [h_1, h_2], [h_1, h_5], [h_2, h_3], [h_2, h_4], [h_3, h_4],\atop [h_3, h_5], [h_4, h_5], [h_1, h_3]=[h_2, h_5] = a_1, [h_1, h_4]=a_2}\right.
\right\rangle,\]
\[I_{7.3}=\left\langle {a_1, a_2, h_1, h_2,\atop h_3, h_4, h_5}\left| {a_i^3, h_j^3, [a_i, h_j], [a_i,a_j], [h_1, h_2], [h_1, h_5], [h_2, h_3], [h_3, h_4], [h_3, h_5],\atop [h_4, h_5], [h_1, h_3]=[h_2, h_4] = a_1,[h_1, h_4]=[h_2, h_5]=a_2}\right.
\right\rangle.\]

If $n=3$, then $m=4$. By Theorem \ref{thm-3of4}, there are 6 congruence classes of 3-dimensional subspaces of the space of $\Asym_4(\Z_3)$:
$$\begin{pmatrix}&c&a&\\-c&&b&\\-a&-b&&\\&&&\end{pmatrix},
\begin{pmatrix}&&a&\\&&b&\\-a&-b&&c\\&&-c&\end{pmatrix},
\begin{pmatrix}&c&a&\\-c&&b&\\-a&-b&&c\\&&-c&\end{pmatrix},$$
$$\begin{pmatrix}&&a&c\\&&b&\\-a&-b&&\\-c&&&\end{pmatrix},
\begin{pmatrix}&c&a&\\-c&&&b\\-a&&&c\\&-b&-c&\end{pmatrix},
\begin{pmatrix}&c&a&b\\-c&&&a\\-a&&&-c\\-b&-a&c&\end{pmatrix}.$$
The first one corresponds to the group $I_{6.3}\times \Z_3$, and the others are indecomposable and correspond to the following groups respectively:
\[I_{7.4}=\left\langle {a_1, a_2, a_3,\atop h_1, h_2, h_3, h_4}\left| {a_i^3, h_j^3, [a_i, h_j], [a_i,a_j], [h_1, h_2], [h_1, h_4], [h_2, h_4], \atop [h_1, h_3]=a_1, [h_2, h_3] = a_2, [h_3, h_4]=a_3}\right.
\right\rangle,\]

\[I_{7.5}=\left\langle {a_1, a_2, a_3,\atop h_1, h_2, h_3, h_4}\left| {a_i^3, h_j^3, [a_i, h_j], [a_i,a_j],  [h_1, h_4], [h_2, h_4], \atop [h_1, h_2]= [h_3, h_4]=a_1, [h_1, h_3]=a_2, [h_2, h_3] = a_3}\right.
\right\rangle,\]

\[I_{7.6}=\left\langle {a_1, a_2, a_3,\atop h_1, h_2, h_3, h_4}\left| {a_i^3, h_j^3, [a_i, h_j], [a_i,a_j], [h_1, h_2], [h_2, h_4], [h_3, h_4], \atop [h_1, h_3]=a_1, [h_1, h_4] = a_3, [h_2, h_3]=a_2}\right.
\right\rangle,\]

\[I_{7.7}=\left\langle {a_1, a_2, a_3,\atop h_1, h_2, h_3, h_4}\left| {a_i^3, h_j^3, [a_i, h_j], [a_i,a_j], [h_1, h_4], [h_2, h_3], \atop [h_1, h_2]=[h_3, h_4]=a_1, [h_1, h_3] = a_2, [h_2, h_4]=a_3}\right.
\right\rangle,\]

\[I_{7.8}=\left\langle {a_1, a_2, a_3,\atop h_1, h_2, h_3, h_4}\left| {a_i^3, h_j^3, [a_i, h_j], [a_i,a_j], [h_2, h_3], [h_1, h_2]=[h_4, h_3]=a_1\atop, [h_1, h_3] = [h_2, h_4]= a_2, [h_1, h_4]=a_3}\right.
\right\rangle.\]

Now assume $n=4$ and $m=3$.

Then the action $\rho$ of $H$ on $A$ must be nontrivial. Otherwise, $A$ can not be the derived subgroup since $\dim(\Asym_3(\Z_3)) = 3 < 4$. Let $h_1$, $h_2$ and $h_3$ be a basis of $H$. Then by Corollary \ref{cor-expp3}, $\varphi_{12}$, $\varphi_{13}$ and $\varphi_{23}$ are linearly independent over $A/\mathrm{soc}(A)$. It forces  $\dim(\mathrm{soc}(A))=1$.
We set $a_1=\varphi_{23}$, $a_2 = -\varphi_{13}$, $a_3 = \varphi_{12}$,
 and
$$a_4= h_1\tr\varphi_{23} -\varphi_{23} =\varphi_{13} - h_2\tr\varphi_{13}= h_3\tr\varphi_{12} -\varphi_{12}.$$ Now under the basis $a_1, a_2, a_3, a_4$, the action $\rho$ of $H$ on $A$ is given by
$$\rho(h_1)=\left(\begin{array}{cccc}1&&&\\&1&&\\&&1&\\1&&&1\end{array}\right),\
\rho(h_2)=\left(\begin{array}{cccc}1&&&\\&1&&\\&&1&\\&1&&1\end{array}\right),\
\rho(h_3)=\left(\begin{array}{cccc}1&&&\\&1&&\\&&1&\\&&1&1\end{array}\right).$$
The corresponding group, denoted by $I_{7.9}$, is indecomposable and has presentation
\[I_{7.9}=\left\langle {a_1, a_2, a_3, a_4\atop h_1, h_2, h_3}\left| {a_i^3, h_j^3, [h_i, a_j] \ (i\ne j) , [h_i, a_i]=a_4\ (i=1,2,3),\atop [h_1, h_2]=a_3, [h_1, h_3]=-a_2, [h_2,h_3]=a_1}\right.
\right\rangle.\]

Thus we have shown the following propostion.

\begin{proposition} $I_{7.1}, I_{7.2}, \cdots, I_{7.9}$ give a complete set of isoclasses of irreducible groups of exponent 3 and order $3^7$.
\end{proposition}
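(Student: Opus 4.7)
The plan is to enumerate all indecomposable groups $G$ of exponent $3$ and order $3^7$ according to $n = \dim_{\Z_3}(G')$, writing $G = A \rtimes_{\rho,\varphi} H$ with $A = G'$ and $H = G/G' \cong \Z_3^m$ where $m = 7-n$. First I would establish the bound $n \le 4$: if the induced action $\rho$ of $H$ on $A$ is trivial then Theorem \ref{thm-data-1to1-grorbits} embeds $A$ into $\Asym_m(\Z_3)$, giving $n \le \binom{m}{2}$; and if $\rho$ is nontrivial then Corollary \ref{cor-expp3} forces $m \ge 3$ and $n \ge 4$. Combined with $m + n = 7$ this leaves exactly the five cases $n = 0, 1, 2, 3, 4$.

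For $n = 0$ the group is elementary abelian, hence decomposable. For $n = 1$ I would apply Theorem \ref{thm-1dim-dergp} to exhaust the groups $G_{6,k}$, $1 \le k \le 3$, and use Example \ref{eg-dec-1dim-dergp} to isolate $G_{6,3}$ as the unique indecomposable representative $I_{7.1}$. For $n = 2, 3$, by Corollary \ref{cor-expp3} the action is still trivial, so Theorem \ref{thm-data-1to1-grorbits} reduces the problem to the congruence orbits of $n$-dimensional subspaces of $\Asym_m(\Z_3)$; the lists of representatives in Section \ref{sec-2of5} (for $n=2, m=5$) and Theorem \ref{thm-3of4} (for $n=3, m=4$) supply the candidate data, and for each candidate I would read off the realizing group from Proposition \ref{prop-pres-from-mrep}.

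The main obstacle will be deciding which of those candidates yield indecomposable groups and identifying the decomposable ones with previously classified factors. I would exploit the fact that $G(\D) \cong G(\D_1) \times G(\D_2)$ precisely when $\D$ splits as a product of data, which in the trivial-action setting translates (via Proposition \ref{prop-prod-data} and Corollary \ref{cor-mr-ta-bc}) to the existence of a block-diagonal matrix $C_H \in \GL_m(\Z_3)$ such that the subspace of $\Asym_m(\Z_3)$ becomes block-diagonal under $X \mapsto C_H X C_H^T$. Inspecting each representative subspace for such a block structure either produces an explicit decomposition (e.g.\ recognizing $I_{5.2}\times \Z_3^2$, $I_3 \times I_3 \times \Z_3$, $I_{6.1}\times \Z_3$, $I_{6.2}\times \Z_3$, or $I_{6.3}\times \Z_3$ among the candidates) or confirms indecomposability, leaving the groups $I_{7.2},\ldots, I_{7.8}$.

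For the remaining case $n = 4$, $m = 3$, the action $\rho$ must be nontrivial since $\dim \Asym_3(\Z_3) = 3 < 4$. I would fix a basis $h_1, h_2, h_3$ of $H$, apply Corollary \ref{cor-expp3} to deduce that $\varphi_{12}, \varphi_{13}, \varphi_{23}$ are linearly independent modulo $\soc(A)$, and hence $\dim \soc(A) = 1$; choosing $a_1 = \varphi_{23}$, $a_2 = -\varphi_{13}$, $a_3 = \varphi_{12}$, together with the common element $a_4$ in $\soc(A)$ described in the preceding text, exhibits a canonical matrix presentation yielding the unique candidate $I_{7.9}$, whose indecomposability follows from $Z(I_{7.9}) \cong \Z_3$. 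Finally, pairwise non-isomorphism among $I_{7.1},\ldots,I_{7.9}$ is immediate from Corollary \ref{cor-equidata-isogp} applied in the setting $A = G'$: the underlying data are inequivalent by construction, either because they arise from distinct congruence orbits, distinct values of $n$, or distinct $(\rho, \varphi)$ patterns, so the realizing groups cannot be isomorphic.
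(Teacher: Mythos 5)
Your proposal is correct and follows essentially the same route as the paper: bound $n=\dim G'$ by $4$ via Corollary \ref{cor-expp3} and the dimension of $\Asym_m(\Z_3)$, handle $n=0,1$ by Theorem \ref{thm-1dim-dergp}, reduce $n=2,3$ (trivial action) to the congruence orbits in Section \ref{sec-2of5} and Theorem \ref{thm-3of4} read off via Proposition \ref{prop-pres-from-mrep}, and treat $n=4$, $m=3$ by the forced nontrivial action with $\dim\soc(A)=1$ yielding the single group $I_{7.9}$. Your explicit block-diagonalization criterion for sorting decomposable from indecomposable candidates, and your appeal to Theorem \ref{thm-data-1to1-grorbits}/Corollary \ref{cor-equidata-isogp} for pairwise non-isomorphism, are just slightly more explicit versions of the checks the paper leaves to ``one checks easily.''
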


\subsection{ } We summarize the classification results in this section as follows.

\begin{theorem}\label{thm-ord1to7} \cite{wil} The isoclasses of groups of order $3^{k}$($k\le 7$) and of exponent 3 are listed as follows,  where $n$ is the dimension of the derived subgroup.
\begin{center}
{
\begin{tabular}{|c|c|c|c|}
\hline
 $k$ & $n$& Decomposable & Indecomposable \\ \hline
 $1$   & $0$ & & $I_1=\Z_3$\\ \hline
 $2$   & $0$ &$I_1^2$ &\\ \hline
 $3$   & $0$ &$I_1^3$ &\\ \hline
 $3$   & $1$ & & $I_3$\\ \hline
 $4$   & $0$ &$I_1^4$ & \\ \hline
     & 1 &$I_1\times I_3$ &  \\ \hline
 $5$   & $0$ &$I_1^5$ & \\ \hline
     & $1$ &$I_1^2\times I_3$ & $I_{5.1}$\\ \hline
     & $2$ & & $I_{5.2}$\\ \hline
 $6$   & $0$ &$I_1^6$ & \\ \hline
     & $1$ &$I_1^3\times I_3, I_1\times I_{5.1}$ & \\ \hline
     & $2$ &$I_1\times I_{5.2}, I_3\times I_3$ & $I_{6.1}, I_{6.2}$\\ \hline
     & $3$ & & $I_{6.3}$\\ \hline
 $7$   & $0$ &$I_1^7$ & \\ \hline
     & $1$ &$I_1^4\times I_3, I_1^2\times I_{5.1}$ & $I_{7.1}$\\ \hline
     & $2$ &$I_1^2\times I_{5.2}, I_1\times I_3\times I_3, I_1\times I_{6.1}, I_1\times I_{6.2}$ & $I_{7.2}, I_{7.3}$ \\ \hline
     & $3$ &$I_1\times I_{6.3}$ & $I_{7.4}, I_{7.5}, I_{7.6}, I_{7.7}, I_{7.8}$\\ \hline
    & $4$ & & $I_{7.9}$\\ \hline
   \end{tabular}

   \footnotesize Table 1. Isoclasses of groups of order $3^k$ ($k\le 7$) and exponent 3}
   \end{center}
\end{theorem}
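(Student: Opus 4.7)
The plan is to proceed by a case analysis on $k$, and within each $k$ a subcase analysis on $n = \dim_{\Z_3} G'$. The starting observation is that by Corollary \ref{cor-exp3-dersubgp} the derived subgroup $G'$ is an elementary abelian $3$-group, and since $G$ has exponent $3$ so does $H = G/G'$; thus $A = G' \cong \Z_3^n$ and $H \cong \Z_3^m$ for some $m + n = k$. By Krull--Schmidt it suffices to exhibit all indecomposable groups in each slot and then list the decomposable ones as products of smaller indecomposables already classified for smaller $k$.

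The key reduction is Corollary \ref{cor-expp3}, which forces the conjugation action of $H$ on $A$ to be trivial unless $m \ge 3$ and $n \ge 4$. Since we are looking at $k \le 7$, this nontrivial-action regime only arises for the single pair $(m,n) = (3,4)$ at $k = 7$. In every other case, the trivial-action hypothesis of Theorem \ref{thm-data-1to1-grorbits}(3) applies, so I would identify the isoclasses in $\mathcal G_2(3; m, n)$ with orbits in $\Gr(n, \Asym_m(\Z_3))/\GL_m(\Z_3)$. I would then import the explicit orbit descriptions already established: Theorem \ref{thm-1dim-dergp} handles $n=1$ (giving $G_{m,k}$ for $1 \le k \le m/2$, whose indecomposability was pinned down in Example \ref{eg-dec-1dim-dergp}); Section \ref{sec-2of3}, \ref{sec-2of4}, \ref{sec-2of5} handle $n=2$ for $m=3,4,5$; the unique orbit in $\Asym_3$ of dimension $3$ handles $(m,n)=(3,3)$; and Theorem \ref{thm-3of4} handles $(m,n) = (4,3)$. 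In each subcase I would read off the presentation from Proposition \ref{prop-pres-from-mrep}, then check by inspection of the center (or of an evident direct factor) which realizing groups split off a copy of $\Z_3$, $I_3$, $I_{5.i}$, or $I_{6.i}$, labeling the remaining ones $I_{k.j}$.

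The remaining and most delicate case is $k = 7$ with $(m,n)=(3,4)$. Here $\dim \Asym_3(\Z_3) = 3 < 4$, so the $H$-action cannot be trivial if we want $A = G'$. I would use the second half of Corollary \ref{cor-expp3}: the cocycle values $\varphi_{12}, \varphi_{13}, \varphi_{23}$ must be linearly independent modulo $\soc(A)$, forcing $\dim \soc(A) = 1$ (since $A$ has rank $4$ and is generated as an $H$-module by $\mathrm{Im}(\varphi)$ and $\rad(A)$, cf.\ Corollary \ref{cor-der-abext}). The $H$-module structure is then essentially unique: after choosing the basis $a_1 = \varphi_{23}$, $a_2 = -\varphi_{13}$, $a_3 = \varphi_{12}$, and $a_4$ spanning $\soc(A)$ with $(h_i - 1)\triangleright a_i = a_4$ and all other $T_r \triangleright a_s = 0$, one verifies compatibility with Corollary \ref{cor-expp1}(3)(c) and reads off a single group $I_{7.9}$.

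The hardest step will not be any individual case but rather the verification, at $k = 7$ and $n = 4$, that the module and cocycle constraints from Corollary \ref{cor-expp1} really do pin down a unique datum up to equivalence; one has to show that any two nontrivial module structures satisfying the constraints are conjugate under $\GL(A)$ and that the resulting cocycles are cohomologous, so that Proposition \ref{prop-equiabeldata-isogp} applies. Apart from this, the proof is bookkeeping: assemble the table entries case by case, match each decomposable realizing group with a product of previously listed indecomposables by exhibiting a direct factor splitting, and invoke Krull--Schmidt for uniqueness of that decomposition.
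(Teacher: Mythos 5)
Your proposal is correct and follows essentially the same route as the paper: reduce to $p$-elementary data, use Corollary \ref{cor-expp3} to force the trivial action except at $(m,n)=(3,4)$ for $k=7$, invoke Theorem \ref{thm-data-1to1-grorbits} together with the orbit classifications (Theorem \ref{thm-1dim-dergp}, Sections 6.2--6.4, Theorem \ref{thm-3of4}) and Proposition \ref{prop-pres-from-mrep}, sort out decomposables via Krull--Schmidt, and pin down the unique nontrivial-action datum giving $I_{7.9}$. The paper settles that last uniqueness step exactly as you anticipate, by choosing the adapted basis $a_1=\varphi_{23}$, $a_2=-\varphi_{13}$, $a_3=\varphi_{12}$, $a_4=(h_1-1)\tr\varphi_{23}$ and reading off the standard module structure and cocycle from Corollary \ref{cor-expp1}.
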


\section{Groups of order $3^8$ and exponent 3}
In this section, we aim to classify groups of order $3^8$ and exponent 3.

Let $G$ be such a group, and $A=G'$, $H=G/G'$.
Assume $A=\mathbb{Z}_3^n$, $H=\mathbb{Z}_3^m$, and $G=\mathbb{Z}_3^n\rtimes_{\rho,f}\mathbb{Z}_3^m$.
Then $n\le 5$, otherwise it is easy to show that $A$ can not be the derived subgroup by Corollary \ref{cor-expp3} and by comparing the dimension.

We work on $n$ case by case. Note that if $n\le 3$, then $\rho$ is trivial, and in this case, the classification
of isoclasses of $G$ is equivalent to the classification of certain congruence classes of subspaces.

\subsection{n=0, or 1}\

This case follows from Theorem \ref{thm-1dim-dergp} easily.

\begin{lemma} Keep the above notations.
\begin{enumerate}
\item[(1)] If $n=0$, then $G = \mathbb{Z}_3^8$ and is decomposable.
\item[(2)] If $n=1$, then $G = G_{7, 1}, G_{7, 2}$, or $G_{7, 3}$, and $G$ is decomposable.
\end{enumerate}
\end{lemma}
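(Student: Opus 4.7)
The plan is to handle the two cases separately, each by directly invoking results already established in the paper. Since $m+n = 8$ in both cases, the numerical constraints together with earlier classification theorems will pin down $G$ up to isomorphism with almost no additional work; this is really a bookkeeping step that sets up the later, substantive cases $n = 2, 3, 4, 5$.

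For $n = 0$, I would observe that $A = G' = 0$ forces $G$ to be abelian. Since $G$ has exponent $3$ and order $3^8$, $G$ is an $8$-dimensional $\mathbb{Z}_3$-vector space, hence $G \cong \mathbb{Z}_3^8$. This is a direct product of $8$ copies of $I_1 = \mathbb{Z}_3$, so it is decomposable.

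For $n = 1$, we have $m = 7$ and $G' \cong \mathbb{Z}_3$. First I would quote Theorem \ref{thm-1dim-dergp}(2), which asserts that such a group is isomorphic to $G_{m,k}$ for a unique integer $k$ with $1 \le k \le m/2$. Specializing to $m = 7$, the admissible values are $k = 1, 2, 3$, giving the three groups $G_{7,1}, G_{7,2}, G_{7,3}$. To establish decomposability of each, I would then invoke Example \ref{eg-dec-1dim-dergp}, which shows that $G_{m,k}$ is indecomposable if and only if $m = 2k$. Since $m = 7$ is odd, none of the values $k = 1, 2, 3$ satisfy $m = 2k$, so each $G_{7,k}$ decomposes (explicitly, $G_{7,k} \cong G_{2k,k} \times \mathbb{Z}_3^{7-2k}$ by Remark \ref{rmk-1dim-dergp}).

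There is essentially no obstacle here: both parts are immediate corollaries of Theorem \ref{thm-1dim-dergp} and Example \ref{eg-dec-1dim-dergp}, together with the dimension count $m + n = 8$. The genuine difficulties of the $3^8$ classification will arise only in the subsequent cases $n \ge 2$, where the congruence orbits in $\Gr(n, \Asym_m(\mathbb{Z}_3))$ must be enumerated and, for $n \ge 4$, where non-trivial actions $\rho$ enter the picture through Corollary \ref{cor-expp3}.
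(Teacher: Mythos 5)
Your argument is correct and is essentially the paper's own: the paper disposes of these two cases by exactly this appeal to Theorem \ref{thm-1dim-dergp} (with the action on $G'\cong\Z_3$ automatically trivial), and decomposability of $G_{7,k}$ for $k=1,2,3$ follows from Remark \ref{rmk-1dim-dergp} and Example \ref{eg-dec-1dim-dergp} since $7\neq 2k$. Nothing further is needed.
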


\subsection{n=2}\
Then $m=6$, and $G=\mathbb{Z}_3^2\rtimes_{\rho,f}\mathbb{Z}_3^6$.

By Section \ref{sec-2of6}, there are 14 congruence classes of 2 dimensional subspaces of $\Asym_6(\Z_3)$:
\[\begin{pmatrix}&&&a&&\\&&&b&&\\&&&&&\\-a&-b&&&&\\&&&&0&\\&&&&&0\end{pmatrix},
 \begin{pmatrix} &&a&&&\\&&&b&&\\-a&&&&&\\&-b&&&&\\&&&&0&\\&&&&&0\end{pmatrix},
 \begin{pmatrix} &&a&b&&\\&&&a&&\\-a&&&&&\\-b&-a&&&&\\&&&&0&\\&&&&&0\end{pmatrix},
\]
\[
\begin{pmatrix} &&a&b&&\\&&-b&a&&\\-a&b&&&&\\-b&-a&&&&\\&&&&0&\\&&&&&0\end{pmatrix},
  \begin{pmatrix} &&&a&&\\&&&b&&\\&&&&a&\\-a&-b&&&&\\&&-a&&&\\&&&&&0\end{pmatrix},
  \begin{pmatrix} &&&a&&\\&&&b&a&\\&&&& b&\\-a&-b&&&&\\&-a&-b&&&\\&&&&&0\end{pmatrix},
\]
\[
\begin{pmatrix}
           &  &   & a &   &   \\
           &  &   &   & a &   \\
           &  &   &   &   & b \\
         -a&  &   &   &   &   \\
           &-a&   &   &   &   \\
           &  & -b&   &   &   \\
       \end{pmatrix},\quad
       \begin{pmatrix}
           &  &    & a &   &   \\
           &  &    &   & b &   \\
           &  &    &   &   & a+b \\
         -a&  &    &   &   &   \\
           &-b&    &   &   &   \\
           &  &-a-b&   &   &   \\
       \end{pmatrix},
\]
\[
\begin{pmatrix}
           &  &    & a & b &   \\
           &  &    &   & a &   \\
           &  &    &   &   & a \\
         -a&  &    &   &   &   \\
         -b&-a&    &   &   &   \\
           &  & -a &   &   &   \\
       \end{pmatrix},\quad
   \begin{pmatrix}
           &  &   & a & b  &   \\
           &  &   &   & a &   \\
           &  &   &   &   & b \\
         -a&  &   &   &   &   \\
         -b&-a&   &   &   &   \\
           &  & -b&   &   &   \\
       \end{pmatrix},\quad
     \begin{pmatrix}
           &  &    & a & b &   \\
           &  &    &-b & a &   \\
           &  &    &   &   & a \\
         -a& b&    &   &   &   \\
         -b&-a&    &   &   &   \\
           &  & -a &   &   &   \\
       \end{pmatrix},
\]
\[
\begin{pmatrix}
           &  &   & a & b &   \\
           &  &   &   & a & b  \\
           &  &   & b & b & a \\
         -a&  & -b&   &   &   \\
         -b&-a& -b&   &   &   \\
           &-b& -a&   &   &   \\
       \end{pmatrix},\quad
    \begin{pmatrix}
           &  &    & a & b &   \\
           &  &    &   & a & b \\
           &  &    &   &   & a \\
         -a&  &    &   &   &   \\
         -b&-a&    &   &   &   \\
           &-b& -a &   &   &   \\
       \end{pmatrix},\quad
   \begin{pmatrix}
           &  &   &   & a &   \\
           &  &   &   & b &   \\
           &  &   &   &   & a \\
           &  &   &   &   & b \\
         -a&-b&   &   &   &   \\
           &  & -a& -b&   &   \\
       \end{pmatrix}.
\]

Let $G_1, G_2, \cdots, G_{14}$ be the corresponding groups respectively. Then
$G_1= I_{5.2}\times \Z_3^3$, $G_2 = I_3\times I_3\times \Z_3^2$, $G_3= I_{6.1}\times \Z_3^2$, $G_4=I_{6.2}\times \Z_3^2$,
$G_5= I_{7.2}\times \Z_3$, $G_6= I_{7.3}\times \Z_3$ , and $G_7 = I_3\times I_{5.1}$ are decomposable, and the others are all indecomposable. We denote $G_8,\cdots, G_{14}$ by  $I_{8.1},\cdots, I_{8.7}$ respectively. The presentations of these groups can be easily read by Proposition \ref{prop-pres-from-mrep} and we left it to the readers.
\subsection{n=3}
In this case, $m=5$.

By Theorem \ref{thm-3of5}, there are 22 congruence classes of 3-dimensional subspaces of the space $\Asym_5(\Z_3)$, say
\[
\begin{pmatrix}&d&&a&\\-d&&d&b&a\\&-d&&&b\\-a&-b&&&\\&-a&-b&&\end{pmatrix},
\begin{pmatrix}&d&&a&\\-d&&d&b&a\\&-d&&&b\\-a&-b&&&d\\&-a&-b&-d&\end{pmatrix},
\begin{pmatrix}&d&&a&\\-d&&&b&a\\&&&&b\\-a&-b&&&\\&-a&-b&&\end{pmatrix},
\]
\[
\begin{pmatrix}&d&&a&\\-d&&&b&a\\&&&&b\\-a&-b&&&d\\&-a&-b&-d&\end{pmatrix},
\begin{pmatrix}&d&&a&\\-d&&&b&a\\&&&&b\\-a&-b&&&-d\\&-a&-b&d&\end{pmatrix},
\begin{pmatrix}&&d&a&\\&&&b&a\\-d&&&&b\\-a&-b&&&\\&-a&-b&&\end{pmatrix},
\]
\[
\begin{pmatrix}&&&a&\\&&&b&a\\&&&&b\\-a&-b&&&d\\&-a&-b&-d&\end{pmatrix},
\begin{pmatrix}&&&a&d\\&&&b&a+d\\&&&d&b\\-a&-b&-d&&\\-d&-a-d&-b&&\end{pmatrix},
\begin{pmatrix}&&&a&\\&&&b&d\\&&&&b\\-a&-b&&&\\&-d&-b&&\end{pmatrix},
\]
\[
\begin{pmatrix}&&&a&\\&&&b&a\\&&&d&b\\-a&-b&-d&&\\&-a&-b&&\end{pmatrix},
\begin{pmatrix}&&&a&\\&&&b&a+d\\&&&-d&b\\-a&-b&d&&\\&-a-d&-b&&\end{pmatrix},
\begin{pmatrix}&&&a&\\&&&b&a+d\\&&&d&b\\-a&-b&-d&&\\&-a-d&-b&&\end{pmatrix},
\]
\[
\begin{pmatrix} &d&&a&\\-d&&&b&\\&&&&a\\-a&-b&&&\\&&-a&&\end{pmatrix},
\begin{pmatrix} &&&a&\\&&d&b&\\&-d&&&a\\-a&-b&&&\\&&-a&&\end{pmatrix},
\]
\[
\begin{pmatrix} &&&a&\\&&&b&\\&&&&a\\-a&-b&&&d\\&&-a&-d&\end{pmatrix},
\begin{pmatrix} &&&a&\\&&&b&\\&&&&d\\-a&-b&&&\\&&-d&&\end{pmatrix},
\]
\[
\begin{pmatrix}&c&a&&\\-c&&b&&\\-a&-b&&&\\&&&0&\\&&&&0\end{pmatrix},
\begin{pmatrix}&&a&&\\&&b&&\\-a&-b&&c&\\&&-c&&\\&&&&0\end{pmatrix},
\begin{pmatrix}&c&a&&\\-c&&b&&\\-a&-b&&c&\\&&-c&&\\&&&&0\end{pmatrix},
\]
\[
\begin{pmatrix}&&a&c&\\&&b&&\\-a&-b&&&\\-c&&&&\\&&&&0\end{pmatrix},
\begin{pmatrix}&c&a&&\\-c&&&b&\\-a&&&c&\\&-b&-c&&\\&&&&0\end{pmatrix},
\begin{pmatrix}&c&a&b&\\-c&&&a&\\-a&&&-c&\\-b&-a&c&&\\&&&&0\end{pmatrix}.
\]

Let $G_1, G_2, \cdots, G_{22}$ be the corresponding groups. Then $G_{16}= I_3\times I_{5.2}$, $G_{17}= I_{6.3}\times \Z_3^2$, $G_{18}= I_{7.4}\times \Z_3$, $G_{19}= I_{7.5}\times \Z_3$, $G_{20}= I_{7.6}\times \Z_3$, $G_{21}= I_{7.7}\times \Z_3$, and $G_{22}= I_{7.8}\times \Z_3$ are decomposable, and the others are indecomposable. We denote the groups $G_1,G_2,\cdots,G_{15}$ by $I_{8.8}, \cdots, I_{8.22}$ respectively. Moreover, the presentations of these groups can be read easily from the matrix presentations by Proposition \ref{prop-pres-from-mrep}.

\subsection{n=4}
Then $m=4$.

In this case, there are two cases, say the action $\rho$ of $H$ on $A$ is trivial, or nontrivial.

If $\rho$ is trivial, then the isoclasses of such groups are in one-to-one correspondence with the following four congruence class of 4-dimensional subspaces of $\Asym_4(\Z_3)$ as listed in Theorem \ref{thm-4of4}:
\[\begin{pmatrix}&&a&c\\&&b&d\\-a&-b&&\\-c&-d&&\end{pmatrix},
\begin{pmatrix}&d&a&c\\-d&&b&\\-a&-b&&\\-c&&&\end{pmatrix},
\begin{pmatrix}&d&a&c\\-d&&b&\\-a&-b&&d\\-c&&-d&\end{pmatrix},
\begin{pmatrix}&c&a&d\\-c&&d&b\\-a&-d&&c\\-d&-b&-c&\end{pmatrix}.
\]
The groups are all indecomposable, and denoted by $I_{8.23}, I_{8.24}, I_{8.25}$ and $I_{8.26}$ respectively.

Now assume $\rho$ is nontrivial.

 Let $h_1$, $h_2$ and $h_3$ and $h_4$ be a basis of $H$. Then by Corollary \ref{cor-expp3}, there exist some $r\le s\le t$ such that $\varphi_{rs}$, $\varphi_{rt}$ and $\varphi_{st}$ are linearly independent over $A/\mathrm{soc}(A)$. It follows that  $\dim(\mathrm{soc}(A))=1$. Without loss of generality, we assume $(r,s,t)= (1,2,3)$.
Let $a_1=\varphi_{23}$, $a_2 = -\varphi_{13}$, $a_3 = \varphi_{12}$,
 and $a_4= h_1\tr\varphi_{23} -\varphi_{23} =\varphi_{13} - h_2\tr\varphi_{13}= h_3\tr\varphi_{12} -\varphi_{12}$. Under the basis $a_1, a_2, a_3, a_4$, the action $\rho$ of $H$ on $A$ is given by
$$\rho(h_1)=\begin{pmatrix}1&&&\\&1&&\\&&1&\\1&&&1\end{pmatrix},\
\rho(h_2)=\begin{pmatrix}1&&&\\&1&&\\&&1&\\&1&&1\end{pmatrix},\
\rho(h_3)=\begin{pmatrix}1&&&\\&1&&\\&&1&\\&&1&1\end{pmatrix}.$$

We may assume $\rho(h_4)= \id_A$ after changing basis. In fact,
$\rho(h_4)$ commutes with $\rho(h_1)$, $\rho(h_2)$ and $\rho(h_3)$ for $H$ is abelian. Then $\rho(h_4) =\left(\begin{array}{cccc}\lambda &&&\\\lambda_1&\lambda&&\\ \lambda_2&&\lambda&\\ \lambda_3&&&\lambda\end{array}\right)$ for some $\lambda\ne 0$ and $\lambda_1, \lambda_2$ and $\lambda_3$. Moreover, $h_4^3=1$ implies $\lambda=1$. Clearly $\tilde h_4 = h_4h_1^{-\lambda_1}h_2^{-\lambda_2}h_3^{-\lambda_3}$ acts trivially on $A$, and $h_1, h_2, h_3, \tilde h_4$ also form a basis of $H$.

By Corollary \ref{cor-expp1}, we have $(h_1-1)\tr \varphi_{14} = (h_4-1)\tr \varphi_{14} =0$ and $$(h_2-1)\tr \varphi_{14}= -(h_4-1)\tr \varphi_{12}=0, $$ Similarly we have $(h_3-1)\tr \varphi_{14}=0$.
This means that $\varphi_{14}\in \mathrm{soc}(A)$ and hence $\varphi_{14}= \mu_1 a_4$, $\varphi_{24}= \mu_2 a_4$ and $\varphi_{34}=\mu_3 a_4$ for some
$\mu_1, \mu_2, \mu_3\in \Z_3$.

Set $\tilde \varphi_{ij} = \varphi_{ij}$ for $1\le i, j\le 3$, and $\tilde \varphi_{ij} = 0$ for $i=4$ or $j=4$. We claim that $\varphi$ and $\tilde\varphi$ differ by a cocycle.
In fact, by the formula \ref{formula-cochain}, we have
\[(\varphi-\tilde\varphi)(h_1^{i_1}h_2^{i_2}h_3^{i_3} h_4^{i_4}, h_1^{j_1}h_2^{j_2}h_3^{j_3} h_4^{j_4}) = -i_4(j_1 \mu_1 + j_2 \mu_2 + j_3 \mu_3) \mu_3 a_4.
\]
Let $f\colon H\to A$ be given by $f(h_1^{i_1} h_2^{i_2} h_3^{i_3} h_4^{i_4}) = i_4 (\mu_1a_1+ \mu_2a_2+\mu_3a_3)
+ i_4(\mu_1i_1 + \mu_2i_2 + \mu_3i_3)a_4$.
It is direct to show that
\begin{align*}
&d(f)(h_1^{i_1}h_2^{i_2}h_3^{i_3} h_4^{i_4}, h_1^{j_1}h_2^{j_2}h_3^{j_3} h_4^{j_4})\\
=& (h_1^{i_1}h_2^{i_2}h_3^{i_3} h_4^{i_4})\tr f(h_1^{j_1}h_2^{j_2}h_3^{j_3} h_4^{j_4}) - f(h_1^{i_1+j_1}h_2^{i_2+j_2}h_3^{i_3+j_3} h_4^{i_4+j_4})
+ f(h_1^{i_1} h_2^{i_2} h_3^{i_3} h_4^{i_4})\\
=& (h_1^{i_1}h_2^{i_2}h_3^{i_3} h_4^{i_4})\tr [j_4(\mu_1a_1+ \mu_2a_2+\mu_3a_3)
+ j_4(\mu_1j_1 + \mu_2j_2 + \mu_3j_3)a_4] \\
& - [(i_4+j_4)(\mu_1a_1+ \mu_2a_2+\mu_3a_3) \\
& + (i_4+j_4)(\mu_1(i_1+j_1) + \mu_2(i_2+j_2) + \mu_3(i_3+j_3))a_4] \\
&+ [i_4(\mu_1a_1+ \mu_2a_2+\mu_3a_3)
+ i_4(\mu_1i_1 + \mu_2i_2 + \mu_3i_3)a_4]\\
=& -i_4(j_1 \mu_1 + j_2 \mu_2 + j_3 \mu_3) \mu_3 a_4,
\end{align*}
and the claim follows.
Thus $G \cong A\rtimes_{\rho, \tilde\varphi} H \cong I_{7.9}\times \Z_3$, and hence $G$ is decomposable.

\subsection{n=5}

In this case, $m=3$ and the action $\rho$ must be nontrivial.

Let $h_1$, $h_2$ and $h_3$ be a basis of $H$. Then by Corollary \ref{cor-expp3}, $\varphi_{12}$, $\varphi_{13}$ and $\varphi_{23}$ are linearly independent over $A/\mathrm{soc}(A)$, and $h_1\tr\varphi_{23} -\varphi_{23} =\varphi_{13} - h_2\tr\varphi_{13}= h_3\tr\varphi_{12} -\varphi_{12}\ne 0$.
Let $a_1=\varphi_{23}$, $a_2 = -\varphi_{13}$, $a_3 = \varphi_{12}$,
 and $a_4= h_1\tr\varphi_{23} -\varphi_{23}$. Let $\tilde A$ denote the subspace spanned by $a_1, a_2, a_3$ and $a_4$. We pick some $a_5\in A$ such that $a_1, a_2, a_3, a_4, a_5$ form a basis of $A$. Then $(h_1-1)\tr a_5 = \sum_{i=1}^5 \lambda_i a_i$. Clearly we have $\lambda_5=0$ for $0= (h_1-1)^3\tr a_5 = \lambda_5^3 a_5$, i.e., $(h_1-1)\tr a_5 \in \tilde A$. Similary $(h_2-1)\tr a_5, (h_3-1)\tr a_5\in \tilde A$. Then by Proposition \ref{prop-der-abext}, the derived subgroup of $G'= \tilde A$, which leads to a contradiction. Thus we have proved the following result.

\begin{proposition} There exists no group of exponent $3$ and order $3^8$ whose derived subgroup has order $3^5$.
\end{proposition}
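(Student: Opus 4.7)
The strategy is to suppose such a $G$ exists and to produce a proper $H$-submodule of $A$ that must coincide with $G' = A$, yielding a contradiction by dimension count. First I would argue that $\rho$ is necessarily nontrivial: if $\rho$ were trivial, then by Corollary~\ref{cor-der-abext} we would have $G'$ equal to the $\Z_3$-span of the $\varphi_{rs}$, which sits inside $\Asym_3(\Z_3)$ of dimension $3 < 5$. With $\rho$ nontrivial, I would reuse the argument at the end of Corollary~\ref{cor-expp3}: with a basis $h_1, h_2, h_3$ of $H$, the classes of $\varphi_{12}$, $\varphi_{13}$, $\varphi_{23}$ are linearly independent in $A/\soc(A)$, so in particular none of them lies in $\soc(A)$.

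Next I would set $a_1 = \varphi_{23}$, $a_2 = -\varphi_{13}$, $a_3 = \varphi_{12}$, and define $a_4 := (h_1-1)a_1 = (h_2-1)a_2 = (h_3-1)a_3$ via Corollary~\ref{cor-expp1}(3)(c); the element $a_4$ is nonzero because $a_1 \notin \soc(A)$. I would then verify $a_4 \in \soc(A)$: one has $(h_1-1)a_4 = (h_1-1)^2 \varphi_{23} = 0$ by Theorem~\ref{thm-expp}(2), and $(h_2-1)a_4 = (h_1-1)(h_2-1)\varphi_{23} = 0$, using that $H$ is abelian and Corollary~\ref{cor-expp1}(3)(b); the case $(h_3-1)a_4 = 0$ is identical. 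Since $a_1, a_2, a_3$ are independent modulo $\soc(A)$ and $a_4 \in \soc(A)$, the four elements are $\Z_3$-independent, and I can extend them to a basis $a_1, \dots, a_5$ of $A$. Set $\tilde A = \mathrm{span}_{\Z_3}(a_1, a_2, a_3, a_4)$.

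The crux of the argument is to show $(h_i-1)a_5 \in \tilde A$ for each $i \in \{1,2,3\}$. Writing $(h_1-1)a_5 = \sum_{j=1}^{5} \lambda_j a_j$ and applying $(h_1-1)$ once more, the identity $(h_1-1)^2 = 0$ on $A$ combined with the fact that $(h_1-1)$ annihilates $a_2, a_3, a_4$ (by Corollary~\ref{cor-expp1}(3)(b) and the previous step) yields
\[
0 = \lambda_1 a_4 + \lambda_5 \sum_{j=1}^{5} \lambda_j a_j.
\]
The coefficient of $a_5$ on the right is $\lambda_5^2$, forcing $\lambda_5 = 0$ in $\Z_3$ and hence $(h_1-1)a_5 \in \tilde A$. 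The cases of $h_2$ and $h_3$ are handled by the same argument; this coefficient-extraction is the only step demanding any care.

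Finally, I would invoke Corollary~\ref{cor-der-abext}: $G'$ is the $H$-submodule of $A$ generated by $\mathrm{Im}(\varphi)$ together with $\rad(A) = \sum_{i=1}^{3}(h_i-1)A$. The preceding computations show $(h_i-1)a_j \in \tilde A$ for every $i$ and every $j$, so $\rad(A) \subseteq \tilde A$ and $\tilde A$ is $H$-stable; consequently the $H$-submodule generated by $\{\varphi_{rs}\} = \{a_3, -a_2, a_1\}$ also lies in $\tilde A$. Hence $G' \subseteq \tilde A$, contradicting $\dim_{\Z_3} \tilde A \le 4 < 5 = \dim_{\Z_3} G'$.
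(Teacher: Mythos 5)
Your proof is correct and takes essentially the same route as the paper's: both arguments set $a_1=\varphi_{23}$, $a_2=-\varphi_{13}$, $a_3=\varphi_{12}$, $a_4=(h_1-1)\tr\varphi_{23}$, show $(h_i-1)\tr a_5\in\tilde A$ by extracting the coefficient of $a_5$ (the paper uses $(h_1-1)^3=0$ to get $\lambda_5^3=0$, you use $(h_1-1)^2=0$ to get $\lambda_5^2=0$ --- the same trick), and then conclude $G'\subseteq\tilde A$ with $\dim\tilde A\le 4<5$ via Proposition \ref{prop-der-abext}. Your extra checks (why $\rho$ must be nontrivial, $a_4\neq 0$, and the linear independence of $a_1,\dots,a_4$) are welcome refinements of the same argument rather than a different approach.
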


\subsection{} In summary, we have the following classification.

\begin{theorem}\label{thm-ord8} The isoclasses of groups of exponent 3 and of order $3^8$ are listed as follows, where $n$ is the dimension of the derived subgroup.
\begin{center}
{
\begin{tabular}{|c|c|c|}
\hline
 $n$ & Decomposable & Indecomposable \\ \hline
   $0$   & $I_1^8$ & \\ \hline
   $1$   & $I_1\times I_{7.1}, I_1^3\times I_{5.1}, I_1^5\times I_3$ &  \\ \hline
   $2$ & $I_1^3\times I_{5.2}, I_1^2\times I_3^2, I_1^2 \times I_{6.1}, I_1^2 \times I_{6.2}, I_1 \times I_{7.2}, I_1 \times I_{7.3}, I_3\times I_{5.1}$ & $I_{8.1}, \cdots, I_{8.7}$ \\ \hline
   $3$ & $I_1^2 \times I_{6.3}, I_1\times I_{7.4}, I_1\times I_{7.5}, I_1\times I_{7.6}, I_1\times I_{7.7}, I_1\times I_{7.8}, I_3\times I_{5.2}$ & $I_{8.8}, \cdots, I_{8.22}$\\ \hline
   $4$ &$I_1\times I_{7.9}$ & $I_{8.23},\cdots, I_{8.26}$\\ \hline
   \end{tabular}
   \footnotesize Table 2. Isoclasses of groups of order $3^8$ and exponent 3}
   \end{center}
\end{theorem}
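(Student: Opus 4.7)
The plan is to follow the same case split by $n = \dim_{\Z_3} G'$ that was used for orders $\le 3^7$, since $m+n = 8$ and by Corollary \ref{cor-expp3} the action of $H = G/G'$ on $A = G'$ can be nontrivial only when $m \ge 3$ and $n \ge 4$. Thus the cases $n = 0, 1, 2, 3$ reduce to central extensions, where Theorem \ref{thm-data-1to1-grorbits} converts the classification into counting orbits of $\GL_m(\Z_3)$ acting on $\Gr(n, \Asym_m(\Z_3))$ by congruence. For $n \ge 4$ the argument splits into a ``trivial-action'' subcase (again Grassmannian) and a ``nontrivial-action'' subcase that has to be handled by direct cocycle computation.

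First I would dispatch the small $n$ cases using results already in hand. For $n = 0$ the group is $\Z_3^8$. For $n = 1$, $m = 7$, Theorem \ref{thm-1dim-dergp} gives exactly three orbits $W_{7,k}$, $k = 1,2,3$, producing $G_{7,k}$; since $7 \neq 2k$ in each case, Example \ref{eg-dec-1dim-dergp} (or Remark \ref{rmk-1dim-dergp}) shows all three are decomposable. For $n = 2$, $m = 6$, Section \ref{sec-2of6} lists fourteen congruence classes; I would go through each and test whether its realizing group splits off a factor. The canonical-form machinery of Theorem \ref{thm-2dim-dergp} is useful here: a subspace that contains a zero block in its matrix presentation produces an extra central $\Z_3$ factor, and a subspace that is a block-diagonal sum of two canonical-form blocks corresponds to a central product of two previously classified groups (which, after comparing with Table 1, is sometimes actually a direct product). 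The same process applies to $n = 3$, $m = 5$, using the twenty-two orbits of Theorem \ref{thm-3of5}.

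For $n = 4$, $m = 4$ we split on whether $\rho$ is trivial. If it is, Theorem \ref{thm-4of4} provides four orbits in $\Gr(4, \Asym_4(\Z_3))/\GL_4(\Z_3)$, and one checks that none of the four canonical representatives admits a nontrivial zero block or block-diagonal split, so each gives an indecomposable group. If $\rho$ is nontrivial, the key input is Corollary \ref{cor-expp3}: some $\varphi_{rs}, \varphi_{rt}, \varphi_{st}$ are linearly independent mod $\soc(A)$ and $\dim \soc(A) = 1$. I can arrange (up to reordering $h_1, \dots, h_4$) that the triple is $(1,2,3)$, fix a basis $a_1, a_2, a_3, a_4$ as in the treatment of $I_{7.9}$, and then adjust $h_4$ modulo $\langle h_1, h_2, h_3\rangle$ so that $\rho(h_4) = \id_A$. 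By Corollary \ref{cor-expp1} the remaining cocycle values $\varphi_{i4}$ lie in $\soc(A) = \Z_3 a_4$; a direct computation via the formula \eqref{formula-cochain} shows that the resulting cocycle differs from the cocycle with $\varphi_{i4} = 0$ by a coboundary $d f$ for an explicit $f\colon H \to A$. Proposition \ref{prop-prod-data}(2) then forces $G \cong I_{7.9} \times \Z_3$, so the nontrivial-action subcase contributes only a decomposable group. The case $n = 5$ ($m = 3$) is ruled out by a dimension argument: picking a basis $h_1, h_2, h_3$ of $H$ and $a_1, \dots, a_4$ as above spanning a subspace $\tilde A$, one sees from $(h_r-1)^3 = 0$ that $(h_r - 1)\tr a_5 \in \tilde A$ for any $a_5 \notin \tilde A$, so Proposition \ref{prop-der-abext} forces $G' \subseteq \tilde A$, contradicting $\dim G' = 5$.

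The main obstacle is the bookkeeping in the middle cases $n = 2$ and $n = 3$: each of the $14 + 22 = 36$ canonical matrix presentations has to be examined individually to decide whether its realizing group is indecomposable, and for every decomposable one the correct factorization into groups from Table 1 must be identified. The indecomposable ones are then named $I_{8.1}, \dots, I_{8.22}$ and presentations read off via Proposition \ref{prop-pres-from-mrep}. Isomorphism between different candidates is automatic once we know by Corollary \ref{cor-equidata-isogp}(1) that the only freedom is the $\GL_m(\Z_3)$ congruence action, which is precisely what the canonical-form theorem already quotients by; so no extra isomorphism check is needed beyond the combinatorial classification.
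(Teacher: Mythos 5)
Your proposal is correct and follows essentially the same route as the paper: the same case split on $n=\dim G'$, reduction of $n\le 3$ to the Grassmannian orbit classifications via Corollary \ref{cor-expp3} and Theorem \ref{thm-data-1to1-grorbits}, the trivial/nontrivial action split for $n=4$ with the coboundary computation forcing $I_{7.9}\times\Z_3$, and the same dimension argument excluding $n=5$. The remaining work (inspecting the $14+22+4$ representatives for decomposability and identifying factors from Table 1) is exactly what the paper does.
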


\section{Congruence classes of 3 and 4-dimensional subspaces of $\Asym_4(\Z_3)$}

We will give a complete set of representatives of congruence classes of $3$ and $4$ dimensional subspaces of $\Asym_4(\Z_3))$ in this section. We need some basic notions.

\subsection{Pfaffian of anti-symmetric matrices of order 4}  In this subsection and the next, $\k$ can be an arbitrary base field.

Let $X= (x_{ij})\in \Asym_4(\k)$ be an anti-symmetric matrix over $\k$. Recall that the \emph{Pfaffian} of $X$ is defined to be $\mathrm{pf}(X) = x_{12}x_{34}- x_{13}x_{24}+ x_{14}x_{23}$, and $\det(X)=\mathrm{pf}(X)^2$. We remark that the notion of Pfaffian is defined for anti-symmetric matrices of arbitrary even order.

\begin{lemma} Let $V\subseteq \Asym_4(\k)$ be an $r$-dimensional subspace with a basis $X_1, \cdots, X_r$. Then $\mathrm{pf}(x_1X_1+ \cdots+ x_rX_r)$ defines a quadratic form in variables $x_1, \cdots, x_r$.

 Moreover, let $\tilde V$ be a subspace of $\Asym_4(\k)$ congruent to $V$, and let $\tilde X_1, \cdots, \tilde X_r$ be a basis of $\tilde V$. Then the quadratic forms $\mathrm{pf}(x_1\tilde X_1+ \cdots+ x_r\tilde X_r)$ and $\mathrm{pf}(x_1X_1+ \cdots+ x_rX_r)$ are equivalent up to a scalar.
\end{lemma}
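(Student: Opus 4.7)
The plan is to reduce both claims to two standard facts about the Pfaffian of a $4\times 4$ anti-symmetric matrix: (a) the Pfaffian is a homogeneous polynomial of degree two in the entries, and (b) the transformation law $\mathrm{pf}(PXP^T)=\det(P)\,\mathrm{pf}(X)$ for $P\in\GL_4(\k)$.

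For the first assertion, I will simply observe that each entry of the matrix $X(x)=x_1X_1+\cdots+x_rX_r$ is a $\k$-linear form in $x_1,\ldots,x_r$, so every monomial appearing in the explicit formula $\mathrm{pf}(X(x))=X(x)_{12}X(x)_{34}-X(x)_{13}X(x)_{24}+X(x)_{14}X(x)_{23}$ is a product of two such linear forms. Hence $\mathrm{pf}(X(x))$ is a homogeneous polynomial of degree $2$ in $x_1,\ldots,x_r$, i.e.\ a quadratic form.

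For the second assertion, I would unpack the definition of congruence: $\tilde V=PVP^T$ for some $P\in\GL_4(\k)$, and hence each $\tilde X_i=PY_iP^T$ for some basis $Y_1,\ldots,Y_r$ of $V$. Since $Y_1,\ldots,Y_r$ is another basis of $V$, one may write $Y_i=\sum_j c_{ij}X_j$ with $C=(c_{ij})\in\GL_r(\k)$. Applying the transformation law for the Pfaffian and then substituting gives
\[
\mathrm{pf}\!\Bigl(\sum_i x_i\tilde X_i\Bigr)=\mathrm{pf}\!\Bigl(P\bigl(\sum_i x_iY_i\bigr)P^T\Bigr)=\det(P)\,\mathrm{pf}\!\Bigl(\sum_j y_jX_j\Bigr),
\]
where $y_j=\sum_i x_ic_{ij}$. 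Since $C$ is invertible, the change of variables $(x_i)\mapsto(y_j)$ is an element of $\GL_r(\k)$, so the two quadratic forms $\mathrm{pf}(\sum_ix_i\tilde X_i)$ and $\mathrm{pf}(\sum_jy_jX_j)$ differ by an invertible linear change of variables followed by multiplication by the nonzero scalar $\det(P)$. This is exactly the statement of equivalence up to a scalar.

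There is no real obstacle here; the whole argument is a one-line application of the Pfaffian transformation identity once one has set up the notation. The only mildly delicate point is remembering that two different bases of $V$ produce quadratic forms that differ by a change of variables, so that one can freely replace $X_i$ by $Y_i$ before applying $P\cdot P^T$; but this is automatic from the invertibility of the basis transition matrix.
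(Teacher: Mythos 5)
Your proof is correct and follows essentially the argument the paper has in mind: the paper states the lemma without a formal proof and immediately records the identity $\mathrm{pf}(CXC^T)=\det(C)\,\mathrm{pf}(X)$ (verified there by factoring $C$ into elementary matrices $P_{ij}$, $D_i(\lambda)$, $T_{ij}(\lambda)$), which together with bilinearity of the defining formula is exactly what you use. The only difference is that you quote the transformation law as a standard fact rather than checking it on elementary congruences, which is a harmless shortcut.
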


We use $E_{ij}\in M_m(\k)$ to denote the matrix with $(i,j)$-entry being 1 and other entries being 0. Then the $m\times m$ identity  matrix $I_m= E_{11} + E_{22} + E_{33} + \cdots +E_{mm}$. We set
\begin{enumerate}
\item $P_{ij}= I_m -E_{ii}- E_{jj}+E_{ij}+E_{ji}$,
\item $D_i(\lambda)= I_m + (\lambda-1) E_{ii}$ for $0\ne\lambda\in\k$,
\item $T_{ij}(\lambda) = I_m + \lambda E_{ij}$ for $\lambda\in \k$.
\end{enumerate}
Matrices of the above form are known as elementary matrices. Any congruence transformation is a composite of series congruence transformations by elementary matrices.

It is direct to show that $\mathrm{pf}(P_{ij}XP_{ij}^T) = -\mathrm{pf}(X)$, $\mathrm{pf}(D_{i}(\lambda)XD_i{\lambda}^T) = \lambda\mathrm{pf}(X)$ and $\mathrm{pf}(T_{ij}(\lambda)XT_{ij}{\lambda}^T) =  \mathrm{pf}(X)$ for any $X\in \Asym_4(\k)$ and $\lambda\in\k$. It follows that $\mathrm{pf}(C^TXC)=\det(C)\mathrm{pf}(X)$ for any $C\in \GL_4(\k)$.

\subsection{Orthogonal complement and radical.} $\k$ is an arbitrary field in this subsection.

Let $m$ be a positive integer, and $X\in \Asym_m(\k)$ be an anti-symmetric matrix. For any $u\in \k^m$, the subspace $u^{\perp_X}=\{v\in \k^n\mid uXv^T=0\}$ is called the \emph{orthogonal complement} of $u$ with respect to $X$, or $X$-complement of $u$ for short. Let $\mathbf{U}\subseteq \k^m$ and $V\subseteq \Asym_m(\k)$ be subspaces. Then the $V$-complement of $u$, the $X$-complement and the $V$-complement of $\mathbf U$ are defined by
\[u^{\perp_V}=\bigcap_{X\in V}u^{\perp_X}, \quad
{\mathbf U}^{\perp_X}= \bigcap_{u\in \mathbf U}u^{\perp_X}, \quad
{\mathbf U}^{\perp_V}= \bigcap_{u\in \mathbf U, X\in V}u^{\perp_X}.\]

\begin{definition}
Let $\k$ be an arbitrary field. Let $X\in \Asym_m(\k)$ be an anti-symmetric matrix, where $m$ is a positive integer.
The subspace \[\mathrm{rad}(X) = \{v\in \k^m\mid vXu^T=0, \forall u\in \k^m\} \]
is called the \emph{radical} of $X$. Similarly, for a subspace $V\subseteq \Asym_m(\k)$, the space $\mathrm{rad}(V)=\bigcap\limits_{X\in V}\mathrm{rad}(X)$
is called the radical of $V$.
\end{definition}

We mention that the above definitions coincide with the ones of quadratic forms.

\begin{remark}\label{rem-complement-inv}

(1) For any $X\in \Asym_m(\k)$, $P\in \GL_m(\k)$ and $u\in \k^{m}$, $(uP)^{\perp_X} = (u^{\perp_{PXP^T}})P$.

(2) By definition, $\mathrm{rad}(X)= (\k^m)^{\perp_X}$, and by (1) we have $\mathrm{rad}(PXP^T)P = \mathrm{rad}(X)$. Therefore the radical of an anti-symmetric matrix or a subspace of $\Asym_m(\k)$ is invariant under congruence transformation.

(3) If $m$ is odd, then $\mathrm{rad}(X)\ne 0$ for any $X\in \Asym_m(\k)$ for any anti-symmetric matrix of odd degree is always degenerate.

\end{remark}

We have the following obvious observation.

\begin{lemma}\label{lem-dimincreasing} Let $V\subseteq \Asym_m(\k)$ be an $r$-dimensional subspace. Then $$V^\natural= \{\diag(X,0)\mid X\in \Asym_m(\k)\}$$ is an $r$-dimensional subspace of $\Asym_{m+1}(\k)$, and $V^\natural_1$ is congruent to $V^\natural_2$ if and only if $V_1$ is congruent to $V_2$. Moreover, let $W\subseteq\Asym_{m+1}(\k)$ be an $r$ dimensional subspace with $\mathrm{rad}(W)\ne 0$, then $W= V^\natural$ for some $V\subseteq\Asym_m(\k)$.
\end{lemma}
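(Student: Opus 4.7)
The two easy parts---that $X \mapsto \diag(X,0)$ is a linear injection, hence $V^\natural$ is $r$-dimensional, and that a congruence $V_2 = PV_1P^T$ lifts to $V_2^\natural = \diag(P,1)\, V_1^\natural\, \diag(P,1)^T$---I will dispatch immediately. The serious content is the converse of the congruence equivalence, together with the ``moreover'' clause.

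For the converse, suppose $QV_1^\natural Q^T = V_2^\natural$ for some $Q \in \GL_{m+1}(\k)$, and write $Q$ in $(m,1)$-block form with blocks $A, b, c, d$. My first step is the identification $\rad(V_i^\natural) = \rad(V_i) \oplus \k e_{m+1}$, followed by the observation that since the last row of every $Y \in V_2^\natural$ vanishes, $e_{m+1}QXQ^T = 0$ for all $X \in V_1^\natural$; invertibility of $Q^T$ then places $e_{m+1}Q$ in $\rad(V_1^\natural)$, pinning down $c \in \rad(V_1)$. The strategy is then to modify $Q$ by right-multiplying by invertible matrices that fix $V_1^\natural$ pointwise under congruence, so as to reduce to $c = 0$ and $d \ne 0$; in that case $Q$ is block upper triangular with $A \in \GL_m(\k)$, and $Q\diag(X,0)Q^T = \diag(AXA^T,0)$ reads off $AV_1A^T = V_2$.

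The main obstacle will be identifying enough elementary matrices that stabilize $V_1^\natural$. I plan to use two families: the shears $T_{i,m+1}(\lambda) = I + \lambda E_{i,m+1}$ always fix $V_1^\natural$ pointwise (since the $(m+1)$-st row and column of any $\diag(X_0,0)$ vanish); and, after a preliminary congruence on $V_1$ arranging $\rad(V_1) = \mathrm{span}(e_1,\ldots,e_t)$, the lower shears $T_{m+1,i}(\lambda)$ for $i \le t$ also fix $V_1^\natural$ pointwise (since the $i$-th row and column of every $\diag(X_0,0)$ then vanish). A single step of the first type handles the degenerate case $d = 0$ (some $c_i \ne 0$ by invertibility of $Q$), and successive steps of the second type clear the nonzero coordinates of $c$ one at a time---legitimate precisely because $c \in \rad(V_1)$ is supported in the first $t$ slots. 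After these reductions $Q$ is block upper triangular with $A \in \GL_m(\k)$, completing the converse.

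For the moreover clause, pick a nonzero $v \in \rad(W)$ and choose $P \in \GL_{m+1}(\k)$ with last row $v^T$; by the conjugation rule for radicals (Remark~\ref{rem-complement-inv}) this gives $e_{m+1} \in \rad(PWP^T)$, so the last row and column of every matrix in $PWP^T$ vanish, hence $PWP^T$ equals $V^\natural$ for some truncated $V \subseteq \Asym_m(\k)$, which in light of the first part of the lemma is the desired conclusion.
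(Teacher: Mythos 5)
Your proof is correct, and there is nothing in the paper to measure it against: the lemma is stated as an ``obvious observation'' with no proof given, so you have supplied the justification the paper omits. The only genuinely non-trivial point is the ``only if'' direction, and your mechanism for it is sound: $\rad(V_i^\natural)=\rad(V_i)\oplus\k e_{m+1}$ forces the last row of $Q$ to be $(c,d)$ with $c\in\rad(V_1)$; the shears $T_{i,m+1}(\lambda)$ fix every $\diag(X,0)$ because row and column $m+1$ vanish, and after normalizing $\rad(V_1)=\mathrm{span}(e_1,\dots,e_t)$ (harmless, since replacing $V_1$ by a congruent copy replaces $V_1^\natural$ by a congruent copy) the shears $T_{m+1,i}(\lambda)$ with $i\le t$ also fix $V_1^\natural$, because rows and columns $1,\dots,t$ of each $X\in V_1$ vanish. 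Right-multiplying $Q$ by one shear of the first kind makes $d\ne0$ (the last row of $Q$ is nonzero and $c$ is supported in the first $t$ slots), shears of the second kind then clear $c$ without disturbing $d$, and the resulting block upper triangular $Q$ gives $AV_1A^T=V_2$ with $A\in\GL_m(\k)$. If you want to avoid the preliminary normalization, note that for any row vector $w\in\rad(V_1)$ the matrix $I_{m+1}+e_{m+1}^T\,(w,0)$ already fixes $V_1^\natural$ pointwise, and these transvections suffice to clear $c$. One small caveat: as printed, the lemma's displayed definition of $V^\natural$ has $X\in\Asym_m(\k)$ where $X\in V$ is meant, and the ``moreover'' clause literally asserts $W=V^\natural$ on the nose, which is false unless $e_{m+1}\in\rad(W)$; your reading (``$W$ is congruent to some $V^\natural$''), proved via $\rad(PWP^T)=\rad(W)P^{-1}$, is exactly what the application in Section 10 requires.
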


We return to the classification of $n$-dimensional subspaces of $\Asym_4(\Z_3)$ for $n\le 4$.

\subsection{n=3}\label{sec-3of4}
By Section \ref{sec-2of4}, there are 4 congruence classes of 2-dimensional subspaces of $\Asym_4(\Z_3)$, say
$$
\begin{pmatrix}&&a&\\&&b&\\-a&-b&&\\&&&0\end{pmatrix},
\begin{pmatrix}&&a&\\&&&b\\-a&&&\\&-b&&\end{pmatrix},
\begin{pmatrix}&&a&b\\&&&a\\-a&&&\\-b&-a&&\end{pmatrix},
\begin{pmatrix}&&a&b\\&&-b&a\\-a&b&&\\-b&-a&&\end{pmatrix}.
$$

Now we consider the congruence classes of 3-dimensional subspaces of $\Asym_4(\Z_3)$. The key point is that any such a 3-dimensional subspace is obtained from one of the above 2-dimensional subspaces by adding a one dimensional subspace.

Let $V\subseteq \Asym_4(\Z_3)$ be a 3-dimensional subspace. Without loss of generality, we may assume that $V$ contains a 2-dimensional subspace $W$ as listed above. Thus we have the following four cases.

\textbf{Case 1.}\quad Assume $W = \left(\begin{array}{cccc}&&a&\\&&b&\\-a&-b&&\\&&&0\end{array}\right)$.
Then there exists $(\alpha,\beta,\gamma,\delta)\ne (0,0,0,0)$, such that $V= W + \Z_3\begin{pmatrix}&\alpha && \beta\\-\alpha&&&\gamma\\&&&\delta\\-\beta&-\gamma&-\delta&\end{pmatrix}$, and we denote $V$ of this form by $\langle\alpha,\beta,\gamma,\delta\rangle$.

It is easy to show that $\langle-\alpha,-\beta,-\gamma,-\delta\rangle=\langle\alpha,\beta,\gamma,\delta\rangle\overset{D_4(-1)}\sim\langle\alpha,-\beta,-\gamma,-\delta\rangle$, where the notation $V\overset{P}\sim W$ means that $W=PVP^T$.

If $\beta\ne0$, then
$$\left(\begin{array}{cccc}&c\alpha&a&c\beta\\-c\alpha&&b&c\gamma\\-a&-b&&c\delta\\-c\beta&-c\gamma&-c\delta&\end{array}\right)
\overset{T_{21}(-\beta\gamma)}\sim\left(\begin{array}{cccc}&c\alpha&a&c\beta\\-c\alpha&&b-a\beta\gamma&\\-a&-b+a\beta\gamma&&c\delta\\-c\beta&&-c\delta&\end{array}\right)$$
$$\overset{T_{24}(-\beta\alpha)}\sim\left(\begin{array}{cccc}&&a&c\beta\\&&b-a\beta\gamma+c\alpha\beta\delta&\\-a&-b+a\beta\gamma-c\alpha\beta\delta&&\\-c\beta&&&\end{array}\right)$$
Hence $\langle\alpha,\beta,\gamma,\delta\rangle\sim\langle0,\beta,0,0\rangle=\langle0,1,0,0\rangle$.

Similarly, if $\gamma\ne0$, then $\langle\alpha,\beta,\gamma,\delta\rangle\sim\langle0,0,\gamma,0\rangle=\langle0,0,1,0\rangle$.

If $\beta=\gamma=0$, then clearly $\langle\alpha,\beta,\gamma,\delta\rangle\sim \langle1,0,0,0\rangle$, $\langle0,0,0,1\rangle$ or $\langle1,0,0,1\rangle$.

Combined with the fact $\langle0,1,0,0\rangle\overset{P_{12}}\sim\langle0,0,1,0\rangle$, we know that $\langle\alpha,\beta,\gamma,\delta\rangle$ is congruent to one of $\langle1,0,0,0\rangle$, $\langle0,1,0,0\rangle$, $\langle0,0,0,1\rangle$ and $\langle1,0,0,1\rangle$. We claim that these four subspaces are not congruent to each other.

The subspaces $\langle1,0,0,0\rangle$, $\langle0,1,0,0\rangle$, $\langle0,0,0,1\rangle$ and $\langle1,0,0,1\rangle$ are given by
\[\begin{pmatrix}
  & c& a& \\
 c&  & b& \\
-a&-b&  & \\
  &  &  &0\\
\end{pmatrix},
\begin{pmatrix}
  &  & a&c \\
  &  & b& \\
-a&-b&  & \\
-c&  &  & \\
\end{pmatrix},
\begin{pmatrix}
  &  & a& \\
  &  & b& \\
-a&-b&  &c\\
  &  &-c& \\
\end{pmatrix},
\begin{pmatrix}
  & c& a& \\
 c&  & b& \\
-a&-b&  &c\\
  &  &-c& \\
\end{pmatrix},
\]
and the Pfaffians are $0$, $bc$, $0$ and $c^2$ respectively. By comparing the Pfaffians, to prove the claim it suffices to show that $\langle1,0,0,0\rangle\not\sim\langle0,0,0,1\rangle$. Easy calculation shows that $$\mathrm{rad}(\langle1,0,0,0\rangle)=(0,0,0,\Z_3):=\{(0,0,0,a)\mid a\in\Z_3\},$$
while $\mathrm{rad}(\langle0,0,0,1\rangle)=0$, and it follows that $\langle1,0,0,0\rangle\not\sim\langle0,0,0,1\rangle$.

\textbf{Case 2.}\quad
Assume $W=\left(\begin{array}{cccc}&&a&\\&&&b\\-a&&&\\&-b&&\end{array}\right)$.
Then $V= W+ \Z_3\begin{pmatrix}&\alpha&&\beta\\-\alpha&&\gamma&\\&-\gamma&&\delta\\-\beta&&-\delta&\end{pmatrix}$, and we denote this space by $(\alpha,\beta,\gamma,\delta)$.

First we have $(\alpha,\beta,\gamma,\delta)=(-\alpha,-\beta,-\gamma,-\delta)$, and moreover, \[(\alpha,\beta,\gamma,\delta)\overset{P_{24}}\sim(\beta,\alpha,-\delta,-\gamma)
\overset{P_{13}}\sim (\delta,-\gamma,-\beta,\alpha)\overset{P_{24}}\sim
(-\gamma,\delta,-\alpha,\beta)
,\]
\[(-\alpha,-\beta,\gamma,\delta)\overset {D_1(-1)}\sim(\alpha,\beta,\gamma,\delta)\overset {D_4(-1)}\sim(\alpha,-\beta,\gamma,-\delta).\]
Thus we may assume that $\alpha\ne0$. Without loss of generality, we can take $\alpha=1$. Now we have $(1,\beta,\gamma,\delta)\overset{T_{42}(-\beta)}\sim
(1, 0, \gamma, \delta+ \beta\gamma)\overset{T_{31}(-\gamma)}\sim(1,0,0,\delta+ \beta\gamma)$.  Hence there are at most two cases, say $(1,0,0,0)$ and $(1,0,0,1)$. Clearly, $(1,0,0,0)\overset{P_{24}}\sim(0,1,0,0)\overset{P_{34}}\sim\langle0,1,0,0\rangle$.

The subspace $(1,0,0,1)$ is given by
$\begin{pmatrix}
  & c& a& \\
-c&  &  &b \\
-a&  &  &c\\
  &-b&-c& \\
\end{pmatrix},$
and the Pfaffian of $(1,0,0,1)$ is $c^2-ab$, by comparing the Pfaffians, we know that  $(1,0,0,0)$ is congruent to none of $\langle1,0,0,0\rangle$, $\langle0,1,0,0\rangle$, $\langle0,0,0,1\rangle$ and $\langle1,0,0,1\rangle$.

\textbf{Case 3.}\quad
Assume  $W=\left(\begin{array}{cccc}&&a&b\\&&&a\\-a&&&\\-b&-a&&\end{array}\right)$, then $V=W + \Z_3\begin{pmatrix} &\alpha& \beta& \\- \alpha & & \gamma &\\- \beta&- \gamma&& \delta\\&&- \delta&\end{pmatrix}$ for some $\alpha, \beta, \delta, \gamma$. We denote this 3-dimensional subspace by $[\alpha,\beta,\gamma,\delta]$.

By direct calculation, we have \[[\alpha,\beta,\gamma,\delta]\overset{D_{1,2}(-1)}\sim[\alpha,-\beta,-\gamma,\delta]
\overset{D_{1,3}(-1)}\sim[-\alpha,-\beta,\gamma,-\delta]\overset{D_{1,2}(-1)}\sim[-\alpha,\beta,-\gamma,-\delta],\]
where $D_{1,2}(-1)=\diag(-1,-1,1,1)$ and $D_{1,3}(-1)=\diag(-1,1,-1,1)$. Consider
$$\mathrm{pf}\begin{pmatrix}&c\alpha&a+c\beta&\\-c\alpha&&c\gamma&a\\
-a-c\beta&-c\gamma&&c\delta\\&-a&-c\delta&\end{pmatrix}
=c^2\alpha\delta-a^2-ac\beta=-(a-c\beta)^2+c^2(\beta^2+\alpha\delta).$$

If $\beta^2+\alpha\delta=0$, then we take $a=\beta,c=1$ to get a rank two element in the 2-dimensional subspace $\begin{pmatrix}&c\alpha&a+c\beta&\\-c\alpha&&c\gamma&a\\-a-c\beta&-c\gamma&&c\delta\\&-a&-c\delta&\end{pmatrix}$,
and together with $\left(\begin{array}{cccc}&&&1\\&&0&\\&0&&\\-1&&&\end{array}\right)$ it spans a 2-dimensional subspace, which is equivalent to the first two classes of 2-dimensional subspaces. Hence this case reduces to the first two cases.

If $\beta^2+\alpha\delta=1$, then by taking $a=\beta+1,c=1$ we get a rank two element in the 2-dimensional subspace $\left(\begin{array}{cccc}&c\alpha&a+c\beta&\\-c\alpha&&c\gamma&a\\-a-c\beta&-c\gamma&&c\delta\\&-a&-c\delta&\end{array}\right)$.
Together with $\left(\begin{array}{cccc}&&&1\\&&0&\\&0&&\\-1&&&\end{array}\right)$, it spans a 2-dimensional subspace, which is equivalent to the first two classes of 2-dimensional subspaces. Hence this case reduces to the first two cases.

It remains to consider the case $\beta^2+\alpha\delta=-1$. Then $\beta=0$, $\alpha\delta=-1$ or $\beta\ne0$, $\alpha\delta=1$. In either case, $\alpha\ne0$ and $\delta\ne0$.

We may assume $\gamma=0$. Otherwise if $\gamma\ne0$, then $[\alpha,\beta,\gamma,\delta]\overset{T_{42}(\delta \gamma)}\sim[\alpha,\beta,\gamma,0]$, and we will go back to the cases $\beta^2+\alpha\delta=0$ or $1$.

Now we have $[\alpha,\beta,\gamma,\delta]\sim [1,0,0,-1]$ or $[1,1,0,1]$.
We claim that $[1,1,0,1]=[1,0,0,-1]$. In fact,
\begin{align*}
[1,1,0,1]=&\begin{pmatrix}&c&a+c&b\\-c&&&a\\-a-c&&&c\\-b&-a&-c&\end{pmatrix}
\overset{T_{32}(1)}\sim
\begin{pmatrix}&c&a-c&b\\-c&&&a+c\\-a+c&&&c\\-b&-a-c&-c& \end{pmatrix}\\
\overset{T_{41}(\delta \gamma)}\sim &
\begin{pmatrix}&c&a-c&b\\-c&&&a-c\\c-a&&&-c\\-b&c-a&c&\end{pmatrix}
=\begin{pmatrix}&c&a&b\\-c&&&a\\-a&&&-c\\-b&-a&c&\end{pmatrix}=[1,0,0,-1].
\end{align*}

\textbf{Case 4.} \quad Assume $W=\begin{pmatrix}&&a&b\\&&-b&a\\-a&b&&\\-b&-a&&\end{pmatrix}$, then
$V= W+ \Z_3\begin{pmatrix}&c\alpha&c\beta&c\gamma\\-c\alpha&&&\\-c\beta&&&c\delta\\-c\gamma&&-c\delta&\end{pmatrix}$.
Consider the determinant $$\det\begin{pmatrix}&c\alpha&a+c\beta&b+c\gamma\\
-c\alpha&&-b&a\\-a-c\beta&b&&c\delta\\-b-c\gamma&-a&-c\delta&\end{pmatrix}
=(-(a-c\beta)^2-(b-c\gamma)^2+c^2(\alpha\delta+\beta^2+\gamma^2))^2.$$
Then there always exists some element of rank two element in the 3-dimensional subspace. Hence this case reduces to the first three cases.

In summary, we have the following classification.
\begin{theorem}\label{thm-3of4}
There are 6 congruence classes of 3-dimensional subspaces of $\Asym_4(\mathbb{Z}_3)$:
$$\begin{pmatrix}&c&a&\\-c&&b&\\-a&-b&&\\&&&0\end{pmatrix},
\begin{pmatrix}&&a&\\&&b&\\-a&-b&&c\\&&-c&\end{pmatrix},
\begin{pmatrix}&c&a&\\-c&&b&\\-a&-b&&c\\&&-c&\end{pmatrix},$$
$$\begin{pmatrix}&&a&c\\&&b&\\-a&-b&&\\-c&&&\end{pmatrix},
\begin{pmatrix}&c&a&\\-c&&&b\\-a&&&c\\&-b&-c&\end{pmatrix},
\begin{pmatrix}&c&a&b\\-c&&&a\\-a&&&-c\\-b&-a&c&\end{pmatrix}.$$
Moreover, the Pfaffians are $0,0,c^2,bc,c^2-ab,-c^2-a^2$ respectively, which are pairwise nonequivalent except the first two.
\end{theorem}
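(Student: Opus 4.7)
My plan is to build every $3$-dimensional subspace $V\subseteq \Asym_4(\Z_3)$ on top of the four known classes of $2$-dimensional subspaces from Section~\ref{sec-2of4}. Since $V$ must contain at least one such $2$-dimensional subspace $W$, after a congruence we may put $W$ into one of the four normal forms and write $V=W+\Z_3 X$ for a single anti-symmetric matrix $X$ chosen transverse to $W$. The plan is then to act on $X$ by the stabilizer of $W$ under congruence (which in each case is a concrete subgroup of $\GL_4(\Z_3)$ generated by elementary matrices $P_{ij}$, $D_i(\lambda)$, $T_{ij}(\lambda)$) and to reduce $X$ to a short list of normal forms.

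In Case~1, where $W$ is the first $2$-dimensional representative, I will parametrize $X$ by a quadruple $(\alpha,\beta,\gamma,\delta)$ and use transvections $T_{21}(\cdot)$ and $T_{24}(\cdot)$ to kill one of the ``off-diagonal'' parameters whenever $\beta$ or $\gamma$ is nonzero; this collapses everything into four candidates $\langle1,0,0,0\rangle$, $\langle0,1,0,0\rangle$, $\langle0,0,0,1\rangle$, $\langle1,0,0,1\rangle$, which I then separate using the Pfaffian (as an invariant quadratic form up to scalar) and the radical of the subspace. Case~2, with $W$ the second representative, admits a symmetry subgroup large enough (containing $P_{13},P_{24}$ and suitable diagonal conjugations) that I may assume $\alpha=1$ and kill the other parameters with $T_{31},T_{42}$, leaving only $(1,0,0,0)\sim\langle 0,1,0,0\rangle$ and $(1,0,0,1)$. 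Case~3 requires a Pfaffian calculation $c^2(\beta^2+\alpha\delta)-(a-c\beta)^2$ whose discriminant $\beta^2+\alpha\delta\in\{0,1,-1\}$ dictates whether $V$ contains a rank-$2$ element of simpler type (reducing to Cases~1--2) or not; only the square class $-1$ survives, and a further transvection $T_{32}(1)$ shows $[1,1,0,1]=[1,0,0,-1]$. In Case~4, the determinant is a perfect square $(\cdots)^2$, so over $\Z_3$ it always vanishes for a suitable choice of the ambient parameters, giving a rank-$2$ matrix in $V$ and so reducing entirely to the previous cases.

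The final step is to verify pairwise nonequivalence of the six surviving representatives. Here I will compute the Pfaffian of the generic member of each subspace: these give the quadratic forms $0,0,c^2,bc,c^2-ab,-c^2-a^2$ in the three variables $a,b,c$, and congruence transformations multiply the Pfaffian by $\det(C)$, so equivalence of subspaces implies equivalence of these Pfaffian forms up to a nonzero scalar. The forms $c^2$, $bc$, $c^2-ab$, and $-c^2-a^2$ are distinguished by rank and by the number of nonzero square classes they represent (recalling that over $\Z_3$ the form $-c^2-a^2$ is anisotropic while $bc$ and $c^2-ab$ are hyperbolic of rank~$2$, and $c^2$ has rank~$1$). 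To distinguish the two subspaces with vanishing Pfaffian (the first two representatives), I will use the radical defined in Remark~\ref{rem-complement-inv}: a direct computation gives $\mathrm{rad}(\langle 1,0,0,0\rangle)=\Z_3 e_4\neq 0$ while the second subspace has trivial radical.

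The main obstacle I anticipate is bookkeeping in Case~3, where the reductions to Cases~1--2 require constructing an \emph{explicit} rank-$2$ matrix inside $V$ whose centralizer in the congruence action lands us in the right normal form; one must be careful that such a reduction is not merely a claim about the existence of a rank-$2$ element, but a genuine change of basis that exhibits the required $2$-dimensional ``standard'' subspace \emph{inside} $V$. The rest amounts to routine, if lengthy, elementary-matrix manipulations of the type illustrated in Cases~1 and~2.
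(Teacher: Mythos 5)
Your proposal is correct and follows essentially the same route as the paper's proof in Section 9: extend each of the four $2$-dimensional normal forms by a transverse line, reduce the extra parameters with elementary congruence transformations, push Cases 3 and 4 back to the earlier cases via rank-$2$ elements detected by the Pfaffian/determinant, and separate the six survivors by the Pfaffian form up to scalar together with the radical for the two classes with vanishing Pfaffian. The only differences are cosmetic (your explicit rank/square-class discussion of the Pfaffian forms, which the paper leaves as an assertion of pairwise nonequivalence).
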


\subsection{n=4.}\label{sec-4of4} In this subsection, we will give a full list of congruence classes of 4-dimensional subspaces of $\Asym_4(\Z_3)$.

Let $V\subset\Asym_4(\Z_3)$ be of dimension 4, and $W$ be a subspace of $V$ dimension 3. We may assume that $W$ is of one of the six subspaces as listed in Theorem \ref{thm-3of4}.

\textbf{Case 1.}\quad Assume $W= \begin{pmatrix}&&a&c\\&&b&\\-a&-b&&\\-c&&&\end{pmatrix}$. Then $V= W + \Z_3 \begin{pmatrix}&\alpha&&\\-\alpha&&&\beta\\&&&\gamma\\&-\beta&-\gamma&\end{pmatrix}$
for some $\alpha, \beta, \gamma\in\Z_3$. We denote $V$ of this form by $\langle\alpha,\beta,\gamma\rangle$.

Applying the congruence transformation by $P_{13}P_{24}$ we obtain that $\langle\alpha,\beta,\gamma\rangle\sim\langle\gamma,-\beta,\alpha\rangle$, and applying $D_1(-1), D_2(-1)$ and $D_3(-1)$, we obtain that
$\langle\alpha,\beta,\gamma\rangle\sim\langle-\alpha,\beta,\gamma\rangle\sim
\langle\alpha,-\beta,\gamma\rangle\sim\langle\alpha,\beta,-\gamma\rangle.
$

If $\beta\ne0$, then
$\langle\alpha,\beta,\gamma\rangle\overset{T_{32}
(-\gamma\beta)}\sim\langle\alpha,\beta,0\rangle\overset{T_{14}(\alpha\beta)}\sim
\langle0,\beta,0\rangle\sim\langle0,1,0\rangle$; else if $\beta=0$, then
$\langle\alpha,\beta,\gamma\rangle\sim\langle1,0,0\rangle$ or $\langle1, 0, 1\rangle$.
The subspaces $\langle0,1,0\rangle,\langle1,0,0\rangle$ and $\langle1,0,1\rangle$ are
\[\begin{pmatrix}&&a&c\\&&b&d\\-a&-b&&\\-c&-d&&\end{pmatrix},
\begin{pmatrix}&d&a&c\\-d&&b&\\-a&-b&&\\-c&&&\end{pmatrix},
\begin{pmatrix}&d&a&c\\-d&&b&\\-a&-b&&d\\-c&&-d&\end{pmatrix}.
\]

\textbf{Case 2.}\quad
Assume $W=\left(\begin{array}{cccc}&c&a&\\-c&&&b\\-a&&&c\\&-b&-c&\end{array}\right)$, then
$V= W +\Z_3
\left(\begin{array}{cccc}&&&\alpha\\&&\beta&\\&-\beta&&\gamma\\-\alpha&&-\gamma&\end{array}\right)$, for some $\alpha, \beta$ and $\gamma$. We denote the subspace of this form by $(\alpha,\beta,\gamma)$.

It is easy to check that $(\alpha,\beta,\gamma)\overset{P_{12}P_{34}}\sim(\beta,\alpha,-\gamma)$, and \[(\alpha,\beta,\gamma)\overset{D_1(-1)D_4(-1)}\sim(\alpha,\beta,-\gamma)\overset{D_1(-1)D_2(-1)}\sim(-\alpha,-\beta,-\gamma).\]
If $\gamma=0$, then $(\alpha,\beta,\gamma)\sim (1,1,0)$, $(1,-1,0)$ or $(1,0,0)$.
If $\gamma\ne0$, then $(\alpha,\beta,\gamma)\sim (1,1,1)$, $(1,-1,1)$, $(1,0,1)$ or $(0,0,1)$.

Now we compare $(\alpha, \beta, \gamma)$'s with $\langle\alpha, \beta, \gamma\rangle$'s.

(i) $(0,0,1)\overset{P_{13}}\sim \langle0,1,0\rangle$.

(ii) $(1,1,1)\sim\langle1,0,1\rangle$ for
\begin{align*}
(1,1,1)=& \begin{pmatrix}&c&a&d\\-c&&d&b\\-a&-d&&c+d\\-d&-b&-c-d&\end{pmatrix}\overset{T_{31}(1)T_{42}(1)}\sim
\begin{pmatrix}&c&a&d+c\\-c&&d-c&b\\-a&c-d&&d-c\\-d-c&-b&c-d& \end{pmatrix}\\
\overset{T_{42}(1)} \sim&
\begin{pmatrix}&c&a&d-c\\-c&&d-c&b\\-a&c-d&&\\c-d&-b&&\end{pmatrix}
\overset{P_{34}P_{24}} \sim \begin{pmatrix}&d-c&c&a\\c-d&&-b&\\-c&b&&d-c\\-a&&c-d&\end{pmatrix}
=\langle1,0,1\rangle
\end{align*}

(iii) $(1,1,0)\sim (1,-1,1)$ for
\begin{align*}(1,1,0)=&\begin{pmatrix}&c&a&d\\-c&&d&b\\-a&-d&&c\\-d&-b&-c&\end{pmatrix}
\overset{T_{24}(-1)}\sim
\begin{pmatrix}&c-d&a&d\\-c+d&&d+c&b\\-a&-d-c&&c\\-d&-b&-c&\end{pmatrix}\\
\overset{T_{31}(1)}\sim
&\begin{pmatrix}&c-d&a&d\\-c+d&&-d&b\\-a&-d&&(c-d)-d\\-d&-b&d-(c-d)&\end{pmatrix} = (1,-1,-1)\sim (1,-1,1).
\end{align*}

(iv) $(1,0,1)\sim(0,0,1)\sim\langle0,1,0\rangle$ for
\begin{align*}(1,0,1)=&\begin{pmatrix}&c&a&d\\-c&&&b\\-a&&&c+d\\-d&-b&-c-d&\end{pmatrix}
\overset{T_{41}(1)}\sim\begin{pmatrix}&c&a&c+d\\-c&&&b\\-a&&&c+d\\-c-d&-b&-c-d&\end{pmatrix}\\
\overset{T_{13}(-1)}\sim&
\begin{pmatrix}&c&a&\\-c&&&b\\-a&&&c+d\\&-b&-c-d&\end{pmatrix}=(0,0,1)\sim \langle0,1,0\rangle.
\end{align*}

(v) $(1,-1,0)\sim\langle0,1,0\rangle$ for
\begin{align*} (1,-1,0)=&\begin{pmatrix}&c&a&d\\-c&&-d&b\\-a&d&&c\\-d&-b&-c&\end{pmatrix}
\overset{T_{42}(1)} \sim \begin{pmatrix}&c&a&d+c\\-c&&-d&b\\-a&d&&c+d\\-c-d&-b&-c-d&\end{pmatrix}\\
\overset{T_{13}(-1)} \sim
&\begin{pmatrix}&(c-d)&a&\\d-c&&-d&b\\-a&d&&(c-d)-d\\&-b&d-(c-d)&\end{pmatrix}=(0,1,1)\sim(1,0,1)
\sim\langle0,1,0\rangle.
\end{align*}

(vi) $(1,0,0) \sim\langle1,0,1\rangle$ for
$$(1,0,0)= \begin{pmatrix}&c&a&d\\-c&&&b\\-a&&&c\\-d&-b&-c&\end{pmatrix}
\overset{P_{34}}\sim
\begin{pmatrix}&c&d&a\\-c&&b&\\-d&-b&&-c\\-a&&c&\end{pmatrix}= \langle1,0,-1\rangle\sim \langle1,0,1\rangle.$$

\textbf{Case 3.}\quad
Assume $W= \left(\begin{array}{cccc}&c&a&b\\-c&&&a\\-a&&&-c\\-b&-a&c&\end{array}\right)$, and
$V= W + \Z_3 \begin{pmatrix}&&&\\&&\alpha&\beta\\&-\alpha&&\gamma\\&-\beta&-\gamma&\end{pmatrix}$, we denote the 4-dimensional subspace of this form by $[\alpha,\beta,\gamma]$.

We note that $$[\alpha,\beta,\gamma]\overset{D_1(-1)D_2(-1)}\sim [-\alpha,-\beta,\gamma]\overset{D_{1}(-1)D_3(-1)}\sim[\alpha,-\beta,-\gamma]\overset{D_1(-1)D_2(-1)}\sim [-\alpha,\beta,-\gamma].$$
If $\alpha\ne1$, then we may assume $\alpha=1$ for $[\alpha,\beta,\gamma]\sim[-\alpha,-\beta,\gamma]$, and $[1,\beta,\alpha]\sim(1,1,0)$ for
\begin{align*} [1,\beta,\gamma]=&\begin{pmatrix}&c&a&b\\-c&&d&a+d\beta\\-a&-d&&-c+d\gamma\\-b&-a-d\beta&c-d\gamma&\end{pmatrix}
\overset{T_{43}(\beta)}\sim \begin{pmatrix}&c&a&b-a\beta\\-c&&d&a\\-a&-d&&-c+d\gamma\\a\beta-b&-a&c-d\gamma&\end{pmatrix}\\
\overset{T_{42}(\gamma)}\sim & \begin{pmatrix}&c&a&b-a\beta+c\gamma\\-c&&d&a\\-a&-d&&-c\\-b+a\beta-c\gamma&-a&c&\end{pmatrix}
= \begin{pmatrix}&c&a&b\\-c&&d&a\\-a&-d&&-c\\-b&-a&c&\end{pmatrix}\\
\overset{P_{34}}\sim &
\begin{pmatrix}&c&b&a\\-c&&a&d\\-b&-a&&c\\-a&-d&-c&\end{pmatrix} = (1,1,0).
\end{align*}

If $\alpha=0$, then $[0,\beta,\gamma]\sim[0,0,1], [0,1,0]$ or $[0,1,1]$.
We can show that

(i) $[0,0,1]\overset{P_{34}P_{23}}\sim \langle1,0,1\rangle$;

(ii)  $[0,1,0]\overset{P_{34}}\sim \langle1,0,1\rangle$;

(iii) $[0,1,1]\sim \langle1,0,1\rangle$ for
\begin{align*}
[0,1,1]=&\begin{pmatrix} &c&a&b\\-c&&&a+d\\-a&&&-c+d\\-b&-a-d&c-d&\end{pmatrix}
\overset{T_{32}(-1)}\sim \begin{pmatrix} &c&a-c&b\\-c&&&a+d\\c-a&&&-c-a\\-b&-a-d&a+c&\end{pmatrix}\\
\overset{T_{23}(-1)}\sim &\begin{pmatrix} &-a-c&a-c&b\\a+c&&&-a+c+d\\c-a&&&-a-c\\-b&a-c-d&a+c&\end{pmatrix}
=\begin{pmatrix} &c&a&b\\-c&&&d\\-a&&&-c\\-b&-d&c&\end{pmatrix} \overset{P_{34}}\sim \langle1,0,1\rangle.
\end{align*}

\textbf{Case 4.}
Assume $W=\left(\begin{array}{cccc}&c&a&\\-c&&b&\\-a&-b&&c\\&&-c&\end{array}\right)$, and $V= \begin{pmatrix}&c&a&d\alpha\\-c&&b&d\beta\\-a&-b&&c+d\gamma\\-d\alpha&-d\beta&-c-d\gamma&\end{pmatrix}$ for some $\alpha, \beta, \gamma$. Then
the Pfaffian is $c^2+cd\gamma-ad\beta+bd\alpha$.

If $\alpha\ne0$, then the 3-dimensional subspace $\{a=0\}$ is equivalent to the 3-dimensional subspaces considered in the first three cases, since its Pfaffian is neither equivalent to $c^2$ nor to 0. Hence this case reduces to the first three cases.

If $\beta\ne0$, the 3-dimensional subspace $\{b=0\}$ is equivalent to the 3-dimensional subspaces considered in the first three cases, since its Pfaffian is neither equivalent to $c^2$ nor to 0. Hence this case reduces to the first three cases.

If $\alpha=\beta=0$, then $\gamma\ne0$, and hence the 3-dimensional subspace $\{a=0\}$ is equivalent to the 3-dimensional subspaces considered in the first three cases, since its Pfaffian is neither equivalent to $c^2$ nor 0. Hence this case reduces to the first three cases.

\textbf{Case 5.}
 Assume $W=\begin{pmatrix}&c&a&\\-c&&b&\\-a&-b&&\\&&&0\end{pmatrix}$, and $V= \begin{pmatrix}&c&a&d\alpha\\-c&&b&d\beta\\-a&-b&&d\gamma\\-d\alpha&-d\beta&-d\gamma&\end{pmatrix}$ for some $\alpha, \beta$ and $\gamma$. The Pfaffian is $cd\gamma-ad\beta+bd\alpha$.

 We claim that there always exists a full rank element in $V$ and hence this case reduces to the first four cases discussed above.
 In fact, if $\alpha\ne0$, then we can take $a=c=0$, $b=d=1$; if $\beta\ne0$, then we can take $b=c=0$, $a=d=1$; and if $\gamma\ne0$, then we can take $a=b=0$, $c=d=1$. It is direct to check that in each case, the corresponding matrix is of full rank.

\textbf{Case 6.} Assume $W= \begin{pmatrix}&&a&\\&&b&\\-a&-b&&c\\&&-c&\end{pmatrix}$, then
$V=\begin{pmatrix}&d\alpha&a&d\beta\\-d\alpha&&b&d\gamma\\-a&-b&&c\\-d\beta&-d\gamma&-c&\end{pmatrix}$ for some
$\alpha, \beta$ and $\gamma$,  and the Pfaffian is
$cd\alpha-ad\gamma+bd\beta$. Similar arguments as in Case 5 shows that this case reduces to the first four cases.

Now we have obtained all possible equivalence subspaces, say $\langle0,1,0\rangle$, $\langle1,0,0\rangle$, $\langle1,0,1\rangle$ and $(1,1,0)$. Moreover, they are pairwise noncongruent.

\begin{theorem}\label{thm-4of4}
There are 4 congruence classes of 4-dimensional subspaces of the space of $\Asym_4(\mathbb{Z}_3)$:
\[\begin{pmatrix}&&a&c\\&&b&d\\-a&-b&&\\-c&-d&&\end{pmatrix},
\begin{pmatrix}&d&a&c\\-d&&b&\\-a&-b&&\\-c&&&\end{pmatrix},
\begin{pmatrix}&d&a&c\\-d&&b&\\-a&-b&&d\\-c&&-d&\end{pmatrix},
\begin{pmatrix}&c&a&d\\-c&&d&b\\-a&-d&&c\\-d&-b&-c&\end{pmatrix}.
\]
Moreover, the Pfaffians are $bc-ad$, $bc$, $bc+d^2$ and $c^2-ab+d^2$ respectively which are pairwise nonequivalent.
\end{theorem}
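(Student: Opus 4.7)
The plan is to follow the strategy used for Theorem \ref{thm-3of4}: take any $4$-dimensional subspace $V\subseteq\Asym_4(\Z_3)$, choose a $3$-dimensional subspace $W\subseteq V$, and assume after congruence that $W$ is one of the six representatives produced by Theorem \ref{thm-3of4}. Writing $V=W+\Z_3 X$ for some $X\in\Asym_4(\Z_3)\setminus W$, the added generator is determined modulo $W$ by three parameters $(\alpha,\beta,\gamma)\in\Z_3^3$, and the congruence transformations of $\Asym_4(\Z_3)$ that fix the congruence class of $W$ act on $(\alpha,\beta,\gamma)$. This gives a finite combinatorial reduction problem over $\Z_3$ in each of the six cases.

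For each choice of $W$ I would normalize the parameter triple by applying the elementary congruences $P_{ij}$, $D_i(\pm 1)$, $T_{ij}(\lambda)$ that stabilize $W$ (or its class), producing a short list of candidates. As in the proof of Theorem \ref{thm-3of4}, many non-obvious identifications between candidates from different cases will appear, and each must be exhibited by a concrete congruence matrix. The main obstacle, and the reason the final count drops from six to four, is that when $W$ belongs to one of the last three classes of Theorem \ref{thm-3of4} (those whose Pfaffian is $c^2$, $c^2-ab$, or $-c^2-a^2$) I expect the $4$-dimensional $V$ to always contain a $3$-dimensional subspace coming from one of the first three classes. The standard way to establish this is to search within $V$ for an element realizing a prescribed Pfaffian or rank: once such an element is found, it together with suitable others spans an earlier type of $W'\subseteq V$, so Cases 4--6 collapse back onto Cases 1--3. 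Carrying out this collapse and then sorting out the internal equivalences inside Cases 1--3 is the delicate, computational heart of the proof.

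Pairwise non-congruence of the four listed candidates follows from the Pfaffian invariant of Section 9.1: for a basis $X_1,\ldots,X_4$ of $V$, the map $(x_1,\ldots,x_4)\mapsto\mathrm{pf}(x_1X_1+\cdots+x_4X_4)$ is a quadratic form in four variables over $\Z_3$, well-defined up to multiplication by $\det(C)\in\Z_3^{\times}$ (reflecting a congruence $V\mapsto CVC^T$) and up to a $\GL_4(\Z_3)$ change of variables (reflecting a change of basis of $V$). The four Pfaffians $bc-ad$, $bc$, $bc+d^2$, $c^2-ab+d^2$ have respective ranks $4$, $2$, $3$, $4$ over $\Z_3$, so ranks immediately separate three of them; the first and the fourth are then distinguished by their discriminants modulo $(\Z_3^{\times})^2$, noting that scaling the form by $\lambda\in\Z_3^{\times}$ multiplies the discriminant by $\lambda^4=1$, so the discriminant class is genuinely an invariant. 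This final check closes the argument.
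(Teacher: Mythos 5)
Your overall framework (extend each $3$-dimensional $W$ from Theorem \ref{thm-3of4} by one generator with three parameters, normalize under congruences preserving $W$, identify coincidences, then separate the survivors by the Pfaffian form) is exactly the paper's, and your non-congruence argument is sound: the paper only asserts that $bc-ad$, $bc$, $bc+d^2$, $c^2-ab+d^2$ are pairwise nonequivalent, while your rank-plus-discriminant computation (with the observation that scaling by $\lambda$ changes the discriminant of a quaternary form by $\lambda^4=1$) justifies it.

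However, the key reduction step is stated in the wrong direction, and as stated it is false. You claim that whenever $W$ lies in one of the classes with Pfaffian $c^2$, $c^2-ab$ or $-c^2-a^2$, the $4$-dimensional $V$ must contain a $3$-dimensional subspace from the first classes of Theorem \ref{thm-3of4} (those with Pfaffian $0$ or, at worst, $c^2$), so that those cases ``collapse'' onto the degenerate ones. Consider instead the first and fourth representatives in the statement, whose Pfaffian forms $bc-ad$ and $c^2-ab+d^2$ are nondegenerate of rank $4$. For any $3$-dimensional subspace $W'\subseteq V$, the Pfaffian form of $W'$ is the restriction of the quaternary Pfaffian form of $V$ to a hyperplane, and the radical of such a restriction has dimension at most $1$; hence the restricted form has rank at least $2$. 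So these two classes of $V$ contain \emph{no} $3$-dimensional subspace whose Pfaffian is $0$ or of rank $1$, i.e.\ no subspace congruent to the first three classes of Theorem \ref{thm-3of4}, and your proposed collapse cannot be carried out; an enumeration based only on $W$ of those degenerate types would miss precisely these two congruence classes. The paper's reduction runs the other way: in the cases where $W$ has Pfaffian $0$ or $c^2$ it locates inside $V$ a full-rank element or a $3$-dimensional subspace whose Pfaffian is \emph{not} equivalent to $0$ or $c^2$, thereby reducing to the cases where $W$ has Pfaffian $bc$, $c^2-ab$ or $-c^2-a^2$, and it is from those cases that the four representatives $\langle0,1,0\rangle$, $\langle1,0,0\rangle$, $\langle1,0,1\rangle$, $(1,1,0)$ are extracted. (Minor point: the ``last three'' classes of Theorem \ref{thm-3of4} have Pfaffians $bc$, $c^2-ab$, $-c^2-a^2$; the class with Pfaffian $c^2$ is the third one, so your grouping also mixes up which classes are the degenerate ones.) To repair the proof you should reverse the reduction, exhibiting in each degenerate case an element or hyperplane of $V$ with a suitable rank or Pfaffian, and then carry out the parameter normalizations and cross-case identifications starting from the three nondegenerate choices of $W$.
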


\begin{remark}
There is a non-degenerate symmetric bilinear form $(\, ,\,)$ on $\Asym_4(\mathbb{Z}_3)$ given by $(X,Y) =\mathrm{tr}(XY)$. For any $V\subseteq\Asym_4(\Z_3)$, set $V^\perp= \{Y\in \Asym_4(\Z_3)\mid \mathrm{tr}(XY)=0, \forall X\in V\}$ to be its orthogonal complement. Clearly $\dim V + \dim V^\perp = 6$. Since $\mathrm{tr}(PXP^{-1})=\mathrm{tr}(X)$ for any $P\in \GL_4(\Z_3)$, we have $(PVP^T)^\perp=(P^{-1})^T V^\perp P^{-1}$, which implies a one-to-one correspondence between congruence classes of $k$-dimensional subspaces of $\Asym_4(\mathbb{Z}_3)$ and the $6-k$ dimensional ones under taking $()^\perp$. We may compare Theorem \ref{thm-4of4} and Section 6.3.
\end{remark}

\section{Congruence classes of 3-dimensional subspaces of $\Asym_5(\mathbb{Z}_3)$}

In this section, we will give a complete set of representatives of congruence classes of 3-dimensional subspaces of $\Asym_5(\Z_3)$.

\subsection{A technical lemma} First we have the following lemma.

\begin{lemma}\label{lem-trivialradical}
Let $V\subseteq\Asym_5(\Z_3)$ be a subspace with $\mathrm{rad}(V)=0$.
\begin{enumerate}
\item[(1)] If $V$ has a basis $X, Y, Z$ such that $\rank(aX+bY)\le 2$ for all $a, b\in \Z_3$ and $\rank(Z)=2$, then
$V$ is congruent to $\diag(\begin{pmatrix}&&a\\&&b\\-a&-b&\end{pmatrix},\begin{pmatrix}&c\\-c&\end{pmatrix})$.
\item[(2)] There exist some $X, Y\in V$ such that $\rank(X)=4$ and $\mathrm{rad}(X)\cap\mathrm{rad}(Y)=0$.
\end{enumerate}
\end{lemma}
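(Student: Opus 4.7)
My plan is to handle (1) first by combining the classification of $2$-dimensional subspaces of $\Asym_5(\Z_3)$ with a stabilizer computation, and then deduce (2) from (1) by a contradiction argument.

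For part (1), the hypothesis that every element of the pencil $\langle X,Y\rangle$ has rank at most $2$ singles out exactly one of the six canonical forms listed in Section \ref{sec-2of5}, namely the first one, which is supported on the top-left $3\times 3$ block. After an initial congruence I may therefore assume $X=E_{13}-E_{31}$ and $Y=E_{23}-E_{32}$, so that $\mathrm{rad}(\langle X,Y\rangle)=\mathrm{span}(e_4,e_5)$. Writing $Z=uv^T-vu^T$ with $u,v$ linearly independent, the condition $\mathrm{rad}(V)=0$ forces $\mathrm{rad}(Z)\cap\mathrm{span}(e_4,e_5)=0$; since $\mathrm{rad}(Z)=\{u,v\}^{\perp}$ has dimension $3$, a dimension count shows this is equivalent to the $2\times 2$ matrix $M=\begin{pmatrix}u_4 & v_4\\ u_5 & v_5\end{pmatrix}$ being invertible.

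To finish (1) I will use the stabilizer of $\langle X,Y\rangle$ inside $\GL_5(\Z_3)$ under congruence. Because congruence preserves the radical of a subspace, any stabilizing $P$ has block form $P=\bigl(\begin{smallmatrix}A & B\\ 0 & D\end{smallmatrix}\bigr)$ with $A\in\GL_3$, $D\in\GL_2$. A short calculation of $PXP^T$ and $PYP^T$ shows further that $A=\bigl(\begin{smallmatrix}A_1 & 0\\ 0 & \mu\end{smallmatrix}\bigr)$ with $A_1\in\GL_2(\Z_3)$ and $\mu\in\Z_3^{\times}$, while $B$ is free. Using the freedom in $B$, I will solve a linear system (whose solvability is exactly the invertibility of $M$) to kill the first three coordinates of $Pu$ and $Pv$ simultaneously. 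This places $PZP^T$ in the lower-right $2\times 2$ block, and a final scaling by $D$ normalizes it to $E_{45}-E_{54}$, giving the claimed diagonal form.

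For part (2) I argue by contradiction. Suppose no element of $V$ has rank $4$. Since $\mathrm{rad}(V)=0$, $V$ is nonzero, and any nonzero element has rank $2$ (rank of an antisymmetric matrix is even). Pick such a $Z$ and complete to a basis $X,Y,Z$; then $\langle X,Y\rangle$ automatically satisfies the hypothesis of part (1), so $V$ is congruent to the displayed diagonal subspace. But the element with $a=c=1, b=0$ in that subspace has rank $4$, a contradiction. Hence some $X\in V$ has $\rank(X)=4$, and $\mathrm{rad}(X)=\Z_3 v$ is one-dimensional. Since $\mathrm{rad}(V)=\bigcap_{W\in V}\mathrm{rad}(W)=0$, the vector $v$ is not in $\mathrm{rad}(V)$, so there exists $Y\in V$ with $v\notin\mathrm{rad}(Y)$, and then $\mathrm{rad}(X)\cap\mathrm{rad}(Y)=0$. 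The main technical point I expect to grind through is the stabilizer analysis in (1), especially the verification that the $B$-block has just enough freedom to clear the first three components of both $Pu$ and $Pv$; the invertibility of $M$ forced by $\mathrm{rad}(V)=0$ is precisely what turns this into a solvable linear system.
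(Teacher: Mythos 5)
Your proposal is correct and takes essentially the same route as the paper: both arguments reduce the pencil $\langle X,Y\rangle$ to the unique canonical $2$-dimensional class all of whose elements have rank $\le 2$, then apply a block upper-triangular congruence that preserves this subspace to push $Z$ into the complementary $2\times 2$ block, with $\mathrm{rad}(V)=0$ supplying exactly the needed invertibility (the paper shows the upper-left $3\times 3$ block of a matrix normalizing $Z$ is invertible, you show your $2\times 2$ matrix $M$ is), and your part (2) is the paper's own contradiction-plus-radical-intersection argument. The only slip is your description of the stabilizer of $\langle X,Y\rangle$: the calculation only forces the $(1,3)$ and $(2,3)$ entries of $A$ to vanish, not the entries $(3,1)$ and $(3,2)$, so the stabilizer is larger than $A=\bigl(\begin{smallmatrix}A_1&0\\0&\mu\end{smallmatrix}\bigr)$; since you use such matrices only in the sufficient direction (exhibiting one $P$ with $B=-A[u'\,v']M^{-1}$ that fixes $W$ and clears the first three coordinates of $Pu,Pv$), this does not affect the validity of the proof.
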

\begin{proof}
(1) Let $W$ be the two dimensional subspace spanned by $X$ and $Y$. By Section \ref{sec-2of5}, we may assume $W = \diag(\begin{pmatrix}&&a\\&&b\\-a&-b&\end{pmatrix},0)$.  Since $\rank(Z)=2$, there exists some $P\in \GL_5(\Z_3)$ such that  $PZ P^T=\diag(0,\begin{pmatrix}&1\\-1&\end{pmatrix})$. We may write $P =\begin{pmatrix}P_1&P_2\\P_3&P_4\end{pmatrix}$, where $P_1\in M_{3\times 3}(\Z_3)$, $P_2\in M_{3\times 2}(\Z_3)$, $P_3\in M_{2\times 3}(\Z_3)$ and  $P_4\in M_{2\times 2}(\Z_3)$ are matrices.

Since $\dim\mathrm{rad}(W)=2$ and $\mathrm{rad}(W)\cap \mathrm{rad}(Z)=0$, we can show that $P_1$ is invertible. Set $Q=\begin{pmatrix}P_1^{-1}&\\-P_3P_1^{-1}& I\end{pmatrix}P$. Then $Q$ has the form $\begin{pmatrix}I_3 & Q_2 \\  &Q_4\end{pmatrix}$, and $QZQ^{T}= \diag(0,\begin{pmatrix}&1\\-1&\end{pmatrix})$. Now the conclusion follows from the fact that $W$ is invariant under the congruence transformation by $Q$.

(2) From the proof of (1) we show that there exists some $X\in V$ with $\rank(X)=4$.
 If $\mathrm{rad}(X)\cap\mathrm{rad}(Y)\ne0$ for any $Y\in V$, then $\mathrm{rad}(X)\subseteq\mathrm{rad}(Y)$ for any $Y$, which means that $\mathrm{rad}(V)=\mathrm{rad}(X_1)\ne 0$, contradicts to the assumption $\mathrm{rad}(V)=0$.
Thus there must exist some $Y\in V$ such that $\mathrm{rad}(X)\cap\mathrm{rad}(Y)=0$.
\end{proof}

In Section 9, we have obtained all congruence classes of 3 and 4-dimensional subspaces of $\Asym_4(\Z_3)$. By Lemma \ref{lem-dimincreasing}, to determine the congruence classes of subspaces of $\Asym_5(\Z_3)$, we need only to consider those ones which have trivial radicals.

Let $V\subseteq \Asym_5(\Z_3)$ be a 3-dimensional subspace with $\mathrm{rad}(V)=0$. Then by Lemma \ref{lem-trivialradical} and Section \ref{sec-2of5}, $V$ has a 2-dimensional subspace $W$ which is congruent to one of the following two subspaces:
\[W_1= \begin{pmatrix}&&&a&\\&&&b&a\\&&&&b\\-a&-b&&&\\&-a&-b&&\end{pmatrix}, \ W_2 = \begin{pmatrix}&&&a&\\&&&b&\\&&&&a\\-a&-b&&&\\&&-a&&\end{pmatrix}.\]
In the following two subsections, we will deal with these two cases separately.

\subsection{Case 1: $V$ has a two dimensional subspace congruent to $W_1$.}\

In this subsection, we assume that $V$ contains a 2-dimensional subspace $W$ such that any nonzero element has rank 4. Then we may assume $W= W_1$.

We recall a group homomorphism $\GL_2(\Z_3)\to \GL_3(\Z_3)$ induced by the natural action of $\GL_2(\Z_3)$ on the homogeneous polynomials of degree 2 in two variables. Precisely, any $X=\begin{pmatrix}a&b\\c&d\end{pmatrix}\in\GL_2(\Z_3)$ maps to
$\hat X= \begin{pmatrix}a^2&ab&b^2\\2ac&ad+bc&2bd\\c^2&cd&d^2\end{pmatrix}\in \GL_3(\Z_3)$. For instance,
\[\widehat{\begin{pmatrix}1&\\&-1\end{pmatrix}} = \begin{pmatrix}1&&\\&-1&\\&&1\end{pmatrix},
\qquad \widehat{\begin{pmatrix}&1\\1&\end{pmatrix}} = \begin{pmatrix}&&1\\&1&\\1&&\end{pmatrix},\]
\[\widehat{\begin{pmatrix}1&1\\&1\end{pmatrix}} = \begin{pmatrix}1&1&1\\&1&2\\&&1\end{pmatrix},
\qquad\quad\ \widehat{\begin{pmatrix}1&\\1&1\end{pmatrix}} = \begin{pmatrix}1&&\\2&1&\\1&1&1\end{pmatrix}.
\]

Then $\GL_2(\Z_3)$ acts on $M_{3\times2}(\Z_3)$ by $X\cdot M = \hat X M X^{-1}$ for any $X\in \GL_2(\Z_3)$ and $M\in M_{3\times2}(\Z_3)$.
Let $E=\left\{\left.\begin{pmatrix}a&\\b&a\\&b\end{pmatrix}\right|a,b\in\mathbb{Z}_3\right\}$ be a subspace of $M_{3\times2}(\Z_3)$. Then $\hat{X}E=EX$ for any $X\in \GL_2(\Z_3)$, or in other words, $E$ is a subrepresentation.
Consequently, $\diag(\hat X, (X^{-1})^T) W \diag({\hat X}^T, (X^{-1})^T) = W$.

\begin{remark} \label{rem-E-stabilizer} Clearly, let $Z$ be a square matrix of order 3. If $ZE\subseteq E$, then $Z=\lambda I_3$ for some $\lambda\in\Z_3$. Consequently, if $ZEX^{-1}\subseteq E$ for some $X\in \GL_2(\Z_3)$, then $Z=\lambda\hat X$ for some $\lambda\in \Z_3$.
\end{remark}

Define an equivalence relation ``$\sim$" on $\mathbb{Z}_3^3$ by $(\alpha,\beta,\gamma)\sim(\alpha',\beta',\gamma')$ if
$$\left(\begin{array}{ccc}0&\alpha'&\beta'\\-\alpha'&0&\gamma'\\-\beta'&-\gamma'&0\end{array}\right)=\hat{X}\left(\begin{array}{ccc}0&\alpha&\beta\\-\alpha&0&\gamma\\-\beta&-\gamma&0\end{array}\right)\hat{X}^T$$
for some $X\in\text{GL}_2(\mathbb{Z}_3)$.

By applying the action by $\begin{pmatrix}1&\\&-1\end{pmatrix}$, $\begin{pmatrix}&1\\1&\end{pmatrix}$, $\begin{pmatrix}1&1\\&1\end{pmatrix}$ and $\begin{pmatrix}1&\\1&1\end{pmatrix}$ iteratively, one shows easily that \begin{align*}(\alpha,\beta,\gamma)\sim&(-\alpha,\beta,-\gamma)\sim(-\gamma,-\beta,-\alpha)\sim(\gamma,-\beta,\alpha)\\
\sim&(\alpha-\beta+\gamma,\beta+\gamma,\gamma)\sim(\alpha+\beta+\gamma,\beta-\gamma,\gamma)\\
\sim&(\alpha,\alpha+\beta,\alpha-\beta+\gamma)\sim(\alpha,\alpha-\beta,\alpha+\beta+\gamma).
\end{align*}
If $\alpha\ne0$ or $\gamma\ne0$, then $(\alpha,\beta,\gamma)\sim(1,0,*)$, otherwise $(\alpha,\beta,\gamma)\sim(0,1,0)$ or $(0,0,0)$.
We note that $(1,0,-1)\sim(-1,0,1)\sim(0,1,1)\sim(0,-1,1)\sim(0,1,0)$.
Hence any $(\alpha, \beta, \gamma)$ is equivalent to one of $(1,0,0)$,$(1,0,1)$, $(0,1,0)$ and $(0,0,0)$.
We claim that these four classes are not congruent to each other.

Firstly, by comparing the rank, we know that $(0,0,0)$ is not equivalent to other 3 classes. By direct calculation,
\begin{align*}&\begin{pmatrix}a^2&ab&b^2\\2ac&ad+bc&2bd\\c^2&cd&d^2\end{pmatrix}
\begin{pmatrix}0&1&0\\-1&0&0\\0&0&0\end{pmatrix}
\begin{pmatrix}a^2&2ac&c^2\\ab&ad+bc&cd\\b^2&2bd&d^2\end{pmatrix}\\
=&\begin{pmatrix}0&a^2(ad-bc)&ac(ad-bc)\\ *&0&c^2(ad-bc)\\ *&*&0\end{pmatrix}
\end{align*}
cannot be equal to $\left(\begin{array}{ccc}0&0&1\\0&0&0\\-1&0&0\end{array}\right)$ or $\left(\begin{array}{ccc}0&1&0\\-1&0&1\\0&-1&0\end{array}\right)$. Similarly,
\begin{align*}
&\begin{pmatrix}a^2&ab&b^2\\2ac&ad+bc&2bd\\c^2&cd&d^2\end{pmatrix}
 \begin{pmatrix}0&0&1\\0&0&0\\-1&0&0\end{pmatrix}
 \begin{pmatrix}a^2&2ac&c^2\\ab&ad+bc&cd\\b^2&2bd&d^2\end{pmatrix}\\
=&\begin{pmatrix}0&2ab(ad-bc)&(ad+bc)(ad-bc)\\ *&0&2cd(ad-bc)\\ *&*&0\end{pmatrix}
\end{align*}
cannot be equal to $\begin{pmatrix}0&1&0\\-1&0&1\\0&-1&0\end{pmatrix}$, and the claim follows.

Therefore we may assume that $V$ is congruent to one of the following:
\begin{description}
\item[Case 1.1]
$\begin{pmatrix}&d&&a&d\alpha\\-d&&d&b&a+d\beta\\&-d&&d\gamma&b+d\delta
\\-a&-b&-d\gamma&&d\epsilon\\-d\alpha&-a-d\beta&-b-d\delta&-d\epsilon&\end{pmatrix},$
\item[Case 1.2]
$\begin{pmatrix}&d&&a&d\alpha\\-d&&&b&a+d\beta\\&&&d\gamma&b+d\delta
\\-a&-b&-d\gamma&&d\epsilon\\-d\alpha&-a-d\beta&-b-d\delta&-d\epsilon&\end{pmatrix},$
\item[Case 1.3]
$\begin{pmatrix}&&d&a&d\alpha\\&&&b&a+d\beta\\-d&&&\gamma&b+d\delta
\\-a&-b&-d\gamma&&d\epsilon\\-d\alpha&-a-d\beta&-b-d\delta&-d\epsilon&\end{pmatrix},$
\item[Case 1.4]
$\begin{pmatrix}&&&a&d\alpha\\&&&b&a+d\beta\\&&&d\gamma&b+d\delta
\\-a&-b&-d\gamma&&d\epsilon\\-d\alpha&-a-d\beta&-b-d\delta&-d\epsilon&\end{pmatrix}.$
\end{description}

We will find out all possible congruence classes case by case.

\noindent\textbf{Case 1.1}\quad
In this case, we can  assume $\alpha=\beta=\gamma=\delta=0$. In fact, after taking the congruence transformation by $T_{52}(-\alpha)T_{43}(-\alpha)$ and $T_{42}(\gamma)T_{51}(\gamma)$, we can eliminate
$\alpha$ and $\gamma$. Now we may assume $\alpha=\gamma=0$, then we can take the congruence transformation by $T_{41}(-\delta)$ to eliminate $\gamma$ and by $T_{53}(-\beta)$ to eliminate $\beta$. Thus $V$ is congruent to
$$\begin{pmatrix}&d&&a&\\-d&&d&b&a\\&-d&&&b\\-a&-b&&&d\epsilon\\&-a&-b&-d\epsilon&\end{pmatrix}$$
for some  $\epsilon\in\{0,\pm1\}$. We denote by $V_{1.1.1}, V_{1.1.2}$ and $V_{1.1.3}$ the subspaces corresponding to
the cases $\epsilon=0$, $\epsilon=1$ and $\epsilon=-1$ respectively.

\noindent\textbf{Case 1.2}\quad
In this case, we can assume $\alpha=\beta=\delta=0$. By taking the congruence transformation by $T_{52}(-\alpha)T_{43}(-\alpha)$, $T_{51}(\beta)T_{42}(\beta)$ and $T_{41}(-\delta)$ we may eliminate $\alpha$, $\beta$ and $\gamma$ respectively.

If $\gamma\ne 0$, we may assume $\gamma=1$, otherwise we may take the congruence transformation by $D_3(-1)D_5(-1)$. Then \[V\sim\begin{pmatrix}&d&&a&\\-d&&&b&a\\&&&d&b\\-a&-b&-d&&d\epsilon\\&-a&-b&-d\epsilon&\end{pmatrix}
\overset{T_{53}(\epsilon)}\sim
\begin{pmatrix}&d&&a&\\-d&&&b&a\\&&&-d&b\\-a&-b&d&&\\&-a&-b&&\end{pmatrix}
.\]
We denote this subspace by $V_{1.2.0}$.

If $\gamma=0$, then $V$ is equivalent to
$$V\sim\begin{pmatrix}&d&&a&\\-d&&&b&a\\&&&&b\\-a&-b&&&d\epsilon\\&-a&-b&-d\epsilon&\end{pmatrix}$$
for some $\epsilon\in\{0,\pm1\}$. The subspaces corresponding to $\epsilon=0$, $\epsilon= 1$ and $\epsilon=-1$ are denoted by $V_{1.2.1}$, $V_{1.2.2}$ and $V_{1.2.3}$ respectively.

\noindent\textbf{Case 1.3}\quad
In this case, we can assume $\alpha=\beta=\gamma=\delta$. In fact, we can take congruence transformation by $T_{53}(-\alpha)$, $T_{43}(\beta)T_{52}(\beta)$, $T_{41}(\gamma)$ and $T_{51}(\delta)T_{42}(\delta)$ to
eliminate $\alpha$, $\beta$, $\gamma$ and $\delta$ respectively. Thus in this case, $V$ is equivalent to \[\begin{pmatrix}&&d&a&\\&&&b&a\\-d&&&&b\\-a&-b&&&d\epsilon\\&-a&-b&-d\epsilon&\end{pmatrix}\]
for some $\epsilon\in\{0,1\}$. Note that the case $\epsilon=1$ is equivalent to the one $\epsilon=-1$ by taking congruence transformation by $D_2(-1)D_4(-1)$. We denote the subspaces corresponding to $\epsilon=0$ and $\epsilon=1$ by $V_{1.3.1}$ and $V_{1.3.2}$ respectively.

\begin{remark}\label{rem-eqeg-5case1} We observe that $V_{1.1.2}\sim V_{1.1.3}\sim V_{1.2.0}\sim V_{1.3.2}$. In fact, we have
\begin{align*}
&\begin{pmatrix}0&-1&0&1&1\\-1&1&-1&0&0\\0&-1&0&-1&-1\\0&0&1&1&-1\\1&0&1&-1&1
\end{pmatrix}
\begin{pmatrix}&d&&a&\\-d&&d&b&a\\&-d&&&b\\-a&-b&&&d\\&-a&-b&-d&
\end{pmatrix}
\begin{pmatrix}0&-1&0&0&1\\-1&1&-1&0&0\\0&-1&0&1&1\\1&0&-1&1&-1\\1&0&-1&-1&1
\end{pmatrix}\\
=&\begin{pmatrix}&&-a-b&b-a-d&\\&&&d-a+b&b-a-d\\a+b&&&&d-a+b\\a+d-b&a-b-d&&&-a-b\\&a+d-b&a-b-d&a+b&
\end{pmatrix}
=\begin{pmatrix}&&d&a&\\&&&b&a\\-d&&&&b\\-a&-b&&&d\\&-a&-b&-d&
\end{pmatrix};
\end{align*}
\begin{align*}
&\begin{pmatrix}1&0&-1&0&1\\1&0&0&0&0\\1&0&-1&0&-1\\0&1&0&-1&0\\0&1&0&1&0
\end{pmatrix}
\begin{pmatrix}&d&&a&\\-d&&d&b&a\\&-d&&&b\\-a&-b&&&-d\\&-a&-b&d&
\end{pmatrix}
\begin{pmatrix}1&1&1&0&0\\0&0&0&1&1\\-1&0&-1&0&0\\0&0&0&-1&1\\1&0&-1&0&0
\end{pmatrix}\\
=&\begin{pmatrix}&&-b&a+d&\\&&&d-a&a+d\\b&&&&d-a\\-a-d&a-d&&&-b\\&-a-d&a-d&b&
\end{pmatrix}
=\begin{pmatrix}&&d&a&\\&&&b&a\\-d&&&&b\\-a&-b&&&d\\&-a&-b&-d&
\end{pmatrix};
\end{align*}
\begin{align*}
&\begin{pmatrix}0&1&0&0&0\\0&0&0&0&1\\1&0&0&0&0\\0&0&0&1&0\\0&0&-1&0&0
\end{pmatrix}
\begin{pmatrix}&&d&a&\\&&&b&a\\-d&&&&b\\-a&-b&&&d\\&-a&-b&-d&
\end{pmatrix}
\begin{pmatrix}0&0&1&0&0\\1&0&0&0&0\\0&0&0&0&-1\\0&0&0&1&0\\0&1&0&0&0
\end{pmatrix}\\
=&\begin{pmatrix}&a&&b&\\-a&&&-d&b\\&&&a&-d\\-b&d&-a&&\\&-b&d&&
\end{pmatrix}
=\begin{pmatrix}&d&&a&\\-d&&&b&a\\&&&d&b\\-a&-b&-d&&\\&-a&-b&&
\end{pmatrix}.
\end{align*}
\end{remark}

\noindent\textbf{Case 1.4}\quad
Note that if one of $\alpha$, $\beta$, $\gamma$, and $\delta$ is nonzero, then we can assume $\epsilon=0$.
In fact, when $\alpha$ (\emph{resp.} $\beta$, $\gamma$, $\delta$) is nonzero, we can take the congruence transformation by $T_{41}(-\alpha\epsilon)$ (\emph{resp.}   $T_{42}(\beta\epsilon)T_{51}(\beta\epsilon)$, $T_{53}(\gamma\epsilon)$,  $T_{43}(-\delta\epsilon)T_{52}(-\delta\epsilon)$) to eliminate $\epsilon$. Otherwise $\alpha=\beta=\gamma=\delta=0$, then we can make $\epsilon=1$ (taking the congruence transformation by $D_2(-1)D_4(-1)$ if necessary), and $V$ is equivalent to
$$\begin{pmatrix}&&&a&\\&&&b&a\\&&&&b\\-a&-b&&&d\\&-a&-b&-d&\end{pmatrix},$$ which is denoted by $V_{1.4.1}$.

Next we consider the case $\epsilon=0$. We define an equivalence relation on $\mathbb{Z}_3^4$ by $(\alpha,\beta,\gamma,\delta)\sim(\alpha',\beta',\gamma',\delta')$ if
$$\hat{X}\begin{pmatrix}a&d\alpha\\b&a+d\beta\\d\gamma&b+d\delta\end{pmatrix}X^{-1}=
\begin{pmatrix}a'&d'\alpha'\\b'&a'+d'\beta'\\d'\gamma'&b'+d'\delta'\end{pmatrix}$$
for some $X\in\text{GL}_2(\mathbb{Z}_3)$.
By applying the matrices $\begin{pmatrix}1&\\&-1\end{pmatrix}$, $\begin{pmatrix}&1\\1&\end{pmatrix}$, $\begin{pmatrix}1&1\\&1\end{pmatrix}$ and $\begin{pmatrix}1&\\1&1\end{pmatrix}$ iteratively,
we obtain that
\begin{align*}
(\alpha,\beta,\gamma,\delta)\sim&(-\alpha,-\beta,-\gamma,-\delta)
\sim(-\alpha,\beta,\gamma,-\delta)\sim(\gamma,-\delta,\alpha,-\beta)
\sim(-\gamma,-\delta,\alpha,\beta)\\
\sim&(\alpha+\beta-\gamma+\delta,\beta-\delta,\gamma,\delta)
\sim(\alpha-\beta+\gamma+\delta,\beta+\delta,\gamma,\delta)\\
\sim&(\alpha,\beta,\gamma-\alpha-\beta-\delta,\delta-\beta)
\sim(\alpha,\beta,\gamma+\alpha-\beta+\delta,\delta+\beta).
\end{align*}

If $\beta=\delta=0$, $(\alpha,0,\gamma,0)\sim(\alpha-\gamma,0,\gamma,0)\sim(\alpha,0,\gamma-\alpha,0)$. Since $(\alpha,0,\gamma,0)\sim(\gamma,0,\alpha,0)$, we can assume $\gamma\ne0$, then $(\alpha,0,\gamma,0)\sim(0,0,\gamma,0)\sim(0,0,1,0)$.

Otherwise, one of $\beta$ and $\delta$ is nonzero. Since $(\alpha,\beta,\gamma,\delta)\sim(\gamma,-\delta,\alpha,-\beta)$, we can assume $\beta\ne0$, then $(\alpha,\beta,\gamma,\delta)\sim(\alpha,\beta,\gamma',0)$.
Since $(\alpha,\beta,\gamma,0)\sim(\alpha+\beta-\gamma,\beta,\gamma,0)$, if $\beta\ne\gamma$, then $(\alpha,\beta,\gamma,0)\sim(0,\beta,\gamma,0)\sim(0,1,0,0)$ or $(0,1,-1,0)$.
If $\beta=\gamma$, since $(\alpha,\beta,\beta,0)\sim(-\alpha,\beta,\beta,0)$, we have $(\alpha,\beta,\beta,0)\sim(0,1,1,0)$ or $(1,1,1,0)$.

Then $(\alpha,\beta,\delta,\gamma)$ is equivalent to one of $(1,1,1,0)$, $(0,0,1,0)$, $(0,1,-1,0)$, $(0,1,0,0)$, $(0,1,1,0)$, and hence $V$ is
equivalent to one of the corresponding subspaces, say
\[\begin{pmatrix}&&&a&d\\&&&b&a+d\\&&&d&b\\-a&-b&-d&&\\-d&-a-d&-b&&\end{pmatrix},
\begin{pmatrix}&&&a&\\&&&b&d\\&&&&b\\-a&-b&&&\\&-d&-b&&\end{pmatrix},
\begin{pmatrix}&&&a&\\&&&b&a\\&&&d&b\\-a&-b&-d&&\\&-a&-b&&\end{pmatrix}
\]
\[\begin{pmatrix}&&&a&\\&&&b&a+d\\&&&-d&b\\-a&-b&d&&\\&-a-d&-b&&\end{pmatrix},
\begin{pmatrix}&&&a&\\&&&b&a+d\\&&&d&b\\-a&-b&-d&&\\&-a-d&-b&&\end{pmatrix}.
\]
We denote these subspaces by $V_{1.4.2}$, $V_{1.4.3}$, $V_{1.4.4}$, $V_{1.4.5}$ and $V_{1.4.6}$ respectively.

\subsection{Case 2: any two dimensional subspace of $V$ contains rank 2 matrices}\

In this subsection, we assume $V$ does not occur in Case 1, say any two dimensional subspace of $V$ has a element of rank 2. Then by Lemma \ref{lem-trivialradical}, we may assume $V$ contains $W_2$ as a subspace, and $V=\begin{pmatrix}&d\phi&d\theta&a&d\alpha\\-d\phi&&d\psi&b&d\beta\\
-d\theta&-d\psi&&d\gamma&a+d\delta\\-a&-b&-d\gamma&&d\epsilon\\
-d\alpha&-d\beta&-a-d\delta&-d\epsilon&\end{pmatrix}$ for some $\alpha, \beta, \gamma, \delta, \epsilon, \theta, \phi $ and $\psi\in \Z_3$. We write $V=[\alpha, \beta, \gamma, \delta, \epsilon, \theta, \phi, \psi]$ for short.

We recall a well-known result which follows easily from the Cauchy-Binet formula.

\begin{lemma}\label{lem-maxprincipalminor}
Let $X\in\Asym_5(\Z_3)$ be an anti-symmetric matrix. Then $\rank(X)=4$ if and only if $X$ has a nonzero principal minor of order 4.
\end{lemma}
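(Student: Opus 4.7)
The plan is to combine two ingredients: the parity of the rank of an anti-symmetric matrix, and a factorization to which the Cauchy--Binet formula applies. First I would note that the rank of any anti-symmetric matrix over a field is even, so for $X\in\Asym_5(\Z_3)$ we have $\rank(X)\in\{0,2,4\}$.

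The ``if'' direction is then immediate: a nonzero principal $4\times 4$ minor of $X$ exhibits a full-rank $4\times 4$ submatrix, which forces $\rank(X)\ge 4$ and hence $\rank(X)=4$ by the preceding observation.

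For the ``only if'' direction, suppose $\rank(X)=4$. By the standard normal form for anti-symmetric bilinear forms over a field of characteristic $\ne 2$ (the same reduction used in the $1$-dimensional case of Theorem \ref{thm-1dim-dergp}), there exists $Q\in\GL_5(\Z_3)$ such that $QXQ^T=\diag(J_2,J_2,0)$, where $J_2=\begin{pmatrix}0&1\\-1&0\end{pmatrix}$. Writing $P\in M_{5\times 4}(\Z_3)$ for the matrix whose columns are the first four columns of $Q^{-1}$, and $J=\diag(J_2,J_2)\in\GL_4(\Z_3)$, this yields $X=PJP^T$ with $\rank(P)=4$ (since $P$ is a submatrix of the invertible matrix $Q^{-1}$). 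For any $4$-element subset $I\subset\{1,\ldots,5\}$, the principal submatrix factors as $X[I,I]=P[I,\cdot]\,J\,P[I,\cdot]^T$, a product of three $4\times 4$ matrices, so by the Cauchy--Binet formula applied to the product $P\cdot(JP^T)$ we obtain
\begin{equation*}
\det(X[I,I])=\det(P[I,\cdot])\cdot\det(J)\cdot\det(P[I,\cdot]^T)=\det(P[I,\cdot])^2.
\end{equation*}
Since $P$ has full column rank $4$, some $4$-subset $I$ makes the $4\times 4$ submatrix $P[I,\cdot]$ invertible, and the corresponding principal minor of $X$ is then the nonzero square $\det(P[I,\cdot])^2$, as desired.

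No serious obstacle is expected; the only point to justify carefully is the normal-form reduction over $\Z_3$, which is routine because $\operatorname{char}\Z_3\ne 2$. An alternative, equally short route avoids the normal form: one selects four linearly independent rows indexed by $I$; the relation $X^T=-X$ then forces the columns outside $I$ to lie in the span of the columns indexed by $I$, so $X[\cdot,I]$ has rank $4$, and applying the same row-dependence argument to $X[\cdot,I]$ shows $X[I,I]$ has rank $4$.
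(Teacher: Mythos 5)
Your proof is correct. Note that the paper does not actually write out an argument for this lemma: it is stated as a well-known fact ``which follows easily from the Cauchy--Binet formula,'' so your write-up fills in exactly the kind of argument the authors had in mind. Both directions check out: the rank of an anti-symmetric matrix over a field is even, so a nonzero principal minor of order $4$ forces $\rank(X)=4$; and conversely the congruence normal form gives $X=PJP^T$ with $P\in M_{5\times 4}(\Z_3)$ of full column rank and $J=\diag\bigl(\begin{smallmatrix}0&1\\-1&0\end{smallmatrix},\begin{smallmatrix}0&1\\-1&0\end{smallmatrix}\bigr)$, whence $\det(X[I,I])=\det(P[I,\cdot])^2\neq 0$ for a suitable $4$-subset $I$. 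One cosmetic remark: since $P[I,\cdot]$ is already a square $4\times 4$ matrix, the displayed identity is just multiplicativity of determinants rather than a genuine use of Cauchy--Binet, so your proof is in fact slightly more elementary than the paper's citation suggests. Your alternative argument (four independent rows, skew-symmetry forcing the columns outside $I$ into the span of the columns indexed by $I$, hence $X[I,I]$ nonsingular) is the standard proof that for skew-symmetric matrices the rank is attained on a principal submatrix; it avoids the normal form entirely and works verbatim for $\Asym_m$ over any field, which is a small gain in generality over the route through the symplectic normal form.
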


We may draw the following consequence.
\begin{lemma} Assume that any 2-dimensional subspace of $V = [\alpha, \beta, \gamma, \delta, \epsilon, \theta, \phi, \psi]$ contains a matrix of rank 2. Then $\alpha=\theta=0$.
\end{lemma}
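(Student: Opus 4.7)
The approach rests on Lemma \ref{lem-maxprincipalminor}: a $5\times 5$ anti-symmetric matrix has rank at most $2$ if and only if all five of its $4\times 4$ principal Pfaffians $P_1,\dots,P_5$ vanish. The hypothesis therefore produces, for every $2$-dimensional subspace $U\subset V$, a non-trivial common zero of five quadratic forms on $U$. Since the generator $Y_b$ of $W_2$ already has rank $2$, it automatically witnesses the hypothesis on every $2$-dimensional subspace containing it, so the real constraint bites only on $2$-dimensional subspaces avoiding $Y_b$. I parametrise those as
\[
U_{\lambda,\mu}=\{a(Y_a+\lambda Y_b)+d(Y_d+\mu Y_b)\mid a,d\in\Z_3\},\qquad (\lambda,\mu)\in\Z_3^2.
\]

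The first step is to write down the five $4\times 4$ principal Pfaffians $P_i(a,d;\lambda,\mu)$ of the general element $X=a(Y_a+\lambda Y_b)+d(Y_d+\mu Y_b)$. A direct computation gives $P_2 = -a^2-ad\delta+d^2(\theta\epsilon+\alpha\gamma)$, and shows that $P_3,P_4,P_5$ are each divisible by $d$. In particular, if $d=0$ then $P_2$ forces $a=0$, so any rank-$2$ element of $U_{\lambda,\mu}$ must have $d\neq 0$; after rescaling, take $d=1$. The task reduces to: for every $(\lambda,\mu)\in\Z_3^2$ there exists $a\in\Z_3$ with $P_i(a;\lambda,\mu)=0$ for $i=1,\dots,5$.

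The crucial observation is that $P_4(a;\lambda,\mu)=a\phi+(\phi\delta-\theta\beta+\alpha\psi)$ depends neither on $\lambda$ nor on $\mu$. I will split on whether $\phi=0$. If $\phi=0$ and $\alpha\neq 0$, I specialise $(\lambda,\mu)=(\beta/\alpha,1)$: then $P_3=(\alpha\lambda-\beta)a+\phi\epsilon+\alpha\mu=\alpha\neq 0$ for every $a$, a contradiction; hence $\alpha=0$. The symmetric choice $(\lambda,\mu)=(\psi/\theta,1)$ applied to $P_5=(\psi-\theta\lambda)a+\phi\gamma-\theta\mu$ forces $\theta=0$. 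If instead $\phi\neq 0$, then $P_4=0$ uniquely determines $a=a^*$ independent of $(\lambda,\mu)$. Substituting this value, $P_3(a^*;\lambda,\mu)$ and $P_5(a^*;\lambda,\mu)$ are linear polynomials in $(\lambda,\mu)$ that must vanish on the whole of $\Z_3^2$, so they must vanish identically; reading off the coefficient of $\mu$ in each immediately yields $\alpha=0$ and $\theta=0$.

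The principal obstacle is bookkeeping, namely computing the five $4\times 4$ principal Pfaffians and isolating their dependence on $(a,d,\lambda,\mu)$. Once the parametrisation of $2$-dimensional subspaces of $V$ avoiding $Y_b$ by $(\lambda,\mu)\in\Z_3^2$ is in place and the observation that $P_4$ is independent of $(\lambda,\mu)$ has been noted, the whole proof reduces to a two-case split followed by a routine coefficient-matching argument.
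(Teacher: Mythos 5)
Your argument is correct, and I checked the computations: the five order-$4$ principal Pfaffians of the general element of $V$ agree with your formulas (after setting $d=1$ and $b=a\lambda+\mu$ one gets $P_3=(\alpha\lambda-\beta)a+\phi\epsilon+\alpha\mu$, $P_4=\phi a+(\phi\delta-\theta\beta+\alpha\psi)$, $P_5=(\psi-\theta\lambda)a+\phi\gamma-\theta\mu$, and $P_2=-a^2-a\delta+(\theta\epsilon+\alpha\gamma)$), your parametrization of the $2$-dimensional subspaces not containing the rank-$2$ generator $Y_b$ by $(\lambda,\mu)\in\Z_3^2$ is complete, and both case analyses are sound. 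However, your route is genuinely different from the paper's. The paper uses congruence invariance of the hypothesis: assuming $\theta\ne0$ it first normalizes $V$ by explicit transvections to $[0,\beta,0,0,\epsilon,1,\phi,0]$, computes the Pfaffians of that normal form, and exhibits a single explicit bad $2$-dimensional subspace ($\{a=b\}$ when $\epsilon\ne0$ or $\beta\ne0$, and $\{b=d\}$ when $\epsilon=\beta=0$); the case $\alpha\ne0$ is then reduced to the case $\theta\ne0$ by one further transvection $T_{35}(\pm1)$. You instead keep all eight parameters, make no WLOG reduction, and rely on three structural facts: rank-$2$ elements of $U_{\lambda,\mu}$ force $d\ne0$ (via $P_2$), $P_4$ is independent of $(\lambda,\mu)$, and the coefficients of $\mu$ in $P_3$ and $P_5$ are exactly $\alpha$ and $-\theta$. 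The paper's normalization buys brevity (only two bad subspaces need be displayed) and its normal forms are reused in the ensuing classification; your version buys transparency and self-containedness — each parameter's role in obstructing rank-$2$ elements is visible, and you never have to verify that the transvections preserve the family $[\alpha,\beta,\gamma,\delta,\epsilon,\theta,\phi,\psi]$, i.e.\ the fixed copy of $W_2$. In a write-up, make the last step of the $\phi\ne0$ case explicit: an affine function of $(\lambda,\mu)$ that vanishes at all nine points of $\Z_3^2$ has zero coefficients (compare the values at $\mu=0$ and $\mu=1$), which is precisely what extracts $\alpha=0$ and $\theta=0$.
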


\begin{proof} If $\theta\neq0$, then $V\cong [0, \beta', 0, 0, \epsilon', \theta, \phi', 0]$ for some $\beta', \epsilon'$ and $\phi'$. In fact, we can take the congruence transformation by $T_{53}(-\alpha\theta)T_{41}(\gamma\theta)T_{21}(-\psi\theta)$) to eliminate $\alpha, \gamma$ and $\theta$, and then
take the congruence transformation by $T_{43}(-\delta\theta)T_{51}(-\delta\theta)$ to eliminate $\delta$. Thus without loss of generality, we may assume $V=[0, \beta, 0, 0, \epsilon, 1, \phi, 0]$, which reads as
\[\begin{pmatrix}
&d\phi&d&a&\\-d\phi&&&b&d\beta\\-d&&&&a\\-a&-b&&&d\epsilon\\&-d\beta&-a&-d\epsilon
\end{pmatrix}.
\]
The Pfaffians of the principal minors of order $4$ are $ba,d^2\epsilon-a^2$, $d^2\phi\epsilon-ad\beta$, $d\phi a-d^2\beta$ and $bd$. Clearly, if $\epsilon\ne0$ or $\beta\ne0$, then 2-dimensional subspace $\{a=b\}$ contains no matrices of rank 2, and if $\epsilon=\beta=0$, then the subspace $\{b=d\}$ contains no matrices of rank 2.

If $\alpha\ne0$, then by taking the congruence transformation by $T_{35}(1)$ or $T_{35}(-1)$, we show that $V$ is congruent to some $[\alpha, \beta, \gamma', \delta', \epsilon, \theta', \phi, \psi']$ with $\theta'\ne 0$, hence $V$ contains some 2-dimensional subspace which contains no matrices of rank 2.
\end{proof}

Now we may assume $\alpha=\theta=0$ thanks to the lemma. By more detailed analysis, we have the following result.
\begin{proposition}\label{prop-3-asym5.1} Assume any two dimensional subspace of $V$ contains a rank two element. Then $V$ is equivalent to one of
$V_{2.1}=[0,0,0,0,0,0,1,0]$, $V_{2.2}=[0,0,0,0,0,0,0,1]$, $V_{2.3}=[0,0,0,1,0,0,0,0]$ and $V_{2.4}=[0,0,0,0,1,0,0,0]$, say
\[V_{2.1}=\begin{pmatrix} &d&&a&\\-d&&&b&\\&&&&a\\-a&-b&&&\\&&-a&&
\end{pmatrix},\quad
V_{2.2}=\begin{pmatrix} &&&a&\\&&d&b&\\&-d&&&a\\-a&-b&&&\\&&-a&&
\end{pmatrix},
\]
\[V_{2.3}=\begin{pmatrix} &&&a&\\&&&b&\\&&&&d\\-a&-b&&&\\&&-d&&
\end{pmatrix}, \
V_{2.4}=\begin{pmatrix} &&&a&\\&&&b&\\&&&&a\\-a&-b&&&d\\&&-a&-d&
\end{pmatrix}.
\]
\end{proposition}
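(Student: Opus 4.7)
The plan is to start from the parametrization $V=[0,\beta,\gamma,\delta,\epsilon,0,\phi,\psi]$ established by the preceding lemma (which used $\alpha=\theta=0$), and reduce the six remaining parameters to yield one of the four claimed canonical forms. The first step is to exploit congruence transformations that either preserve $W_2$ or move $V$ to another datum with the same block structure. Elementary matrices $T_{ij}(\lambda)$ and scalings $D_i(\lambda)$ acting on appropriately chosen coordinates can absorb certain parameters into $W_2$ (which is then merely renamed) or shift entries within the extra direction. For instance, transformations acting on the first three rows/columns in ways compatible with the pattern of $W_2$ should cut the six-parameter family down considerably, leaving only a handful of potentially nonzero entries.

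The second step is to impose the hypothesis that every $2$-dimensional subspace of $V$ contains a matrix of rank $2$. By Lemma \ref{lem-maxprincipalminor}, a $5\times 5$ anti-symmetric matrix has rank at most $2$ iff all five of its principal $4\times 4$ Pfaffians vanish. Writing a general element of $V$ as $aX_1+bX_2+dY$ for generators $X_1,X_2$ of $W_2$ and the extra direction $Y$, each of these Pfaffians becomes a ternary quadratic form in $(a,b,d)$ whose coefficients are polynomials in $\beta,\gamma,\delta,\epsilon,\phi,\psi$. A $2$-dimensional subspace of $V$ is cut out by a linear form in $(a,b,d)$, and containing a rank-$2$ element means these five Pfaffians have a common nontrivial zero on that line. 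Requiring this for \emph{every} line in $(a,b,d)$-space forces tight algebraic relations on the parameters, and in particular forces many of them to vanish.

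The third step is the resulting case analysis over $\Z_3$: each admissible configuration of remaining nonzero parameters is reduced by a final normalization using $D_i(\pm 1)$ (the only scalings available, since $\Z_3^\times=\{\pm 1\}$) to exactly one of $V_{2.1},V_{2.2},V_{2.3},V_{2.4}$. Finally, I would verify pairwise non-congruence of these four representatives by comparing congruence invariants: radicals in the sense of Section 9.2, the multiset of ranks occurring among nonzero elements of $V$, and the pattern of Pfaffians of distinguished principal $4\times 4$ submatrices (invariant up to nonzero scalar under congruence). The hard part will be the second and third steps: the ternary quadratic Pfaffian conditions over the small field $\Z_3$ behave subtly (common-zero statements that hold generically over larger fields can fail here), and one must enumerate cases carefully to avoid either missing a class or spuriously identifying two genuinely distinct ones.
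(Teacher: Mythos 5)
Your plan follows the paper's own proof essentially verbatim: after the preliminary lemma gives $\alpha=\theta=0$, both arguments reduce the remaining parameters by explicit elementary congruence transformations (the paper splits according to $\phi\ne0$, $\phi=0\ne\psi$, or $\phi=\psi=0$) and then invoke Lemma \ref{lem-maxprincipalminor}, i.e.\ the five principal $4\times4$ Pfaffians viewed as quadratic forms in $(a,b,d)$, to show that any surviving nonzero parameter would yield a two-dimensional subspace with no rank-two element, which forces the four normal forms over $\Z_3$. The pairwise non-congruence you append at the end is not part of this statement (it is handled separately in Proposition \ref{prop-3-asym5.2}), so the vagueness there is harmless for the claim at hand.
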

\begin{proof}
By the above lemma, we may assume $\alpha=\theta=0$. We have the following cases.

\noindent\textbf{Case 2.1: $\phi\ne0$.}\quad
By taking congruence actions by $T_{51}(\phi\beta)T_{43}(\phi\beta)$, $T_{45}(-\phi\psi)T_{31}(\phi\psi)$ and $T_{42}(\phi\delta)$ one after another, we show that
$V$ is equivalent to some $[0, 0, \gamma', 0, \epsilon', 0, \phi, 0]$. Thus without loss of
generality, we may assume $\phi=1$, $\beta=\delta=\psi=0$, and $V$ reads as
\[\begin{pmatrix}&d&&a&\\-d&&&b&\\&&&d\gamma&a\\-a&-b&-d\gamma&&d\epsilon\\&&-a&-d\epsilon&\end{pmatrix}.\]
The Pfaffians of the principal blocks of order 4 are $ab$, $a^2$, $d^2\epsilon$, $ad$, and $d^2\gamma$.
If $\epsilon\ne 0$ or $\gamma\ne0$, then the two dimensional subspace $\{b=0\}$ has no elements of rank 2, which
contradicts the assumption on $V$. Therefore, $V$ is congruent to $V_{2.1}=[0,0,0,0,0,0,1,0]$.

Note that the Pfaffians of the principal minors of $V_{2.1}$ of order 4 are $ab$, $a^2$, 0, $ad$, and $0$. Then clearly any nonzero element in the two dimensional subspace $\{a=0\}$ has rank $2$. Thus any two dimensional subspace of $V_{2.1}$ has an element of rank 2, since it will have nonzero intersection with the space $\{a=0\}$.

\noindent\textbf{Case 2.2: $\phi=0$, $\psi\ne0$.}\quad
In this case, we can take the congruence transformation by $T_{52}(-\psi\beta)T_{42}(\psi\gamma)$ to show that $V$ is congruent to
some $[0, 0, 0, \delta', \epsilon', 0, 0, \psi]$. Thus we may assume $\psi=1$ and $\phi=\beta=\gamma=0$, and $V$ reads as
\[\begin{pmatrix} &&&a&\\&&d&b&\\&-d&&&a+d\delta\\-a&-b&&&d\epsilon\\&&-a-d\delta&-d\epsilon&
\end{pmatrix}.
\]
The Pfaffians of the principal minors of order 4 are $d^2\epsilon-b(a+d\delta)$, $a(a+d\delta)$, $0$, $0$ and $ad$.
If $\epsilon\ne0$, then any matrix in the 2-dimensional subspace $\{b=0\}$ has rank 4;
if $\epsilon=0$ and $\delta\ne0$, then any matrix in the 2-dimensional subspace $\{b=d\}$ has rank 4.
It forces that $\epsilon=\delta=0$, and hence $V$ is congruent to $V_{2.2}=[0,0,0,0,0,0,0,1]$.

Now the Pfaffians of the principal minors of $V_{2.2}$ of order 4 are $-ba$, $a^2$, $0$, $0$ and $ad$. Then any nonzero elements of the subspace $\{a=0\}$ of $V_{2.2}$ has rank 2, and hence any two dimensional subspace of $V_{2.2}$ contains an element of rank 2.

\noindent\textbf{Case 2.3: $\phi=\psi=0$.} We may assume $\beta=0$. In fact, if $\beta\ne0$, then we take the congruence transformation by $T_{25}(1)$ to make $\psi$ nonzero, and it reduces to the Case 2.2. Now $V=[0, 0, \gamma, \delta, \epsilon, 0, 0, 0]$ reads as
\[V=\begin{pmatrix}&&&a&\\&&&b&\\
&&&d\gamma&a+d\delta\\-a&-b&-d\gamma&&d\epsilon\\
&&-a-d\delta&-d\epsilon&\end{pmatrix}.\]

Now we have two subcases.

(i) $\delta\ne0$. After taking the congruence transformations by $T_{31}(-\gamma\delta)T_{45}(-\gamma\delta)$ and $T_{43}(-\epsilon\delta)T_{51}(-\epsilon\delta)$, we show that $V$ is equivalent to
$[0, 0, 0, \delta, 0, 0, 0, 0]$, and hence to $V_{2.3}=[0, 0, 0, 1, 0, 0, 0, 0]$.

The Pfaffians of the principal minors of $[0, 0, 0, 1, 0, 0, 0, 0]$ of order 4 are $b(a+d)$, $a(a+d), 0, 0, 0$. Thus any nonzero element in the subspace $\{a+d=0\}$ of $[0, 0, 0, 1, 0, 0, 0, 0]$ has rank 2, and hence any two dimensional subspace contains an element of rank 2.

(ii) $\delta=0$. Write $(\gamma, \epsilon)= [0, 0, \gamma, 0, \epsilon, 0, 0, 0]$ for short. Then it is easy to  show that $(\gamma, \epsilon)\overset{T_{35}(1)}\sim (\gamma-\epsilon, \epsilon)$ and $(\gamma, \epsilon)\overset{T_{53}(1)}\sim(\gamma, \epsilon-\gamma)$. Applying these two equivalences iteratively, we show that $V$ is equivalent to $V_{2.4}=(0, 1)= [0, 0, 0, 0, 1, 0, 0, 0]$.

The Pfaffians of the principal minors of $V_{2.4}$ of order 4 are $ba$, $a^2, 0, 0, 0$. Then any nonzero element in the subspace $\{a=0\}$ of $V_{2.4}$ has rank 2, and hence any two dimensional subspace contains an element of rank 2.
\end{proof}

We denote the subspaces in the above proposition by $V_{2.1}, V_{2.2}, V_{2,3}$ and $V_{2.4}$ respectively.

\begin{proposition}\label{prop-3-asym5.2} $V_{2.1}, V_{2.2}, V_{2,3}$ and $V_{2.4}$ are not congruent to each other.
\end{proposition}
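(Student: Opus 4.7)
The plan is to attach two congruence invariants to each $3$-dimensional subspace $V\subseteq\Asym_5(\Z_3)$, chosen so that their joint values separate the four subspaces $V_{2.1}, V_{2.2}, V_{2.3}, V_{2.4}$. The first invariant is qualitative: whether the locus $N(V):=\{X\in V\mid \rank(X)\le 2\}$ is a linear subspace. The second is numerical: the dimension $d(V):=\dim(\rad(W)\cap U)$, defined whenever $W:=N(V)$ is itself a subspace, where $U\subseteq\Z_3^5$ is the linear span of the kernels of the rank-$4$ elements of $V$.

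By Lemma \ref{lem-maxprincipalminor}, a matrix $X\in \Asym_5(\Z_3)$ has rank $4$ iff some $4\times 4$ principal minor of $X$ has nonzero Pfaffian. A direct computation of the five such Pfaffians for a generic element of each $V_{2.i}$ (which is exactly the content of the proof of Proposition \ref{prop-3-asym5.1}, and can be reread off from it) yields: for $V_{2.1}$, $V_{2.2}$, and $V_{2.4}$, the locus $N(V_{2.i})=\{a=0\}$ is a $2$-dimensional subspace; for $V_{2.3}$ one has $N(V_{2.3})=\{d=0\}\cup\{a=b=0\}$, which fails to be a subspace since $(1,0,0)+(0,0,1)=(1,0,1)$ has rank $4$. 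Because the rank of a matrix is preserved under any congruence $X\mapsto PXP^T$, the property of $N(V)$ being linear is itself a congruence invariant, and already separates $V_{2.3}$ from the three others.

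To separate $V_{2.1}, V_{2.2}, V_{2.4}$ I will use $d(V)$. Under $V\mapsto PVP^T$ one has $W\mapsto PWP^T$, and then both $\rad(W)$ and $U$ transform identically as $(P^T)^{-1}$ applied to column vectors (compare Remark \ref{rem-complement-inv}), so $\dim(\rad(W)\cap U)$ is indeed congruence-invariant. The computation of $U$ reduces to solving $Xv^T=0$ for a generic rank-$4$ element $X\in V_{2.i}$, using $a\ne 0$ as a pivot. One obtains: for $V_{2.1}$, $\rad(W)=\langle e_3,e_5\rangle$ and $U=\langle e_1,e_2,e_4\rangle$, hence $d=0$; for $V_{2.2}$, $\rad(W)=\langle e_1,e_5\rangle$ and $U=\langle e_1,e_2,e_5\rangle$, hence $d=2$; for $V_{2.4}$, $\rad(W)=\langle e_1,e_3\rangle$ and $U=\langle e_1,e_2\rangle$, hence $d=1$. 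The three values are distinct, which finishes the proof.

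The main computational steps (Pfaffians of principal minors and kernels of rank-$4$ elements) are straightforward linear algebra; the actual obstacle is conceptual rather than technical, namely picking invariants refined enough to tell apart $V_{2.1}$ and $V_{2.2}$, which share the same rank stratification and the same dimensions of $\rad(W)$ and $U$ individually, and are only separated by how these two subspaces of $\Z_3^5$ are positioned relative to each other.
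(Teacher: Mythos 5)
Your proof is correct, and for the decisive step it takes a genuinely different route from the paper's. Both you and the paper first exploit the rank stratification read off from the Pfaffians of the $4\times 4$ principal minors: the paper counts the elements of rank $\le 2$ (obtaining $9,9,11,9$), while you observe that this locus is a linear subspace ($\{a=0\}$) for $V_{2.1},V_{2.2},V_{2.4}$ but not for $V_{2.3}$ (where it is $\{d=0\}\cup\{a=b=0\}$); either observation isolates $V_{2.3}$, since a congruence induces a rank-preserving linear bijection between the subspaces. The real divergence is in separating $V_{2.1}$, $V_{2.2}$, $V_{2.4}$: the paper works with the operation $\mathbf W\mapsto \mathbf W^{\perp_V}$ on test subspaces $\mathbf W\subseteq\Z_3^5$, exhibiting a $2$-dimensional $\mathbf W$ with $\dim \mathbf W^{\perp_{V_{2.4}}}=4$ (impossible for the other two) and a $3$-dimensional one with $\dim \mathbf W^{\perp_{V_{2.2}}}=3$ (impossible for $V_{2.1}$), which forces quantified checks over all $2$- and $3$-dimensional subspaces; you instead package everything into the single number $\dim(\rad(W)\cap U)$, where $W$ is the rank-degenerate locus and $U$ is the span of the kernels of the rank-$4$ elements, and the invariance follows exactly as you say because both $\rad(W)$ and $U$ transform by the same invertible map under congruence (compare Remark \ref{rem-complement-inv}). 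I checked your computations: the kernel of a rank-$4$ element is spanned by $(-b,a,0,-d,0)$ for $V_{2.1}$, by $(-b,a,0,0,d)$ for $V_{2.2}$, and by $(-b,a,0,0,0)$ for $V_{2.4}$, giving $U=\langle e_1,e_2,e_4\rangle$, $\langle e_1,e_2,e_5\rangle$, $\langle e_1,e_2\rangle$ respectively, while $\rad(W)=\langle e_3,e_5\rangle$, $\langle e_1,e_5\rangle$, $\langle e_1,e_3\rangle$, so the intersection dimensions are $0,2,1$ as claimed (incidentally $\dim U$ alone already splits off $V_{2.4}$). Your invariant buys a clean, directly computable number with no quantification over subspaces; the paper's argument stays closer to the $\perp_V$-machinery it has already set up but pays for it with the extra "for all $\mathbf W$" verifications.
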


\begin{proof} For any subspace $V\subseteq \Asym_5(\Z_3)$, we set $N(V)=\#\{X\in V\mid \rank(X)\ne 4\}$. Then $N(V)$ is invariant under congruence equivalence. By Lemma \ref{lem-maxprincipalminor}, $X$ has rank 2 if and only if all its principal minor of order 4 is 0. Easy calculation shows that
$N(V_{2.1})=N(V_{2.2})=9$, $N(V_{2.3})=11$, $N(V_{2.4})=9$. Thus $V_{2.3}$ is not congruent to other three.

For short we write $\perp_1=\perp_{V_{2.1}}$, $\perp_2=\perp_{V_{2.2}}$ and $\perp_4=\perp_{V_{2.4}}$ temporarily.
Set \[\mathbf U= (\Z_3, \Z_3, 0, 0, 0)= \{(u_1, u_2, 0, 0, 0)\in\Z_3^5\mid u_1, u_2\in \Z_3\}, \mathbf V= (\Z_3, \Z_3, 0, 0, \Z_3).\] It is easy to show that $\mathbf U^{\perp_4}=(\Z_3,\Z_3,0,0,\Z_3)$ has dimension $4$, and
$\mathbf V^{\perp_2}=(\Z_3,\Z_3,0,0,\Z_3)$ has dimension 3. Moreover, for any $\mathbf W\subseteq \Z_3^5$, if $\dim(W)=2$, then $\dim(W^{\perp_1})\le 3$ and $\dim(W^{\perp_2})\le 3$; if  $\dim(W)=3$, then $\dim(W^{\perp_1})<3$.
Then the assertion follows from Remark \ref{rem-complement-inv}.
\end{proof}

\subsection{The classification result.}
We have shown that any three dimensional subspaces of $\Asym_5(\Z_3)$ is equivalent to one as listed in Section 10.2 and 10.3. Moreover, the subspaces $V_{2.1}$, $V_{2.2}$, $V_{2.3}$ and $V_{2.4}$ as in Proposition \ref{prop-3-asym5.1} are pairwise noncongruent, and they are not congruent to the ones given in Section 10.2. Now to complete our classification, it suffices to show that the ones in Case 1.1 through 1.4 are pairwise noncongruent, except the ones as listed in Remark \ref{rem-eqeg-5case1}.

\begin{proposition}\label{prop-noneq-5case1}  The 12 subspaces $V_{1.1.1}$, $V_{1.1.2}$, $V_{1.2.1}$, $V_{1.2.2}$, $V_{1.2.3}$, $V_{1.3.1}$, $V_{1.4.1}$, $V_{1.4.2}$, $V_{1.4.3}$, $V_{1.4.4}$, $V_{1.4.5}$, $V_{1.4.6}$ are not congruent to each other.
\end{proposition}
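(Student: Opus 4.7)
The plan is to distinguish the twelve subspaces through congruence invariants, following the same philosophy as in Proposition \ref{prop-3-asym5.2}. The primary invariant will be $N_2(V)=|\{X\in V:\rank(X)=2\}|$, which equals the number of nonzero common zeros in $\Z_3^3$ of the five principal Pfaffians $\mathrm{pf}_i(a,b,d)$, $i=1,\ldots,5$, obtained by striking out the $i$-th row and column of the generic element $aX_1+bX_2+dX_3$ of $V$. Each $\mathrm{pf}_i$ is a quadratic form in $(a,b,d)$, and $\rank(X)\le 2$ iff all five vanish by Lemma \ref{lem-maxprincipalminor}. A straightforward tabulation yields the value of $N_2$ for each $V_{i.j.k}$; this should sort the twelve subspaces into several classes.

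Next, for subspaces sharing the same value of $N_2$, I would apply the secondary invariants $\dim\mathrm{rad}(V)$ (which is invariant under congruence by Remark \ref{rem-complement-inv}) and the orthogonal-complement dimension function $\mathbf{U}\mapsto\dim\mathbf{U}^{\perp_V}$ for small subspaces $\mathbf{U}\subseteq\Z_3^5$, as used in the proof of Proposition \ref{prop-3-asym5.2}. A further invariant is the number $M(V)$ of two-dimensional subspaces $W\subseteq V$ every nonzero element of which has rank $4$ (equivalently, $W$ is congruent to $W_1$), since by construction all our subspaces contain at least one copy of $W_1$ but may contain different numbers of such copies depending on the interaction of $W_1$ with the added generator $Z$.

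Should two subspaces still fail to be separated by these invariants, I would examine the isomorphism type of the $2$-dimensional subspace arrangement: the multiset of congruence types, over all $2$-dimensional $W\subset V$, of $W$ viewed as an element of $\Gr(2,\Asym_5(\Z_3))/\GL_5(\Z_3)$ as classified in Section \ref{sec-2of5}. The main obstacle will be the sheer bulk of computation: each $V$ produces a quintuple of quadratic Pfaffians in three variables, and one must enumerate common zeros over the $27$ points of $\Z_3^3$ for each of the twelve cases. An additional delicate point is that coarse numerical invariants like $N_2(V)$ alone may coincide across several cases --- particularly among the $V_{1.4.*}$ family, which differ only in the parameters $(\alpha,\beta,\gamma,\delta)$ --- so the essential step will be to identify finer invariants that resolve the remaining ambiguities and to verify they take distinct values by hand on the explicit matrices.
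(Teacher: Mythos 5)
Your overall strategy is the same as the paper's: the primary invariant there is $N(V)=\#\{X\in V:\operatorname{rank}(X)\ne 4\}=N_2(V)+1$, which splits the twelve spaces into classes of sizes $3,6,2,1$, and most of the remaining separations are indeed achieved by data of the kind you list (existence of totally isotropic subspaces or isotropic decompositions of $\Z_3^5$, and whether $V$ contains a $2$-dimensional subspace with nontrivial radical, resp.\ one with nontrivial radical all of whose nonzero elements have rank $4$ --- i.e.\ which of the six classes of Section \ref{sec-2of5} occur inside $V$). Two caveats: $\dim\operatorname{rad}(V)$ is vacuous here, since all twelve spaces contain a copy of $W_1$ and hence have trivial radical; and your plan is a programme rather than a proof, since you do not verify that the invariants actually take different values on the colliding cases.

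That last point is where there is a genuine gap: for the pair $V_{1.1.1}$, $V_{1.3.1}$ every invariant you explicitly name coincides. Both have $N_2=2$ (a single line of rank-$2$ elements, namely $a=b=0$), both have $\operatorname{rad}(V)=0$, and a direct check shows that all $13$ two-dimensional subspaces of either space have trivial radical; consequently the multiset of congruence types of $2$-dimensional subspaces is the same for both (nine subspaces of type $W_1$ and four of type $W_2$), so in particular $M(V)=9$ for both, and the one-dimensional orthogonal-complement data also agree ($\dim u^{\perp_V}=3$ for exactly $25$ lines of vectors $u$ and $2$ otherwise, in both cases); moreover, as noted in the paper's proof, neither space admits a $3$-dimensional totally isotropic subspace. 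So ``identify finer invariants'' is precisely the missing content, and it is not a routine computation: the paper settles this pair not by an invariant but by a structural argument, assuming $P\in\GL_5(\Z_3)$ carries one space to the other, showing that $P$ must be block upper triangular with respect to the splitting $\Z_3^3\oplus\Z_3^2$ and (via Remark \ref{rem-E-stabilizer}) that its upper-left block must be a scalar multiple of some $\hat X$, which would make the two $3\times 3$ pencil parts congruent under the $\hat X$-action, contradicting the inequivalence of the classes $(1,0,1)$ and $(0,1,0)$ established at the start of Case~1. Without an argument of this kind, or a genuinely new invariant that you exhibit and evaluate, your proposal does not complete the proof.
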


\begin{proof} First the Pfaffians of the principal minors of order 4 and the number of elements with rank strictly less than 4 are listed as follows.
\begin{align*}
&V_{1.1.1}: -b^2, ab, -a^2, bd, ad; &N= 3.\\
&V_{1.1.2}: d^2-b^2, ab, d^2-a^2, bd, ad; &N= 1.\\
&V_{1.2.1}: -b^2, -ab, - a^2, bd, 0;  &N= 3.\\
&V_{1.2.2}: -b^2, -ab, d^2- a^2, bd, 0; &N= 5.\\
&V_{1.2.3}: -b^2, -ab, -d^2- a^2, bd, 0; &N= 1.\\
&V_{1.3.1}: -b^2, -ab, -a^2, -da, -bd; &N= 3.\\
&V_{1.4.1}: -b^2, -ab, -a^2, 0, 0; &N=3.\\
&V_{1.4.2}: d(a+d)-b^2, d^2-ad, bd-a(a+d), 0, 0; &N= 1.\\
&V_{1.4.3}: -b^2, -ab, -ad, 0, 0 ; &N=5.\\
&V_{1.4.4}: ad-b^2, -ab, -a^2, 0, 0; &N= 3.\\
&V_{1.4.5}: -d(a+d)-b^2, -ab, -a(a+d), 0, 0; &N= 3.\\
&V_{1.4.6}: d(a+d)-b^2, -ab, -a(a+d), 0, 0; &N= 7.
\end{align*}
We listed the spaces according to $N(V)$ as follows:

$N=1, V= V_{1.1.2}\sim V_{1.2.0}, V_{1.2.3}, V_{1.4.2}:$
\begin{align*}
&V_{1.1.2}=\begin{pmatrix}&d&&a&\\-d&&d&b&a\\&-d&&&b\\-a&-b&&&d\\&-a&-b&-d&\end{pmatrix}
\sim \begin{pmatrix}&d&&a&\\-d&&&b&a\\&&&d&b\\-a&-b&-d&&\\&-a&-b&&
\end{pmatrix}= V_{1.2.0}, \\
& V_{1.2.3}=  \begin{pmatrix}&d&&a&\\-d&&&b&a\\&&&&b\\-a&-b&&&-d\\&-a&-b&d&\end{pmatrix},
 V_{1.4.2} \begin{pmatrix}&&&a&d\\&&&b&a+d\\&&&d&b\\-a&-b&-d&&\\-d&-a-d&-b&&\end{pmatrix}.
\end{align*}

$N=3, V= V_{1.1.1}, V_{1.2.1}, V_{1.3.1}, V_{1.4.1}, V_{1.4.4}, V_{1.4.5}:$
\begin{align*}
&V_{1.1.1}= \begin{pmatrix}&d&&a&\\-d&&d&b&a\\&-d&&&b\\-a&-b&&&\\&-a&-b&&\end{pmatrix},
&V_{1.2.1}=\begin{pmatrix}&d&&a&\\-d&&&b&a\\&&&&b\\-a&-b&&&\\&-a&-b&&\end{pmatrix},\\
&V_{1.3.1}=\begin{pmatrix}&&d&a&\\&&&b&a\\-d&&&&b\\-a&-b&&&\\&-a&-b&&\end{pmatrix},
&V_{1.4.1}=\begin{pmatrix}&&&a&\\&&&b&a\\&&&&b\\-a&-b&&&d\\&-a&-b&-d&\end{pmatrix},\\
&V_{1.4.4}=\begin{pmatrix}&&&a&\\&&&b&a\\&&&d&b\\-a&-b&-d&&\\&-a&-b&&\end{pmatrix},
&V_{1.4.5}=\begin{pmatrix}&&&a&\\&&&b&a+d\\&&&-d&b\\-a&-b&d&&\\&-a-d&-b&&\end{pmatrix}.
\end{align*}

$N=5, V= V_{1.2.2}, V_{1.4.3}:$
\[V_{1.2.2}=\begin{pmatrix}&d&&a&\\-d&&&b&a\\&&&&b\\-a&-b&&&d\\&-a&-b&-d&\end{pmatrix},\qquad \qquad
V_{1.4.3}=\begin{pmatrix}&&&a&\\&&&b&d\\&&&&b\\-a&-b&&&\\&-d&-b&&\end{pmatrix}.
\]

$N=7, V= V_{1.4.6}:$
\[V_{1.4.6}=\begin{pmatrix}&&&a&\\&&&b&a+d\\&&&d&b\\-a&-b&-d&&\\&-a-d&-b&&\end{pmatrix}.\]

Clearly subspaces with different number of rank 4 elements will not be congruent. Thus the subspace $V_{1.4.6}$ is not congruent to others. Now we discuss on $N$ case by case.

\noindent{\bf Case N=1:}

$V_{1.4.2}$ is not congruent to $V_{1.1.2}$ and $V_{1.2.3}$, since $\Z_3^5 = (\Z_3,\Z_3,\Z_3, 0, 0)\oplus (0,0,0,\Z_3,\Z_3)$ is a direct sum of a 3-dimensional isotropy subspace and a 2-dimensional isotropy subspace with respect to $V_{1.4.2}$, while for $V_{1.1.2}$ and $V_{1.2.3}$ there is no such a decomposition.

Moreover, $\Z_3^5$ has a pair of 2-dimensional isotropy subspace with respect to $V_{1.2.0}$, say $(0,\Z_3,\Z_3, 0, 0)$ and $(0,0,0,\Z_3,\Z_3)$, whose intersection is zero, while for $V_{1.2.3}$, there are no such isotopy subspaces. Hence $V_{1.1.2}$ is not congruent to $V_{1.2.3}$.

\noindent{\bf Case N=5:}

 $\Z_3^5 = (\Z_3,\Z_3,\Z_3, 0, 0)\oplus (0,0,0,\Z_3,\Z_3)$ is a direct sum of isotropy subspaces with respect to $V_{1.4.3}$, while for $V_{1.2.2}$ there is no such a decomposition. Thus $V_{1.2.2}$ and
$V_{1.4.3}$ are not congruent.

\noindent{\bf Case N=3:}

Similar argument as above shows that $V_{1.4.4}$ and $V_{1.4.5}$ are not congruent to $V_{1.1.1}$, $V_{1.2.1}$, $V_{1.3.1}$ and $V_{1.4.1}$.  Moreover,
\[V_{1.4.5}\overset{T_{31}(-1)}\sim
\begin{pmatrix}&&&a&\\&&&b&a+d\\&&&-(a+d)&b\\-a&-b&a+d&&\\&-a-d&-b&&\end{pmatrix}
=\begin{pmatrix}&&&a&\\&&&b&d\\&&&-d&b\\-a&-b&d&&\\&-d&-b&&\end{pmatrix},\]
and the latter contains a subspace $W=\begin{pmatrix}&&&&\\&&&b&d\\&&&-d&b\\&-b&d&&\\&-d&-b&&\end{pmatrix}$, such that $\mathrm{rad}(W)\ne 0$ and every nonzero element of $W$ has rank 4. There is no difficulty to show that $V_{1.4.4}$ does not have such a two dimensional subspace, therefore $V_{1.4.4}$ is not congruent to $V_{1.4.5}$.

Note that $V_{1.4.1}$ has a three dimensional isotropy subspace $(\Z_3, \Z_3, \Z_3, 0, 0)$, while $V_{1.1.1}$, $V_{1.2.1}$ and $V_{1.3.1}$ do not have, thus $V_{1.4.1}$ is not congruent to the others.

$V_{1.2.1}$ has a two dimensional subspace $\{b=0\}$ whose radical is $(0,0,\Z_3,0,0)\ne 0$, while any two dimensional subspace of $V_{1.1.1}$ or $V_{1.3.1}$ has trivial radical, thus $V_{1.2.1}$ is neither congruent to $V_{1.1.1}$ nor to $V_{1.3.1}$.

Now we are left to prove that  $V_{1.1.1}$ and $V_{1.3.1}$ are not congruent. Otherwise, if $V_{1.3.1}$ is congruent to $V_{1.1.1}$, then there exists some $P\in \GL_5(\Z_3)$, such that $PXP^T\in V_{1.1.1}$ for any $X\in V_{1.3.1}$.
We write
 \[D=\begin{pmatrix}&&1\\&&\\-1&&\end{pmatrix}, D'=\begin{pmatrix}&1&\\-1&&1\\&-1&\end{pmatrix},
E(a,b)=\begin{pmatrix}a&\\b&a\\&b\end{pmatrix}\]
for $a, b\in \Z_3$. Then
$P\begin{pmatrix}dD &E(a,b)\\ -E(a,b)^T&\end{pmatrix}P^T =\begin{pmatrix}d'D'& E(a',b')\\ -E(a',b')^T&\end{pmatrix}$, and $(a,b,d)=(a',b',d')Q$ for some $Q\in\GL_3(\Z_3)$. Write $P$ as a block matrix, say $P=\begin{pmatrix}P_1& P_2\\ P_3&P_4 \end{pmatrix}$, where $P_1, P_2, P_3$ and $P_4$ are $3\times 3$, $3\times 2$, $2\times 3$ and $2\times 2$ matrices respectively.

By comparing the rank, $P\begin{pmatrix}D & \\ &\end{pmatrix}P^T =\begin{pmatrix}d'D'& \\  &\end{pmatrix}$ for some $d'$. Then $\begin{pmatrix}P_1\\P_3\end{pmatrix}DP_3^T=0$, hence $DP_3^T=0$, $P_3=\left(\begin{array}{ccc}0&r_1&0\\0&r_2&0\end{array}\right)$.
Let $P_4=\left(\begin{array}{cc}s_{11}&s_{12}\\s_{21}&s_{22}\end{array}\right)$, then from
$$\begin{pmatrix}P_1&P_2\\P_3&P_4\end{pmatrix}\begin{pmatrix}dD&E(a,b)\\-E(a,b)^T&\end{pmatrix}
\begin{pmatrix}P_1^T&P_3^T\\P_2^T&P_4^T\end{pmatrix}=\begin{pmatrix}d'D'&E(a',b')\\-E(a',b')^T&\end{pmatrix},$$
we get $P_3E(a,b)P_4^T=P_4E(a,b)^TP_3^T$ for any $a, b$.
Thus $r_1s_{21}=r_2s_{11}$, $r_1s_{22}=r_2s_{12}$ and
$(P_3,P_4)=\left(\begin{array}{ccccc}0&r_1&0&s_{11}&s_{12}\\0&r_2&0&s_{21}&s_{22}\end{array}\right)$.

If $(r_1,r_2)\ne(0,0)$, then $\rank(P_3,P_4)=1$, which leads to a contradiction.
Therefore $r_1=r_2=0$, $P_3=0$. Hence $P_1(dD)P_1^T=d'D'$ and $P_1E(a,b)P_4^T=E'(a',b')$. By Remark \ref{rem-E-stabilizer}, $D'$ is a multiple of some $\hat X$, and $P_1DP_1^T= \hat X D \hat X^T$, contradicts to the fact shown in Section 10.2 that $D$ and $D'$ are not equivalent under the congruence action by $\hat X$'s.
\end{proof}

In summary, we have the following classification.

\begin{theorem}\label{thm-3of5}
There are 16 congruence classes of 3-dimensional subspaces of $\Asym_5(\Z_3)$ with trivial radical:
\[
\begin{pmatrix}&d&&a&\\-d&&d&b&a\\&-d&&&b\\-a&-b&&&\\&-a&-b&&\end{pmatrix},
\begin{pmatrix}&d&&a&\\-d&&d&b&a\\&-d&&&b\\-a&-b&&&d\\&-a&-b&-d&\end{pmatrix},
\begin{pmatrix}&d&&a&\\-d&&&b&a\\&&&&b\\-a&-b&&&\\&-a&-b&&\end{pmatrix},
\]
\[
\begin{pmatrix}&d&&a&\\-d&&&b&a\\&&&&b\\-a&-b&&&d\\&-a&-b&-d&\end{pmatrix},
\begin{pmatrix}&d&&a&\\-d&&&b&a\\&&&&b\\-a&-b&&&-d\\&-a&-b&d&\end{pmatrix},
\begin{pmatrix}&&d&a&\\&&&b&a\\-d&&&&b\\-a&-b&&&\\&-a&-b&&\end{pmatrix},
\]
\[
\begin{pmatrix}&&&a&\\&&&b&a\\&&&&b\\-a&-b&&&d\\&-a&-b&-d&\end{pmatrix},
\begin{pmatrix}&&&a&d\\&&&b&a+d\\&&&d&b\\-a&-b&-d&&\\-d&-a-d&-b&&\end{pmatrix},
\begin{pmatrix}&&&a&\\&&&b&d\\&&&&b\\-a&-b&&&\\&-d&-b&&\end{pmatrix},
\]
\[
\begin{pmatrix}&&&a&\\&&&b&a\\&&&d&b\\-a&-b&-d&&\\&-a&-b&&\end{pmatrix},
\begin{pmatrix}&&&a&\\&&&b&a+d\\&&&-d&b\\-a&-b&d&&\\&-a-d&-b&&\end{pmatrix},
\begin{pmatrix}&&&a&\\&&&b&a+d\\&&&d&b\\-a&-b&-d&&\\&-a-d&-b&&\end{pmatrix},
\]
\[
\begin{pmatrix} &d&&a&\\-d&&&b&\\&&&&a\\-a&-b&&&\\&&-a&&\end{pmatrix},
\begin{pmatrix} &&&a&\\&&d&b&\\&-d&&&a\\-a&-b&&&\\&&-a&&\end{pmatrix},
\]
\[
\begin{pmatrix} &&&a&\\&&&b&\\&&&&d\\-a&-b&&&\\&&-d&&\end{pmatrix},
\begin{pmatrix} &&&a&\\&&&b&\\&&&&a\\-a&-b&&&d\\&&-a&-d&\end{pmatrix},
\]
and 6 congruence classes of  3-dimensional subspaces with nontrivial radical:
$$\begin{pmatrix}&c&a&&\\-c&&b&&\\-a&-b&&&\\&&&0&\\&&&&0\end{pmatrix},
\begin{pmatrix}&&a&&\\&&b&&\\-a&-b&&c&\\&&-c&&\\&&&&0\end{pmatrix},
\begin{pmatrix}&c&a&&\\-c&&b&&\\-a&-b&&c&\\&&-c&&\\&&&&0\end{pmatrix},$$
$$\begin{pmatrix}&&a&c&\\&&b&&\\-a&-b&&&\\-c&&&&\\&&&&0\end{pmatrix},
\begin{pmatrix}&c&a&&\\-c&&&b&\\-a&&&c&\\&-b&-c&&\\&&&&0\end{pmatrix},
\begin{pmatrix}&c&a&b&\\-c&&&a&\\-a&&&-c&\\-b&-a&c&&\\&&&&0\end{pmatrix}.$$
\end{theorem}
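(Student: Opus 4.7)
The plan is to split the classification into two major cases according to whether $\mathrm{rad}(V)$ is trivial, and then reduce each case to a finite orbit problem that can be handled by explicit congruence transformations together with a short list of discriminating invariants.

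First I would dispatch the subspaces with $\mathrm{rad}(V) \ne 0$. By Lemma \ref{lem-dimincreasing}, every such $V$ has the form $V_0^{\natural}$ for a unique congruence class of $3$-dimensional $V_0 \subseteq \Asym_4(\Z_3)$, and two such embeddings are $\GL_5(\Z_3)$-congruent in $\Asym_5(\Z_3)$ if and only if the corresponding $V_0$'s are $\GL_4(\Z_3)$-congruent. Theorem \ref{thm-3of4} provides exactly six classes, yielding the second list verbatim.

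For $\mathrm{rad}(V) = 0$, I would use Lemma \ref{lem-trivialradical}: such a $V$ contains a $2$-dimensional subspace $W$ whose elements of rank $\le 2$ do not force a nontrivial radical on $V$. Combined with the list of $2$-dimensional congruence classes from Section \ref{sec-2of5}, the possibilities for $W$ up to congruence collapse to the two models $W_1$ (every nonzero element has rank $4$) and $W_2$ (contains rank-$2$ elements, arranged so that $\mathrm{rad}(V)=0$ remains achievable). Then $V = W \oplus \Z_3 Z$ for a single generator $Z$, which I would parametrize by the free entries outside $W$. The hard work is to describe the stabilizer of $W$ in $\GL_5(\Z_3)$ acting on these free parameters and to reduce the parameters to a finite list of canonical triples. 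For $W_1$ the stabilizer contains a copy of $\GL_2(\Z_3)$ acting via the symmetric-square representation $X \mapsto \hat X$ on $M_{3\times 3}(\Z_3)$ (preserving the distinguished subspace $E$ associated with $W_1$) together with unipotent block-triangular translations; dividing the parameter space into four subcases according to the vanishing pattern of the two diagonal $3\times 3$ blocks of $Z$ (Cases 1.1--1.4 in the outline) and running the $\hat X$-action on $\Z_3^3$ and $\Z_3^4$ yields a short list of twelve candidates. For $W_2$ a direct rank-filtration argument using Lemma \ref{lem-maxprincipalminor} forces two of the eight parameters to vanish, and the remaining subcases reduce to the four candidates $V_{2.1},\dots,V_{2.4}$.

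Finally, I would verify that the candidate list (sixteen for trivial radical, six for nontrivial radical) contains no redundancies. The separation between the two outer cases is automatic from the radical. For separation within the trivial-radical list, the key invariants are: the number $N(V) = \#\{X \in V : \operatorname{rank}(X) \ne 4\}$, the collection of Pfaffians of $4\times 4$ principal minors (viewed as quadratic forms in the coordinates of $V$) up to equivalence, and the existence of isotropic or coisotropic subspaces of $\Z_3^5$ of prescribed dimensions with respect to $V$. Cross-case coincidences such as $V_{1.1.2} \sim V_{1.2.0} \sim V_{1.3.2}$ would be confirmed by exhibiting an explicit congruence matrix, as in Remark \ref{rem-eqeg-5case1}.

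The main obstacle I anticipate is the exhaustiveness of the orbit reduction under the $\hat{\GL_2(\Z_3)}$-action in Case A, and more delicately the final separation of $V_{1.1.1}$ from $V_{1.3.1}$: these two have identical $N$, identical Pfaffian patterns up to relabelling, and no obvious distinguishing isotropic subspace. The plan there is to exploit the rigidity of the stabilizer of $W_1$: an isomorphism between them would have to normalize $W_1$, hence decompose into a block form whose $3\times 3$ top-left part is a scalar multiple of some $\hat X$; then comparison with the two $\GL_2(\Z_3)$-orbits $(1,0,0)$ and $(0,1,0)$ of the auxiliary classification in Section \ref{sec-2of3} forces a contradiction. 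Carrying this structural argument through carefully is the most substantive step of the proof.
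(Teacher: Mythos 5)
Your plan follows the paper's proof essentially step for step: the nontrivial-radical classes are dispatched via Lemma \ref{lem-dimincreasing} and Theorem \ref{thm-3of4}; the trivial-radical case is split into the $W_1$/$W_2$ alternatives of Lemma \ref{lem-trivialradical}, with the $\hat X$-orbit (symmetric square of $\GL_2(\Z_3)$) reduction in Case 1, the principal-minor Pfaffian reduction via Lemma \ref{lem-maxprincipalminor} in Case 2, and the same separating invariants ($N(V)$, Pfaffians of $4\times4$ principal minors, isotropic subspaces, and the stabilizer-rigidity/block-decomposition argument through Remark \ref{rem-E-stabilizer} for $V_{1.1.1}$ versus $V_{1.3.1}$). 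The only slips are cosmetic: the relevant $\hat X$-orbits in that final separation are $(1,0,1)$ and $(0,1,0)$ (this orbit computation is carried out at the start of Case 1, not in Section \ref{sec-2of3}), and the subcases 1.1--1.4 are indexed by the $\hat X$-orbit of the top-left $3\times3$ block of the added generator rather than by a mere vanishing pattern.
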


\vskip20pt
\subsection*{Acknowledgments}
Z. Wan is supported by the Shuimu Tsinghua Scholar Program. Y. Ye is partially supported by the Natural
Science Foundation of China (Grant No.11971449).
C. Zhang is supported by the Fundamental Research Funds for the Central Universities (No. 2020QN20).

\vskip20pt

\begin{thebibliography}{99}

\bibitem{ars}
 M. Auslander, I. Reiten, S. O. Smal{\o}, Representation theory of algebras, Cambridge Studies in Advanced Mathematics 36, Cambridge University Press, 1997.

\bibitem{br} K. Brown, Cohomology of groups, GTM 87, Springer-Verlag, New York, 1982.

\bibitem{gan}
F. R. Gantmacher, The theory of matrices, Vol. 2, Chelsea Publ., New York, 1959.

\bibitem{hs} G. Hochschild, J. P. Serre, Cohomology of group extensions, Trans. Amer. Math. Soc.,
74 (1953), 110-134.

\bibitem{hlyy}
H. L. Huang, G. X. Liu, Y. P. Yang, Y. Ye, Finite quasi-quantum groups of diagonal type, J. Reine. Angew. Math., 759 (2020), 201-243.

\bibitem{hwy}
H. L. Huang, Z. Y. Wan, Y. Ye, Explicit cocycle formulas on finite abelian groups with applications to braided linear Gr-categories and Dijkgraaf–Witten invariants, P. Royal Society Edinburgh Section A: Mathematics, 2019, doi.org/10.1017/prm.2019.15.

\bibitem{HU}
B. Huppert, Endliche Gruppen, Springer-Verlag, Berlin, 1967.

\bibitem{lyn} R. C. Lyndon, The cohomology theory of group extensions, Duke Math. J., 15 (1948),
271–292.


\bibitem{khu}
E. I. Khukhro, $p$-automorphisms of finite $p$-groups, London Mathematical Society Lecture Note Series,
vol. 246, Cambridge University Press, Cambridge, 1998.

\bibitem{new} M. F. Newman, Determination of groups of prime-power order, Group theory (Proc. Miniconf., Australian Nat. Univ., Canberra, 1975), pp. 73-84, Lecture Notes in Math., Vol. 573, Springer, Berlin, 1977.


\bibitem{ob}
E. A. O'Brien, The $p$-group generation algorithm, J. Symbolic Computation, 9(1990), 677-698.

\bibitem{rot} J. J. Rotman, An introduction to the theory of groups, Vol. 148. Springer Science \& Business Media, 2012.

\bibitem{schar}
R. Scharlau, Paare alternierender Formen, Math. Z., 147(1)(1976), 13-19.

\bibitem{lee1}
M. Vaughan-Lee, The automorphisms of class two groups of prime exponent, arXiv:1501.00678.

\bibitem{lee2}
M. Vaughan-Lee, Groups of order $p^8$ and exponent $p$, Int. J. Group Theory, 4(2015), 25-42.
\bibitem{lee3}
M. Vaughan-Lee, The class three groups of order $p^9$ with exponent $p$, arXiv:1702.04898v1.
\bibitem{lee4}
M. Vaughan-Lee, Orbits of irreducible binary forms over $\text{GF}(p)$, arXiv:1705.07418v1.

\bibitem{vish}
A. L. Vishnevetskii, A system of invariants for some groups of class 2 with commutator subgroup of rank two, Int. Ukrain. Mat. Zh., 37(1985), 294-300.

\bibitem{wei}
C. A. Weibel, An introduction to homological algebra, Cambridge Studies in Advanced Mathematics, 38. Cambridge University Press, Cambridge, 1994.

\bibitem{wil}
D. Wilkinson, The group of exponent $p$ and order $p^7$ ($p$ any prime), J. Algebra, 118(1988), 109-119.
\end{thebibliography}
\end{document}